\newtheorem{theoremIntro}{Theorem}
\newtheorem{propositionIntro}[theoremIntro]{Proposition}
\newtheorem{theorem}{Theorem}[section]
\newtheorem{proposition}[theorem]{Proposition}
\newtheorem{corollary}[theorem]{Corollary}
\newtheorem{lemma}[theorem]{Lemma}
\newtheorem{remark}[theorem]{Remark}
\newtheorem{defi}[theorem]{Definition}
\newtheorem{claim}[theorem]{Claim}
\newcommand{\scal}[2]{\left\langle #1,#2 \right\rangle}
\newcommand{\g}{\nabla}
\newcommand{\di}{\mathrm{div}}
\newcommand{\lap}{\Delta}
\newcommand{\dr}{\partial}
\newcommand{\vol}{\mathrm{vol}}
\newcommand{\dist}{\mathrm{dist}}
\newcommand{\Span}{\mathrm{Span}}
\newcommand{\Ind}{\mathrm{Ind}}
\newcommand{\tr}{\mathrm{tr}}
\newcommand{\Riem}{\mathrm{Riem}}
\newcommand{\Conf}{\mathrm{Conf}}
\newcommand{\Diff}{\mathrm{Diff}}
\newcommand{\osc}{\mathrm{osc}}
\newcommand{\Imm}{\mathrm{Imm}}
\newcommand{\R}{\mathbb{R}}
\newcommand{\C}{\mathbb{C}}
\newcommand{\N}{\mathbb{N}}
\newcommand{\s}{\mathbb{S}}
\newcommand{\Z}{\mathbb{Z}}
\newcommand{\RP}{\mathbb{RP}}
\newcommand{\Hd}{\mathbb{H}}
\newcommand{\Lr}{\mathcal{L}}
\newcommand{\Dr}{\mathcal{D}}
\newcommand{\Ar}{\mathcal{A}}
\newcommand{\Er}{\mathcal{E}}
\newcommand{\Cr}{\mathcal{C}}
\newcommand{\Ur}{\mathcal{U}}
\newcommand{\Arond}{\mathring{A}}
\newcommand{\inter}[2]{[\![#1,#2]\!]}
\newcommand{\ust}{\underset}
\newcommand{\vp}{\varphi}
\newcommand{\ve}{\varepsilon}
\title{Energy quantization for Willmore surfaces with bounded index}
\author{Dorian Martino\footnote{Partial support through ANR BLADE-JC ANR-18-CE40-002 is acknowledged.\\Institut de Mathématiques de Jussieu, Université Paris Cité, Bâtiment
			Sophie Germain, 75205 Paris Cedex 13, France\\ Email : dorian.martino@imj-prg.fr
		}}
\date{}
\begin{document}
	
	\maketitle
	
	\begin{abstract}
		We prove an energy quantization result for Willmore surfaces with bounded index, whether the underlying Riemann surfaces degenerates in the moduli space or not. To do so, we translate the question on the conformal Gauss map's point of view. In particular, we prove that in a neck or a collar region, the conformal Gauss map converges to a light-like geodesic in the De Sitter space.
	\end{abstract}

	\section{Introduction}

	The Willmore energy has been introduced in the XIX century by Sophie Germain in non linear mechanics as being a modelization of the free energy of a bent two dimensional elastic membrane. It was then independently introduced in geometry by Wilhelm Blaschke \cite{blaschke1955} in an effort to merge minimal surfaces theory and conformal invariance. If $\Phi$ denotes the immersion of an abstract closed surface $\Sigma$ into $\R^3$, the Willmore energy of $\Phi$ is given by
	\begin{align*}
		\Er(\Phi) &:= \int_\Sigma \left| \Arond_\Phi\right|^2_{g_\Phi} d\vol_{g_\Phi},
	\end{align*}
	where $g_{\Phi}$ is the first fundamental form of the immersion, i.e. the induced metric by $\Phi$, $d\vol_{g_\Phi}$ is the
	associated volume form and $\Arond_\Phi$ is the traceless part of the second fundamental form $A_\Phi$ of $\Phi$. An other definition can be
	\begin{align*}
		W(\Phi) := \int_\Sigma |H_\Phi|^2 d\vol_{g_\Phi},
	\end{align*}
	where $H_\Phi$ is the mean curvature of $\Phi(\Sigma)$. Thanks to Gauss-Bonnet, if $\chi(\Sigma)$ is the Euler characteristic of $\Sigma$, we can link $W$ and $\Er$ by $\Er(\Phi) = 2W(\Phi) - 4\pi \chi(\Sigma)$.
	
	Blaschke proved that the quantity $\left|\Arond_\Phi\right|^2_{g_\Phi} d\vol_{g_\Phi}$ is pointwise conformal invariant. That is to say, for any element $\xi\in \Conf(\s^3)$, the Möbius group of conformal
	transformations of $\R^3 \cup \{ \infty\} \simeq \s^3$,
	\begin{align*}
		\left|\Arond_{\xi\circ\Phi} \right|^2_{g_{\xi\circ\Phi}} d\vol_{g_{\xi\circ\Phi}} &= \left| \Arond_\Phi \right|^2_{g_\Phi} d\vol_{g_\Phi}.
	\end{align*}
	Therefore, the lagrangian $\Er$ is invariant under conformal transformations. However, the lagrangian $W$ is not invariant under \textit{every} conformal transformation of $\R^3$, but only under conformal transformations that preserve the topology. In this work, we will focus on $\Er$ rather than $W$.\\
	
	One motivation for the study of the Willmore energy, is to apply Morse theory to understand the topology of the space $\Imm(\Sigma,\R^3)$ of immersions of $\Sigma$ into $\R^3$. For instance, a basic problem would be to understand min-max procedures for $W$ : if $\gamma \in \pi_k(\Imm(\Sigma,\R^3))$ is a generator of regular homotopy of immersions, consider the quantity
	\begin{align*}
		\beta_\gamma & := \inf\left\{  \sup_{t\in\s^k} W(\Phi_t)  :  (\Phi_t)_{t\in\s^k} \in \gamma \right\}.
	\end{align*}
	One can ask few natural questions : can we bound these numbers ? Does there exists any immersion realising these optimization problems ? A first issue is
	that the map $W : \Imm(\Sigma,\R^3) \to \R$ cannot be a Morse function, partly because of the conformal invariance. To overcome this, Tristan Rivière \cite{riviere2020} adopted a viscosity approach. Instead of considering $W$ alone, he adds a "smoother" times a small parameter $\sigma>0$ and then let $\sigma\to 0$. He applied successfully this method for a well choosen smoother, in the case $\Sigma = \s^2$, and obtained that the values of min-max procedures for $W$ on $\s^2$ is the sum of the energy of some Willmore sphere plus the energy of some inverted Willmore spheres. In particular, this result can be applied to the sphere eversion and constitute a step towards the $16\pi$ conjecture. For more information about the viscosity method for geodesics or minimal surfaces, see \cite{riviere2017,riviere2021,michelat2016}. It would be interesting to generalize the study of min-max procedures to surfaces of genus greater or equal to 1. This is one of the motivation of the present work.\\
	
	During a long time the minimal surfaces\footnote{Surfaces with $H=0$ are minimizers of $W$.} and their conformal transformations were the only known critical points of $\Er$. In 1965, Tom Willmore \cite{willmore1965} relaunched the interest for the lagrangian by raising the question of finding compact surface minimizing $W$. He showed that round spheres are absolute minimizers, and conjectured that the Clifford torus should be the unique minimizer among the class of immersed tori, up to conformal transformations. This conjecture has been proved by Fernando Marques and André Neves \cite{marques2014b} in 2014. In 1985, Ulrich Pinkall \cite{pinkall1985} constructed a large class of Willmore tori and proved that inversions of minimal surfaces are not the only Willmore surfaces. In 1984, Robert Bryant \cite{bryant1984} enlightened a duality between Willmore surfaces in $\s^3$ and minimal surfaces in the De Sitter space
	\begin{align*}
		& \s^{3,1} := \{ x\in \R^5 : |x|^2_\eta = 1 \}, \\
		& \eta := (dx^1)^2 + \cdots + (dx^4)^2 - (dx^5)^2.
	\end{align*}
	To each immersion $\Phi : \Sigma\to\R^3$, one can consider its conformal Gauss map $Y :\Sigma\to \s^{3,1}$, see \Cref{section_CGM} for a precise definition, which represent at each $x\in \Sigma$, the sphere in $\R^3$ which is tangent to $\Phi(\Sigma)$ at $\Phi(x)$, with same mean curvature. Bryant showed that $\Phi$ is a smooth Willmore immersion if and only if $Y$ is a smooth conformal harmonic map, in other words a minimal surface. This result can be seen as a generalization of a well known property of constant mean curvature surfaces, namely the Gauss map of a constant mean curvature surface is harmonic. Since conformal transformations of $\R^3$ preserve the set of spheres, the notion of conformal Gauss map is more adapted to the study of Willmore surfaces than the notion of Gauss map.\\
	
	In order to understand the space of Willmore surfaces, one can consider sequences of immersions $\Phi_k$ of a given closed surface $\Sigma$ into $\R^3$ and which are below a given energy level. From an analysis point of view, one of the main difficulty to study Willmore surfaces is the fact that the Euler-Lagrange equation of $W$ is a non-linear supercritical elliptic equation of order four. Nevertheless, it has been proven that Willmore surfaces satisfy an $\ve$-regularity property, either extrinsically \cite{kuwert2001} by Ernst Kuwert and Reiner Schätzle or intrinsically \cite{riviere2008,riviere2014} by Tristan Rivière. This crucial step explains that below a threshold of energy, if the metrics $g_k := g_{\Phi_k}$ are controlled, then up to a subsequence, $(\Phi_k)_{k\in\N}$ converges in the strong $C^\ell$-topology, for any $\ell\in\N$. The $\ve$-regularity leads to concentration compactness phenomena, first developed by Jonathan Sacks and Karen Uhlenbeck \cite{sacks1981} in 1981. Once the $\ve$-regularity is established, a covering argument identifies the points where the energy concentrates. At these points, some "bubbles" are formed and we say that the sequence $(\Phi_k)_{k\in\N}$ satisfy a weak bubble convergence.
	
	A difficult question is then to understand if the whole energy concentrating at these points is given exclusively by the sum of the energies of these bubbles or if there is some additional energy needed to glue these bubbles to the rest of the limit. The regions separating the bubbles between themselves or separating the bubbles with the macroscopic solution are annuli of degenerating conformal classes, called neck region. If the energy inside the necks vanishes, we say that the bubble convergence is strong, see \Cref{bubble_compactness}.
	
	By the uniformisation theorem, each $g_k$ is conformal to a smooth metric $h_k$ of constant curvature. Under the assumption that the sequence $(h_k)_{k\in\N}$ is controlled, Yann Bernard and Tristan Rivière \cite{bernard2014} proved that the sequence, modulo the action of $\Conf(\s^3)$, satisfy a strong bubble convergence.\\
	
	The hypothesis on the conformal class of $(g_k)_{k\in\N}$ can be removed in some cases. Thanks to Duligne-Mumford's description, we have a precise picture of surfaces degenerating in the moduli space. In particular, the regions where the metrics $g_k$ degenerate are given by degenerating cylinders, called collar region. They are conformally equivalent to necks. Paul Laurain and Tristan Rivière \cite{laurain2018} established an estimate on the Green function of the Laplace operator of any degenerating sequence of constant Gauss curvature metric, which permits to extend the weak bubble convergence for Willmore surfaces to the case where the underlying conformal classes degenerate. They also proved the strong bubble convergence in \cite{laurain2018a} with some additional assumptions, see below.\\
	
	From the conformal Gauss map's point of view, we have a sequence of conformal harmonic maps $Y_k : (\Sigma,g_k) \to (\s^{3,1},\eta)$ with bounded energy. Harmonic maps defined on degenerating surfaces with values into Riemannian manifolds have been intensively studied \cite{zhu2008,li2017,chen2012,laurain2014}. However, in the case of harmonic maps with values into Lorentz spaces, only few results are known \cite{zhu2013,bernard2020}.  \\
	
	In the case of harmonic maps defined on a degenerating cylinder, coming as a collar or a neck, with values into a compact Riemannian manifold, Miaomiao Zhu \cite{zhu2008} proved that the loss of energy in collars and necks is explicitly given by the residue coming from the Hopf differential. This residue always vanishes in neck regions, so the energy inside necks always vanishes. However, it is not true in collars and some energy can disappear, see for instance \cite{parker1996}. A geometric explanation is given by Li Chen, Yuxiang Li and Youde Wang \cite{chen2012}. They showed that a sequence of harmonic maps converges to a geodesic on necks and collars. They proved that the behaviour of the parametrization is entirely defined by the behaviour of the residue coming from the Hopf differential, and that the energy loss can be quantified by the asymptotic behaviour of the length of the cylinder shrinking to this geodesic. Yuxiang Li, Lei Liu and Youde Wang \cite{li2017} proved that, if the target is positively curved, the length of a geodesic can be controlled by its index. In particular, they showed that if the harmonic maps have bounded index, then the length of the limit geodesic is bounded, and there is also a strong bubble convergence.  \\
	
	One can also find residues for Willmore immersions, but of different nature. The residue of harmonic maps, generated by the Hopf differential, is given by conformal invariance on the domain. In the Willmore case, the conformal invariance is on the target. Four residues are given by conservation laws coming from Noether theorem, see \cite{bernard2016}. Despite this difference, Laurain-Rivière \cite{laurain2018a} proved that only one the residue can obstruct the strong bubble convergence, and that if this residue converges to zero fast enough, then there is strong bubble convergence.\\
	
	In this work, we study the behaviour of the conformal Gauss maps in necks and collars, see \Cref{section_main_results} for precise statements. First, we extend the result of \cite{chen2012} by proving that on a domain with given oscillations, the conformal Gauss maps converges to a geodesic up to a isometry of $\s^{3,1}$ and reparametrization. The following is the \Cref{bubbling_CGM} :
	\begin{theoremIntro}\label{theoremI}
		Let $(\Phi_k)_{k\in\N}$ be a sequence of Willmore immersions $\Sigma \to \R^3$ satisfying $\sup_k \Er(\Phi_k) <\infty$. Let $(Y_k)_{k\in\N}$ be their conformal Gauss maps. Consider $\Cr_k := [-T_k,T_k]\times \s^1$ a degenerating cylinder arising as a neck or a collar region. There exists a sequence of paths $(M_{t,k})_{t\in[-T_k,T_k],k\in\N}\subset SO(4,1)$ satisfying the following : if there exists $\delta>0$ and a cylinder $\tilde{\Cr}_k :=[t_k-1,t_k+\kappa_k]\times \s^1 \subset \Cr_k$, for any $k\in\N$, such that
		\begin{align*}
			\forall k\in\N,\ \ \ \ \osc_{\tilde{\Cr}_k} M_{t_k,k} Y_k = \delta,
		\end{align*}
		Then there exists a nonconstant light-like straight line $\sigma$ and a parametrization $\alpha_k : (0,s_k)\times \s^1 \to \tilde{\Cr}_k$ such that $(M_{t_k,k} Y_k \circ \alpha_k)_{k\in\N}$ converges to $\sigma$ in the $C^2_{loc}$-topology.
	\end{theoremIntro}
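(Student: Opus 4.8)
The plan is to mimic the strategy of Chen–Li–Wang \cite{chen2012} for harmonic maps into Riemannian manifolds, but adapted to the pseudo-Riemannian target $\s^{3,1}$ and to the fact that $Y_k$ is the conformal Gauss map of a Willmore immersion rather than an abstract harmonic map. By Bryant's duality each $Y_k$ is a conformal harmonic (hence minimal) map $(\Sigma,g_k)\to(\s^{3,1},\eta)$; on the cylinder $\tilde{\Cr}_k$ the conformal factor is irrelevant for the harmonic map equation, so $Y_k$ satisfies $\Delta Y_k = -\langle \g Y_k,\g Y_k\rangle_\eta\, Y_k + (\text{lower order involving the Hopf residue})$ in the flat cylinder coordinates $(t,\theta)$. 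The first step is to choose the gauge: using the fact that $\tilde{\Cr}_k$ is a subcylinder of a neck/collar, the $\ve$-regularity for Willmore surfaces (Kuwert–Schätzle \cite{kuwert2001}, Rivière \cite{riviere2008,riviere2014}) combined with the Bernard–Rivière \cite{bernard2014} and Laurain–Rivière \cite{laurain2018,laurain2018a} neck analysis gives that the energy of $Y_k$ on unit-length subcylinders of the neck is small; this is what makes the oscillation hypothesis $\osc_{\tilde{\Cr}_k}M_{t_k,k}Y_k=\delta$ meaningful. I would define $M_{t,k}\in SO(4,1)$ as the isometry that ``recenters'' $Y_k(t,\cdot)$ to a fixed base point and normalizes the tangent frame — essentially a moving-frame normalization along the central circles $\{t\}\times\s^1$, chosen so that $M_{t_k,k}Y_k$ stays in a fixed compact chart.

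Second, I would establish the crucial structural dichotomy familiar from neck analysis: on $\tilde{\Cr}_k$ either $Y_k$ is $\theta$-independent in the limit (the ``no-oscillation-in-$\theta$'' regime), or it develops a bubble. The oscillation normalization $\osc_{\tilde\Cr_k}M_{t_k,k}Y_k=\delta$ together with the small-energy bound forces the first alternative after extracting the subsequence and rescaling: the angular energy $\int_{\tilde\Cr_k}|\dr_\theta Y_k|^2$ must go to zero (otherwise one extracts a nontrivial bubble carrying a definite amount of energy, contradicting that we are in a neck where energy is concentrated only at the tips). Hence in the limit $Y_k$ depends only on a rescaled arclength parameter. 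The reparametrization $\alpha_k:(0,s_k)\times\s^1\to\tilde\Cr_k$ is then built to be the arclength parametrization of the curve $t\mapsto$ (average of $M_{t_k,k}Y_k$ over $\{t\}\times\s^1$); one checks $s_k$ is comparable to the total variation of this curve and uses the oscillation lower bound $\delta$ to guarantee $\liminf s_k>0$, so the limit is nonconstant.

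Third, one passes to the limit in the equation. After reparametrization the map $\gamma_k:=M_{t_k,k}Y_k\circ\alpha_k$ satisfies an ODE of the form $\gamma_k'' = -\langle\gamma_k',\gamma_k'\rangle_\eta\gamma_k + o(1)$ in $C^0_{loc}$, where the error collects the vanishing angular energy and the Hopf-differential residue terms (which, as recalled in the introduction, vanish in necks and are controlled in collars by the Laurain–Rivière residue analysis \cite{laurain2018a}). The limit $\sigma$ therefore solves the geodesic equation $\sigma''=-\langle\sigma',\sigma'\rangle_\eta\sigma$ in $\s^{3,1}$. To identify $\sigma$ as \emph{light-like} and a \emph{straight line}, I would show $\langle\sigma',\sigma'\rangle_\eta\equiv 0$: this is exactly the Hopf differential / conformality constraint in the limit — the holomorphic quadratic differential of $Y_k$ has vanishing (or, in the collar case, controlled and then rescaled-away) residue along $\tilde\Cr_k$, which forces the energy density ratio $|\dr_t Y_k|^2-|\dr_\theta Y_k|^2\to 0$, hence $|\sigma'|_\eta^2=0$. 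A null geodesic of $\s^{3,1}$ with the induced affine parametrization is an affine line in $\R^5$ (since $\sigma''=0$), giving the light-like straight line $\sigma$. Finally $C^2_{loc}$ convergence upgrades from $C^0$ via the ODE: $\gamma_k\to\sigma$ in $C^0$ plus the equation gives $C^1$, then $C^2$ convergence by bootstrapping, using the $\ve$-regularity to control $\g Y_k$ and $\g^2 Y_k$ pointwise on unit subcylinders.

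The main obstacle I expect is the third step, specifically showing that the limit is \emph{light-like}: in the Riemannian case of \cite{chen2012} one never has to distinguish null from spacelike/timelike geodesics, and the argument that $|\sigma'|_\eta^2\to 0$ is genuinely a feature of the conformal Gauss map — it should follow because the conformal Gauss map of an immersion is automatically conformal as a map into $\s^{3,1}$ with the conformal factor being (a multiple of) $|\Arond_{\Phi_k}|^2$, so the angular and radial energy densities are equal up to lower order, and once $\dr_\theta Y_k\to 0$ the radial density must also degenerate to match. Making this quantitative in a degenerating collar, where the Hopf residue need not vanish and must be absorbed by the choice of $\alpha_k$ and by the rescaling, is the delicate point and will require the precise residue estimates of Laurain–Rivière \cite{laurain2018a} together with the Green-function bounds of \cite{laurain2018}. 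A secondary difficulty is the construction of the frame path $M_{t,k}$: it must be simultaneously smooth in $t$, bounded in $SO(4,1)$ on the relevant scale, and compatible across the whole neck $\Cr_k$ (not just $\tilde\Cr_k$), which requires patching local normalizations — here one uses that $SO(4,1)$ acts transitively on $\s^{3,1}$ with point stabilizers isomorphic to a copy of $SO(3,1)$, so the frame bundle is trivializable over the contractible cylinder.
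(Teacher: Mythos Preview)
Your outline has the right large-scale shape (gauge, angular decay, limit ODE, light-like identification), but it misidentifies where the real work is and relies on a framework the paper explicitly rejects.

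\textbf{The gauge is the hard step, not a formality.} You describe $M_{t,k}$ as ``recentering to a fixed base point and normalizing the tangent frame'', and treat the existence of a compact chart as automatic. It is not: the target $\s^{3,1}$ is noncompact, and the harmonic map system has the form $|\Delta Y_k|_\xi \le |\nabla Y_k|^2_\xi\,|Y_k|_\xi$ with no a priori $L^\infty_\xi$ bound on either factor. Without such a bound you cannot run any of the differential-inequality or bootstrap arguments you outline. The paper devotes all of \cref{gauge_neck_region} to this: on each dyadic annulus one chooses an inversion of $\R^3$ whose center lies on a specific sphere so that the transformed immersion has \emph{zero average mean curvature} on that annulus (\cref{gauge_dyadic_annuli}); this, together with the estimate $\|(H-\bar H)e^\lambda\|_{L^2}\le C\|\Arond e^{-\lambda}\|_{L^2}$ (\cref{control_h_a}), yields the $\ve$-regularity for $Y_k$ in the \emph{Euclidean} norm (\cref{control_normal_by_arond}, \cref{existence_gauge}). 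Your proposal has no mechanism producing these bounds.

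\textbf{The Chen--Li--Wang scheme does not transplant.} The paper says explicitly (end of the sketch in \cref{section_main_results}) that the techniques of \cite{chen2012} fail here because in a Lorentzian target one loses Cauchy--Schwarz and the relevant Poincar\'e-type inequalities; instead it follows Zhu \cite{zhu2008}. Concretely, once the gauge gives $L^\infty_\xi$ control, one sets $\alpha_k(t)=\int_{\s^1}|\partial_\theta Y_k|^2_\xi$, $\beta_k(t)=\int_{\s^1}|\nabla Y_k|^2_\xi$, $\gamma_k(t)=\int_{\s^1}|\nabla(Y_k-Y_k^*)|^2_\xi$ and derives the ODE inequality $\alpha_k''\ge\alpha_k$ (\cref{estimate_xitheta}); this forces $\alpha_k=o(\beta_k)$ and then $\gamma_k=o(\beta_k)$ uniformly (\cref{step1_conv_geodesic}, \cref{estimate_xistar}), which is the quantitative ``no angular dependence'' statement. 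Your bubble-extraction argument for the angular decay is too soft to give this uniform comparison.

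\textbf{The light-like identification is immediate, and there is no Hopf residue.} You flag the null character of $\sigma$ as the main obstacle and invoke the Laurain--Rivi\`ere residue analysis. But the conformal Gauss map of \emph{any} immersion is conformal into $(\s^{3,1},\eta)$, so its Hopf differential vanishes identically: $\int_{\{t\}\times\s^1}|\partial_t Y_k|^2_\eta=\int_{\{t\}\times\s^1}|\partial_\theta Y_k|^2_\eta$ for every $t$ (equation \eqref{residue_hopf_diff}). Once the Euclidean angular energy is $o$ of the Euclidean radial energy, both $\eta$-densities are $o$ of $|\partial_t Y_k^*|^2_\xi$, hence $|\sigma'|^2_\eta=0$. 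No residue estimate is needed, and the ``Willmore residues'' mentioned in the introduction are a different object (Noether charges of the immersion, not the Hopf differential of $Y_k$).

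\textbf{$C^2$ convergence is not a bootstrap.} Because the reparametrization $\alpha_k$ degenerates, you cannot simply differentiate the ODE. The paper repeats the entire $\alpha_k''\ge\alpha_k$ scheme one derivative higher, with $\delta_k(t)=\int_{\s^1}|\partial_\theta\nabla Y_k|^2_\xi$ (\cref{estimate_delta}), and then proves \cref{uniform_bound_hessian} separately.
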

	
	The starting point consists in proving an $\ve$-regularity result for the conformal Gauss map in the spirit of \cite{bernard2020}. However, to prove the convergence to a geodesic, the technics of \cite{chen2012} don't apply here since we work in a Lorentz space. In particular we don't have Cauchy-Schwarz inequality or Poincaré inequality. We developp the approach of \cite{zhu2008}, which is more suitable in our case.\\
	
	The next step to prove strong bubble convergence for sequence of Willmore immersions with bounded energy and index, is to adapt the notion of index on the conformal Gauss map. We define a notion of index $\Ind_\Ar$ for space-like critical points of the area functional $\Ar$, see \Cref{section_CGM} equation \eqref{definition_indexY}. The key proposition, see \cref{equality_index}, shows that the Willmore index of a Willmore immersion $\Phi$ is the area index of its conformal Gauss map $Y$, when we restrict the variations $(Y_t)_{t\in(-1,1)}$ as being conformal Gauss maps of a variation $(\Phi_t)_{t\in(-1,1)}$ of $\Phi$.
	
	\begin{propositionIntro}
		For any Willmore immersion $\Phi : \Sigma\to\R^3$ not totally umbilic, we have $\Ind_\Er(\Phi) = \Ind_\Ar(Y)$.
	\end{propositionIntro}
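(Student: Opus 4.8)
The plan is to realize the Willmore energy $\Er$ as the pullback of the area functional $\Ar$ under the conformal Gauss map operator, and then to deduce the equality of indices by a soft argument on quadratic forms. First I would record the structural properties of the operator $\mathcal{Y} : \Psi\mapsto Y_\Psi$ sending an immersion to its conformal Gauss map. By the material of \Cref{section_CGM}, $\mathcal{Y}$ is smooth on the space of immersions of $\Sigma$ into $\R^3$, and for \emph{every} immersion $\Psi$ the induced metric $Y_\Psi^*\eta$ is conformal to $g_\Psi$, with conformal factor a fixed positive multiple of $|\Arond_\Psi|^2_{g_\Psi}$. Since $\Sigma$ is a surface, the area element of $Y_\Psi$ is then that same multiple of $|\Arond_\Psi|^2_{g_\Psi}\,d\vol_{g_\Psi}$, which yields the identity
\begin{align*}
	\Ar(Y_\Psi) = c_0\,\Er(\Psi),
\end{align*}
with $c_0 > 0$ a universal constant, valid for all immersions $\Psi$. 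Because $Y_\Psi$ is a conformal map one never extracts a square root of a vanishing quantity, so this identity — and the regularity of $\Psi\mapsto\Ar(Y_\Psi)$ — is unaffected by the umbilic points of $\Psi$, where $Y_\Psi$ merely branches. By Bryant's duality, $\mathcal{Y}$ sends the Willmore immersion $\Phi$ to the (possibly branched) minimal immersion $Y = Y_\Phi$, a critical point of $\Ar$, while $\Phi$ is a critical point of $\Er$.

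Next I would transport second variations. As functions on the space of immersions, $c_0\,\Er = \mathcal{Y}^*\Ar$, so their Hessians at $\Phi$ coincide up to the factor $c_0$; and since $Y = \mathcal{Y}(\Phi)$ is a critical point of $\Ar$, the chain rule for Hessians at a critical point gives, for every admissible variation $v$ of $\Phi$,
\begin{align*}
	c_0\,\delta^2\Er(\Phi)\big[v,v\big] = \delta^2\Ar(Y)\big[\,d\mathcal{Y}_\Phi(v),\,d\mathcal{Y}_\Phi(v)\,\big].
\end{align*}
Write $L := d\mathcal{Y}_\Phi$. Its image consists exactly of the variations of $Y$ obtained by differentiating one-parameter families of conformal Gauss maps through $Y$, which is precisely the class of variations entering the definition \eqref{definition_indexY} of $\Ind_\Ar(Y)$; thus $c_0\,\delta^2\Er(\Phi)$ is the pullback by $L$ of $\delta^2\Ar(Y)$ restricted to that class. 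The conclusion is then pure linear algebra: whenever a quadratic form satisfies $q = L^*\tilde q$, it vanishes on $\ker L$, so every subspace on which $q$ is negative definite meets $\ker L$ trivially and is carried by $L$ isomorphically onto a subspace of $\operatorname{im} L$ on which $\tilde q$ is negative definite, and conversely every negative subspace of $\tilde q$ inside $\operatorname{im} L$ lifts, basis vector by basis vector, to a negative subspace of $q$ of the same dimension. Hence $q$ and $\tilde q|_{\operatorname{im} L}$ have the same index, which with $q = c_0\,\delta^2\Er(\Phi)$ and $\tilde q = \delta^2\Ar(Y)$ gives $\Ind_\Er(\Phi) = \Ind_\Ar(Y)$, the argument being insensitive to the dimensions involved.

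The part that deserves genuine care is the one compressed into the first paragraph: the conformality of the conformal Gauss map, the precise value of its conformal factor, and the behaviour of $\mathcal{Y}$ and of the area functional at the branch points of $Y$, i.e. the umbilic points of $\Phi$. It is exactly this structure that makes the pullback identity $\mathcal{Y}^*\Ar = c_0\Er$ hold and the Hessian chain rule legitimate in the presence of branch points; thanks to conformality the branch points turn out to be harmless, but this has to be checked. Once these points are in place, the index equality is purely formal. Alternatively, one could bypass the pullback identity, compute $L = d\mathcal{Y}_\Phi$ explicitly, and verify by a longer direct calculation that the second-order Jacobi form of $\Ar$ evaluated on $Lv$ reproduces the fourth-order second-variation form of $\Er$ on $v$; the approach above trades that computation for the structural identity.
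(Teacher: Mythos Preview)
Your framework is sound: the identity $\Ar\circ\mathcal{Y}=\tfrac12\Er$, the Hessian chain rule at the critical point $Y$, and the linear-algebra lemma on pulled-back quadratic forms are all correct and together reduce the index equality to identifying the image of $L:=d\mathcal{Y}_\Phi$ restricted to \emph{normal} variations of $\Phi$ (note that $\Ind_\Er$ in \eqref{def_index_Psi} is defined over $\dot\Phi\in\Span(N)$, a restriction you do not make explicit). This skeleton is exactly what the paper uses, implicitly, once the characterization of that image is in hand.

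The genuine gap is that you assert this characterization without proof. The quantity $\Ind_\Ar(Y)$ in \eqref{definition_indexY} is \emph{not} defined as ``the area index over variations through conformal Gauss maps''; it is defined over those $Z\in T_Y\s^{3,1}$ satisfying the two PDE constraints $\scal{\nu}{\lap_{g_\Psi}Z-|\g Y|^2_\eta Z}_\eta=0$ and $\scal{\nu}{\g Z}_\eta=0$. Your sentence ``which is precisely the class of variations entering the definition \eqref{definition_indexY}'' is claiming that this PDE-defined class equals $\mathrm{im}(L|_{\text{normal}})$, and that equality is the actual content of the proposition --- it is what the paper establishes in \cref{proof_equality_index} via \cref{caracterization_CGM_variation}. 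One direction comes from linearizing the structural identity \eqref{mean_curvature_CGM} satisfied by every conformal Gauss map together with the orthogonality of \cref{orthogonality_lemma}; the converse requires \emph{reconstructing} a normal variation $(\Psi_t)$ from a given $Z$ satisfying the system, which the paper does by recovering $\Psi_t$ from the isotropic lines in the normal bundle of a perturbation $Y_t$ away from umbilic points, then showing via \cref{LCGM_invariants} that the tangential defect is forced to vanish, and finally extending across the umbilic set by density (this is where ``not totally umbilic'' enters). The issues you flag in your last paragraph --- conformality, the conformal factor, behaviour of $\Ar$ at branch points --- are real but are essentially already handled in \cref{section_CGM}; the substantive missing step is the characterization of $\mathrm{im}(L)$.
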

	
	The stability of Willmore immersions from the conformal Gauss map's point of view have been studied in \cite{palmer1991}. Bennett Palmer found conditions on the conformal Gauss map that leads to stability of the Willmore immersion, for instance if Bryant's quartic is large enough, the Willmore immersion must be unstable. Index of inverted minimal surfaces has been studied in \cite{michelat2021,hirsch2019,hirsch2021}.\\
	
	If we consider a sequence $(\Phi_k)_{k\in\N}$ with bounded energy and index, then we recover a setting similar as \cite{li2017}. But in our case, the limit geodesic is light-like so its length always vanishes. Therefore we need to understand the behaviour of the conformal Gauss maps before the limit. According to the \Cref{theoremI}, the image of the conformal Gauss maps is a space-like cylinder shrinking to a light-like geodesic. If some energy remains in this cylinder, we show that the number of Jacobi fields along $\Phi_k$ goes to infinity. Our main result, see \Cref{quantization_bounded_index}, is the following energy quantization result :
	
	\begin{theoremIntro}
		Let $(\Sigma,h_k)$ be a sequence of closed surfaces with fixed genus, constant curvature and normalized volume if needed. We assume that this sequence converges to a nodal surfaces $(\tilde{\Sigma},\tilde{h})$. Let $(\tilde{\Sigma}^l)_{l\in\inter{1}{q}}$ be the connected components of $\tilde{\Sigma}$. Let $(\Phi_k)_{k\in\N}$ a sequence of conformal Willmore immersion $(\Sigma,h_k) \to \R^3$ satisfying
		\begin{align*}
			\sup_k \Big( \Er(\Phi_k) + \Ind_\Er (\Phi_k)\Big) <\infty.
		\end{align*}
		Then, there exists $q$ branched immersions $\Phi^l_\infty : \tilde{\Sigma}^l \to \R^3$ and a finite number of possibly branched immersions $\omega_j :\s^2\to \R^3$, $j\in\inter{1}{p}$, which are Willmore away from possibly finitely many points, and such that, up to a subsequence,
		\begin{align*}
			\lim_{k\to \infty} \Er(\Phi_k) &= \sum_{l=1}^q \Er(\tilde{\Phi}^l_\infty) + \sum_{j=1}^p \Er(\omega_j).
		\end{align*}
	\end{theoremIntro}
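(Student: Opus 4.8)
The plan is to run a bubble-tree / concentration–compactness scheme, transport the whole analysis to the conformal Gauss maps, and reduce the statement to the vanishing of the energy lost in the neck and collar regions; this last point is exactly where the index bound enters.

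\emph{Step 1 (bubble-tree decomposition).} Combining the $\ve$-regularity for Willmore immersions with the Deligne–Mumford picture of the degeneration $(\Sigma,h_k)\to(\tilde\Sigma,\tilde h)$, I would first set up the weak bubble convergence as in \cite{laurain2018}. On the thick part of $(\Sigma,h_k)$, away from the finitely many points where $|\Arond_{\Phi_k}|^2\,d\vol_{g_{\Phi_k}}$ concentrates, the $\ve$-regularity gives uniform local $C^\ell$ bounds, so after extracting a subsequence $\Phi_k$ converges smoothly on each connected component $\tilde\Sigma^l$, $l\in\inter{1}{q}$; point-removability (as in \cite{kuwert2001}) shows the limit extends to a branched immersion $\tilde\Phi^l_\infty$ which is Willmore away from the nodes and the concentration points. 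Around each concentration point, after a conformal rescaling and possibly a Möbius transformation of $\s^3$, one extracts a bubble $\omega_j:\s^2\to\R^3$; since every bubble carries at least a fixed amount $\ve_0>0$ of energy, the iteration terminates after finitely many bubbles $\omega_1,\dots,\omega_p$, each Willmore away from finitely many points. The regions interpolating between bubbles and between bubbles and the macroscopic limits, together with the collars produced by the moduli degeneration, are degenerating cylinders $\Cr_k=[-T_k,T_k]\times\s^1$, and the usual no-concentration-away-from-points argument yields
\[
\lim_{k\to\infty}\Er(\Phi_k)=\sum_{l=1}^q\Er(\tilde\Phi^l_\infty)+\sum_{j=1}^p\Er(\omega_j)+\lim_{k\to\infty}\sum_{\text{necks and collars}}\Er\big(\Phi_k;\Cr_k\big).
\]
It therefore suffices to prove that the last sum vanishes.

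\emph{Step 2 (producing a long null tube from residual neck energy).} Suppose, for contradiction, that after exhausting the bubble tree some remaining cylinder $\Cr_k$ — a neck or a collar, with no interior energy concentration — satisfies $\liminf_k\Er(\Phi_k;\Cr_k)=\eta_0>0$. Since the conformal Gauss map $Y_k$ is conformal and space-like with area density a fixed multiple of $|\Arond_{\Phi_k}|^2\,d\vol_{g_{\Phi_k}}$, one has $\mathrm{Area}(Y_k;\Cr_k)\ge c\,\eta_0-o(1)$, while the area carried over any sub-cylinder of bounded conformal length tends to $0$ because the cylinder is degenerating. Using the $\ve$-regularity for the conformal Gauss map in the spirit of \cite{bernard2020} together with a stopping-time/covering argument, I would then extract a sub-cylinder $\tilde\Cr_k=[t_k-1,t_k+\kappa_k]\times\s^1$ with $\kappa_k\to\infty$ on which a suitable normalisation $M_{t_k,k}Y_k$ has oscillation exactly a small fixed $\delta>0$. \Cref{bubbling_CGM} then applies and produces a nonconstant light-like straight line $\sigma$ and a reparametrisation $\alpha_k:(0,s_k)\times\s^1\to\tilde\Cr_k$ with $M_{t_k,k}Y_k\circ\alpha_k\to\sigma$ in $C^2_{loc}$; moreover $s_k\to\infty$, since accumulating a fixed oscillation $\delta$ over a degenerating neck forces a diverging conformal length. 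In other words, residual neck energy forces the conformal Gauss maps to contain an arbitrarily long space-like minimal tube in $\s^{3,1}$ collapsing onto a null geodesic.

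\emph{Step 3 (index blow-up and conclusion).} On this long tube I would analyse the second variation of the area functional $\Ar$ along $Y_k$. For a space-like critical point of $\Ar$ in the de Sitter space the associated Jacobi operator on normal sections has, schematically, the form $-\lap^\perp-|A^{Y_k}|^2-\hat R$, where $\hat R$ collects the contribution of the ambient curvature of $\s^{3,1}$. Since $M_{t_k,k}Y_k\circ\alpha_k$ is $C^2$-close to the null line $\sigma$ on $(0,s_k)\times\s^1$, all geometric quantities are controlled, and one exploits the collapse onto the light-like geodesic to extract a definite sign from $\hat R$ in the collapsing direction. On each unit sub-cylinder $[i,i+1]\times\s^1$ one then builds a normal section with strictly negative second variation; these sections have pairwise disjoint supports, hence span a space of dimension $\gtrsim s_k$ on which $\Ar''<0$. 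Transporting these variations back by $\alpha_k$ and $M_{t_k,k}^{-1}$, and using that by \eqref{definition_indexY} and \cref{equality_index} variations of $\Phi_k$ through conformal Gauss maps compute exactly $\Ind_\Er(\Phi_k)$, we obtain $\Ind_\Er(\Phi_k)\gtrsim s_k\to\infty$, contradicting $\sup_k\Ind_\Er(\Phi_k)<\infty$. Hence no neck or collar loses energy, and the energy identity of Step 1 is exactly the assertion of the theorem.

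\emph{Main obstacle.} The delicate point is Step 3: because the normal bundle of the space-like surface $Y_k$ is Lorentzian, the ambient-curvature term $\hat R$ in the Jacobi operator is not a priori sign-definite, and one must genuinely use the collapse onto the \emph{light-like} geodesic — together with the explicit geometry of $\s^{3,1}$ — to produce the negative directions. Tied to this is the need to guarantee that the directions so produced are admissible for $\Ind_\Ar$, i.e. realised by variations of $\Phi_k$ through conformal Gauss maps, so that \cref{equality_index} can be invoked, and to make quantitative the implication "residual neck energy $\Rightarrow$ conformal length $s_k\to\infty$". By contrast, the bubble-tree bookkeeping of Step 1, including the branched point-removability at the nodes and at the bubble points, is essentially routine given \cite{kuwert2001,bernard2014,laurain2018,laurain2018a}.
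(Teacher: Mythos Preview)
Your overall architecture matches the paper's: reduce to vanishing of neck/collar energy, then show that residual energy forces $\Ind_\Er(\Phi_k)\to\infty$ via the conformal Gauss map and \cref{equality_index}. Step~1 is fine. The difficulty is that your Steps~2--3 misidentify both the relevant length parameter and the mechanism that produces negative directions.

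In Step~2 you fix a sub-cylinder of Euclidean oscillation $\delta$ and invoke \cref{bubbling_CGM}; but on such a region the reparametrised length $s_k$ stays comparable to $\delta$ and does \emph{not} diverge. What does diverge on a region carrying a fixed amount of area $\mu$ is the \emph{Lorentz} average length $\ell_k=\int(\int_{\s^1}|\g Y_k|^2_\eta)^{1/2}dt$ (equivalently, the Euclidean oscillation), since $\|\g Y_k\|_{L^\infty_\eta}\to 0$ while $\int|\g Y_k|^2_\eta\ge\mu$; this is the paper's Remark~7.2 and \eqref{infinite_length}.

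This feeds into the real gap in Step~3. On a unit sub-cylinder in the Euclidean-arclength coordinate $s$, the Lorentz area $\int|\g Y_k|^2_\eta$ tends to $0$. In the second-variation formula
\[
\delta^2\Dr_{Y_k}(V)=\int |\g V|^2_\eta-|V|^2_\eta|\g Y_k|^2_\eta+|\scal{V}{\g Y_k}_\eta|^2,
\]
the only source of negativity is the middle term, and for any $V\approx fE$ with $E$ constant unit space-like this term is $-\int f^2|\g Y_k|^2_\eta\to 0$ on your unit cell, while $\int|\g V|^2_\eta\approx\int|\g f|^2$ is bounded below. So a compactly supported section on $[i,i+1]\times\s^1$ does \emph{not} give a negative direction; the ambient curvature of $\s^{3,1}$ provides no extra help here because the limit geodesic is null. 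The paper instead subdivides the neck by \emph{area} into $J$ pieces, each with area $\sim\lambda/J$ and Lorentz length $\ell_k\to\infty$, and on each piece uses the slowly varying test function $f_k(s)=\sin(\pi s/\ell_k)$ in the Lorentz-arclength coordinate (see \eqref{change_variable}); then both $\int|\g f_k|^2$ and $\int f_k^2|\g Y_k|^2_\eta$ are of order $\mu/\ell_k^2$, and a direct computation (Lemma~\ref{bound_energy_fk}) shows the curvature term wins by a constant factor, yielding $\delta^2\Dr_{Y_k}(E_k)\le -c\mu/\ell_k^2<0$. A second point you flag but do not resolve is genuinely needed: the variation must satisfy the constraints \eqref{system_condition_LCGM} of \cref{equality_index}. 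The paper handles this by adding explicit correction terms $d_k\nu_k+e_k^i\dr_i\nu_k$ to $f_kE$ (see \eqref{definition_jacobi_field}--\eqref{component_nu}) and proving these corrections are lower order; this step uses the $C^2$-convergence to the null line, in particular $\nu_k\to\sigma'$ and $\dr_s\nu_k\to 0$.
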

	
	A well-known fact from specialists, but never written explicitly, is that this quantization result for the Lagrangian $\Er$ is equivalent to the quantization for the Lagrangian $W$, see \Cref{quantization_E_W}.\\
	
	One difficulty to study the space $\text{Imm}(\Sigma,\R^3)$ is to prove an energy quantization result on Palais-Smale sequences. An easier question is to know whether there is energy quantization for Willmore surfaces or not. When $\Sigma = \s^2$, any conformal class of metrics is bounded, so there is always quantization by \cite{bernard2014}. Then Rivière \cite{riviere2020} proved it also for Palais-Smale sequences coming from the viscosity method. Our main result can be seen as a step towards the study of Palais-Smale sequences when the underlying surface $\Sigma$ is not a sphere.\\
	
	\textbf{Organization of the paper :}\\
	In the \cref{section_preliminaries}, we introduce the main notations for the rest of the paper. We recall the correspondance between $\Conf(\s^3)$ and $SO(4,1)$. We sketch the construction of the conformal Gauss map and recall the relation with the Willmore energy. We also define degenerating surfaces. In the  \cref{section_bounded_sequence}, we define the bubble convergence, the neck and collar regions. We precise the setting and estimates we are working with. In the \cref{section_main_results}, we states the precise statements of our results and give a sketch of the proofs. In the \cref{proof_theoremA}, \cref{proof_equality_index} and \cref{proof_quantization} we prove the \cref{bubbling_CGM}, the \cref{equality_index} and the  \cref{quantization_bounded_index} respectively.\\

	\textbf{Acknowledgement :} \\
	I would like to thank Paul Laurain for his constant support and advices, and Nicolas Marque for an enlightening conversation and for pointing to me some references.
	
	\section{Preliminaries and notations}\label{section_preliminaries}
	
	We state the theorems for immersions in $\R^3$, but since formulas are easier to handle for immersions in $\s^3$, we will manly work with immersions in $\s^3$, except in \cref{gauge_neck_region}, where some formulas will be recalled. \\
	
	\subsection{Willmore energy}

	Consider $\Sigma$ a closed compact manifold of dimension 2 of genus greater or equal to 1, and $\Psi :\Sigma\to \s^3$ a smooth immersion. Let $\lambda$ be the conformal factor : $g := \Psi^*(\xi_{|\s^3}) = e^{2\lambda} h$, where $\xi$ is the flat metric on $\R^4$ and $h$ is a metric on $\Sigma$ conformal to $g$ with constant curvature. Let $N$ be the Gauss map of $\Psi$, $A$ its second fundamental form, $H$ its mean curvature and $\Arond$ the traceless part of the second fundamental form. They are given by the formulas :
	\begin{align*}
		A_{ij} &:= -\scal{\dr_i \Psi}{\dr_j N}_\xi, \\
		H &:= \frac{1}{2} \tr_g (A) = \frac{e^{-2\lambda} }{2} \scal{\lap_h\Psi}{N}_\xi, \\
		\Arond &= A - Hg,\\
		\g N &= -H\g \Psi - e^{-2\lambda} \Arond\g \Psi.
	\end{align*}
	We consider the energy
	\begin{align*}
		\Er(\Psi) &:= \int_\Sigma \left| \Arond\right|^2_g d\vol_g .
	\end{align*}
	By Gauss-Bonnet, we have
	\begin{align*}
		\Er(\Psi) &= \frac{1}{2}\int_\Sigma |dN|^2_g d\vol_g - \chi(\Sigma).
	\end{align*}
	Therefore, a bound on $\Er(\Psi)$ is equivalent to a bound on the $L^2$-norm of the whole second fundamental form.\\
	Willmore surfaces are smooth according to the $\ve$-regularity, see theorem 1.5 in \cite{riviere2008}. \\
	
	The index of an immersion is defined by
	\begin{align}\label{def_index_Psi}
		\Ind_\Er(\Psi) &:= \dim\Span\left\{ (\dr_t \Psi_t)_{|t=0} : \begin{array}{l}
			(\Psi_t)_{t\in(-1,1)}\ \text{is a smooth familly of immersions }\Sigma\to \s^3,\\
			\Psi_0=\Psi,\\
			(\dr_t \Psi_t)_{|t=0} \in \Span(N),\ \frac{d^2}{dt^2}\Er(\Psi_t)_{|t=0}<0
		\end{array}  \right\} .
	\end{align}
	
	\subsection{Correspondance $\Conf(\R^3) \simeq SO(4,1)$}
	
	The correspondance $\Conf(\s^3) \simeq SO(4,1)$ is known for a long time, see for instance \cite{akivis1996,hertrich-jeromin2003}. The explicit computations can be found in \cite{marque2021}. There is an isomorphism $\vp : \Conf(\s^3) \to SO(4,1)$ given by the following relations, where we identify $\s^3\simeq \R^3\cup\{\infty\}$ :
	\begin{itemize}
		\item If $t_a : \R^3 \to \R^3$ is the translation by a vector $a\in \R^3$, then
		\begin{align*}
			\vp(t_a) = \begin{pmatrix}
				I_3 & -a & a \\
				a^T & 1 - \frac{|a|^2_\xi}{2} & \frac{|a|^2_\xi}{2} \\
				a^T & - \frac{|a|^2_\xi}{2} & 1+ \frac{|a|^2_\xi}{2}
			\end{pmatrix},
		\end{align*}
		\item If $d_\lambda : \R^3 \to \R^3$ is the dialation by a factor $e^\lambda$, then
		\begin{align*}
			\vp(d_\lambda) &= \begin{pmatrix}
				I_3 & 0 & 0 \\
				0 & \cosh \lambda & \sinh \lambda \\
				0 & \sinh \lambda & \cosh \lambda
			\end{pmatrix},
		\end{align*}
		\item If $R_\Theta : \R^3 \to \R^3$ is the rotation by a matrix $\Theta \in SO(3)$ :
		\begin{align*}
			\vp(R_\Theta) &= \begin{pmatrix}
				\Theta & 0 & 0\\
				0 & 1 & 0\\
				0 & 0 & 1
			\end{pmatrix},
		\end{align*}
		\item If $\iota : \R^3 \to \R^3$ is the inversion with respect to $B_1(0)$ :
		\begin{align*}
			\vp(\iota) = \begin{pmatrix}
				-I_3 & 0 & 0\\
				0 & 1 & 0\\
				0 & 0 & -1
			\end{pmatrix}.
		\end{align*}
	\end{itemize}
	Given an arbitrary $\xi\in \Conf(\s^3)$, we will denote $M_\xi := \vp(\xi)$ in the rest of the paper.
	
	\subsection{Conformal Gauss map}\label{section_CGM}
	
	An introduction to conformal Gauss map can be found in \cite{eschenburg,hertrich-jeromin2003,marque2021}. Here we sketch the construction and recall the main properties.\\
	
	\textbf{Construction of the conformal Gauss map :}\\
	Consider an immersion $\Psi : \Sigma \to \s^3$ with mean curvature $H$, let $x\in\Sigma$ and consider the 2-sphere $S_x\subset \s^3$ which is tangent to $\Psi(\Sigma)$ at the point $\Psi(x)$ with mean curvature $H(x)$. Let $\hat{S}_x\subset \R^4$ be the 3-sphere that intersect $\s^3$ orthogonally along $S_x$. If $H(x)=0$, then $\hat{S}_x$ is a flat 3-plane. If $H(x)\neq 0$, the center of $\hat{S}_x$ is given by $c_x := \Psi(x) + \frac{N(x)}{H(x)}$, with radius $\frac{1}{|H(x)|}$. In order to have a description independant of the vanishing of the mean curvature, we can consider the homogeneous vector
	\begin{align}\label{homogeneous_Y}
		\left[ \begin{pmatrix}
			\Psi(x) + \frac{N(x)}{H(x)}\\ 1
		\end{pmatrix} \right] \in \RP^4 .
	\end{align}
	If $\eta = (dx^1)^2 + \cdots + (dx^4)^2 - (dx^5)^2$ is the Minkowski metric on $\R^5$ and $|\cdot|_\eta$ is the associated norm, we note that
	\begin{align*}
		\left| \begin{pmatrix}
			\Psi(x) - \frac{N(x)}{H(x)}\\ 1
		\end{pmatrix} \right|^2_\eta = \frac{1}{H(x)^2} >0.
	\end{align*}
	We can normalize the homogeneous vector \eqref{homogeneous_Y} in order to have a constant norm equal to $1$. Therefore, we obtain a representation of $S_x$ with the quantity 
	\begin{align}\label{definition_Y}
		Y(x) := \begin{pmatrix}
			H(x)\Psi(x) + N(x)\\ H(x)
		\end{pmatrix}.
	\end{align}
	This definition is independant of the vanishing of $H$. By construction, $Y(x)$ belongs to the De Sitter space
	\begin{align*}
		\s^{3,1} &:= \{ p\in \R^5 : |p|^2_\eta = 1 \}.
	\end{align*}
	We note $\R^{4,1}$ the space $(\R^5,\eta)$. \underline{We call $Y$ the conformal Gauss map} because it is a conformally invariant version of the Gauss map, in the sense that for any $\Xi \in \Conf(\s^3)$, the conformal Gauss map $Y_{\Xi\circ\Psi}$ is given by the following relation :
	\begin{align*}
		Y_{\Xi\circ \Psi} = M_{\Xi} Y.
	\end{align*}
	
	\textbf{Relation with the Willmore functional :}\\
	Let $g=\Psi^*\xi$. The derivatives of $Y$ are given by
	\begin{align*}
		\g Y &= \g H \begin{pmatrix}
			\Psi\\ 1
		\end{pmatrix} + \begin{pmatrix}
			H\g \Psi + \g N\\ 0
		\end{pmatrix}.
	\end{align*}
	So that
	\begin{align}\label{derivatives_Y}
		\g Y &= \g H \begin{pmatrix}
			\Psi \\ 1
		\end{pmatrix} - \begin{pmatrix}
			\Arond \g \Psi\\ 0
		\end{pmatrix} = (\g H)\nu - \Arond(\g \nu).
	\end{align}
	where $\nu = \begin{pmatrix}
		\Psi \\ 1
	\end{pmatrix}$ and $\Arond \g \Psi$ stand for $\Arond_i^{\ j} \dr_j \Psi$, where the index is raised by the metric $g$. The map $Y:(\Sigma,g) \to \s^{3,1}$ is conformal : $\scal{\dr_i Y_k}{\dr_j Y_k}_\eta = \frac{1}{2}\left|\Arond_\Psi\right|^2_g g_{ij}$, so that
	\begin{align*}
		\Ar(Y) := \int_\Sigma d\vol_{Y^*\eta} = \frac{1}{2}\Er(\Psi).
	\end{align*}
	Since $Y$ is conformal, the Hopf differential $\scal{\dr Y}{\dr Y}_\eta (dz)^2$ vanishes. In particular, if $Y$ is defined on an annuli $[0,T]\times \s^1$, then for any $t\in[0,T]$,
	\begin{align}\label{residue_hopf_diff}
		\int_{\{t\}\times\s^1} |\dr_t Y|^2_\eta &= \int_{\{t\}\times\s^1} |\dr_\theta Y|^2_\eta .
	\end{align}
	Another consequence is the relation
	\begin{align*}
		\Dr(Y) := \frac{1}{2} \int_\Sigma |\g Y|^2_\eta d\vol_g = \Ar(Y).
	\end{align*}
	Moreover, $\Psi$ is Willmore if and only if $Y$ is harmonic : $\lap_g Y = -|\g Y|_\eta^2 Y$. In this case, the following quartic is holomorphic :
	\begin{align*}
		\scal{\dr^2 Y}{\dr^2 Y}_\eta (dz)^4 = -\scal{\dr Y}{\dr^3 Y} (dz)^4.
	\end{align*}
	In particular, if $Y$ is not constant, the zeros of $\g Y$ are isolated. We will consider the euclidean and the Minkowski norms on $Y$. To distinguish them in $L^p$ norms, we will denote $L^p_\xi$ the norms
	\begin{align*}
		\| Y\|_{L^p_\xi} = \big\| |Y|_\xi \big\|_{L^p}, & & \|\g Y \|_{L^p_\xi} = \big\| |\g Y|_\xi \big\|_{L^p}.
	\end{align*}
	We denote by $L^p_\eta$ the norms
	\begin{align*}
		\| Y\|_{L^p_\eta} = \big\| |Y|_\eta \big\|_{L^p}, & & \|\g Y \|_{L^p_\eta} = \big\| |\g Y|_\eta \big\|_{L^p}.
	\end{align*}

	\textbf{Variations of a conformal Gauss map :} \\
	For any spacelike map $Z:\Sigma\to \s^{3,1}$, we define the area and the dirichlet energy in $\s^{3,1}$ by
	\begin{align*}
		\Ar(Z) := \int_\Sigma d\vol_{Z^*\eta} = \int_\Sigma \sqrt{\det\scal{\dr_i Z}{\dr_j Z}_\eta } dx, &  & \Dr(Z) := \frac{1}{2}\int_\Sigma |\g Z|^2_\eta.
	\end{align*}
	We will use restictions of these functionals. Given a measurable set $\Omega\subset \Sigma$, we let 
	\begin{align*}
		\Ar(Z;\Omega) := \int_\Omega d\vol_{Z^*\eta}, &  & \Dr(Z;\Omega) := \frac{1}{2}\int_\Omega |\g Z|^2_\eta.
	\end{align*}
	We denote $\delta^2 \Ar_Z$ and $\delta^2 \Dr_Z$ the second derivatives of the area and the Dirichlet functional : if $(Z_t)_{t\in(-1,1)}$ is a variation of $Z$, then
	\begin{align*}
		\delta^2 \Ar_Z \left[ (\dr_t Z_t)_{|t=0} \right] := \frac{d^2}{dt^2} \Ar(Z_t)_{|t=0}, &  & \delta^2 \Dr_Z \left[ (\dr_t Z_t)_{|t=0} \right] := \frac{d^2}{dt^2} \Dr(Z_t)_{|t=0}.
	\end{align*}
	
	Since the formulas are easier in $\s^3$, we will define the index for Willmore immersion in $\s^3$. To compute the index, we will need to understand the normal space of $Y(\Sigma)\subset \s^{3,1}$. According to \cite{schatzle2017}, the umbilic points of a Willmore immersion $\Psi : \Sigma \to \s^3$ which is not totally umbilic, is composed of isolated points and closed real analytic curves. We focus on points where $|\g Y|^2_\eta = \left| \Arond_\Psi \right|^2_{g_\Psi} \neq 0$. Since $Y$ is conformal, the family of orthogonal vectors $(Y,\dr_1 Y,\dr_2 Y)$ span a 3-dimensional space in $\R^{4,1}$. Since they are all spacelike, we can add to this family two vectors $\nu,\nu^*$ in the normal space of $Y(\Sigma)\subset \s^{3,1}$, satisfying $|\nu|^2_\eta =|\nu^*|^2_\eta = 0$, $\scal{\nu}{\nu^*}_\eta = -1$. Up to exchange $\nu$ and $\nu^*$, we obtain a direct basis $(Y,\dr_1 Y,\dr_2 Y,\nu,\nu^*)$ of $\R^{4,1}$. Up to the normalization $\nu_5 = 1$, the vector $\nu$ can be choosen of the form $\begin{pmatrix}
		\Psi\\ 1
	\end{pmatrix}$. Therefore, under a non degeneracy assumption, we can recover $\Psi$ from the knowledge of $Y$. This is a key point that we will develop to understand the index of a Willmore immersion. 
	We define the area index of $Y$ by
	\begin{align}\label{definition_indexY}
		\Ind_\Ar(Y) &:= \dim  \Span\left\{ Z : \Sigma\to T_Y \s^{3,1} \Big| \begin{array}{l}
			\scal{\nu}{\lap_{g_\Psi} Z - |\g Y|^2_\eta Z }_\eta = 0\\
			\scal{\nu }{\g Z}_\eta = 0\\
			\delta^2 \Ar_Y(Z) <0
		\end{array} \right\} .
	\end{align}
	
	\subsection{Degenerating surfaces}
	
	We recall some aspects of the Deligne-Mumford's description of the loss of compactness of the conformal class for a sequence of Riemann surfaces with fixed topology, see for instance \cite{hummel1997}. This presentation is taken from the section 1.4 of \cite{laurain2018a}.\\
	
	Let $(\Sigma,c_k)$ be a sequence of closed Riemann surfaces of fixed genus $g$ and varying conformal class $c_k$. If $g=0$, there is only one conformal class on the sphere. If $g=1$, then $(\Sigma,c_k)$ is conformally equivalent to $\R^2 / \left( \frac{1}{\sqrt{\Im(v_k)}} \Z \times \frac{v_k}{\sqrt{\Im(v_k)}} \Z \right)$ where $v_k$ lies in the fundamental domain $\{ z\in \C : |\Re (z)|\leq \frac{1}{2}, |z|\geq 1 \}$ of $\Hd^2/PSL_2(\Z)$. We say that the conformal classes $c_k$ degenerate if $|v_k|\xrightarrow[k\to \infty]{}{+\infty}$. \\
	If $g\geq 2$,  let $h_k$ be the hyperbolic metric associated with $c_k$. We say that $(\Sigma,c_k)$ degenerates if there exists closed geodesics whose length goes to zero. In that case, up to a subsequence, there exists
	\begin{itemize}
		\item an integer $N\in \inter{1}{3g-3}$,
		\item a sequence $\Lr_k = (\Gamma^i_k)_{1\leq i\leq N}$ of finitely many pairwise disjoint closed geodesics of $(\Sigma,h_k)$ with length converging to zero,
		\item a closed Riemann surfaces $(\bar{\Sigma},\bar{c})$,
		\item a complete hyperbolic surfaces $(\tilde{\Sigma},\tilde{h})$ with $2N$ cusps $\{(q^i_1,q^i_2) \}_{1\leq i\leq N}$ such that $\tilde{\Sigma}$ is obtained topologically by removing the geodesics of $\Lr_k$ to $\Sigma$ and after closing each component of $\dr(\Sigma\setminus \Lr_k)$ by adding a puncture $q^i_l$ at each of these component. Moreover, $\bar{\Sigma}$ is topologically equal to $\tilde{\Sigma}$ and the complex structure defined by $\tilde{h}$ on $\tilde{\Sigma}\setminus \{ q^i_l \}$ extends uniquely to $\bar{c}$. We can also equipped $\bar{\Sigma}$ with a metric $\bar{h}$ with constant curvature, but not necessarily hyperbolic since the genus of $\bar{\Sigma}$ can be lower than to one of $\Sigma$.
	\end{itemize}
	$(\tilde{\Sigma},\tilde{h})$ is called the nodal surface of the converging sequence and $(\bar{\Sigma},\bar{c})$ is its renormalization. These objects are related, in the sense that, there exists a diffeomorphism $\psi_k : \tilde{\Sigma}\setminus \{q^i_l\} \to \Sigma \setminus \Lr_k$ such that $\tilde{h}_k := \psi_k ^* h_k$ converges in $C^\infty_{loc}$ to $\tilde{h}$.

	\section{Sequence of Willmore immersions with bounded energy}\label{section_bounded_sequence}
	
	\subsection{Bubble compactness}\label{bubble_compactness}
	
	We give a precise definition of bubble convergence.
	\begin{defi}\label{def_bubble_conv}
		Let $(h_k)_{k\in\N}$ be a sequence of metrics on a given closed surface $\Sigma$, with constant curvature and fixed volume if needed. We assume that $(\Sigma,h_k)$ converges to a nodal surface $(\tilde{\Sigma},\tilde{h})$. Let $(\tilde{\Sigma}^l)_{l\in\inter{1}{q}}$ be the connected components of $\tilde{\Sigma}$. We say that a sequence $(\Psi_k)_{k\in\N}$ of conformal Willmore immersions $(\Sigma,h_k) \to\s^3$ satisfy a weak bubble convergence if up to a subsequence, $(\Psi_k)_{k\in\N}$ satisfy the following : there exists
		\begin{itemize}
			\item a number $I\in\N$ and converging sequences $(x^i_k)_{k\in\N,i\in\inter{1}{I}}\subset\Sigma$, let $S = \left\{ \displaystyle{\lim_{k\to\infty}} x^i_k : i\in\inter{1}{I} \right\}$;
			\item for each $i\in\inter{1}{I}$, there exists a sequence $(r^i_k)_{k\in\N}\subset \R^*_+$ converging to $0$;
			\item conformal maps $\Xi^l_k,\xi^i_k\in\Conf(\s^3)$, for any $l\in\inter{1}{q}$, $i\in\inter{1}{I}$, $k\in\N$;
			\item diffeomorphisms $f_k^l:\Sigma \to \Sigma$, for any $l\in\inter{1}{q}$, $k\in\N$;
			\item a finite number of bubbles : smooth Willmore immersions $\omega^i : \R^2 \to \s^3$, for any $i\in\inter{1}{I}$, with possible ends and branch points;
		\end{itemize}
		such that
		\begin{itemize}
			\item for each $l\in\inter{1}{q}$, $(\Xi^l_k\circ\Psi_k\circ f_k^l)_{k\in\N}$ converges in the $C^m_{loc}(\tilde{\Sigma}^l \setminus S)$ topology, for any $m \in \N$, to a smooth possibly branched immersion $\Psi_\infty^l :\tilde{\Sigma}^l\to \s^3$, which is Willmore away from $S$;
			\item For each $i\in\inter{1}{I}$, in some charts around $\displaystyle{\lim_{k\to\infty}} x^i_k$, the sequence $\Big(\xi^i_k\circ \Psi_k ( x^i_k + r^i_k \cdot) \Big)_{k\in\N}$ converges to $\omega^i$ in the $C^m_{loc}$-topology, for any $m\in\N$, on $\R^2$ minus a finite set.
		\end{itemize}
		We say that the convergence is strong if there is no loss of energy :
		\begin{align*}
			\lim_{k\to \infty} \Er(\Psi_k) &= \sum_{l=1}^q \Er(\Psi_\infty^l) + \sum_{i=1}^{I} \Er(\omega^i).
		\end{align*}
	\end{defi}
	
	Consider $(\Psi_k)_{k\in\N}$ a sequence of Willmore conformal immersion $\Sigma\to \s^3$ such that
	\begin{align}\label{hypothesis_bound_E}
		\sup_{k\in\N} \Er(\Psi_k) <\infty.
	\end{align}
	Let $g_k := \Psi_k^*(\xi_{|\s^3})$. \\
	
	If $\Sigma = \s^2$, there is only one conformal class. Thanks to \cite{bernard2014}, there is a strong bubble convergence.\\
	If the genus of $\Sigma$ is greater or equal to $1$, then we can decompose $(\Sigma,g_k)$ into thin and thick parts. On the thick parts, the metric converges smoothly and we can again apply \cite{bernard2014} to obtain a bubble tree. The thin parts are conformally equivalent to long cylinders and will be constructed in the next section. \\
	
	With the notations of the definition \ref{def_bubble_conv}, consider some bubbles $(x^1_k)_{k\in\N},...,(x^I_k)_{k\in\N}$ forming at the same point in $\Sigma$. At the scale of each $x^i_k$, a blow up of $(\Psi_k)_{k\in\N}$, with speed $r^i_k$, locally converges. We want to understand what happen far from $x^i_k$. For $i\neq j$, the speeds $r^i_k$ and $r^j_k$ satisfy either $r^i_k \ust{k\to\infty}{=} o (r^j_k)$ or $r^j_k \ust{k\to\infty}{=} o (r^i_k)$. We can assume that $r^1_k< \cdots < r^I_k$. We are interested in the behaviour of $(\Psi_k)_{k\in\N}$ in the regions $B(x^{i+1}_k,r^{i+1}_k) \setminus B(x^i_k,r^i_k)$ and in the region $B(0,1)\setminus B(x^I_k,r^I_k)$. These domains are \textbf{neck regions}. They are all described as a degenerating annuli $B(0,R_k)\setminus B(0,r_k)$ with $r_k \ust{k\to \infty}{=} o(R_k)$, where no bubble is forming inside and $(\Psi_k)_{k\in\N}$ converges, in the sense of the definition \ref{def_bubble_conv}, on the boundary. The hypothesis that there is no bubble can be translated in the estimate :
	\begin{align*}
		\lim_{k\to \infty} \sup_{\rho\in[r_k,R_k/2]} \int_{B(0,2\rho)\setminus B(0,\rho)} |\g N_k|^2 dx = 0.
	\end{align*}
	
	\subsection{Choice of the thin parts}

	\textbf{In the torus case :}\\
	
	A torus is isometric to a cylinder $C_\ell := \frac{1}{\sqrt{2\pi \ell}} \left( \s^1 \times [0,\ell] \right)$, for some $\ell>0$, with the identification $(\theta,0) \sim (\theta,\ell)$. \\
	Then, $C_\ell$ admits the following chart :
	\begin{align*}
		\psi_\ell : \left| \begin{array}{c c c}
			B(0,1) \setminus B(0,e^{-\ell}) & \to & C_\ell \\
			(\theta,r) & \mapsto & \left( \frac{\cos\theta}{\sqrt{2\pi \ell}} , \frac{\sin\theta}{\sqrt{2\pi \ell}} , \frac{-\log r}{\sqrt{2\pi \ell}} \right)
		\end{array} \right. .
	\end{align*}
	By the theorem 0.2 of \cite{laurain2018} assures that the conformal factor $\lambda_\Psi$ of an immersion $\Psi : C_\ell \to \s^3$ satisfies :
	\begin{align*}
		\| \g \lambda_\Psi \|_{L^{2,\infty}} &\leq C\| \g N_\Psi \|_{L^2}^2.
	\end{align*}
	If $(\Psi_k)_k$ is a sequence of immersions from $C_{\ell_k} \to \R^3$ satisfying (\ref{hypothesis_bound_E}), there is a finite number $t^1,...,t^{I_k} \in [-\frac{\ell_k}{2} , \frac{\ell_k}{2} ]$ such that
	\begin{align*}
		\liminf_{k\to \infty} \int_{\s^1 \times [t^i,t^i+1]} |\g N_k|^2 dt d\theta \geq \frac{\ve_0}{2},
	\end{align*}
	where $\ve_0$ is the threshold of energy needed to have $\ve$-regularity of \cite{riviere2008}, theorem 1.5. If $(g_k)_k$ degenerates in the moduli space, then $\ell_k \to +\infty$, and we can find $t_k$ such that for any $i\in\inter{1}{I_k}$, $|t_k - t^i|\xrightarrow[k\to \infty]{}{+\infty}$. Thanks to the $\ve$-regularity, the sequence $\Psi_k(\cdot + t_k)$ will converge to a possibly trivial bubble. Hence, if we cut the torus at $t_k$, we obtain a cylinder with no concentration point near the boundary.\\
	
	\textbf{In the hyperbolic case :} \\
	
	Thanks to the collar lemma, we know that choosing $\delta<\sinh(1)$, the thin part $\{ x\in (\Sigma,h_k) : \text{inj}(x)<\delta \}$ consists in a finite number of collars. Up to extraction, this number is fixed for $k$ large enough. Here we restrict ourselves to one collar. This collar contains a smallest closed geodesic and is conformal to an hyperbolic cylinder of the form
	\begin{align*}
		A_\ell := \left\{ z= re^{i\theta} \in \Hd : r\in [1,e^\ell], \arctan\left[ \sinh\frac{\ell}{2} \right] < \theta < \pi - \arctan\left[ \sinh \frac{\ell}{2} \right] \right\},
	\end{align*}
	with the identification $e^{i\theta} \sim e^{\ell + i\theta}$. The geodesic correspond the to line $\{\theta = \frac{\pi}{2} \}$. \textbf{This is a collar region}. We will rather use the parametrization :
	\begin{align*}
		P_\ell := \left\{ (t,\theta) : \theta \in [0,2\pi], \frac{2\pi}{\ell} \arctan\left[ \sinh \frac{\ell}{2} \right] < t < \frac{2\pi}{\ell} \left( \pi - \arctan\left[ \sinh\frac{\ell}{2} \right] \right) \right\},
	\end{align*}
	with the identification $(t,0)\sim (t,2\pi)$. In this parametrization, the constant curvature metric is
	\begin{align*}
		ds^2 = \left( \frac{\ell}{2\pi \sinh\left(\frac{\ell t}{2\pi} \right)} \right)^2 (dt^2 + d\theta^2 ).
	\end{align*}
	The geodesic corresponds to the line $\{t=\frac{\pi^2}{\ell} \}$. As the length of the geodesic goes to $0$, the cylinder becomes $[0,T_k]\times \s^1$ with $T_k \xrightarrow[k\to \infty]{}{+\infty}$.

	\subsection{A priori estimates on necks and collars}
	
	In both cases, necks and collars are conformally equivalent to long cylinders or degenerating annuli. Since we work with conformally invariant problems, we can work on both representation, depending on the context.\\
	
	In the section \ref{gauge_neck_region}, we will work with the description by degenerating annuli $\Cr_k = B_{R_k}(0) \setminus B_{r_k}(0)$ with $R_k =o(1)$ and $r_k = o(R_k)$, necks and collars enjoy the following two estimates :
	\begin{align}\label{apriori_necks}
		\sup_{k\in\N} \int_{\Cr_k} |\g N_k|^2 dx \leq \ve_0, &  & \limsup_{k\to \infty} \sup_{\rho \in [r_k, \frac{R_k}{2}]} \int_{B_{2\rho} \setminus B_\rho} |\g N_k|^2 dx = 0,
	\end{align}
	where $\ve_0$ is the amount of energy needed to obtain the $\ve$-regularity of Rivière \cite{riviere2008}. In the sections \ref{section_convergence_geodesic} and \ref{proof_quantization}, we will work with the description by long cylinder $\Cr_k = [-T_k,T_k]\times \s^1$ with $T_k \xrightarrow[k\to\infty]{}{+\infty}$. The estimate \eqref{apriori_necks} translate into
	\begin{align}\label{apriori_necks_cylinder}
		\sup_{k\in\N} \int_{\Cr_k} |\g N_k|^2 dx \leq \ve_0, &  & \limsup_{k\to \infty} \sup_{t\in[-T_k+1,T_k-1]} \int_{[t-1,t+1]\times \s^1} |\g N_k|^2 dx = 0.
	\end{align}

	\section{Main results}\label{section_main_results}
	
	\subsection{Statements}

	We prove the following bubble convergence result for the conformal Gauss map :
	
	\begin{theorem}\label{bubbling_CGM}
		Let $\Sigma$ be a closed Riemann surface and consider $\Phi_k :\Sigma\to \R^3$ a sequence of smooth immersions. Assume that on the whole surface $\Sigma$, it holds
		\begin{align*}
			\sup_{k\in \N} \int_\Sigma |\g \vec{n}_k|^2_{\Phi_k^*\xi} d\vol_{\Phi_k^*\xi} <\infty.
		\end{align*}
		There exists $\ve_0>0$ satisfying the following. Consider a cylinder $\Cr_k := [-T_k,T_k]\times \s^1$ with $T_k \xrightarrow[k\to \infty]{}{+\infty}$, associated to the flat metric. Assume that there exists an embedding $\Cr_k \hookrightarrow \Sigma$ and that each $\Phi_k : \Cr_k \to \R^3$ is a smooth conformal Willmore immersion. Let $\vec{n}_k$ be the Gauss map of $\Phi_k$. Assume that in $\Cr_k$ :
		\begin{align*}
			\sup_{k\in\N} \int_{\Cr_k} |\g \vec{n}_k|^2 <\ve_0, &  & \lim_{k\to +\infty} \sup_{t\in[-T_k+1,T_k-1]} \int_{[t-1,t+1]\times \s^1} |\g \vec{n}_k|^2 dx = 0.
		\end{align*}
		Let $t_k \in [-T_k,T_k]$ with $|t_k-T_k| \xrightarrow[k\to \infty]{}{+\infty}$ and $|t_k+T_k| \xrightarrow[k\to \infty]{}{+\infty}$. Let $Y_k$ be the conformal Gauss map of $\Phi_k$. There exists $\xi_k\in \Conf(\R^3)$ such that $\xi_k\circ \Phi_k$ has the same topology as $\Phi_k$ and $M_k:= M_{\xi_k} \in SO(4,1)$ satisfies
		\begin{align*}
			\sup_{k\in\N,\theta\in\s^1} |M_k Y_k(t_k,\theta)|_\xi <\infty.
		\end{align*}
		Furthermore for any $\delta>0$, if there exists $\kappa_k>0$ satsfying
		\begin{align*}
			\forall k\in\N,\ \ \ \ \sup_{x,y \in [t_k-1,t_k+\kappa_k]\times\s^1} |M_k Y_k(x) - M_k Y_k(y)|_\xi = \delta,
		\end{align*}
		then there exists a reparametrization $\alpha_k :(-s_k,s_k)\times \s^1 \to (t_k-1,t_k+\kappa_k)\times \s^1$ such that $(M_k Y_k\circ \alpha_k)_{k\in\N}$ converges in the $C^2_{loc}$-topology to a time-like straight line of euclidean oscillation $\delta$.
	\end{theorem}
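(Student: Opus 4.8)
\textbf{Proof strategy for \Cref{bubbling_CGM}.}

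The plan is to combine an $\ve$-regularity statement for the conformal Gauss map with a rescaling argument à la Zhu \cite{zhu2008} applied on the sub-cylinder where the oscillation is controlled. I proceed in four steps.

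\emph{Step 1: $\ve$-regularity and boundedness of $M_k Y_k(t_k,\cdot)$.} First I would establish that the hypothesis $\sup_k\int_{\Cr_k}|\g\vec n_k|^2<\ve_0$ together with the no-bubbling assumption $\lim_k\sup_t\int_{[t-1,t+1]\times\s^1}|\g\vec n_k|^2=0$ forces, via the relation \eqref{derivatives_Y} and the conformal identity $\scal{\dr_iY_k}{\dr_jY_k}_\eta=\tfrac12|\Arond_{\Phi_k}|^2_{g_{\Phi_k}}g_{ij}$, a pointwise $\ve$-regularity estimate for $Y_k$ on interior cylinders: $\|\g Y_k\|_{L^\infty([t-1/2,t+1/2]\times\s^1)}$ and higher derivatives are small and controlled by the local Dirichlet energy. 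This is the analogue of the $\ve$-regularity of \cite{bernard2020} for harmonic maps into Lorentz targets; the point is that $Y_k$ is harmonic ($\lap_{g_k}Y_k=-|\g Y_k|^2_\eta Y_k$) and the equation is critical once the local energy is below threshold. Because $\eta$ is indefinite this must be done carefully — there is no Cauchy–Schwarz for $|\cdot|_\eta$ — but the components $Y_k^1,\dots,Y_k^5$ are genuine functions and the euclidean norm $|\g Y_k|_\xi$ is comparable to $|\Arond|_{g_k}+|\g H|$-type quantities after applying a suitable element of $SO(4,1)$. Next, to bound $|M_k Y_k(t_k,\theta)|_\xi$ uniformly, I would observe that $Y_k(t_k,\cdot)$ is a small loop in $\s^{3,1}$ (small oscillation by the energy estimate on $[t_k-1,t_k+1]\times\s^1$) and choose $\xi_k\in\Conf(\s^3)$, equivalently $M_k\in SO(4,1)$ preserving orientation/topology, so that $M_k Y_k(t_k,\theta_0)$ equals a fixed base point of $\s^{3,1}\subset\R^5$ for some $\theta_0$; the small oscillation then keeps the whole circle in a fixed compact set of $\R^5$. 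The existence of such $\xi_k$ with $\xi_k\circ\Phi_k$ topologically equivalent to $\Phi_k$ follows because $SO(4,1)$ acts transitively on $\s^{3,1}$ and we may compose translations, dilations and rotations only (no inversion), which preserve the topology of the immersion.

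\emph{Step 2: Setting up the rescaling on $[t_k-1,t_k+\kappa_k]\times\s^1$.} Write $\tilde Y_k:=M_kY_k$. On $\tilde\Cr_k:=[t_k-1,t_k+\kappa_k]\times\s^1$ we are given $\osc_{\xi}\tilde Y_k=\delta$. Using the conformal identity, $\tilde Y_k$ is a conformal (hence weakly harmonic, space-like) map, and the Hopf-type balancing \eqref{residue_hopf_diff} gives $\int_{\{t\}\times\s^1}|\dr_t\tilde Y_k|^2_\eta=\int_{\{t\}\times\s^1}|\dr_\theta\tilde Y_k|^2_\eta$ for every $t$. Following Zhu, I would introduce the energy profile $E_k(t):=\int_{\{t\}\times\s^1}|\dr_\theta\tilde Y_k|^2_\eta\,d\theta$ and the total Dirichlet energy $\mu_k:=\Dr(\tilde Y_k;\tilde\Cr_k)$. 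Two regimes appear. If $\mu_k\not\to0$, one rescales in the $t$-direction by the appropriate factor $s_k$ so that the rescaled cylinder $(-s_k,s_k)\times\s^1$ carries $O(1)$ energy per unit length; define $\alpha_k(s,\theta):=(t_k-1+\phi_k(s),\theta)$ for a suitable increasing $\phi_k$, the "clock change" of \cite{chen2012,zhu2008} chosen so that $\int_{\{s\}\times\s^1}|\dr_s(\tilde Y_k\circ\alpha_k)|^2\,d\theta$ is normalized. If $\mu_k\to 0$ the oscillation $\delta$ is still achieved, which forces the cylinder $\tilde\Cr_k$ to be long and the map to behave like a very slow geodesic; one rescales so that the oscillation is spread over an $O(1)$ interval.

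\emph{Step 3: Passing to the limit — a geodesic in $\s^{3,1}$.} On the rescaled cylinders, the $\ve$-regularity of Step 1 (which is scale-invariant for the $L^\infty$ gradient bound relative to local energy) gives $C^{2,\alpha}_{loc}$ bounds for $\tilde Y_k\circ\alpha_k$, so up to a subsequence it converges in $C^2_{loc}((-s_k,s_k)\times\s^1)$ — with $s_k\to s_\infty\in(0,\infty]$ — to some map $\sigma:\,I\times\s^1\to\s^{3,1}$. The harmonic map equation passes to the limit; the normalization of the clock change forces the $\theta$-energy of $\sigma$ to vanish on every slice (because the rescaling in the $t$-direction kills the $\theta$-derivatives — this is exactly where the balancing \eqref{residue_hopf_diff} is used, to transfer control from $\dr_\theta$ to $\dr_t$), so $\sigma$ is $\theta$-independent and is therefore a harmonic curve $\sigma(s)$ in $\s^{3,1}$, i.e. $\sigma''=\scal{\sigma'}{\sigma'}_\eta\,\sigma$ (sign as dictated by $|\sigma|^2_\eta=1$). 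Along a harmonic curve $\scal{\sigma'}{\sigma'}_\eta$ is constant; the space-like nature of $\tilde Y_k$ (it is a conformal Gauss map, so $\scal{\dr_iY_k}{\dr_jY_k}_\eta$ is a positive multiple of $g_{ij}$) passes to the limit as $\scal{\sigma'}{\sigma'}_\eta\geq 0$, and it is $>0$ since the oscillation is exactly $\delta>0$, hence $\sigma$ is parametrized proportionally to arclength and solves $\sigma''=c^2\sigma$ with $c^2:=\scal{\sigma'}{\sigma'}_\eta>0$. A curve in $\s^{3,1}$ with $\sigma''=c^2\sigma$, $|\sigma|^2_\eta=1$, $\scal{\sigma'}{\sigma'}_\eta=c^2>0$ is precisely a time-like straight line in $\R^{4,1}$ (its affine span is a $2$-plane of signature $(1,1)$ meeting $\s^{3,1}$ in a hyperbola, whose intrinsic geodesics are the restrictions of time-like affine lines); I would compute this explicitly by noting $\sigma(s)=\sigma(0)\cosh(cs)+\tfrac1c\sigma'(0)\sinh(cs)$ lies on such a line in the projective/affine sense. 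Finally the euclidean oscillation of $\sigma$ on its domain is $\delta$, because $C^2$ convergence preserves $\osc_\xi$ and $\osc_\xi\tilde Y_k=\delta$ on $\tilde\Cr_k$ for all $k$. This yields the reparametrization $\alpha_k$ and the conclusion.

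\emph{Main obstacle.} The delicate point is Step 1 together with the compactness in Step 3 in the Lorentzian setting: one cannot use Cauchy–Schwarz, Poincaré, or the maximum principle for $|\tilde Y_k|_\eta$ directly, so the $\ve$-regularity and the $C^2_{loc}$ bounds must be extracted component-wise in $\R^5$ after the gauge choice $M_k$ of Step 1, using that the euclidean norm $|\g\tilde Y_k|_\xi$ controls the full gradient and that, after the gauge, $\tilde Y_k$ stays in a fixed compact region of $\R^5$ — so that the euclidean and the relevant Lorentzian quantities remain comparable. Equally subtle is verifying that the clock change $\phi_k$ can be chosen so that the limit $\sigma$ is genuinely non-constant and $\theta$-independent; this is where the equality \eqref{residue_hopf_diff} of the $t$- and $\theta$-energies on each slice is essential, and it is the reason the argument of \cite{zhu2008} adapts while the one of \cite{chen2012} does not.
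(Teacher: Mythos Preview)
Your overall architecture (gauge + $\ve$-regularity, then Zhu-type analysis on the sub-cylinder) matches the paper, but Step~3 contains a genuine conceptual error, and Step~1 is too naive to deliver the estimates you need.

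\textbf{The limit geodesic is light-like, not space-like.} You argue that since each $\tilde Y_k$ is space-like, the limit satisfies $\scal{\sigma'}{\sigma'}_\eta\geq 0$, and then that it is $>0$ ``since the oscillation is exactly $\delta>0$''. This is wrong: the oscillation hypothesis is in the \emph{euclidean} norm $|\cdot|_\xi$, and a light-like line $\sigma(s)=as+b$ with $|a|^2_\eta=0$ can have arbitrary euclidean oscillation. In fact the Hopf balancing $\int_{\s^1}|\dr_t Y_k|^2_\eta=\int_{\s^1}|\dr_\theta Y_k|^2_\eta$ that you invoke, combined with the decay of the $\theta$-derivative, forces $|\g Y_k|_\eta=o(|\g Y_k|_\xi)$ in the good gauge, so the limit has $\scal{\sigma'}{\sigma'}_\eta=0$; hence $\sigma''=0$ and $\sigma$ is a straight line in $\R^{4,1}$ (this is how the paper concludes; the word ``time-like'' in the theorem statement is a slip for ``light-like''). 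Your formula $\sigma(s)=\sigma(0)\cosh(cs)+c^{-1}\sigma'(0)\sinh(cs)$ is therefore irrelevant.

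\textbf{The gauge is more than fixing a point.} Moving $Y_k(t_k,\theta_0)$ to a base point via transitivity does not give an $\ve$-regularity estimate $\rho\|\g(M_kY_k)\|_{L^\infty_\xi}\leq C\|\g Y_k\|_{L^2_\eta}$, because $|\g Y_k|_\xi$ contains a term $|\g H_k|$ that is not controlled by the conformally invariant $|\Arond_k|$. The paper's gauge is an inversion chosen so that the mean curvature of the transformed immersion has zero average on the dyadic annulus; a Poincar\'e--Codazzi argument then bounds $(H-\bar H)e^\lambda$ by $\Arond e^{-\lambda}$, which is what makes $|\g(M_kY_k)|_\xi$ controllable. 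Without this step you cannot compare euclidean and Lorentzian quantities, so your compactness argument in Step~3 (which relies on $\tilde Y_k$ staying in a compact set of $\R^5$) has no input. Relatedly, your rescaling in Step~2 should be by the \emph{euclidean} quantity $\big(\int_{\s^1}|\dr_t\tilde Y_k|^2_\xi\big)^{1/2}$, not by the Lorentzian $E_k(t)$, for the same reason.
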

	
	The following shows that when computing the index of $\Phi$, we can compute only the second variations of $Y$, with variations being conformal Gauss maps.
	
	\begin{proposition}\label{equality_index}
		Let $\Phi : \Sigma \to \R^3$ be a smooth immersion and $Y$ be its conformal Gauss map. Assume that the umbilic points of $\Phi$ are nowhere dense. Consider a vector field $Z : \Sigma \to T_Y \s^{3,1}$. There exists a smooth variation $(\Phi_t)_{t\in(-1,1)}$ of $\Phi$ such that $(\dr_t \Phi_t)_{|t=0} \perp \g \Phi$ and its conformal Gauss map $(Y_t)_{t\in(-1,1)}$ satisfies $(\dr_t Y_t)_{|t=0} = Z$ if and only if $Z$ satisfies 
		\begin{align}\label{system_condition_LCGM}
			\left\{ \begin{array}{l}
				\scal{\nu}{\lap_{g_\Phi}Z - |\g Y|^2_\eta Z }_\eta = 0, \\
				\scal{\nu}{\g Z}_\eta = 0,
			\end{array}
			\right.
		\end{align}
		where $\nu = \begin{pmatrix}
			\Phi\\ \frac{|\Phi|^2-1}{2} \\ \frac{|\Phi|^2+1}{2}
		\end{pmatrix}$. In particular, if $\Phi$ is a smooth Willmore surfaces not totally umbilic, then $\Ind_\Er(\Phi) = \Ind_\Ar(Y)$.
	\end{proposition}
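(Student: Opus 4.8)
The plan is to establish the equivalence as a statement about linearizing the map $\Phi \mapsto Y_\Phi$, then deduce the index equality by restricting the second variation. First I would recall from \eqref{definition_Y} and \eqref{derivatives_Y} that $Y = \begin{pmatrix} H\Psi + N \\ H \end{pmatrix}$ (working in $\s^3$, as the excerpt does), and that the vector $\nu = \begin{pmatrix} \Psi \\ 1 \end{pmatrix}$ satisfies $\scal{\nu}{Y}_\eta = 0$, $\scal{\nu}{\g Y}_\eta = 0$, and $|\nu|^2_\eta = 0$; moreover $\nu$ is precisely the null normal direction to $Y(\Sigma)$ that encodes $\Psi$ itself, as explained in \Cref{section_CGM}. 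The forward direction is a direct computation: given a normal variation $\dr_t\Psi_t|_{t=0} = f N$, differentiate the formula for $Y$ in $t$. The resulting field $Z := \dr_t Y_t|_{t=0}$ lies in $T_Y\s^{3,1}$ automatically (since $|Y_t|^2_\eta \equiv 1$). The two constraints in \eqref{system_condition_LCGM} should then fall out of the harmonic-map-type structure: pairing $Z$ against $\nu$ kills the terms involving the variation of $\Psi$ in the $\nu$-direction, and $\scal{\nu}{\g Z}_\eta = 0$ follows by differentiating $\scal{\nu}{\g Y}_\eta = 0$ along the variation and using that the variation of $\nu$ is itself proportional to $\nu$ modulo tangential terms. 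The equation $\scal{\nu}{\lap_g Z - |\g Y|^2_\eta Z}_\eta = 0$ encodes the fact that a genuine variation of $Y$ through conformal Gauss maps must respect the constraint that $\nu$ remains null and normal.

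For the converse — the substantive direction — the strategy is a reconstruction argument: given $Z$ satisfying \eqref{system_condition_LCGM}, I would produce the variation $\Phi_t$ by solving for $\Psi_t$ from $Y_t$. The point made in \Cref{section_CGM} is that, away from umbilic points (which by hypothesis are nowhere dense, and by \cite{schatzle2017} form a set of isolated points and analytic curves when $\Phi$ is Willmore), one recovers $\Psi$ as the null normal $\nu$ to $Y(\Sigma)$ normalized by $\nu_5 = 1$ (in the $\s^3$ model) or $\nu_4 = \tfrac{|\Phi|^2-1}{2}$, $\nu_5 = \tfrac{|\Phi|^2+1}{2}$ in the $\R^3$ model. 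So I would first integrate $Z$ to a path $Y_t$ in $\s^{3,1}$ with $\dr_t Y_t|_{t=0} = Z$ and $Y_0 = Y$ — for instance $Y_t = \cos(t|Z|_\eta) Y + \sin(t|Z|_\eta)\frac{Z}{|Z|_\eta}$ where $Z$ is spacelike, handling the null/timelike cases separately or simply taking any smooth extension — then extract $\nu_t$ as the normalized null normal, and set $\Psi_t := $ the first three (resp. first four) components of $\nu_t$. The two constraints in \eqref{system_condition_LCGM} are exactly what is needed to guarantee that $\nu_t$ remains a \emph{null} vector and that $\Psi_t$ is an immersion with $Y_{\Psi_t} = Y_t$: the first-order deformation of "$\nu$ null, $\nu$ normal to $\g Y$" is $\scal{\nu}{\g Z}_\eta = 0$, and the compatibility that $Y_t$ is the conformal Gauss map of $\Psi_t$ (rather than an arbitrary deformation) is the second-order-looking relation $\scal{\nu}{\lap_g Z - |\g Y|^2_\eta Z}_\eta = 0$, which is the linearization of the harmonicity-type identity $\lap_g Y = -|\g Y|^2_\eta Y$ paired against $\nu$. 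One must check that the reconstructed $\Psi_t$ is genuinely an immersion for small $t$ and smooth across the umbilic locus; here the nowhere-density hypothesis plus a continuity/density argument extends the construction from the complement of the umbilic set to all of $\Sigma$. Finally, one verifies $(\dr_t\Psi_t)|_{t=0} \perp \g\Psi$; if the naive reconstruction gives a tangential component, reparametrize by a $t$-dependent diffeomorphism of $\Sigma$ to remove it, which does not change $Y_t$ up to reparametrization.

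For the index equality, the key is the relation $\Ar(Y) = \tfrac12\Er(\Psi)$ together with the fact that reparametrizations of $\Sigma$ and the projection to normal variations do not affect $\tfrac{d^2}{dt^2}$ at a critical point. Since $\Psi$ is Willmore, $Y$ is a (conformal, hence area-)critical point, and for any admissible $Z$ one has $\delta^2\Ar_Y(Z) = \tfrac12 \tfrac{d^2}{dt^2}\Er(\Psi_t)|_{t=0}$ where $\Psi_t$ is the reconstructed variation; conversely every normal variation of $\Psi$ gives an admissible $Z$. The bijection between the two spaces of variations established in the first part thus restricts to a bijection between the negative cones of the two second forms, up to the kernel coming from tangential reparametrizations (which contributes zero to both Hessians). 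Collecting this gives $\Ind_\Er(\Phi) = \Ind_\Ar(Y)$. The main obstacle I anticipate is the converse reconstruction near and across the umbilic locus: one must show the extracted $\Psi_t$ is a smooth immersion and not merely a measurable or singular object, and that the second-variation identity survives the limit onto the umbilic set; this is where the regularity of the umbilic locus from \cite{schatzle2017} and a careful density argument will be needed.
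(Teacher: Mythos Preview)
Your overall architecture matches the paper's, but the converse direction contains a genuine gap. You write that the two constraints in \eqref{system_condition_LCGM} ``guarantee that $\nu_t$ remains a null vector and that $\Psi_t$ is an immersion with $Y_{\Psi_t} = Y_t$''. This is false: for an arbitrary path $Y_t$ in $\s^{3,1}$ with $\dot Y_0 = Z$, the extracted immersion $\Psi_t$ (from the null normal $\nu_t$) will \emph{not} have $Y_t$ as its conformal Gauss map. The paper makes this explicit: letting $\gamma_t$ denote the actual conformal Gauss map of $\Psi_t$, one only has $Y_t = \gamma_t + \beta_t \nu_t$ for some function $\beta_t$ with $\beta_0 = 0$, since both $Y_t$ and $\gamma_t$ lie in $\Span(\nu_t,\g\nu_t)^\perp$ and have unit $\eta$-norm. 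So at first order $Z = \dot\gamma + \dot\beta\,\nu$, and after composing with diffeomorphisms $\vp_t$ to make the variation of $\Psi_t$ normal, $Z = \dot\Gamma + (\dot\vp\cdot\g)Y + \dot\beta\,\nu$. The role of the two conditions is then purely algebraic: using a computation of how $\scal{\nu}{\lap_{g_\Psi}(\cdot) - |\g Y|^2_\eta(\cdot)}_\eta$ acts on $\beta\nu$ and on $\alpha^i\dr_i\nu$ (the paper's Lemma~\ref{LCGM_invariants}), the first condition forces $\dot\beta + (\dot\vp\cdot\g)H = \di_{g_\Psi}(\dot\vp\,\Arond)$, and the second condition forces $\dot\vp\,\Arond = 0$, hence $\dot\vp = 0$ away from umbilic points (this is where invertibility of $\Arond$ is used). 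Together these give $\dot\beta = 0$ and $Z = \dot\Gamma$. Your interpretation of the first equation as ``the linearization of $\lap_g Y = -|\g Y|^2_\eta Y$'' is also off: $\Psi$ is not assumed Willmore for the characterization, and what is being linearized is the identity \eqref{mean_curvature_CGM}, namely that $\scal{\nu}{\lap_{g_\Psi}\gamma}_\eta = 0$ for \emph{any} conformal Gauss map $\gamma$.

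The extension across the umbilic locus is not a bare density argument either. The paper observes that once $Z = \dot\Gamma$ on $\Sigma\setminus\Ur$, the normal component $\scal{\dot\Psi}{N} = -\scal{Z}{\nu}_\eta$ is globally defined, and the derivative of a conformal Gauss map at $t=0$ depends only on this normal component. Hence the \emph{explicit global} variation $\tilde\Psi_t := (\Psi - t\scal{Z}{\nu}_\eta N)/|\Psi - t\scal{Z}{\nu}_\eta N|$ is smooth on all of $\Sigma$ and its conformal Gauss map has derivative equal to $Z$ on the dense set $\Sigma\setminus\Ur$, hence everywhere by continuity. Your proposed route --- extract $\Psi_t$ from $\nu_t$ and hope it extends --- cannot work directly, because the null normal extraction genuinely degenerates where $\g Y$ vanishes. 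The index equality then follows as you say, from $\Ar(Y_t) = \tfrac12\Er(\Psi_t)$ and the bijection just established.
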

	
	If $\pi$ is the stereographic projection of $\s^3\setminus \{\mathrm{north pole}\}$ onto $\R^3$, and $\Psi := \pi^{-1}\circ\Phi$, we recall that 
	\begin{align*}
		\begin{pmatrix}
			\Phi\\ \frac{|\Phi|^2-1}{2} \\ \frac{|\Phi|^2+1}{2}
		\end{pmatrix} &= \frac{|\Phi|^2+1}{2} \begin{pmatrix} \Psi\\ 1 \end{pmatrix}.
	\end{align*}
	Hence, the conditions \eqref{system_condition_LCGM} are conformally invariant. We obtain a quantization result for a sequence of bounded energy and index :
	
	\begin{theorem}\label{quantization_bounded_index}
		Let $(\Sigma,h_k)$ be a sequence of closed surfaces with fixed genus, constant curvature and normalized volume if needed. We assume that this sequence converges to a nodal surfaces $(\tilde{\Sigma},\tilde{h})$. Let $(\tilde{\Sigma}^l)_{l\in\inter{1}{q}}$ be the connected components of $\tilde{\Sigma}$. Let $(\Phi_k)_{k\in\N}$ a sequence of conformal Willmore immersion $(\Sigma,h_k) \to \R^3$ satisfying
		\begin{align*}
			\sup_{k\in\N}\Big( \Er(\Phi_k) + \Ind_\Er (\Phi_k) \Big) <\infty.
		\end{align*}
		Then, there exists $q$ branched immersions $\Phi^l_\infty : \tilde{\Sigma}^l \to \R^3$ and a finite number of possibly branched immersions $\omega_j :\s^2\to \R^3$ which are Willmore away from possibly finitely many points, and such that, up to a subsequence,
		\begin{align*}
			\lim_{k\to \infty} \Er(\Phi_k) &= \sum_{l=1}^q \Er(\tilde{\Phi}^l_\infty) + \sum_{j=1}^p \Er(\omega_j).
		\end{align*}
	\end{theorem}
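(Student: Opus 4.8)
The strategy is to reduce the energy quantization problem for $\Er$ to a statement about the conformal Gauss maps $Y_k$, using that $\Ar(Y_k) = \frac12 \Er(\Phi_k)$ and $\Dr(Y_k) = \Ar(Y_k)$. First I would set up the bubble tree: decompose $(\Sigma, h_k)$ into thick and thin parts via Deligne--Mumford, apply \cite{bernard2014} on the thick parts (after composing with suitable $\Xi_k^l \in \Conf(\s^3)$) to extract macroscopic limits $\Phi_\infty^l$, and apply the standard blow-up/$\ve$-regularity covering argument at concentration points to extract the bubbles $\omega_j$. This gives a weak bubble convergence, so the content is the \emph{no-neck-energy} assertion
\begin{align*}
	\lim_{k\to\infty} \Er(\Phi_k) = \sum_{l=1}^q \Er(\Phi_\infty^l) + \sum_{j=1}^p \Er(\omega_j),
\end{align*}
which by conformal invariance of $\Er$ and the identity $\Er = 2\Ar(Y)$ is equivalent to showing that $\Ar(Y_k; \Cr_k) \to 0$ on every neck/collar region $\Cr_k = [-T_k,T_k]\times\s^1$ appearing in the tree.

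Next, fix such a neck $\Cr_k$. By \Cref{bubbling_CGM}, after applying $M_k \in SO(4,1)$ the rescaled conformal Gauss maps on any subcylinder of prescribed oscillation $\delta$ converge in $C^2_{loc}$, after reparametrization $\alpha_k$, to a light-like (time-like in the $\s^3$ normalization of the theorem statement) straight line $\sigma$ of oscillation $\delta$. Since $\sigma$ is a \emph{geodesic segment} (a degenerate, measure-zero image), the convergence already forces the energy to concentrate in thinner and thinner tubes around $\sigma$; the remaining issue is to rule out that a positive amount of energy $\Dr(Y_k;\Cr_k) \geq \varepsilon_1 > 0$ persists. Suppose for contradiction it does. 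Using \eqref{residue_hopf_diff} (the vanishing Hopf differential forces the energy profile $t \mapsto \int_{\{t\}\times\s^1}|\dr_t Y_k|^2_\eta$ to balance radial and angular parts) together with the $\ve$-regularity for $Y_k$ in the Lorentzian setting (the analogue of \cite{bernard2020} mentioned after \Cref{theoremI}), I would run the standard energy-decomposition: partition $\Cr_k$ into maximal subcylinders of fixed oscillation $\delta$, on each of which \Cref{bubbling_CGM} applies. The number $m_k$ of such subcylinders then tends to infinity (otherwise $\osc_{\Cr_k} M_k Y_k$ would be bounded and, combined with $\ve$-regularity, would give $\Dr(Y_k;\Cr_k)\to 0$).

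The heart of the argument — and the main obstacle — is then to convert "$m_k \to \infty$ subcylinders each carrying a definite piece of a light-like geodesic limit'' into "$\Ind_\Er(\Phi_k) \to \infty$'', contradicting the hypothesis. Here I would use \Cref{equality_index}: it suffices to produce, for $k$ large, arbitrarily many linearly independent admissible variations $Z$ of $Y_k$ (solving the system \eqref{system_condition_LCGM}) with $\delta^2\Ar_{Y_k}(Z) < 0$. The idea is that along each of the $m_k$ subcylinders the map $M_k Y_k \circ \alpha_k$ looks like a light-like geodesic, and a light-like geodesic in $\s^{3,1}$ — being a degenerate minimal surface — is highly unstable for the area functional: one can build a compactly supported normal variation supported in that subcylinder with strictly negative second variation (this is where the Lorentzian signature helps rather than hurts, since the normal bundle contains a null direction). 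Making these variations disjointly supported over the $m_k$ subcylinders yields $m_k$ independent negative directions, and transporting them back through $\alpha_k$ and $M_k^{-1}$ (an isometry of $\s^{3,1}$, hence preserving $\delta^2\Ar$) and through \Cref{equality_index} gives $\Ind_\Er(\Phi_k) \geq m_k \to \infty$. The delicate points I expect to fight are: (i) checking the constructed $Z$'s genuinely satisfy the constraint system \eqref{system_condition_LCGM}, equivalently that they arise from honest immersion variations — this may require perturbing the naive geodesic-variation by a controlled correction term solving the two scalar constraints, using that $\nu$ is transverse to $\Span(Y,\dr_1 Y,\dr_2 Y)$ away from umbilics; (ii) ensuring the negative second variation survives the $C^2_{loc}$ (not global) nature of the convergence, which forces the variations to be localized well inside each subcylinder where the estimates are uniform; and (iii) handling umbilic loci, where the frame degenerates, by the usual argument that they are lower-dimensional (by \cite{schatzle2017}) and do not carry energy. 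Once the contradiction is reached, $\Dr(Y_k;\Cr_k)\to 0$ on every neck, summing over the finitely many necks and the thick-part contributions yields the stated identity, and finally \Cref{quantization_E_W} (or rather the Gauss--Bonnet identity $\Er = 2W - 4\pi\chi$ applied on each component and bubble) transfers it back to the $\R^3$ immersions $\Phi_k$.
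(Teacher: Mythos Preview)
Your overall architecture is correct and matches the paper's: reduce to showing $\Ar(Y_k;\Cr_k)\to 0$ on each neck/collar, argue by contradiction that persistent energy forces $\Ind_\Er(\Phi_k)\to\infty$, and manufacture many disjointly supported negative variations of $Y_k$ satisfying \eqref{system_condition_LCGM}, then invoke \Cref{equality_index}. You also correctly anticipate the need for correction terms in $\Span(\nu,\g\nu)$ to force the admissibility constraints.

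However, there is a genuine gap in the mechanism you propose for producing the negative variations. You subdivide $\Cr_k$ into $m_k\to\infty$ subcylinders of \emph{fixed oscillation} $\delta$ and assert that ``a light-like geodesic in $\s^{3,1}$ --- being a degenerate minimal surface --- is highly unstable for the area functional''. This is false, and the paper says so explicitly in the sketch: the limit light-like geodesic has zero Lorentz length, carries no intrinsic instability, and is in fact minimizing in the relevant sense; one \emph{cannot} build Jacobi fields on the limit and pull them back. The instability is a purely pre-limit phenomenon: on a subcylinder with area bounded below by $\mu>0$, the Lorentz average length $\ell_k = \int \big(\int_{\s^1}|\g Y_k|^2_\eta\big)^{1/2}dt$ satisfies $\ell_k\to\infty$ (since $\|\g Y_k\|_{L^\infty_\eta}\to 0$), and the test field $E_k = \rho_k f_k E + (\text{corrections})$ with $f_k(s)=\sin(\pi s/\ell_k)$ and $E$ a constant \emph{space-like} vector orthogonal to $\sigma$ yields $\delta^2\Dr_{Y_k}(E_k)\sim -\pi^3\mu/\ell_k^2<0$. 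This is a first-eigenvalue-goes-to-zero argument, not an instability of the limit object.

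Your subdivision by oscillation does not guarantee a uniform lower bound on the area of each piece: the total area $\geq\varepsilon_1$ could be concentrated on a bounded number of the $m_k$ pieces, leaving most with area $\to 0$, hence no control on $\ell_k$ there and no negative direction. The correct subdivision, which the paper carries out in Section~\ref{section_explosion_index}, is by \emph{area}: for any fixed $J$, cut $\Cr_k$ into $J$ subcylinders each with $\Ar(Y_k;\cdot)\in[\lambda/(2J),\lambda/J]$, apply the construction above on each to obtain $J$ disjointly supported negative directions, and conclude $\Ind_\Er(\Phi_k)\geq J$ for $k$ large. Replacing ``fixed oscillation'' by ``fixed area'' and replacing the appeal to instability of $\sigma$ by the $\ell_k\to\infty$ computation closes the gap.
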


	\subsection{Sketch of the proofs}
	
	\textbf{Proof of the theorem \ref{bubbling_CGM} :}\\
	In order to show the convergence to a curve, we prove that the angular derivative always vanishes very fast, see the lemma \ref{step1_conv_geodesic}, then we prove that the distance between $Y$ and its average $Y^*(r) = \fint_{\dr B_r(0)} Y$ has the same size as the angular derivative, see the lemma \ref{estimate_xistar}. With these properties in hand, the limit object must be a curve. Since we can pass to the limit in the equation of harmonic maps, the limit curve must be a geodesic. Using \eqref{residue_hopf_diff}, the limit curve must be light-like.\\
	The first issue, is that the system satisfied by $Y_k$ is highly different from the one of harmonic maps with values into a compact Riemannian manifold, since it is of the form $|\lap Y_k|_\xi \leq|\g Y_k|^2_\xi |Y_k|_\xi$, without $L^\infty$ bounds a priori since $\s^{3,1}$ is not compact.  To solve this issue, we find a good gauge, that is a matrix in $SO(4,1)$ such that at each scale we have an $\ve$-regularity estimate for the conformal Gauss map, see the section \ref{gauge_neck_region}. Then we  work only on a region where the oscillations of $Y_k$ are bounded. \\

	\textbf{Proof of the proposition \ref{equality_index} :}\\
	If we consider an arbitrary space-like variation $(Y_t)_t$ of $Y$, there is a priori no reason to know whether each $Y_t$ is a conformal Gauss map or not. However, each $Y_t$ is close to $Y$. Therefore, its normal space in $\s^{3,1}$ is close to the one of $Y$ and we can recover an immersion $\Psi_t : \Sigma \to \s^3$ by looking at isotropic vectors of its normal space. An other important property of conformal Gauss maps, is that its mean curvature must always vanish in some isotropic direction. Therefore, any vector field $Z$ along $Y$ coming from a variation through conformal Gauss maps must satisfy the linearized equation corresponding to this restriction. Palmer \cite{palmer1991} showed that away from the umbilic points of $\Psi$, satisfying this linearized equation is a necessary and sufficient condition. Here we have a less geometric point of view and we obtain two equations equivalent to the condition of coming from a variation through conformal Gauss maps. \\

	\textbf{Proof of the theorem \ref{quantization_bounded_index} :}\\
	In a Riemannian manifold with positive curvature, if a geodesic is too long, then there exists a Jacobi field on this long section of the geodesic. So in the case of harmonic maps with values in a positively curved Riemannian manifold, once we have a Jacobi field on the limit geodesic, we can project it before the limit to obtain a Jacobi field along the harmonic maps. Therefore, the number of the Jacobi field along the limit is controlled by the indexes of the harmonic maps. In our case, the limit has no lorentz length so we cannot construct Jacobi fields directly on the limit (the limit geodesic is minimizing in a certain sense). So we build them before the limit. Since the limit is a straight line, we can consider a unit space-like constant vector which is always in the normal space of the limit. We add a perturbation to obtain a sequence of vector fields along our sequence of conformal Gauss maps which come from a variation through conformal Gauss maps. We show that this perturbation is small thanks to the theorem \ref{bubbling_CGM} and that before the limit, these vector fields are Jacobi fields.

	\section{Proof of the theorem \ref{bubbling_CGM}}\label{proof_theoremA}
	
	\subsection{Gauge in necks and collars}\label{gauge_neck_region}
	
	\textbf{Only in this section}, we work with Willmore immersions $\Phi_k : \Cr_k \to \R^3$, where $\Cr_k = B_{1/R}\setminus B_{R r_k}$ for some $R>1$, with $r_k \xrightarrow[k\to \infty]{}{0}$. We assume that $\Cr_k$ is a part of a closed surface $\Sigma$ and that $\Phi_k$ is defined on the whole $\Sigma$, but not necessarily Willmore on $\Sigma\setminus \Cr_k$. We denote $g_k := \Phi_k^*\xi$, $\vec{n}_k$ their Gauss maps, $H_k$ their mean curvature and $\Arond_k$ the traceless part of their second fundamental form. The conformal Gauss maps $Y_k$ are given by
	\begin{align}\label{def_Y_R3}
		Y_k := H_k \begin{pmatrix}
			\Phi_k\\
			\frac{|\Phi_k|^2-1}{2} \\
			\frac{|\Phi_k|^2+1}{2}
		\end{pmatrix} + \begin{pmatrix}
			\vec{n}_k\\ \scal{\vec{n}_k}{\Phi_k} \\ \scal{\vec{n}_k}{\Phi_k}
		\end{pmatrix}, &  & \g Y_k := (\g H_k) \begin{pmatrix}
			\Phi_k\\
			\frac{|\Phi_k|^2-1}{2} \\
			\frac{|\Phi_k|^2+1}{2}
		\end{pmatrix} - e^{-2\lambda_k} \Arond_k \begin{pmatrix}
			\g \Phi_k\\ \scal{\g \Phi_k}{\Phi_k} \\ \scal{\g \Phi_k}{\Phi_k}
		\end{pmatrix},
	\end{align}
	where $\lambda_k$ is the conformal factor of $\Phi_k$. We can recover the mean curvature by the formula $H_k = (Y_k)_5 - (Y_k)_4$.\\
	The assumptions \eqref{apriori_necks} can be written :
	\begin{align}\label{apriori_necks_R3}
		\sup_{k\in\N} \int_{\Cr_k} |d\vec{n}|^2_{g_k} d\vol_{g_k} <\ve_0, & & \lim_{R\to \infty} \lim_{k\to \infty} \sup_{\rho\in[R r_k,1/(2R)]} \int_{B_{2\rho}\setminus B_\rho} |d\vec{n}_k|^2_{g_k} d\vol_{g_k}  =0.
	\end{align}
	Furthermore, we assume that on the whole surface $\Sigma$, it holds
	Assume that on the whole surface $\Sigma$, it holds
	\begin{align}\label{hyp:uniform_L2_sff}
		\sup_{k\in \N} \int_\Sigma |d\vec{n}_k|^2_{g_k} d\vol_{g_k} <\infty.
	\end{align}
	
\begin{remark}
The hypothesis that $\Phi_k$ is defined on a closed surface is purely technical, in order to use the lemma 5.3 in \cite{riviere2013a}. This lemma will be used only once to prove the estimate \eqref{eq:Linfty_phi}. We will work mainly on the annuli $\Cr_k$, so all the other estimates come from \eqref{apriori_necks_R3}.
\end{remark}

	For each $\rho \in[Rr_k, 1/(2R)]$, we consider the annuli
	\begin{align}\label{def_annuli}
		A_{\rho} := B_{2\rho} \setminus B_{\rho} \subset N_k,\ \ \ \ \ \ \hat{A}_{\rho} := B_{\frac{19}{10}\rho}\setminus B_{\frac{11}{10}\rho},\ \ \ \ \ \tilde{A}_\rho := B_{\frac{18}{10}\rho} \setminus B_{\frac{12}{10}\rho},\ \ \ \ \bar{A}_\rho := B_{\frac{17}{10}\rho}\setminus B_{\frac{13}{10}\rho}.
	\end{align}
	The goal of this section, see the \cref{existence_gauge}, is to prove that we can restrict ourselves to the following situation :
	\begin{itemize}
		\item  For every $\rho_k \in[Rr_k, \frac{1}{R}]$, there exists $M_{k,\rho_k} \in SO(4,1)$ such that
		\begin{align*}
			\rho_k\| M_{k,\rho_k} \g Y_k \|_{L^\infty_\xi\left( \bar{A}_{\rho_k} \right) } & \leq C\|\g Y_k \|_{L^2_\eta \left( A_{\rho_k} \right)},  \\
			\|M_{k,\rho_k} Y_k\|_{L^\infty_\xi (\bar{A}_{\rho_k})} & \leq C,
		\end{align*}
		\item We have the uniform estimate :
		\begin{align*}
			\lim_{R\to \infty} \lim_{k\to \infty} \sup_{s_k \in [\rho_k/2, 1/R]} s_k\| M_{k,\rho_k} \g Y_k \|_{L^\infty_\xi\left( \bar{A}_{s_k} \right) } =0.
		\end{align*}
	\end{itemize}

	\begin{lemma}\label{gauge_dyadic_annuli}
		There exists $C_0>1$ such that for any $k\in\N$ and any dyadic annuli $A_{\rho_k} \subset \Cr_k$, there exists a conformal transformation $\Theta_{k,\rho_k} \in \Conf(\R^3)$ such that $\Psi_{k,\rho_k} := \Theta_{k,\rho_k}\circ \Phi_k$ has the same topology as $\Phi_k$ with the following estimates :
		\begin{align}\label{defect_conformal_factor}
			\frac{1}{C_0} \leq \left\| \frac{ |\g \Psi_{k,\rho_k}| }{ |\g \Phi_k| } \right\|_{L^\infty(A_{\rho_k})} \leq C_0,
		\end{align}
		And
		\begin{align}
			\fint_{A_{\rho_k}} H_{\Psi_{k,\rho_k}} &= 0.
		\end{align}
	\end{lemma}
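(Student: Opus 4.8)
The plan is to take $\Theta_{k,\rho_k}$ to be, up to left‑ and right‑composition with translations, an inversion in a sphere whose centre and radius are both comparable to the extrinsic scale $L_k:=\rho_k e^{\lambda_k}$ of $\Phi_k$ on the dyadic annulus $A_{\rho_k}$ (well defined up to a universal factor since $\osc_{A_{\rho_k}}\lambda_k$ is bounded). Such a map is uniformly close to an isometry on $\Phi_k(A_{\rho_k})$, which yields \eqref{defect_conformal_factor} with a universal $C_0$, and it acts on the mean curvature by an explicit formula affine in the centre of the sphere, which I would use to force $\fint_{A_{\rho_k}}H_{\Psi_{k,\rho_k}}=0$ by a one–dimensional intermediate value argument. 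First I would translate $\Phi_k$ so that $\fint_{A_{\rho_k}}\Phi_k=0$; this is a conformal change of factor $1$ leaving $H_k$ unchanged, so it is absorbed into $\Theta_{k,\rho_k}$. Next, the smallness hypothesis $\int_{\Cr_k}|\g\vec{n}_k|^2<\ve_0$, together with the $\ve$‑regularity of \cite{riviere2008} and the conformal factor control of \cite{laurain2018}, give on $A_{\rho_k}$ the standard estimates $\osc_{A_{\rho_k}}\vec{n}_k\leq C\sqrt{\ve_0}$, $\osc_{A_{\rho_k}}\lambda_k\leq C$, $\sup_{A_{\rho_k}}|H_k|\leq C\sqrt{\ve_0}/L_k$ and $\sup_{A_{\rho_k}}|\Phi_k|\leq c_0 L_k$, with universal constants. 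In particular the vector $\vec{v}_k:=\fint_{A_{\rho_k}}\bigl(H_k\Phi_k+\vec{n}_k\bigr)$ — which is the average over $A_{\rho_k}$ of the first three components of $Y_k$, see \eqref{def_Y_R3} — satisfies $|\vec{v}_k|\geq\bigl|\fint_{A_{\rho_k}}\vec{n}_k\bigr|-C\sqrt{\ve_0}\geq 1-C\sqrt{\ve_0}\geq\tfrac12$ once $\ve_0$ is small enough.

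For $e\in\s^2$ and $d>0$, let $\Theta_{e,d}\in\Conf(\R^3)$ be the inversion in the sphere of radius $d$ centred at $de$, whose conformal factor at $y$ equals $d^2/|y-de|^2$. From \eqref{def_Y_R3} and the isomorphism $\Conf(\s^3)\simeq SO(4,1)$ one obtains the pointwise identity
\begin{align*}
H_{\Theta_{e,d}\circ\Phi_k} = -H_k + \frac{2}{d}\scal{e}{H_k\Phi_k+\vec{n}_k} - \frac{1}{d^2}\Bigl(|\Phi_k|^2 H_k + 2\scal{\Phi_k}{\vec{n}_k}\Bigr),
\end{align*}
which is affine in $e$; hence averaging over $A_{\rho_k}$,
\begin{align*}
\fint_{A_{\rho_k}} H_{\Theta_{e,d}\circ\Phi_k} = A_k(d) + \frac{2}{d}\scal{e}{\vec{v}_k}, \qquad |A_k(d)| \leq \frac{C}{L_k}\Bigl(\sqrt{\ve_0}+\frac{L_k^2}{d^2}\Bigr),
\end{align*}
with $A_k(d)$ independent of $e$, the bound on $A_k(d)$ coming from the four estimates above.

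I would then fix a universal integer $N$ so large, and afterwards $\ve_0$ so small, that $C(\sqrt{\ve_0}+N^{-2})<\tfrac1{2N}$, and take $d:=NL_k$. For this $d$ every pole $NL_k e$ lies at distance in $[(N-c_0)L_k,(N+c_0)L_k]$ from the centred patch $\Phi_k(A_{\rho_k})$, so that $\bigl\|\,|\g(\Theta_{e,NL_k}\circ\Phi_k)|/|\g\Phi_k|\,\bigr\|_{L^\infty(A_{\rho_k})}$ lies in $\bigl[(N/(N+c_0))^2,(N/(N-c_0))^2\bigr]$, which is \eqref{defect_conformal_factor} with $C_0:=(N/(N-c_0))^2>1$. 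On the other hand $|A_k(NL_k)|<\tfrac1{2NL_k}\leq\tfrac{2|\vec{v}_k|}{NL_k}$, so the continuous map $e\mapsto\fint_{A_{\rho_k}}H_{\Theta_{e,NL_k}\circ\Phi_k}=A_k(NL_k)+\tfrac2{NL_k}\scal{e}{\vec{v}_k}$ takes opposite signs at $e=\pm\vec{v}_k/|\vec{v}_k|$, hence vanishes at some $e_k\in\s^2$ by connectedness. Finally, composing on the left with a generic small translation $t_a$ — which changes neither $H$ nor the conformal factor, and so preserves everything above — I can push the image of the point at infinity, namely $NL_k e_k+a$, out of the compact negligible set $\Phi_k(\Sigma)$; taking $\Theta_{k,\rho_k}$ to be the composition of the recentering translation, $\Theta_{e_k,NL_k}$ and $t_a$, the map $\Psi_{k,\rho_k}:=\Theta_{k,\rho_k}\circ\Phi_k$ is then a genuine immersion $\Sigma\to\R^3$ with the same topology as $\Phi_k$ and satisfies both \eqref{defect_conformal_factor} and $\fint_{A_{\rho_k}}H_{\Psi_{k,\rho_k}}=0$.

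The crux is the pair of \emph{scale‑invariant} inequalities $|\vec{v}_k|\gtrsim 1$ and $L_k\bigl|\fint_{A_{\rho_k}}H_k\bigr|\lesssim\sqrt{\ve_0}$: they are exactly what makes the choice $d\sim L_k$ simultaneously compatible with the near‑isometry requirement \eqref{defect_conformal_factor} and with solving $\fint H=0$, and hence what makes $C_0$ universal, independent of $k$ and $\rho_k$. Both reduce to the $\ve$‑regularity on dyadic annuli and to keeping track of the extrinsic scale $L_k$; the preservation of topology is a minor technical point dispatched by the final generic translation.
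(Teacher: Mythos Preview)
Your overall strategy is correct and close to the paper's: both proofs compose with an inversion and exploit the explicit affine/quadratic dependence of $H_{\iota_a\circ\Phi_k}$ on the centre $a$ (which you read off from $Y_5-Y_4$ via the $SO(4,1)$ action), together with the $\ve$-regularity bounds on dyadic annuli. Your organisation is slightly different---you keep the extrinsic scale $L_k$ explicit and fix $d=NL_k$ rather than first rescaling to $A_1$---but the substance is the same and your estimates $|\vec v_k|\gtrsim 1$, $L_k|\fint H_k|\lesssim\sqrt{\ve_0}$ are exactly what the paper uses (in the rescaled form $|\overline{Y_{123}}|\gtrsim 1$ and \Cref{average_spacelike}).

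There is, however, a genuine gap in your topology step. Composing on the \emph{left} with a translation $t_a$ does not move the pole of $\Theta_{e_k,NL_k}$: the preimage of $\infty$ under $t_a\circ\Theta_{e_k,NL_k}$ is still the inversion centre $NL_ke_k$, independent of $a$ (what you computed, $NL_ke_k+a$, is the \emph{image} of $\infty$, which is irrelevant). So if $NL_ke_k\in\tilde\Phi_k(\Sigma)$, your $\Psi_{k,\rho_k}$ still hits $\infty$ and the topology is destroyed, no matter how you choose $a$. Worse, your IVT only produces a one-dimensional circle of admissible directions $e$ (the level set of an affine function on $\s^2$), and there is no a priori reason this circle is not entirely contained in $\frac{1}{NL_k}\tilde\Phi_k(\Sigma)$---a two-surface may well contain a round circle.

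The paper avoids this by letting the inversion centre $a$ range over all of $\R^3$ instead of the sphere $\{de:e\in\s^2\}$. The zero set of $a\mapsto\fint H_{\iota_a\circ\phi}$ is then a genuine $2$-sphere $\s_{k,\rho_k}$, and one checks (as you essentially did) that it meets a fixed ball $B(0,t)$. Since $\Phi_k$ is Willmore and not a round sphere, its umbilic set is nowhere dense, so $\tilde\Phi_k(\Sigma)$ cannot contain an open piece of $\s_{k,\rho_k}$ (any such piece would be totally umbilic). Hence $[\s_{k,\rho_k}\cap B(0,t)]\setminus\tilde\Phi_k(\Sigma)\neq\emptyset$, and any $a$ in this set does the job. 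Your argument is easily repaired along these lines: drop the constraint $|a|=NL_k$, observe from your own formula that the zero set in $a$ is a $2$-sphere passing through $B(0,CL_k)$, and invoke the umbilic-point structure to pick $a$ off $\tilde\Phi_k(\Sigma)$.
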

	
	\begin{proof}
		We proceed as in \cite{bernard2020} for the existence of conformal transformations. We will use some results from \cite{bernard2020} that we recall in the \cref{technical_lemmas_gauge}. First, we rescale : if we consider the domain to be a subset of $\C$, then we let
		\begin{align*}
			& \tilde{\Cr}_k := \frac{1}{\rho_k} \Cr_k = B_{1/(\rho_k R)} \setminus B_{Rr_k/\rho_k},\\
			\forall z\in \tilde{\Cr}_k,\ \ \ \ &\phi_{k,\rho_k}(z) := \frac{1}{\rho_k} e^{-\overline{\lambda_k}} \left( \Phi_k(\rho_k z) - \Phi_k(\rho_k) \right),\\
			& \overline{\lambda_k} := \fint_{A_{\rho_k}} \lambda_k.
		\end{align*}
		So that, by the harnack estimate on the conformal factor, see lemma II.1 of \cite{bernard2019} :
		\begin{align}\label{normalization_phi}
			\phi_{k,\rho_k}(1)=0,\ \ \ \ \ \ C^{-1} < |\g \phi_{k,\rho_k}| < C \ \text{on }A_1.
		\end{align}
		If $\fint_{A_1} H_{\phi_{k,\rho_k}} =0$, then we take $\Theta_{k,\rho_k} = \mathrm{id}$. If not, we consider the inversion $\iota_a(x) := \frac{x-a}{|x-a|^2}$ around a point $a\in\R^3$. In $SO(4,1)$, we have
		\begin{align*}
			M:= M_{\iota_a} &= \begin{pmatrix}
				-id & 0 & 0 \\
				0 & 1 & 0 \\
				0 & 0 & -1
			\end{pmatrix} \begin{pmatrix}
				id & a & -a \\
				-a^T & 1- \frac{|a|^2}{2} & \frac{|a|^2}{2} \\
				-a^t & - \frac{|a|^2}{2} & 1+ \frac{|a|^2}{2}
			\end{pmatrix} \\
			&= \begin{pmatrix}
				-id & -a & a \\
				-a^T & 1- \frac{|a|^2}{2} & \frac{|a|^2}{2} \\
				a^T & \frac{|a|^2}{2} & -1-\frac{|a|^2}{2}
			\end{pmatrix}.
		\end{align*}
		Let $Y_{\phi_{k,\rho_k}} := (Y_{123}, Y_4,Y_5)$ be the conformal Gauss map of $\phi_{k,\rho_k}$. The conformal Gauss map of $\psi_{a,k,\rho_k} := \iota_a\circ \phi_{k,\rho_k}$ is given by
		\begin{align*}
			Y_{\psi_{a,k,\rho_k}} &= MY_{\phi_{k,\rho_k}} = \begin{pmatrix}
				-Y_{123} +(Y_5 - Y_4) a\\
				-\scal{a}{Y_{123}} + Y_4 + \frac{ |a|^2}{2}(Y_5 - Y_4) \\
				\scal{a}{Y_{123}} - Y_5 - \frac{|a|^2}{2} (Y_5- Y_4)
			\end{pmatrix}.
		\end{align*}
		Let $\bar{\cdot}$ denote the average on the annuli $A_1$. Thanks to \eqref{def_Y_R3}, the average of the mean curvature is given by
		\begin{align*}
			\overline{H_{\psi_{a,k,\rho_k}}} &= \overline{(Y_{\psi_{a,k,\rho_k}})_5} - \overline{(Y_{\psi_{a,k,\rho_k}})_4} \\
			&= 2\scal{a}{ \overline{ Y_{123} }} - \overline{Y_4} - \overline{Y_5} -|a|^2 \overline{H_{\phi_{k,\rho_k}}}.
		\end{align*}
		We consider an orthonormal basis of $\R^3$ given by $\left\{ \frac{ \overline{Y_{123}} }{| \overline{Y_{123}}|} , v_1, v_2 \right\}$, and decompose $a= x\frac{ \overline{Y_{123}} }{| \overline{Y_{123}}|} + yv_1 + zv_2$. Then,
		\begin{align*}
			\overline{H_{\psi_{a,k,\rho_k}}} &= 2x \left| \overline{ Y_{123} }\right| - \overline{Y_5} - \overline{Y_4}  - (x^2+ y^2 +z^2) \overline{ H_{\phi_{k,\rho_k}} } \\
			&= - \overline{H_{\phi_{k,\rho_k}}} \left[ \left( x - \frac{ \left| \overline{Y_{123}} \right| }{ \overline{H_{\phi_{k,\rho_k}}} } \right)^2 + y^2 + z^2 - \frac{ \left| \overline{Y_{123}} \right|^2 }{ \overline{H_{\phi_{k,\rho_k}}}^2 } + \frac{ \overline{Y_4} + \overline{Y_5} }{\overline{H_{\phi_{k,\rho_k}}} } \right] \\
			&= -\overline{H_{\phi_{k,\rho_k}}} \left[ \left( x - \frac{ \left| \overline{Y_{123}} \right| }{ \overline{H_{\phi_{k,\rho_k}}} } \right)^2 + y^2 + z^2 - \frac{ |\overline{Y_{\phi_{k,\rho_k}}} |^2_\eta }{ \overline{H_{\phi_{k,\rho_k}}}^2 } \right].
		\end{align*}
		Using the \cref{average_spacelike} and \eqref{apriori_necks_R3}, we have $\left|\overline{Y_{\phi_{k,\rho_k}}} \right|^2_\eta>\frac{1}{2}$ for $k$ large enough. So the solutions $a\in\R^3$ such that $\overline{H_{\psi_{a,k,\rho_k}}} = 0$ are exactly the sphere $\s_{k,\rho_k}$ of center $c_{k,\rho_k}:=\frac{ \overline{Y_{123}} }{ \overline{H_{\phi_{k,\rho_k}}} }$ and radius $r_{k,\rho_k} :=\frac{ |\overline{Y_{\phi_{k,\rho_k}}} |_\eta }{ \overline{H_{\phi_{k,\rho_k}}} }$. If $a\in \s_{k,\rho_k}$, then the conformal factor $\lambda_{\psi_{a,k,\rho_k}}$ of $\psi_{a,k,\rho_k}$ is given by
		\begin{align*}
			\lambda_{\psi_{a,k,\rho_k}} &= \lambda_{\phi_{k,\rho_k}} + \log|\phi_{k,\rho_k} - a|^2.
		\end{align*}
		Therefore, we need to find $a\in \s_{k,\rho_k}$ such that $0 < |\phi_{k,\rho_k} - a|$ on $\tilde{\Cr}_k$ and $C^{-1}<|\phi_{k,\rho_k}-a|<C$ on $A_1$. We prove that there exists $t>0$ independant of $R,k,\rho$ such that $B(0,t) \cap \s_{k,\rho_k} \neq \emptyset$ and any choice $a\in B(0,t) \cap \s_{k,\rho_k}$ gives $C^{-1} <|\phi_{k,\rho_k}-a|<C$ : thanks to \eqref{apriori_necks_R3} and the $\ve$-regularity, we have the estimate
		\begin{align*}
			\left\| Y_{\phi_{k,\rho_k}} \right\|_{L^2_\xi(A_1)} &\leq C.
		\end{align*}
		By Cauchy-Schwarz, $\left| \overline{Y_{\phi_{k,\rho_k}}} \right|_\xi < C$, and
		\begin{align*}
			|c_{k,\rho_k}| -r_{k,\rho_k} &= \frac{ |\overline{Y_{123}} | }{ |\overline{H_{\phi_{k,\rho_k}}}| } - \frac{ |\overline{Y_{\phi_{k,\rho_k}}} |_\eta }{ |\overline{H_{\phi_{k,\rho_k}}}| } \\
			&= \pm \frac{ |\overline{Y_4} + \overline{Y_5} |}{ |\overline{Y_{123}}| + |\overline{Y_{\phi_{k,\rho_k}}}|_\eta }.
		\end{align*}
		So that we can choose $t= \max(2\|\phi_{k,\rho_k}\|_{L^\infty(A_1)}, |\overline{Y_4} + \overline{Y_5} | )$. Note that the smallness of $\|\g \vec{n}_{\phi_{k,\rho_k}} \|_{L^2(B_2\setminus B_{1/2})}$ warrants the possibility the construct a sequence $(\phi_k)_{k\in\N}$ satisfying (\ref{normalization_phi}) and $\phi_k(A_1)$ being arbitrarely close to $\s_{\phi_k}$, since its radius is bounded from below.\\
		
		Therefore, it remains to show that we can choose $a_k\in B(0,t) \cap \s_{k,\rho_k}$ such that $\dist[a_k,\phi_{k,\rho_k}(\Sigma)]>0$. The umbilic points of $\phi_{k,\rho_k}$ are isolated points or closed curve. In particular, the set $\phi_{k,\rho_k}(\Sigma)\cap [B(0,t) \cap \s_{k,\rho_k}]$ cannot be an open set of $\s_{k,\rho_k}$ if $\phi_{k,\rho_k}$ is not a round sphere. So there exists $a_{k,\rho_k} \in [B(0,t) \cap \s_{k,\rho_k}]\setminus \phi_{k,\rho_k}(\Sigma)$. \\
		We define $\Psi_{k,\rho_k}$ and $\Theta_{k,\rho_k}$ as :
		\begin{align}\label{def_psi}
			\Psi_{k,\rho_k}(z) = \Theta_{k,\rho_k} \circ  \Phi_k(z) &:= \rho_k e^{\fint_{A_{\rho_k}} \lambda_k} \iota_{a_{k,\rho_k}}\circ \phi_{k,\rho_k}(z/\rho_k).
		\end{align}
	\end{proof}

	\begin{remark}\label{remark_possible_sphere}
		Let $s_k\in [Rr_k, \frac{1}{R}]$, and $\tilde{g}_k := \Psi_{k,s_k}^* \xi$. Using the proposition 1.2.1 in \cite{kuwert2012}, we can compute the new second fundamental form of $\Psi_{k,s_k}$ :
		\begin{align}\label{conformal_change_secondff}
			A^{\Psi_{k,s_k}}_{ij} &= e^{u\circ\Phi_k} \left( A^{\Phi_k}_{ij} - \scal{\g u\circ \Phi_k}{\vec{n}_{\Phi_k}} \delta_{ij}\right),
		\end{align}
		where $u$ is the conformal factor of $\Theta_{k,s_k}$. Dilations and rotations have constant conformal factor. Only the inversions have a non trivial conformal factor. If $a\in\R^3$ and  $\forall x\in\R^3,\ \iota_a(x):= \frac{x-a}{|x-a|^2}$. Then
		\begin{align*}
			\g \iota_a(x) &= \frac{id}{|x-a|^2} -2 \frac{(x-a)\otimes (x-a)}{|x-a|^4}.
		\end{align*}
		Hence, $|\g \iota_a(x)|^2 =C |x-a|^{-4}$, so that
		\begin{align*}
			\g \left( \log |\g \iota_a(x)|^2 \right) &= C\frac{x-a}{|x-a|^2}.
		\end{align*}
		Hence, on a dyadic annuli $A_{\rho_k} \subset B_{1/R}\setminus B_{Rr_k}$, there exists a dilation factor $\alpha_k \in \R\setminus \{0\}$, a rotation $R_k \in SO(3)$ and a point $a_k \in \R^3$ such that
		\begin{align*}
			\int_{A_{\rho_k}} |d\vec{n}_{\Psi_{k,s_k}}|^2_{\tilde{g}_k} d\vol_{\tilde{g}_k} &= \frac{C\rho_k^2}{\dist( \alpha_k R_k\Phi_k(A_{\rho_k}), a_k)^2 } + o(1).
		\end{align*}
		Therefore, one bubble might appear exactly where the center of the inversion is. However, the quantity $\left| \Arond_{\phi_k} \right|^2_{\tilde{g}_k} d\vol_{\tilde{g}_k} = \left| \Arond_k \right|^2_{g_k} d\vol_{g_k}$ is a pointwise conformal invariant. So the bubble is totally umbilic : it is a round sphere. This bubble can't appear at the scale $s_k$ by \eqref{defect_conformal_factor}. Therefore, up to restrict the region $\Cr_k$, \textbf{we assume that there is no new bubble}. In particular, we preserve \eqref{apriori_necks}. We will pursue this discussion with the remark \ref{remark_creation_oscillation} below.
	\end{remark}

	\begin{corollary}\label{control_normal_by_arond}
		With the notations of the lemma \ref{gauge_dyadic_annuli}, we obtain for any $\rho \in [2Rr_k, 1/(2R)]$,
		\begin{align*}
			\rho \| \g \vec{n}_{\Psi_{k,\rho}} \|_{L^\infty(\hat{A}_\rho)} &\leq C\left\| \Arond_{\Psi_{k,\rho}} e ^{-\lambda_{\Psi_{k,\rho}} } \right\|_{L^2(A_\rho)}.
		\end{align*}
	\end{corollary}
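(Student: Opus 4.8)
The plan is to combine the $\ve$-regularity estimate for the conformal Gauss map established via the gauge of \Cref{gauge_dyadic_annuli} with the structural formula \eqref{derivatives_Y} for $\g Y$. First I would recall that, after applying the conformal transformation $\Theta_{k,\rho}$ from \Cref{gauge_dyadic_annuli}, the immersion $\Psi_{k,\rho}$ has bounded conformal factor oscillation on $A_\rho$ by \eqref{defect_conformal_factor} and vanishing mean-curvature average $\fint_{A_\rho} H_{\Psi_{k,\rho}} = 0$. The point of normalizing the mean curvature this way is that the vertical component $H_{\Psi_{k,\rho}} = (Y)_5 - (Y)_4$ of the conformal Gauss map, which a priori is unbounded because $\s^{3,1}$ is noncompact, is now controlled in $L^2(A_\rho)$ purely in terms of $\|\Arond_{\Psi_{k,\rho}} e^{-\lambda_{\Psi_{k,\rho}}}\|_{L^2(A_\rho)}$ via a Poincaré–Wirtinger inequality on the annulus together with $\g H = \langle \g Y, \nu\rangle_\eta$-type identities; so the full conformal Gauss map $Y_{\Psi_{k,\rho}}$, and in particular $\g Y_{\Psi_{k,\rho}}$, inherits an $L^2$ bound of the same order.

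Next I would invoke an interior $\ve$-regularity estimate for $\g \vec{n}_{\Psi_{k,\rho}}$ — this is the content of the $\ve$-regularity of Rivière \cite{riviere2008}, applicable because the second hypothesis in \eqref{apriori_necks_R3} guarantees that the scale-invariant energy on each dyadic annulus is below $\ve_0$, and this is preserved by the gauge change in view of \Cref{remark_possible_sphere}. Concretely, on the slightly smaller annulus $\hat{A}_\rho \Subset A_\rho$ one gets
\begin{align*}
	\rho \|\g \vec{n}_{\Psi_{k,\rho}}\|_{L^\infty(\hat{A}_\rho)} &\leq C \|\g \vec{n}_{\Psi_{k,\rho}}\|_{L^2(A_\rho)}.
\end{align*}
It then remains to bound the right-hand side by $\|\Arond_{\Psi_{k,\rho}} e^{-\lambda_{\Psi_{k,\rho}}}\|_{L^2(A_\rho)}$. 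For this I would use the Codazzi-type relation and the formula $\g \vec{n} = -H\g\Psi - e^{-2\lambda}\Arond \g\Psi$ from the preliminaries: the tangential part of $\g\vec n$ is exactly $e^{-2\lambda}\Arond\g\Psi$, whose pointwise $g$-norm is $e^{-\lambda}|\Arond|$, so $|\g\vec n|_g^2 = H^2 + |\Arond e^{-\lambda}|^2$ up to harmless constants; integrating and using the $L^2$ control of $H$ on $A_\rho$ obtained in the first step closes the estimate.

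The main obstacle I anticipate is the first step: converting the mean-curvature normalization $\fint_{A_\rho} H = 0$ into a genuine $L^2(A_\rho)$ bound on $H$ (hence on the noncompact component of $Y$) by $\|\Arond e^{-\lambda}\|_{L^2}$. This requires controlling $\g H$ by $\Arond$-dependent quantities — which is where the Willmore equation (equivalently, harmonicity of $Y$, $\lap_g Y = -|\g Y|^2_\eta Y$) enters — and then applying a Poincaré inequality on the degenerating annulus $A_\rho$ with a constant uniform in $k$ and $\rho$, which is legitimate precisely because $A_\rho$ is a \emph{fixed} dyadic annulus after rescaling by $\rho$. Once $H$ is controlled, everything else is a bootstrap through the already-established gauge estimates, and the passage from $L^\infty$ on $\bar A_\rho$ or $\hat A_\rho$ to the stated annulus is only a matter of the nesting in \eqref{def_annuli}.
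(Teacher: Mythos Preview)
Your plan is correct and follows essentially the same route as the paper: apply the $\ve$-regularity to pass from $L^2$ to $L^\infty$ on the slightly smaller annulus, decompose $|\g\vec n|_g^2$ into its $H$- and $\Arond$-contributions, and use the vanishing average $\fint_{A_\rho} H_{\Psi_{k,\rho}}=0$ to absorb the $H$-part into the $\Arond$-part. The paper packages that last step as \Cref{control_h_a}.

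One small correction: the estimate $\|(H-\fint H)e^\lambda\|_{L^2}\leq C\|\Arond e^{-\lambda}\|_{L^2}$ does not come from the Willmore equation or harmonicity of $Y$. It comes from the Gauss--Codazzi equations, which express $\g H$ as a divergence of $\Arond$-type terms for \emph{any} immersion, combined with a Bourgain--Brezis type div-estimate (see the discussion in \Cref{technical_lemmas_gauge}). The Willmore condition is what makes the $\ve$-regularity step available, not the $H$-control. Your identity ``$\g H=\scal{\g Y}{\nu}_\eta$'' is also not quite right since $\nu$ is null; the Codazzi identity is the correct substitute.
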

	
	\begin{proof}
		We proceded as in \cite{bernard2020}. By definition of $ H_{\Psi_{k,\rho}}$ and $\Arond_{\Psi_{k,\rho}}$, we have
		\begin{align*}
			\| \g \vec{n}_{\Psi_{k,\rho}} \|_{L^2(A_\rho)} &\leq \left\| H_{\Psi_{k,\rho}} e^{\lambda_{\Psi_{k,\rho}} } \right\|_{L^2(A_\rho)} + \left\| \Arond_{\Psi_{k,\rho}} e^{-\lambda_{\Psi_{k,\rho}} } \right\|_{L^2(A_\rho)}.
		\end{align*}
		Since $\fint_{A_\rho} H_{\Psi_{k,\rho}}=0$,
		\begin{align*}
			\| \g \vec{n}_{\Psi_{k,\rho}} \|_{L^2(A_\rho)}  &\leq \left\| \left(H_{\Psi_{k,\rho}} - \fint_{A_\rho} H_{\Psi_{k,\rho}} \right) e^{\lambda_{\Psi_{k,\rho}} } \right\|_{L^2(A_\rho)} + \left\| \Arond_{\Psi_{k,\rho}} e^{-\lambda_{\Psi_{k,\rho}} } \right\|_{L^2(A_\rho)}.
		\end{align*}
		By the \cref{control_h_a},
		\begin{align*}
			\| \g \vec{n}_{\Psi_{k,\rho}} \|_{L^2(A_\rho)} &\leq C \left\| \Arond_{\Psi_{k,\rho}} e^{-\lambda_{\Psi_{k,\rho}} } \right\|_{L^2(A_\rho)}.
		\end{align*}
		By conformal invariance,
		\begin{align*}
			\| \g \vec{n}_{\Psi_{k,\rho}} \|_{L^2(A_\rho)} &\leq C \left\| \Arond_{\Phi_k} e^{-\lambda_{\Phi_k} } \right\|_{L^2(A_\rho)}.
		\end{align*}
		We conclude with the $\ve$-regularity.
	\end{proof}

	\begin{lemma}\label{existence_gauge}
		There exists $C>1$ such that for every $k\in\N$, there exists a path $(\Xi_{k,\rho})_{\rho\in[Rr_k, 1/(2R)]}$ of conformal transfomations in $\R^3$ such that $(M_{k,\rho})_{\rho\in[Rr_k, 1/(2R)]} := (M_{\Xi_{k,\rho}})_{\rho\in[Rr_k, 1/(2R)]} \subset SO(4,1)$ satisfies : \\
		For every $\rho_k \in[Rr_k, \frac{1}{R}]$,
		\begin{align}
			\rho_k\| M_{k,\rho_k} \g Y_k \|_{L^\infty_\xi\left( \tilde{A}_{\rho_k} \right) } & \leq C\|\g Y_k \|_{L^2_\eta \left( A_{\rho_k} \right)},  \label{gradientY_bound_good_scale}\\
			\|M_{k,\rho_k} Y_k\|_{L^\infty_\xi (\tilde{A}_{\rho_k})} & \leq C. \label{Linfty_bound_Y}
		\end{align}
		For any $\alpha\in(0,1)$ there exists $C_\alpha>0$ such that for any $k\in\N$ and $s_k\in[ \alpha \rho_k, \frac{1}{R}]$, if $\phi_{k,\rho_k} := \Xi_{k,\rho_k} \circ \Phi_k$ :
		\begin{align}\label{gradientY_bound_wrong_scale}
			s_k\| M_{k,\rho_k} \g Y_k \|_{L^\infty_\xi\left( \tilde{A}_{s_k} \right) } & \leq C_\alpha \|\g \vec{n}_{\phi_{k,\rho_k}} \|_{L^2 \left( A_{s_k} \right)}.
		\end{align}
		Furthermore, the topology of $\phi_{k,\rho_k}(\Sigma)$ is the same as $\Phi_k(\Sigma)$ and we have the bound
		\begin{align}\label{eq:Linfty_phi}
			\sup_{k\in\N,\rho_k\in[Rr_k,1/R]} \| \phi_{k,\rho_k} \|_{L^\infty(\Sigma)} <\infty.
		\end{align}
	\end{lemma}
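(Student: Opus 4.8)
The plan is to build the gauge $\Xi_{k,\rho}$ by patching together the scale-by-scale conformal transformations $\Theta_{k,\rho}$ produced in \Cref{gauge_dyadic_annuli}, and then to verify the four displayed estimates by combining the corollary on the control of $\g\vec n$ by $\Arond$ with the $\ve$-regularity of Rivière \cite{riviere2008} and the explicit form of the conformal Gauss map in \eqref{def_Y_R3}.

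\begin{proof}
	Fix $k$. For each dyadic scale $\rho\in[Rr_k,1/(2R)]$ let $\Theta_{k,\rho}$ be the conformal transformation given by \Cref{gauge_dyadic_annuli}, set $\Xi_{k,\rho}:=\Theta_{k,\rho}$, $M_{k,\rho}:=M_{\Xi_{k,\rho}}$ and $\phi_{k,\rho}:=\Xi_{k,\rho}\circ\Phi_k$. Since $\Theta_{k,\rho}$ is obtained from $\Phi_k$ by a dilation, a rotation and a single inversion $\iota_{a_{k,\rho}}$ whose center avoids $\Phi_k(\Sigma)$, the immersion $\phi_{k,\rho}$ has the same topology as $\Phi_k$, which is the second-to-last claim; its conformal Gauss map is exactly $M_{k,\rho}Y_k$ by the transformation rule $Y_{\Xi\circ\Psi}=M_\Xi Y$.

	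\textbf{The two estimates \eqref{gradientY_bound_good_scale}--\eqref{Linfty_bound_Y} at the good scale.} Using \eqref{def_Y_R3} for $\phi_{k,\rho_k}$ we have $M_{k,\rho_k}\g Y_k=(\g H_{\phi_{k,\rho_k}})\,\nu_{\phi_{k,\rho_k}}-e^{-2\lambda_{\phi_{k,\rho_k}}}\Arond_{\phi_{k,\rho_k}}\g\phi_{k,\rho_k}$, and similarly $M_{k,\rho_k}Y_k=H_{\phi_{k,\rho_k}}\nu_{\phi_{k,\rho_k}}+\vec n_{\phi_{k,\rho_k}}$-type terms. The normalization \eqref{normalization_phi} gives $C^{-1}<|\g\phi_{k,\rho_k}|<C$ on $A_{\rho_k}$ (after the rescaling $z\mapsto z/\rho_k$), together with $\|\phi_{k,\rho_k}\|_{L^\infty(A_1)}\le C$, so the Euclidean size of $\nu_{\phi_{k,\rho_k}}$ is bounded on $\tilde A_{\rho_k}$. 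On the middle annulus one has the $\ve$-regularity of \cite{riviere2008} for $\phi_{k,\rho_k}$ (its $L^2$ second fundamental form energy is controlled by $\ve_0$ via the first estimate in \eqref{apriori_necks_R3} and conformal invariance of $|\g\vec n|^2$), which upgrades the $L^2$ bounds on $\g\vec n_{\phi_{k,\rho_k}}$ and on $H_{\phi_{k,\rho_k}}-\fint H_{\phi_{k,\rho_k}}=H_{\phi_{k,\rho_k}}$ to $L^\infty$ bounds on slightly smaller annuli, with the pointwise curvature bound $\rho_k\|\g\vec n_{\phi_{k,\rho_k}}\|_{L^\infty(\hat A_{\rho_k})}\le C\|\Arond_{\phi_{k,\rho_k}}e^{-\lambda}\|_{L^2(A_{\rho_k})}\le C\|\g Y_k\|_{L^2_\eta(A_{\rho_k})}$ of \Cref{control_normal_by_arond} and the relation $\scal{\dr_i Y}{\dr_j Y}_\eta=\tfrac12|\Arond|^2_g g_{ij}$. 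Feeding these $L^\infty$ bounds into the two displayed expressions for $M_{k,\rho_k}Y_k$ and $M_{k,\rho_k}\g Y_k$ yields \eqref{Linfty_bound_Y} and \eqref{gradientY_bound_good_scale} on $\tilde A_{\rho_k}$.

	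\textbf{The estimate \eqref{gradientY_bound_wrong_scale} at a wrong scale, and \eqref{eq:Linfty_phi}.} For $s_k\in[\alpha\rho_k,1/R]$ the inversion center $a_{k,\rho_k}$ need not be close to $\phi_{k,\rho_k}(A_{s_k})$, so the conformal factor of $\phi_{k,\rho_k}$ is no longer normalized there; nevertheless $\|\g\phi_{k,\rho_k}\|_{L^\infty(A_{s_k})}$ is comparable, up to a constant depending only on $\alpha$ and on the bound $t$ from \Cref{gauge_dyadic_annuli}, to $s_k$ times $\dist(a_{k,\rho_k},\cdot)^{-2}$, hence still finite; running the same $\ve$-regularity argument for $\phi_{k,\rho_k}$ on $A_{s_k}$ (no bubble appears there, by \Cref{remark_possible_sphere}) gives $s_k\|M_{k,\rho_k}\g Y_k\|_{L^\infty_\xi(\tilde A_{s_k})}\le C_\alpha\|\g\vec n_{\phi_{k,\rho_k}}\|_{L^2(A_{s_k})}$. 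Finally \eqref{eq:Linfty_phi} follows from Lemma 5.3 of \cite{riviere2013a}: since $\phi_{k,\rho_k}$ is a conformal immersion of the closed surface $\Sigma$ with uniformly bounded $L^2$ second fundamental form \eqref{hyp:uniform_L2_sff}, area bounded from below, and the point normalization $\phi_{k,\rho_k}(1)=0$ of \eqref{normalization_phi}, that lemma yields a uniform $L^\infty$ bound on $\phi_{k,\rho_k}$ over $\Sigma$, uniformly in $k$ and $\rho_k$.
\end{proof}

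The step I expect to be the main obstacle is the wrong-scale estimate \eqref{gradientY_bound_wrong_scale}: one must control $\g Y_k$ pointwise at a scale where the chosen gauge is not adapted, and the danger is precisely the round-sphere bubble hiding at the inversion center described in \Cref{remark_possible_sphere}. Handling it requires tracking how the $\dist(a_{k,\rho_k},\phi_{k,\rho_k}(A_{s_k}))$-dependence enters $\|\g\phi_{k,\rho_k}\|_{L^\infty(A_{s_k})}$ and checking — via the pointwise conformal invariance of $|\Arond|^2\,d\vol$ and the no-new-bubble reduction — that no genuine concentration of $|\g\vec n|^2$ occurs on $A_{s_k}$, so that $\ve$-regularity still applies there with a constant $C_\alpha$ blowing up only as $\alpha\to0$.
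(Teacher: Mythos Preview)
Your treatment of the good-scale estimates \eqref{gradientY_bound_good_scale}--\eqref{Linfty_bound_Y} is essentially the paper's argument. There are, however, two genuine gaps.

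\textbf{The wrong-scale estimate \eqref{gradientY_bound_wrong_scale}.} You assert that $\|\g\phi_{k,\rho_k}\|_{L^\infty(A_{s_k})}$ is controlled by a constant depending only on $\alpha$ and $t$. But $s_k\in[\alpha\rho_k,1/R]$ bounds the ratio $s_k/\rho_k$ only from \emph{below}; it can be arbitrarily large. The conformal factor $\mu_{k,\rho_k}$ of $\phi_{k,\rho_k}$ is normalized only on $A_1$ (after rescaling), and there is no a priori reason it stays bounded on $A_{s_k/\rho_k}$ when $s_k/\rho_k\to\infty$. The paper handles this by a quantitative argument: writing $\bar\mu:=\fint_{\tilde A_{s_k/\rho_k}}\mu_{k,\rho_k}$, one first obtains
\[
s_k\|M_{k,\rho_k}\g Y_k\|_{L^\infty_\xi(\tilde A_{s_k})}\le C\Big(1+\tfrac{\rho_k}{s_k}e^{-\bar\mu}\Big)\|\g\vec n_{\phi_{k,\rho_k}}\|_{L^2(A_{s_k/\rho_k})},
\]
and then controls $e^{-\bar\mu}$ by telescoping the averages $\fint_{\tilde A_{2^i}}\mu_{k,\rho_k}$ over the $\sim\log(s_k/\rho_k)$ intermediate dyadic annuli, using the Harnack-type bounds on the conformal factor from \cite{laurain2018,bernard2019}. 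Each step contributes at most $C\sup_\rho\|\g\vec n\|_{L^2(B_{2\rho}\setminus B_\rho)}^2\le C\ve_0$, so (reducing $\ve_0$ if needed) $e^{-\bar\mu}\le C\,2^j\le C\,s_k/\rho_k$, which exactly cancels the prefactor $\rho_k/s_k$. Your sketch does not supply any mechanism for this cancellation; the appeal to $\dist(a_{k,\rho_k},\cdot)$ does not give a bound uniform in $s_k/\rho_k$.

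\textbf{The global bound \eqref{eq:Linfty_phi}.} You invoke the lemma of \cite{riviere2013a} to get a direct $L^\infty$ bound from the point normalization and the $L^2$ bound on the second fundamental form. That is not what the lemma gives, and in fact $\phi_{k,\rho_k}(\Sigma)$ as constructed need not be bounded: it is the image of the closed surface under an inversion whose center merely avoids the image, so points approaching that center from outside are sent near infinity. The paper's fix is to use the lemma in its actual form --- it produces a ball $B_r(p_k)$ with $p_k$ bounded and $\phi_{k,\rho_k}(\Sigma)\cap B_r(p_k)=\emptyset$ --- and then to \emph{compose $\Xi_{k,\rho_k}$ with one further inversion} centered at $p_k$. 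This extra inversion corresponds to multiplication of $Y_k$ by a bounded matrix in $SO(4,1)$, so \eqref{gradientY_bound_good_scale}--\eqref{gradientY_bound_wrong_scale} are preserved, while the image is now contained in $\overline{B_r(0)}$. Your definition $\Xi_{k,\rho}:=\Theta_{k,\rho}$ omits this last step.
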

	
	\begin{remark}\label{global_L2xi_bound_Y}
		As a consequence, we have the $L^2_\xi$ bound : $\sup_{k\in\N} \int_{[ \alpha \rho_k, \frac{1}{R}]\times \s^1} |M_{k,\rho_k} \g Y_k|^2_\xi <\infty $. Indeed, since there is no new bubble, the formula \eqref{conformal_change_secondff} and the estimate \eqref{gradientY_bound_wrong_scale} shows that 
		\begin{align*}
			\int_{[ \rho_k, \frac{1}{2R}]\times \s^1} |M_{k,\rho_k} \g Y_k|^2_\xi &\leq C \int_{[\frac{1}{2} \rho_k, \frac{1}{R}]\times \s^1} |\g \vec{n}_{\phi_{k,\rho_k}}|^2_{\tilde{g}_k} d\vol_{\tilde{g}_k} \\
			&\leq C\int_{[\frac{1}{2} \rho_k, \frac{1}{R}]\times \s^1} |\g \vec{n}_{\Phi_k}|^2_{g_k} d\vol_{g_k} + o(1),
		\end{align*}
		where $\tilde{g}_k := \phi_{k,\rho_k}^*\xi$ and $g_k = \Phi_k^* \xi$.
	\end{remark}
	
	\begin{proof}
		Let $\Psi_{k,\rho_k}$ be as in the \cref{gauge_dyadic_annuli} and
		\begin{align*}
			\phi_{k,\rho_k}(z) &:=  \frac{1}{\rho_k} e^{-\overline{\lambda_k}} \left( \Psi_{k,\rho_k}(\rho_k z) - \Psi_{k,\rho_k}(\rho_k) \right), \\
			\overline{\lambda_k} &:= \fint_{A_{\rho_k}} \lambda_k .
		\end{align*}
		Let $\mu_{k,\rho_k}$ be the conformal factor of $\phi_{k,\rho_k}$. By the definition of $\Psi_{k,\rho_k}$ by (\ref{def_psi}), and the controls on the conformal factor, \cite[Theorem 0.2]{laurain2018} and \cite[Lemma II.1]{bernard2019} :
		\begin{align}\label{forget_conf_factor}
			\frac{1}{C} \leq e^{\mu_{k,\rho_k}} \leq C\ \ \ \ \text{on }A_1.
		\end{align}
		In particular, $\left\| \phi_{k,\rho_k} \right\|_{L^\infty(\tilde{A}_1)} \leq C$. We also have the bound
		\begin{align*}
			\|\g \vec{n}_{\phi_{k,\rho_k}} \|_{L^\infty \left( \hat{A}_1 \right)} &\leq C \left\| \Arond_{\phi_{k,\rho_k}} e^{-\lambda_{k,\rho_k}} \right\|_{L^2 \left( A_1 \right) }.
		\end{align*}
		Note that $\phi_{k,\rho_k}(z) = \Xi_{k,\rho_k} \circ \Phi_k(\rho_k z)$ for some $\Xi_{k,\rho_k} \in \Conf(\R^3)$. We denote $M_{k,\rho_k} := M_{\Xi_{k,\rho_k}} $.\\
		
		\underline{Proof of the estimate \eqref{Linfty_bound_Y} :}\\
		We can estimate the mean curvature of $\phi_{k,\rho_k}$ in the following way : since $C^{-1}< \fint_{\tilde{A}_1} \mu_{k,\rho_k}<C$,
		\begin{align*}
			\left\| H_{\phi_{k,\rho_k}} \right\|_{L^\infty(\tilde{A}_1)} &= \left\| e^{-2\mu_{k,\rho_k}} \scal{\lap \phi_{k,\rho_k}}{\vec{n}_{\phi_{k,\rho_k}}} \right\|_{L^\infty(\tilde{A}_1)}\\
			&\leq C\left\| e^{-2\mu_{k,\rho_k}} |\g^2 \phi_{k,\rho_k} | \right\|_{L^\infty(\tilde{A}_1)}.
		\end{align*}
		By (\ref{forget_conf_factor}) :
		\begin{align*}
			\left\| H_{\phi_{k,\rho_k}} \right\|_{L^\infty(\tilde{A}_1)} &\leq C \left\| e^{-\mu_{k,\rho_k}} |\g^2 \phi_{k,\rho_k} | \right\|_{L^\infty(\tilde{A}_1)}.
		\end{align*}
		By the $\ve$-regularity :
		\begin{align*}
			\left\| H_{\phi_{k,\rho_k}} \right\|_{L^\infty(\tilde{A}_1)} &\leq C \left( 1+ \left\|\g \vec{n}_{\phi_{k,\rho_k}} \right\|_{L^2(\hat{A}_1)} \right).
		\end{align*}
		By the lemma \ref{control_normal_by_arond} :
		\begin{align*}
			\left\| H_{\phi_{k,\rho_k}} \right\|_{L^\infty(\tilde{A}_1)} &\leq C\left( 1+ \|\g Y_k\|_{L^2_\eta(A_\rho)} \right)\\
			&\leq C.
		\end{align*}
		For the conformal Gauss map, we have $M_{k,\rho_k} Y_k(\rho_k z) = Y_{\phi_{k,\rho_k}}(z)$, so that
		\begin{align*}
			\| M_{k,\rho_k}Y_k\|_{L^\infty_\xi (\tilde{A}_{\rho_k})} &= \left\| Y_{\phi_{k,\rho_k}} \right\|_{L^\infty_\xi(\tilde{A}_1)} \\
			&\leq \|H_{\phi_{k,\rho_k}}\|_{L^\infty(\tilde{A}_1)} \left( 1+ \|\phi_{k,\rho_k} \|_{L^\infty(\tilde{A}_1)} + \|\phi_{k,\rho_k} \|_{L^\infty(\tilde{A}_1)}^2 \right) \\
			& + 1+ \|\phi_{k,\rho_k} \|_{L^\infty(\tilde{A}_1)} \\
			&\leq C.
		\end{align*}
		
		\underline{Proof of the estimate \eqref{gradientY_bound_good_scale} :}\\
		Considering the gradient, we have
		\begin{align*}
			\rho_k \left\| M_{k,\rho_k} \g Y_k  \right\|_{L^\infty_\xi(\tilde{A}_{\rho_k})} &= \| \g Y_{\phi_{k,\rho_k}} \|_{L^\infty_\xi (\tilde{A}_1)}\\
			&\leq \left\|\g H_{\phi_{k,\rho_k}} \right\|_{L^\infty(\tilde{A}_1)} +C \left\| \Arond_{\phi_{k,\rho_k}} e^{-\mu_{k,\rho_k}} \right\|_{L^\infty(\tilde{A}_1)} \\
			&\leq C\left\| \g \left( e^{-2\mu_{k,\rho_k}} \scal{\g \phi_{k,\rho_k}}{\g \vec{n}_{\phi_{k,\rho_k}}} \right) \right\|_{L^\infty(\tilde{A}_1)} + C\|\g \vec{n}_{\phi_{k,\rho_k}} \|_{L^\infty(\tilde{A}_1)} \\
			&\leq C\left\| e^{-\mu_{k,\rho_k}} |\g^2\vec{n}_{\phi_{k,\rho_k}}| + e^{-2\mu_{k,\rho_k}} |\g^2 \phi_{k,\rho_k}| |\g \vec{n}_{\phi_{k,\rho_k}} | \right\|_{L^\infty(\tilde{A}_1)} +  C\|\g \vec{n}_{\phi_{k,\rho_k}} \|_{L^\infty(\tilde{A}_1)} \\
			&\leq C \|\g^2 \vec{n}_{\phi_{k,\rho_k}} \|_{L^\infty(\tilde{A}_1)} +  \left\| e^{-\mu_{k,\rho_k}} \g^2 \phi_{k,\rho_k} \right\|_{L^\infty(\tilde{A}_1)} \|\g \vec{n}_{\phi_{k,\rho_k}} \|_{L^2(\tilde{A}_1)} + C\|\g \vec{n}_{\phi_{k,\rho_k}} \|_{L^\infty(\tilde{A}_1)} \\
			&\leq C\|\g Y_k\|_{L^2_\eta(A_\rho)}.
		\end{align*}
		
		\underline{Proof of the estimate \eqref{gradientY_bound_wrong_scale} :}\\
		We proceed as for (\ref{gradientY_bound_good_scale}).
		\begin{align*}
			\rho_k \left\| M_{k,\rho_k} \g Y_k  \right\|_{L^\infty_\xi(\tilde{A}_{s_k})} &= \| \g Y_{\phi_{k,\rho_k}} \|_{L^\infty_\xi (\tilde{A}_{s_k/\rho_k})}\\
			&\leq \left\|\g H_{\phi_{k,\rho_k}} \right\|_{L^\infty(\tilde{A}_{s_k/\rho_k})} +C \left\| \Arond_{\phi_{k,\rho_k}} e^{-\mu_{k,\rho_k}} \right\|_{L^\infty(\tilde{A}_{s_k/\rho_k})} \\
			&\leq C\left\| \g \left( e^{-2\mu_{k,\rho_k}} \scal{\g \phi_{k,\rho_k}}{\g \vec{n}_{\phi_{k,\rho_k}}} \right) \right\|_{L^\infty(\tilde{A}_{s_k/\rho_k})} + C\| \g \vec{n}_{\phi_{k,\rho_k}} \|_{L^\infty(\tilde{A}_{s_k/\rho_k})} \\
			&\leq C\left\| e^{-\mu_{k,\rho_k}} |\g^2\vec{n}_{\phi_{k,\rho_k}}| + e^{-2\mu_{k,\rho_k}} |\g^2 \phi_{k,\rho_k}| |\g \vec{n}_{\phi_{k,\rho_k}} | \right\|_{L^\infty(\tilde{A}_{s_k/\rho_k})} +  C\|\g \vec{n}_{\phi_{k,\rho_k}} \|_{L^\infty(\tilde{A}_{s_k/\rho_k})}.
		\end{align*}
		Using the harnack estimate on the conformal factor, \cite[Lemma II.1]{bernard2019}, and letting $\overline{\mu_{k,\rho_k}} := \fint_{\tilde{A}_{s_k/\rho_k}} \mu_{k,\rho_k}$, we obtain
		\begin{align*}
			\rho_k \left\| M_{k,\rho_k} \g Y_k  \right\|_{L^\infty_\xi(\tilde{A}_{s_k})} &\leq C e^{-\overline{\mu_{k,\rho_k}}} \Big( \|\g^2 \vec{n}_{\phi_{k,\rho_k}} \|_{L^\infty(\tilde{A}_{s_k/\rho_k})} \\
			&+  \left\| e^{-\mu_{k,\rho_k}} \g^2 \phi_{k,\rho_k} \right\|_{L^\infty(\tilde{A}_{s_k/\rho_k})} \|\g \vec{n}_{\phi_{k,\rho_k}} \|_{L^\infty(\tilde{A}_{s_k/\rho_k})} \Big) + C\|\g \vec{n}_{\phi_{k,\rho_k}} \|_{L^\infty(\tilde{A}_{s_k/\rho_k})}.
		\end{align*}
		So
		\begin{align*}
			\rho_k \left\| M_{k,\rho_k} \g Y_k  \right\|_{L^\infty_\xi(\tilde{A}_{s_k})} &\leq C e^{-\overline{\mu_{k,\rho_k}}} \left( \frac{\rho_k}{s_k} \right)^2 \|\g \vec{n}_{\phi_{k,\rho_k}} \|_{L^2(A_{s_k/\rho_k})} + C\frac{\rho_k}{s_k}  \|\g \vec{n}_{\phi_{k,\rho_k}} \|_{L^2(A_{s_k/\rho_k})}.
		\end{align*}
		Hence,
		\begin{align}\label{almost_estimate_wrong_scale}
			s_k \left\| M_{k,\rho_k} \g Y_k  \right\|_{L^\infty_\xi(\tilde{A}_{s_k})} &\leq C\left( 1+ \frac{\rho_k}{s_k} e^{-\overline{\mu_{k,\rho_k}}} \right)  \|\g \vec{n}_{\phi_{k,\rho_k}} \|_{L^2(A_{s_k/\rho_k})}.
		\end{align}
		Consider $j\in\N$ such that $2^j \rho_k\leq s_k < 2^{j+1} \rho_k$. We have
		\begin{align*}
			-\overline{\mu_{k,\rho_k}} &= -\fint_{\tilde{A}_{s_k/\rho_k}} \mu_{k,\rho_k} \\
			&= \left( \fint_{\tilde{A}_{2^j}} \mu_{k,\rho_k} -\fint_{\tilde{A}_{s_k/\rho_k}} \mu_{k,\rho_k} \right) - \fint_{\tilde{A}_{2^j}} \mu_{k,\rho_k}\\
			&= \sum_{i=0}^{j-1} \left( \fint_{\tilde{A}_{2^i}} \mu_{k,\rho_k} -\fint_{\tilde{A}_{2^{i+1}}} \mu_{k,\rho_k} \right) + \left( \fint_{\tilde{A}_{2^j}} \mu_{k,\rho_k} -\fint_{\tilde{A}_{s_k/\rho_k}} \mu_{k,\rho_k} \right) - \fint_{\tilde{A}_1} \mu_{k,\rho_k} .
		\end{align*}
		So
		\begin{align*}
			e^{-\overline{\mu_{k,\rho_k}}} &= \exp\left[ \left( \fint_{\tilde{A}_{2^j}} \mu_{k,\rho_k} -\fint_{\tilde{A}_{s_k/\rho_k}} \mu_{k,\rho_k} \right) - \fint_{\tilde{A}_1} \mu_{k,\rho_k} \right] \prod_{i=0}^{j-1} \exp\left[ \fint_{\tilde{A}_{2^i}} \mu_{k,\rho_k} -\fint_{\tilde{A}_{2^{i+1}}} \mu_{k,\rho_k} \right].
		\end{align*}
		If $j\geq 0$, using the controls on the conformal factor, see \cite[Theorem 0.2]{laurain2018} and \cite[Lemma II.1]{bernard2019}, together with \eqref{forget_conf_factor}, we obtain
		\begin{align*}
			e^{-\overline{\mu_{k,\rho_k}}} &\leq C \exp\left[ Cj \sup_{\rho\in[ Rr_k, \frac{1}{2R}]} \|\g \vec{n}_{\phi_{k,\rho_k}} \|_{L^2(B_{2\rho}\setminus B_\rho)}^2 \right].
		\end{align*}
		Using \eqref{apriori_necks_R3}, up to reduce $\ve_0$ we obtain
		\begin{align*}
			e^{-\overline{\mu_{k,\rho_k}}} &\leq Ce^{(\log 2)j} \leq C\frac{s_k}{\rho_k}.
		\end{align*}
		Therefore, \eqref{almost_estimate_wrong_scale} reduces to
		\begin{align*}
			s_k \left\| M_{k,\rho_k} \g Y_k  \right\|_{L^\infty_\xi(\tilde{A}_{s_k})} &\leq C  \|\g \vec{n}_{\phi_{k,\rho_k}} \|_{L^2(A_{s_k/\rho_k})}.
		\end{align*}

		\underline{Topology :}\\
		The topology is preserved thanks to the \cref{gauge_dyadic_annuli}. \\
		
		\underline{$L^\infty$-bound :}\\
		Here we use the fact that $\Phi_k$ is defined on a whole closed surface $\Sigma$ together with the bound \eqref{hyp:uniform_L2_sff}. The $L^\infty$ bound is given by the Lemma 5.13 in \cite{riviere2013a} : There exists $r>0$ and a bounded sequence $(p_k)_{k\in\N}\subset \R^3$ such that for any $k\in\N$, $\phi_{k,\rho_k}(\Sigma)\cap B_r(p_k) = \emptyset$. We consider the inversion $\iota_k$ with respect to $B_r(p_k)$.
		For the conformal Gauss map, the inversion $\iota_k$ corresponds to a multiplication by a bounded matrix. Hence, we preserve the estimates $L^\infty_\xi\cap W^{1,2}_\xi$. The resulting conformal transformation is the one we are looking for.
	\end{proof}

	\begin{remark}\label{remark_creation_oscillation}
		We conclude the \cref{remark_possible_sphere}. We now prove that the sphere that might appear by our choice of conformal transformation generates oscillations of the conformal Gauss map. The radius of this new sphere must go to $0$. Indeed, the bubble has to be degenerated thanks to the proof of \cite[Theorem 0.2]{laurain2018a}. We sketch it briefly for completeness. If the sphere wasn't degenerated, then we could blow up the neck region and obtain a Willmore sphere with at least 2 ends. With an appropriate inversion, we obtain a Willmore sphere. By Bryant's classification, this surface has at least 4 ends. Thanks to \eqref{apriori_necks_R3}, this surface must be totally umbilic, contradiction. Hence, the sphere has to be degenerated. For a sphere $\s(p,r)$ of center $p\in\R^3$ and radius $r>0$, the conformal Gauss map is constant and given by 
		\begin{align*}
			Y_{\s(p,r)} &= \frac{1}{r} \begin{pmatrix}
				p\\ \frac{|p|^2-r^2 - 1}{2} \\ \frac{|p|^2 - r^2 +1}{2}
			\end{pmatrix}.
		\end{align*}
		If $r\to 0$, then $Y_{\s(p,r)} \to \infty$. By \eqref{Linfty_bound_Y}, there must be oscillations.
	\end{remark}

	\subsection{Convergence to a geodesic in the $C^1$-topology }\label{section_convergence_geodesic}

	The goal of the section, see the proposition \ref{convergence_geodesic}, is to prove that on a region of fixed oscillations, with the hypothesis of the lemma \ref{existence_gauge}, then $(Y_k)_k$ converges to a light-like geodesic. Following the analysis of \cite{zhu2008}, we consider a domain as a long cylinder. We consider the cylinders
	\begin{align*}
		\Cr_k &:= [-T_k, T_k]\times \s^1, \\
		\bar{\Cr}_k &:=[-T_k + 2, T_k-2]\times \s^1, \\
		\forall s,t,\ \ \ Q(t,s) &:= [t-s,t+s]\times \s^1 .
	\end{align*}
	where $T_k \xrightarrow[k\to \infty]{}{+\infty}$.\\
	
	\begin{claim}
		We can restrict ourselves to the following setting : there exists some fixed $\delta>0$, such that
		\begin{align}
			& \sup_{k\in\N} \Big( \|\g Y_k\|_{L^2_\xi(\Cr_k)} + \| Y_k \|_{L^\infty_\xi(\Cr_k)} \Big) <\infty, \label{uniform_Linfty_bound} \\
			& \| \g Y_k\|_{L^\infty_\xi(\bar{\Cr}_k)} \xrightarrow[k\to \infty]{}{0}, \label{unif_conv_to_zero}  \\
			& \osc_{\Cr_k} Y_k = \delta. \label{hypothesis_oscillation}
		\end{align}
	\end{claim}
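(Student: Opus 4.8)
The plan is to read off \eqref{uniform_Linfty_bound}--\eqref{hypothesis_oscillation} as a mere normalisation of the hypotheses of \Cref{bubbling_CGM}, using the gauge built in \Cref{existence_gauge}. Passing to cylinder coordinates $w=t+i\theta=-\log z$, a dyadic annulus $A_\rho=B_{2\rho}\setminus B_\rho$ becomes a sub-cylinder of bounded length, and the weighted gradient $\rho\,\g_z$ becomes $\g_{(t,\theta)}$ up to a fixed constant. First I would fix, for each $k$, the matrix $M_k:=M_{k,\rho_k}$ from \Cref{existence_gauge} at the scale $\rho_k$ corresponding to one endpoint of the window $[t_k-1,t_k+\kappa_k]\times\s^1$ on which $\osc M_kY_k=\delta$, choosing that endpoint so that every scale occurring in the window lies in the admissible range $[\alpha\rho_k,1/R]$ of \eqref{gradientY_bound_wrong_scale}; then recentre and relabel the window as $\Cr_k=[-T_k,T_k]\times\s^1$ and replace $Y_k$ by $M_kY_k$ (still denoted $M_kY_k$ below, and relabelled $Y_k$ in the statement of the claim). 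After this, \eqref{hypothesis_oscillation} holds by construction.

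Next I would verify the two remaining estimates. The uniform bound $\sup_k\|\g Y_k\|_{L^2_\xi(\Cr_k)}<\infty$ is \Cref{global_L2xi_bound_Y} transported to the cylinder. For the $L^\infty_\xi$ bound, \eqref{Linfty_bound_Y} gives $\|M_kY_k\|_{L^\infty_\xi}\le C$ on the sub-cylinder sitting over the base scale, and since $\osc_{\Cr_k}M_kY_k=\delta$ this propagates to all of $\Cr_k$ (this also re-derives the base-point bound assumed in \Cref{bubbling_CGM}). For \eqref{unif_conv_to_zero}, transport \eqref{gradientY_bound_good_scale} and \eqref{gradientY_bound_wrong_scale}: in cylinder coordinates they both read $\|\g(M_kY_k)\|_{L^\infty_\xi([t-1,t+1]\times\s^1)}\le C\big(\|\g Y_k\|_{L^2_\eta}+\|\g\vec n_{\phi_{k,\rho_k}}\|_{L^2}\big)$ over the corresponding unit piece. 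By the conformal change of the second fundamental form \eqref{conformal_change_secondff}, together with the fact recorded in \Cref{remark_possible_sphere} that the gauge creates no nondegenerate bubble, one has $\|\g\vec n_{\phi_{k,\rho_k}}\|_{L^2(\text{unit piece})}^2\le C\|\g\vec n_{\Phi_k}\|_{L^2(\text{unit piece})}^2+o(1)$, and likewise $\|\g Y_k\|_{L^2_\eta}^2\le C\|\g\vec n_{\Phi_k}\|_{L^2}^2$; the no-bubble hypothesis \eqref{apriori_necks_cylinder} then forces both quantities to $0$ uniformly for $t\in[-T_k+2,T_k-2]$, which is \eqref{unif_conv_to_zero}.

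Finally I would check $T_k\to\infty$: otherwise $\kappa_k$ stays bounded along a subsequence, $\Cr_k$ has bounded length, $\|\g(M_kY_k)\|_{L^\infty_\xi}\to 0$ on all of $\Cr_k$ (the interior by \eqref{unif_conv_to_zero}, the two end-collars of bounded length again by \eqref{gradientY_bound_good_scale}--\eqref{gradientY_bound_wrong_scale}), and controlling the angular oscillation of each slice $\{t\}\times\s^1$ by the same quantity via the conformal relation \eqref{residue_hopf_diff}, one would get $\osc_{\Cr_k}M_kY_k\to 0$, contradicting $\osc_{\Cr_k}M_kY_k=\delta>0$. The one genuinely delicate point is that a single matrix $M_k$ must serve simultaneously on the whole degenerating cylinder $\Cr_k$: this is exactly the content of the scale-uniform estimate \eqref{gradientY_bound_wrong_scale}, whose proof already absorbs the drift of the conformal factor across dyadic scales through the Harnack control of \cite{laurain2018,bernard2019}; one also keeps in mind, via \Cref{remark_creation_oscillation}, that the at-most-degenerate sphere the gauge may create contributes only to the oscillation of the conformal Gauss map, which is precisely why reducing to a region of prescribed oscillation $\delta$ is the natural normalisation.
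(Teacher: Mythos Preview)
Your argument is correct and follows essentially the same route as the paper: both invoke \Cref{existence_gauge} for the gauge, \Cref{global_L2xi_bound_Y} for the $L^2_\xi$ bound, the no-new-bubble observation of \Cref{remark_possible_sphere} together with \eqref{gradientY_bound_wrong_scale} and \eqref{apriori_necks_cylinder} for \eqref{unif_conv_to_zero}, and then propagate the $L^\infty_\xi$ bound from the base slice to all of $\Cr_k$ via the prescribed oscillation. The only cosmetic differences are that the paper fixes the gauge once at the outer end and then frames the restriction as a dichotomy using the oscillation inequality \eqref{oscillations} from \cite{zhu2008}, whereas you pin the gauge at the endpoint compatible with the range of \eqref{gradientY_bound_wrong_scale} and check $T_k\to\infty$ directly; both are equivalent normalisations.
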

	
	\begin{proof}
		By the lemma \ref{existence_gauge} and the remark \ref{global_L2xi_bound_Y}, we are in the following setting. The sequence $(Y_k)_{k\in\N}$ is a defined from the flat cylinder $\Cr_k$ into $\s^{3,1}$, with the following estimates (for simplicity, we consider that the matrix in $SO(4,1)$ is $I_5$) :
		\begin{align*}
			& \sup_{k\in\N} \Big( \|\g Y_k\|_{L^2_\xi(\Cr_k)} + \| Y_k \|_{L^\infty_\xi(Q(-T_k+1,1))} \Big) <\infty, \\
			& \| \g Y_k\|_{L^\infty_\xi(\bar{\Cr}_k)} \xrightarrow[k\to \infty]{}{0}.
		\end{align*}
		
		Let $t_1,t_2\in [-T_k,T_k]$ with $t_1<t_2$. We define the euclidean average length of $Y_k$ between $t_1$ and $t_2$ :
		\begin{align*}
			L_k(t_1,t_2) := \int_{t_1}^{t_2} \left( \int_{\s^1} |\dr_t Y_k(t,\theta)|^2_\xi d\theta \right)^\frac{1}{2} dt .
		\end{align*}
		
		We recall the proposition 3.5 in \cite{zhu2008} : for any $-T_k < t_1<t_2 < T_k$,
		\begin{align}\label{oscillations}
			\osc_{[t_1,t_2]\times \s^1} Y_k &\leq 4\pi \|\g Y_k\|_{L^\infty_\xi(\bar{\Cr}_k)} + \frac{1}{\sqrt{2\pi}} L(t_1,t_2) .
		\end{align}
		Hence, we have two possibilities :
		\begin{itemize}
			\item $\osc_{[t_1,t_2]\times \s^1} Y_k \xrightarrow[k\to \infty]{}{0}$ : there is no oscillation in the neck, geometrically, this means that the two sides $\displaystyle{\lim_{k\to\infty}} Y_k(\cdot + T_k)$ and $\displaystyle{\lim_{k\to\infty}} Y_k(\cdot - T_k)$ are connected.
			\item There exists $\delta>0$ fixed and $t_k > -T_k+1$ such that
			\begin{align}
				\osc_{[-T_k,t_k]\times \s^1} Y_k &= \delta .
			\end{align}
			In this case, the euclidean average length $L(-T_k,T_k)$ is bounded away from $0$.
		\end{itemize}
		We focus on the second case : we assume that \eqref{hypothesis_oscillation} holds until the end of the section. Therefore, up to change the cylinder, we have the following estimates :
		\begin{align*}
			& \sup_{k\in\N} \Big( \|\g Y_k\|_{L^2_\xi(\Cr_k)} + \| Y_k \|_{L^\infty_\xi(\Cr_k)} \Big) <\infty, \\
			& \| \g Y_k\|_{L^\infty_\xi(\bar{\Cr}_k)} \xrightarrow[k\to \infty]{}{0}.
		\end{align*}
	\end{proof}
	
	For a function $g : \R^2 \to \R$, we let $g^*(r) := \fint_{\s^1} g(r,\theta) d\theta = \fint_{\dr B_r(0)} g$. If $t\in[-T_k,T_k]$, let
	\begin{align*}
		\alpha_k(t) &:= \int_{\s^1} |\dr_\theta Y_k(t,\theta)|^2_\xi d\theta, \\
		\beta_k(t) &:= \int_{\s^1} |\g Y_k(t,\theta)|^2_\xi d\theta, \\
		\gamma_k(t) &:= \int_{\s^1} |\g(Y_k - Y_k^*)(t,\theta)|^2_\xi d\theta .
	\end{align*}
	By differentiating $\alpha_k$, we can obtain a first estimate independant of the hypothesis \eqref{hypothesis_oscillation} :
	
	\begin{lemma}\label{estimate_xitheta}
		Under the assumptions \eqref{uniform_Linfty_bound} and \eqref{unif_conv_to_zero}, for $k$ large enough, we have 
		\begin{align*}
			\forall t\in [-T_k,T_k], \ \ \ \alpha_k''(t)& \geq \alpha_k(t).
		\end{align*}
		For any $t_1<t<t_2$, we can estimate
		\begin{align}\label{pointwise_alpha}
			\alpha_k(t) &\leq \max(\alpha_k(t_1),\alpha_k(t_2)).
		\end{align}
		Moreover, for any $\nu\in(0,1]$, if $|t_1-t_2|>1$ :
		\begin{align}\label{integral_xitheta}
			\int_{t_1}^{t_2} \alpha_k(t)^\nu dt &\leq C(\nu) \left[ \alpha_k(t_1)^\nu + \alpha_k(t_2)^\nu \right].
		\end{align}
	\end{lemma}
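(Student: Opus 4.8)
\emph{Proof plan.} The plan is to pass to the flat coordinates $(t,\theta)$ on $\Cr_k=[-T_k,T_k]\times\s^1$, abbreviate $u:=Y_k$ with subscripts denoting partial derivatives, and exploit the harmonic map equation for the conformal Gauss map. Since the harmonic map system is conformally invariant in dimension two and $Y_k$ is a conformal harmonic map (here $\Phi_k$ is Willmore on $\Cr_k$), in these coordinates
\begin{align*}
\dr_t^2 u+\dr_\theta^2 u=-f\,u,\qquad f:=|\g u|^2_\eta=|\dr_t u|^2_\eta+|\dr_\theta u|^2_\eta .
\end{align*}
Conformality of $Y_k$ gives moreover $|\dr_t u|^2_\eta=|\dr_\theta u|^2_\eta$, hence $f=2|\dr_\theta u|^2_\eta$; since $\scal{\dr_i Y_k}{\dr_j Y_k}_\eta=\tfrac12|\Arond|^2 g_{ij}$ is positive semi-definite we have $f\ge 0$, and since the $\eta$-norm of a vector is dominated by its $\xi$-norm, $0\le f\le|\g u|^2_\xi$. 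So on $\bar\Cr_k$, assumption \eqref{unif_conv_to_zero} gives $\|f\|_{L^\infty(\bar\Cr_k)}\le\ve_k^2$ with $\ve_k:=\|\g Y_k\|_{L^\infty_\xi(\bar\Cr_k)}\to 0$, while \eqref{uniform_Linfty_bound} gives $\|u\|_{L^\infty_\xi}\le C$. (The differential inequality below is proved on the region where $\g Y_k$ is uniformly small, i.e.\ on $\bar\Cr_k$; this is all that is used, up to relabelling $T_k$.)

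\emph{Step 1: the inequality $\alpha_k''\ge\alpha_k$.} I would differentiate $\alpha_k(t)=\int_{\s^1}|\dr_\theta u|^2_\xi\,d\theta$ twice in $t$, commute $\dr_t$ and $\dr_\theta$, substitute $\dr_t^2 u=-\dr_\theta^2 u-fu$ and integrate by parts in $\theta$ (boundary terms vanish by periodicity), obtaining
\begin{align*}
\alpha_k''(t)=\int_{\s^1}\Big(2|\dr_\theta^2 u|^2_\xi+2|\dr_t\dr_\theta u|^2_\xi+2f\,\scal{u}{\dr_\theta^2 u}_\xi\Big)\,d\theta .
\end{align*}
A further integration by parts rewrites the last, error, term as $-2\int_{\s^1}f_\theta\scal{u}{\dr_\theta u}_\xi-2\int_{\s^1}f|\dr_\theta u|^2_\xi$. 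Then I would use $|f_\theta|=4|\scal{\dr_\theta^2 u}{\dr_\theta u}_\eta|\le 4|\dr_\theta^2 u|_\xi|\dr_\theta u|_\xi$, $|\scal{u}{\dr_\theta u}_\xi|\le\|u\|_{L^\infty_\xi}|\dr_\theta u|_\xi\le C|\dr_\theta u|_\xi$, the pointwise bounds $|\dr_\theta u|_\xi\le\ve_k$ and $f\le\ve_k^2$, and Cauchy--Schwarz plus Young to get
\begin{align*}
\Big|2\int_{\s^1}f_\theta\scal{u}{\dr_\theta u}_\xi\,d\theta\Big|\le 8C\ve_k\Big(\int_{\s^1}|\dr_\theta^2 u|^2_\xi\Big)^{1/2}\alpha_k(t)^{1/2}\le\tfrac14\int_{\s^1}|\dr_\theta^2 u|^2_\xi+C'\ve_k^2\,\alpha_k(t),
\end{align*}
together with $\big|2\int_{\s^1}f|\dr_\theta u|^2_\xi\big|\le 2\ve_k^2\alpha_k(t)$. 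Discarding $2\int|\dr_t\dr_\theta u|^2_\xi\ge 0$ and applying the Wirtinger inequality on $\s^1$, $\int_{\s^1}|\dr_\theta u|^2_\xi\le\int_{\s^1}|\dr_\theta^2 u|^2_\xi$ (valid since $\int_{\s^1}\dr_\theta u\,d\theta=0$ componentwise), I obtain $\alpha_k''(t)\ge\big(\tfrac74-C''\ve_k^2\big)\alpha_k(t)\ge\alpha_k(t)$ for $k$ large.

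\emph{Step 2: the two consequences.} Since $\alpha_k\ge 0$ and $\alpha_k''\ge\alpha_k$, $\alpha_k$ is a subsolution of $\dr_t^2-1$, so it lies below the solution $w$ of $w''=w$ on $[t_1,t_2]$ with the same boundary values, namely $w(t)=\frac{\sinh(t_2-t)}{\sinh(t_2-t_1)}\alpha_k(t_1)+\frac{\sinh(t-t_1)}{\sinh(t_2-t_1)}\alpha_k(t_2)$, because $\alpha_k-w$ satisfies the same inequality with zero boundary values and hence has no positive interior maximum. As $\sinh a+\sinh b\le\sinh(a+b)$ for $a,b\ge 0$, the two coefficients are nonnegative and sum to at most $1$, which gives $\alpha_k(t)\le w(t)\le\max(\alpha_k(t_1),\alpha_k(t_2))$, i.e.\ \eqref{pointwise_alpha}. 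For \eqref{integral_xitheta} I would raise $\alpha_k\le w$ to the power $\nu\in(0,1]$, use $(a+b)^\nu\le a^\nu+b^\nu$, and integrate over $[t_1,t_2]$; with $L:=t_2-t_1>1$ and $s=t_2-t$,
\begin{align*}
\int_{t_1}^{t_2}\Big(\frac{\sinh(t_2-t)}{\sinh L}\Big)^\nu dt=\frac{1}{\sinh(L)^\nu}\int_0^L\sinh(s)^\nu\,ds\le C(\nu),
\end{align*}
using $\sinh s\le\tfrac12 e^s$ and $\sinh L\ge\tfrac14 e^L$ for $L>1$; the symmetric term is identical, and \eqref{integral_xitheta} follows.

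\emph{Main obstacle.} The only genuinely delicate point is the error term $2\int f\scal{u}{\dr_\theta^2 u}_\xi$: a crude $L^\infty$ estimate on it leaves a contribution of order $\big(\int|\dr_\theta^2 u|^2_\xi\big)^{1/2}$ (or an additive constant), neither of which is absorbable when $\alpha_k$ is small. The remedy is the extra integration by parts, which makes the error quadratic in $\dr_\theta u$, so that the single remaining factor of $\dr_\theta^2 u$ can be soaked up by the good term $2\int|\dr_\theta^2 u|^2_\xi$ via Wirtinger, leaving only an $O(\ve_k^2)\alpha_k$ remainder; this argument uses in an essential way \emph{both} smallness hypotheses, $\|f\|_{L^\infty(\bar\Cr_k)}\to 0$ and $\|\g Y_k\|_{L^\infty_\xi(\bar\Cr_k)}\to 0$, as well as the uniform bound $\|Y_k\|_{L^\infty_\xi}\le C$.
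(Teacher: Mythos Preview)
Your proof is correct and follows essentially the same route as the paper's: differentiate $\alpha_k$ twice, substitute the harmonic map equation, integrate the error term $\int f\scal{Y_k}{\dr_\theta^2 Y_k}_\xi$ by parts once more, and absorb the resulting contribution via Young and the Wirtinger inequality; then use the maximum principle against the solution of $w''=w$ with matching boundary data. Your use of the sinh-form of $w$, the observation that the two coefficients sum to at most $1$ (via $\sinh a+\sinh b\le\sinh(a+b)$), and the direct estimate $\int_0^L(\sinh s/\sinh L)^\nu\,ds\le C(\nu)$ are slightly cleaner than the paper's explicit exponential computations, but the argument is the same in substance. Your remark that the differential inequality is really established on $\bar\Cr_k$ (where $\|\g Y_k\|_{L^\infty_\xi}$ is small) rather than on all of $[-T_k,T_k]$ is accurate and matches what the paper's proof actually shows.
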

	
	\begin{proof}
		We compute
		\begin{align*}
			\alpha_k''(t) &= \frac{d}{dt} \left( 2\int_{\s^1} \scal{\dr_\theta Y_k}{\dr^2_{\theta t} Y_k}_\xi d\theta \right) \\
			&= 2\int_{\s^1} \left( |\dr^2_{\theta t} Y_k|^2_\xi + \scal{\dr_\theta Y_k}{\dr^3_{\theta tt} Y_k}_\xi \right) \\
			&= 2\int_{\s^1} \left( |\dr^2_{\theta t} Y_k|^2_\xi - \scal{\dr^2_{\theta\theta} Y_k}{\dr^2_{tt} Y_k}_\xi \right) \\
			&= 2\int_{\s^1} \left( |\dr^2_{\theta t} Y_k|^2_\xi - \scal{\dr^2_{\theta\theta} Y_k}{\lap Y_k - \dr^2_{\theta \theta} Y_k}_\xi \right) \\
			&= 2\int_{\s^1} \left( |\dr^2_{\theta t} Y_k|^2_\xi + |\dr^2_{\theta \theta} Y_k|^2_\xi + \scal{\dr^2_{\theta \theta} Y_k}{ |\g Y_k|^2_\eta Y_k}_\xi \right) \\
			&= 2\int_{\s^1} \left( |\dr^2_{\theta t} Y_k|^2_\xi + |\dr^2_{\theta \theta} Y_k|^2_\xi - \scal{\dr_{\theta} Y_k}{ \dr_\theta \left( |\g Y_k|^2_\eta Y_k\right) }_\xi \right).
		\end{align*}
		We can bound the third term using \eqref{uniform_Linfty_bound}
		\begin{align}
			\left| \scal{\dr_{\theta} Y_k}{ \dr_\theta \left( |\g Y_k|^2_\eta Y_k\right) }_\xi \right| &\leq |\dr_\theta Y_k|_\xi \left| \dr_\theta \left( |\g Y_k|^2_\eta Y_k\right) \right|_\xi \nonumber \\
			&\leq C|\g \dr_\theta Y_k |_\xi |\g Y_k|_\xi |\dr_\theta Y_k|_\xi + |\g Y_k|^2_\eta |\dr_\theta Y_k|_\xi^2 .  \label{third_derivative}
		\end{align}
		Hence,
		\begin{align*}
			\alpha_k''(t) &\geq  \left( 2 - C\|\g Y_k\|_{L^\infty_\xi(\bar{\Cr}_k)}^2 \right) \int_{\s^1}\left(  |\dr^2_{\theta t} Y_k|^2_\xi +  |\dr^2_{\theta \theta} Y_k|^2_\xi\right) -C\|\g Y_k\|_{L^\infty_\eta(\bar{\Cr}_k)}^2 \int_{\s^1} |\dr_\theta Y_k|^2_\xi .
		\end{align*}
		By \eqref{unif_conv_to_zero} and Poincaré inequality on $\s^1$, we obtain for $k$ large enough :
		\begin{align*}
			\alpha_k''(t) &\geq \frac{5}{4} \int_{\s^1} |\dr^2_{\theta\theta} Y_k|^2_\xi - \frac{1}{1000} \int_{\s^1} |\dr_\theta Y_k|^2_\xi \\
			&\geq \int_{\s^1} |\dr_\theta Y_k|^2_\xi = \alpha_k(t).
		\end{align*}
		Consider $-T_k<t_1<t_2<T_k$ and let $\tau:[t_1,t_2]\to \R$ be the solution to
		\begin{align*}
			\tau'' &= \tau ,\\
			\tau(t_1) &= \alpha_k(t_1) ,\\
			\tau(t_1) &= \alpha_k(t_2).
		\end{align*}
		Then $\tau(t) = \lambda e^t + \mu e^{-t}$ with constants
		\begin{align*}
			\lambda &:= \frac{ e^{-2t_2 - t_1} \alpha_k(t_1) - e^{-2t_1 - t_2} \alpha_k(t_2) }{ e^{-2t_2} - e^{-2t_1} }, \\
			\mu &:= \frac{ e^{-t_2} \alpha_k(t_2) - e^{-t_1} \alpha_k(t_1) }{e^{-2t_2} - e^{-2t_1} }.
		\end{align*}
		By the maximum principle : for any $t\in[t_1,t_2]$,
		\begin{align*}
			0 \leq \alpha_k(t) &\leq \tau(t) = \frac{ e^{t-2t_2 - t_1} - e^{-t-t_1} }{ e^{-2t_2} - e^{-2t_1} } \alpha_k(t_1) + \frac{ - e^{t-2t_1 - t_2} + e^{-t-t_2} }{ e^{-2t_2} - e^{-2t_1} } \alpha_k(t_2) \\
			&\leq \max(\alpha_k(t_1),\alpha_k(t_2)).
		\end{align*}
		Hence, if $\nu\in(0,1]$ and $|t_1-t_2|>1$ :
		\begin{align*}
			\int_{t_1}^{t_2} \alpha_k(t)^\nu dt &\leq \frac{1}{\nu} \left( |\lambda|^\nu (e^{\nu t_2} - e^{\nu t_1} ) + |\mu|^\nu (e^{-\nu t_1} - e^{-\nu t_2}) \right)\\
			&\leq \frac{C(\nu)}{(e^{-2t_2} - e^{-2t_1})^\nu} \left[ |\alpha_k(t_1)|^\nu \left( e^{-\nu(2t_2+t_1) + \nu t_2} - e^{-\nu(2t_2 + t_1) + \nu t_1} + e^{-2\nu t_1} - e^{-\nu(t_1 + t_2)} \right) \right.\\
			& \left. + |\alpha_k(t_2)|^\nu \left( e^{-\nu(2t_1+t_2)} (e^{\nu t_2} - e^{\nu t_1}) + e^{-\nu t_2} (e^{-\nu t_1} - e^{-\nu t_2} ) \right) \right] \\
			&\leq \frac{C(\nu)}{(e^{-2t_2} - e^{-2t_1})^\nu}(e^{-2\nu t_1} + e^{-2\nu t_2}) \left( \alpha_k(t_1)^\nu + \alpha_k(t_2)^\nu \right) \\
			&\leq C(\nu) \left( \alpha_k(t_1)^\nu + \alpha_k(t_2)^\nu \right).
		\end{align*}
	\end{proof}
	
	Thanks to \eqref{integral_xitheta} with the choice $\nu = \frac{1}{2}$, we obtain
	\begin{align*}
		\int_{-T_k+2}^{T_k-2} \alpha_k(t)^\frac{1}{2} dt &\leq C\|\g Y_k\|_{L^\infty(\bar{\Cr}_k)}^{1/2} \xrightarrow[k\to \infty]{}{0}.
	\end{align*}
	We have assumed that there exists oscillations of size $\delta$ in $\bar{\Cr}_k$, by \eqref{hypothesis_oscillation}. By \eqref{oscillations}, we obtain
	\begin{align*}
		\int_{-T_k+2}^{T_k-2} \alpha_k(t)^\frac{1}{2} dt \ust{k\to \infty}{=} o\left( \int_{-T_k+2}^{T_k-2} \beta_k(t)^\frac{1}{2} dt \right).
	\end{align*}
	In order to show the convergence to a geodesic, we first need to show that $|\dr_\theta Y_k|_\xi = o(|\dr_t Y_k|_\xi) $ uniformly along the cylinder. Since the oscillations only come from $\dr_t Y_k$ by \eqref{oscillations}, we show that $\alpha_k = o(\beta_k)$.\\
	
	\begin{claim}\label{claim_boundary_conditions_C1}
		There exists $-T_k < t_{1,k}<t_{2,k}<T_k$ such that
		\begin{enumerate}[label=\alph*)]
			\item\label{restriction_Cr1} $\alpha_k(t_{1,k}) = o(\beta_k(t_{1,k}))$ and $\alpha_k(t_{2,k}) = o(\beta_k(t_{2,k}))$,
			\item\label{restriction_Cr3} $\osc_{[t_{1,k}, t_{2,k}]\times \s^1} Y_k \geq \delta/2$.
		\end{enumerate}
	\end{claim}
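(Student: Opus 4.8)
The plan is to choose $t_{1,k}$ and $t_{2,k}$ near the two ends of the interior cylinder $\bar{\Cr}_k=[-T_k+2,T_k-2]\times\s^1$, inside the ``good set'' on which the angular energy $\alpha_k$ is negligible compared with the total energy $\beta_k$, while retaining at least half of the oscillation between them. First I would record three inputs. (i) $\eta_k:=\int_{-T_k+2}^{T_k-2}\alpha_k(t)^{1/2}\,dt\to 0$, which is precisely the estimate obtained just above the statement of the claim from \eqref{integral_xitheta} with $\nu=\tfrac12$. (ii) $\osc_{\bar{\Cr}_k}Y_k\ge\delta-o(1)$: by \eqref{hypothesis_oscillation} and subadditivity of $\osc$ over the overlapping cover $\Cr_k=\big([-T_k,-T_k+2]\cup[-T_k+2,T_k-2]\cup[T_k-2,T_k]\big)\times\s^1$, noting that the oscillation over the two end annuli is $o(1)$ (available after the standing ``up to change the cylinder'' reduction) and that the angular oscillation at each fixed $t\in\bar{\Cr}_k$ is $\le 2\pi\|\g Y_k\|_{L^\infty_\xi(\bar{\Cr}_k)}=o(1)$ by \eqref{unif_conv_to_zero}. (iii) $\beta_k(t)>0$ for all $t$ (the zeros of $\g Y_k$ are isolated, see \Cref{section_CGM}), together with the pointwise bound $|(Y_k^*)'(t)|_\xi=\big|\fint_{\s^1}\dr_t Y_k(t,\cdot)\big|_\xi\le\tfrac{1}{\sqrt{2\pi}}\beta_k(t)^{1/2}$, where $Y_k^*(t):=\fint_{\s^1}Y_k(t,\cdot)$.

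Next I would fix $\ve_k\to 0$ with $\ve_k^{-1/2}\eta_k\to 0$ (for instance $\ve_k:=\max(\eta_k^{1/2},k^{-1})$) and put $G_k:=\{t\in[-T_k+2,T_k-2]:\alpha_k(t)\le\ve_k\beta_k(t)\}$, $E_k:=[-T_k+2,T_k-2]\setminus G_k$. On $E_k$ one has $\beta_k^{1/2}\le\ve_k^{-1/2}\alpha_k^{1/2}$, so $\int_{E_k}\beta_k^{1/2}\le\ve_k^{-1/2}\eta_k\to 0$ and, by (iii), $\int_{E_k}|(Y_k^*)'|_\xi\to 0$. By (ii) there are $t_a<t_b$ in $[-T_k+2,T_k-2]$ with $|Y_k^*(t_b)-Y_k^*(t_a)|_\xi\ge\tfrac78\delta$ for $k$ large (the angular oscillation being $o(1)$); writing $u_k$ for the associated unit vector and $\vp_k(t):=\scal{Y_k^*(t)-Y_k^*(t_a)}{u_k}_\xi$, so that $\vp_k(t_a)=0$, $\vp_k(t_b)\ge\tfrac78\delta$ and $|\vp_k'|\le|(Y_k^*)'|_\xi$, I would set $s_1:=\inf\{t\ge t_a:\vp_k(t)\ge\tfrac18\delta\}$ and $s_2:=\sup\{t\le t_b:\vp_k(t)\le\tfrac34\delta\}$. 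Then $\int_{[t_a,s_1]}|\vp_k'|\ge\tfrac18\delta$ and $\int_{[s_2,t_b]}|\vp_k'|\ge\tfrac18\delta$, while $\int_{E_k}|\vp_k'|=o(1)<\tfrac18\delta$ for $k$ large; hence $[t_a,s_1]\cap G_k\neq\emptyset$ and $[s_2,t_b]\cap G_k\neq\emptyset$, and I would take $t_{1,k}\in[t_a,s_1]\cap G_k$ and $t_{2,k}\in[s_2,t_b]\cap G_k$.

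It then remains to read off the conclusions. Membership in $G_k$ gives $\alpha_k(t_{i,k})\le\ve_k\beta_k(t_{i,k})$ with $\ve_k\to 0$, i.e. $\alpha_k(t_{i,k})=o(\beta_k(t_{i,k}))$, which is the first assertion of the claim. Since $\vp_k(t_{1,k})\le\tfrac18\delta<\tfrac34\delta\le\vp_k(t_{2,k})$ and $t_{1,k}\le s_1<s_2\le t_{2,k}$ one has $t_{1,k}<t_{2,k}$, and, replacing the endpoint values by the circle averages at a cost $O(\|\g Y_k\|_{L^\infty_\xi(\bar{\Cr}_k)})$,
\begin{align*}
	\osc_{[t_{1,k},t_{2,k}]\times\s^1}Y_k &\ge |Y_k^*(t_{2,k})-Y_k^*(t_{1,k})|_\xi-4\pi\|\g Y_k\|_{L^\infty_\xi(\bar{\Cr}_k)} \\
	&\ge \vp_k(t_{2,k})-\vp_k(t_{1,k})-o(1)\ge\tfrac34\delta-\tfrac18\delta-o(1)\ge\tfrac\delta2
\end{align*}
for $k$ large, the second assertion. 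The one genuinely delicate point is input (ii), namely controlling the oscillation of $Y_k$ near $\dr\Cr_k$; this is exactly what the standing ``change the cylinder'' reduction is there to guarantee, and everything else is elementary bookkeeping on the interval using only the convexity estimate \eqref{integral_xitheta} and the pointwise bound $|(Y_k^*)'|_\xi\lesssim\beta_k^{1/2}$.
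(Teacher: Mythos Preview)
Your argument is correct and rests on the same core observation as the paper's proof: since $\int\alpha_k^{1/2}=o(1)$ while the oscillation forces $\int\beta_k^{1/2}$ to stay bounded below on suitable subintervals, on any such subinterval there must exist a point where $\alpha_k/\beta_k$ is small. The paper implements this more directly: it fixes end intervals $I_1=[-T_k,\tau_{1,k}]$ and $I_2=[\tau_{2,k},T_k]$ by the oscillation thresholds $\osc_{I_i\times\s^1}Y_k=\delta/10$, takes $t_{i,k}$ to be the \emph{minimizer} of $\alpha_k/\beta_k$ on $I_i$, and argues by contradiction using \eqref{oscillations} and \eqref{integral_xitheta} (if the minimum were $\ge c>0$ then $\int_{I_i}\beta_k^{1/2}\le c^{-1/2}\int_{I_i}\alpha_k^{1/2}\to 0$, killing the oscillation); the bound $\osc_{[t_{1,k},t_{2,k}]\times\s^1}Y_k\ge\delta-2\cdot\delta/10$ then drops out immediately. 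Your good/bad-set decomposition together with the one-dimensional projection $\vp_k$ is a more explicitly quantitative, and slightly longer, repackaging of the same pigeonhole. One small advantage of the paper's route is that it works directly on the full $\Cr_k$ and never needs your input (ii), i.e.\ the transfer of the oscillation from $\Cr_k$ to $\bar{\Cr}_k$; in your version this is, as you correctly flag, the only step requiring a careful appeal to the ``change of cylinder'' reduction. Otherwise the two arguments are equivalent.
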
 
	
	\begin{proof}
		Choose $-T_k<\tau_{1,k}<\tau_{2,k} <T_k$ such that
		\begin{align*}
			\osc_{[-T_k,\tau_{1,k} ]\times \s^1} Y_k = \osc_{[\tau_{2,k},T_k ]\times \s^1} Y_k = \frac{\delta}{10}.
		\end{align*}
		We remark that $\beta_k$ never vanishes since the critical points of $Y_k$ are isolated. Consider $t_{1,k} \in [-T_k,\tau_{1,k}]$ and $t_{2,k} \in [\tau_{2,k},T_k]$ such that 
		\begin{align*}
			\frac{\alpha_k(t_{1,k}) }{\beta_k(t_{1,k})} = \min_{ [-T_k,\tau_{1,k} ] } \frac{\alpha_k}{\beta_k}, &  & \frac{\alpha_k(t_{2,k}) }{\beta_k(t_{2,k})} = \min_{ [\tau_{2,k},T_k ] } \frac{\alpha_k}{\beta_k}.
		\end{align*}
		Let $I_1 := [-T_k,\tau_{1,k} ]$, $I_2 := [\tau_{2,k},T_k ]$ and consider $i\in\{1,2\}$. If $\frac{\alpha_k(t_{i,k}) }{\beta_k(t_{i,k})}$ is bounded below by some constant $c>0$, then 
		\begin{align*}
			\forall t\in I_i, \ \ \ \ \beta_k(t) \leq \frac{\alpha_k(t)}{c}.
		\end{align*}
		Thanks to \eqref{oscillations}, 
		\begin{align*}
			\frac{\delta}{10} = \osc_{I_i} Y_k &\leq  4\pi \|\g Y_k\|_{L^\infty_\xi(I_i \times \s^1)} + \frac{1}{\sqrt{2\pi}} \int_{I_i} \beta_k(t)^\frac{1}{2} dt \\
			&\leq 4\pi \|\g Y_k\|_{L^\infty_\xi(I_i \times \s^1)} + \frac{1}{\sqrt{2\pi c}} \int_{I_i} \alpha_k(t)^\frac{1}{2} dt.
		\end{align*}
		Using \eqref{integral_xitheta}, we conclude that 
		\begin{align*}
			\frac{\delta}{10} &\leq C\|\g Y_k\|_{L^\infty_\xi(I_i \times \s^1)} \xrightarrow[k\to\infty]{}{0}.
		\end{align*}
		Contradiction. Therefore, we obtain $\alpha_k(t_{i,k}) \ust{k\to\infty}{=} o(\beta_k(t_{i,k}))$. Furthermore,
		\begin{align*}
			\osc_{[t_{1,k},t_{2,k}]\times \s^1} Y_k \geq \delta - \osc_{[-T_k,t_{1,k}]\times \s^1} Y_k - \osc_{[t_{2,k},T_k]\times \s^1} Y_k \geq \frac{4}{5}\delta.
		\end{align*}
	\end{proof}
	
	Note that in the condition \ref{restriction_Cr3}, one can replace $\delta/2$ by $a\delta$ for any fixed $a\in(0,1)$. We show in the lemma \ref{step1_conv_geodesic} that the condition \ref{restriction_Cr1} becomes uniform on the cylinder $[t_{1,k},t_{2,k}]\times \s^1$. We will see that these estimates implies that $Y_k$ converges to a light-like geodesic in $\s^{3,1}$. To do so, we will compare $Y_k$ to its average $Y_k^*$. \\
	
	\begin{lemma}\label{step1_conv_geodesic}
		Consider $-T_k<t_{1,k}<t_{2,k}<T_k$ such that $\alpha_k(t_{1,k}) \ust{k\to \infty}{=} o(\beta_k(t_{1,k}))$ and $\alpha_k(t_{2,k}) \ust{k\to \infty}{=} o(\beta_k(t_{2,k}))$. Then
		\begin{align}\label{difference_t_theta}
			\max_{[t_{1,k},t_{2,k}]} \alpha_k \ust{k\to \infty}{=} o\left(\min_{[t_{1,k},t_{2,k}]} \beta_k \right),
		\end{align}
		And 
		\begin{align}\label{uniform_arclength}
			\max_{[t_{1,k},t_{2,k}]} \beta_k &\ust{k\to\infty}{\sim}  \min_{[t_{1,k},t_{2,k}]} \beta_k .
		\end{align}
	\end{lemma}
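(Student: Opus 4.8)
The plan is to reduce both \eqref{difference_t_theta} and \eqref{uniform_arclength} to two ingredients: the convexity of $\alpha_k$ already obtained in Lemma~\ref{estimate_xitheta}, and a Harnack-type control of $\beta_k$ along the whole cylinder $[t_{1,k},t_{2,k}]\times\s^1$. First I would note that, since $\alpha_k''\geq\alpha_k\geq 0$ on $[t_{1,k},t_{2,k}]$, the function $\alpha_k$ is convex there, hence $\max_{[t_{1,k},t_{2,k}]}\alpha_k=\max\{\alpha_k(t_{1,k}),\alpha_k(t_{2,k})\}$; combined with the hypothesis $\alpha_k(t_{i,k})=o(\beta_k(t_{i,k}))$ this gives at once $\max_{[t_{1,k},t_{2,k}]}\alpha_k=o\big(\max\{\beta_k(t_{1,k}),\beta_k(t_{2,k})\}\big)$. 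Thus everything comes down to proving \eqref{uniform_arclength}, i.e. that $\beta_k$ varies by at most a factor tending to $1$ between $t_{1,k}$ and $t_{2,k}$: indeed \eqref{difference_t_theta} then follows because $\max_{[t_{1,k},t_{2,k}]}\alpha_k=o\big(\max_{[t_{1,k},t_{2,k}]}\beta_k\big)$ and, by \eqref{uniform_arclength}, the right-hand side is $o\big(\min_{[t_{1,k},t_{2,k}]}\beta_k\big)$.

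For \eqref{uniform_arclength} I would follow the strategy of Zhu~\cite{zhu2008}. Differentiating $\beta_k$ twice and using the harmonic-map equation $\lap_{g_\Phi}Y_k=-|\g Y_k|^2_\eta Y_k$, the uniform bound $\|Y_k\|_{L^\infty_\xi(\Cr_k)}\leq C$ from \eqref{uniform_Linfty_bound}, the Poincaré inequality on $\s^1$, and the smallness $\|\g Y_k\|_{L^\infty_\xi(\bar{\Cr}_k)}\to 0$ from \eqref{unif_conv_to_zero}, I expect (exactly as in the computation of $\alpha_k''$ in Lemma~\ref{estimate_xitheta}) a lower bound $\beta_k''\geq -\ve_k\beta_k$ with $\ve_k\to 0$, so $\beta_k$ is approximately convex. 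The $\ve$-regularity for the conformal Gauss map, applied both to $Y_k$ and to $v_k:=\dr_t Y_k$ — whose Laplacian $\lap v_k=-|\g Y_k|^2_\eta v_k-\dr_t(|\g Y_k|^2_\eta)Y_k$ is small — should moreover give $|\beta_k''(t)|\leq C\int_{[t-1,t+1]\times\s^1}|\g Y_k|^2_\xi$, a quantity going to $0$ uniformly in $t$ by the no-bubble hypothesis. Feeding these two facts into a three-point comparison on $[t_{1,k},t_{2,k}]$, and using the global bound $\int_{\Cr_k}|\g Y_k|^2_\xi\leq C$ together with the lower bound $\int_{t_{1,k}}^{t_{2,k}}\beta_k^{1/2}\gtrsim\delta$ that follows from $\osc_{[t_{1,k},t_{2,k}]\times\s^1}Y_k\geq\delta/2$ and \eqref{oscillations}, one should conclude $\max_{[t_{1,k},t_{2,k}]}\beta_k\sim\min_{[t_{1,k},t_{2,k}]}\beta_k$.

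I expect the second step to be the main obstacle, for two reasons. First, $\s^{3,1}$ being Lorentzian, there is no Cauchy–Schwarz or Poincaré inequality adapted to $|\cdot|_\eta$, so the differential inequalities must be run with the euclidean norm $|\cdot|_\xi$ while the equation only produces the indefinite term $|\g Y_k|^2_\eta$; the discrepancy $|\g Y_k|^2_\xi-|\g Y_k|^2_\eta=2|\dr(Y_k)_5|^2$ has to be absorbed using only the $L^\infty_\xi$ control of $Y_k$ and $\g Y_k$ provided by the gauge of Section~\ref{gauge_neck_region} and the smallness \eqref{unif_conv_to_zero}. Second, \eqref{uniform_arclength} is a genuinely global statement on a cylinder whose length $t_{2,k}-t_{1,k}$ may diverge, so a local Harnack estimate on unit subcylinders is insufficient and must be propagated over the whole neck; this is exactly where the finiteness of the total energy and the quantitative lower bound on the oscillation enter, preventing $\beta_k$ from either blowing up or collapsing somewhere in the interior.
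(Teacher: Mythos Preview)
Your reduction in the first paragraph is fine, but the heart of the argument --- your proposed proof of \eqref{uniform_arclength} --- has a genuine gap, and in fact the logical order you propose is the reverse of what actually works.

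The key identity you are missing is the conformality relation \eqref{residue_hopf_diff}: since $Y_k$ is conformal, $\int_{\s^1}|\g Y_k|^2_\eta = 2\int_{\s^1}|\dr_\theta Y_k|^2_\eta \leq 2\alpha_k(t)$. This is what makes the Lorentzian setting tractable here, and it is precisely the ingredient that replaces the Cauchy--Schwarz and Poincar\'e inequalities you correctly flag as unavailable. With it, one computes the \emph{first} derivative
\[
(\beta_k-2\alpha_k)'(t) = -2\int_{\s^1}\scal{\dr_t Y_k}{|\g Y_k|^2_\eta Y_k}_\xi,
\]
and the right-hand side is bounded by $C\|\g Y_k\|_{L^\infty_\xi}\int_{\s^1}|\g Y_k|^2_\eta \leq C\|\g Y_k\|_{L^\infty_\xi}\,\alpha_k(t)$. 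Since $\int_{t_{1,k}}^{t_{2,k}}\alpha_k \leq C(\alpha_k(t_{1,k})+\alpha_k(t_{2,k}))$ by \eqref{integral_xitheta}, the quantity $\beta_k-2\alpha_k$ is essentially constant on $[t_{1,k},t_{2,k}]$, equal to its boundary values up to $o(\alpha_k(t_{1,k})+\alpha_k(t_{2,k}))$. Combined with the boundary hypothesis $\alpha_k(t_{i,k})=o(\beta_k(t_{i,k}))$ and the pointwise bound \eqref{pointwise_alpha}, this yields \eqref{difference_t_theta} \emph{directly}, without going through \eqref{uniform_arclength} first. Then \eqref{uniform_arclength} follows immediately by integrating the same first-derivative identity between the points realizing $\max\beta_k$ and $\min\beta_k$, using the already-established \eqref{difference_t_theta}.

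By contrast, your second-derivative route does not close. The inequality $\beta_k''\geq -\ve_k\beta_k$ on an interval of diverging length $t_{2,k}-t_{1,k}$ gives no uniform Harnack control (solutions of $u''=-\ve u$ oscillate on scale $\ve^{-1/2}$, which need not dominate $t_{2,k}-t_{1,k}$), and the bound $|\beta_k''|\to 0$ uniformly is likewise useless once integrated twice over a diverging interval. The ``three-point comparison'' you sketch, together with the global energy bound and the oscillation lower bound, does not supply the missing smallness: those quantities control $\int\beta_k$ and $\int\beta_k^{1/2}$, not the pointwise ratio $\max\beta_k/\min\beta_k$. The decisive point is that the error in $\beta_k'$ is controlled by $\alpha_k$, whose integral is \emph{uniformly small} by \eqref{integral_xitheta}, independently of the interval length --- and this comes from \eqref{residue_hopf_diff}.
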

	
	\begin{proof}
		We first estimate the difference between $\beta_k$ and $\alpha_k$. We compute :
		\begin{align}
			\beta_k'(t) &= 2\int_{\s^1} \left( \scal{\dr_t Y_k}{\dr^2_{tt} Y_k}_\xi + \scal{\dr_\theta Y_k}{\dr^2_{t\theta} Y_k}_\xi \right) \nonumber\\
			&= 2\int_{\s^1} \scal{\dr_t Y_k}{\lap Y_k - \dr^2_{\theta\theta } Y_k}_\xi + \alpha_k'(t) \nonumber \\
			&= 2\int_{\s^1} \left( \scal{\dr_t Y_k}{-|\g Y_k|^2_\eta Y_k}_\xi + \scal{\dr^2_{t\theta} Y_k}{\dr_\theta Y_k}_\xi \right) + \alpha_k'(t) \nonumber \\
			&= -2\int_{\s^1} \scal{\dr_t Y_k}{|\g Y_k|^2_\eta Y_k}_\xi + 2\alpha_k'(t). \label{derivative_difference_beta_alpha}
		\end{align}
		Thanks to \eqref{residue_hopf_diff} :
		\begin{align}
			|(\beta_k-2\alpha_k)'(t) | &\leq C\|\g Y_k\|_{L^\infty_\xi(\bar{\Cr}_k)} \int_{\s^1} |\g Y_k|^2_\eta \nonumber \\
			&\leq C\|\g Y_k\|_{L^\infty_\xi(\bar{\Cr}_k)} \int_{\s^1} |\dr_\theta Y_k|^2_\eta \nonumber\\
			&\leq C\|\g Y_k\|_{L^\infty_\xi(\bar{\Cr}_k)} \alpha_k(t). \label{comparison_xit_xitheta}
		\end{align}
		We can integrate on $[t_{1,k},t]$ :
		\begin{align}
			|\beta_k(t) - 2\alpha_k(t)| &\geq |\beta_k(t_{1,k}) - 2\alpha_k(t_{1,k})| - \int_{t_{1,k}}^t |(\beta_k-2\alpha_k)'| \nonumber \\
			&\geq |\beta_k(t_{1,k}) - 2\alpha_k(t_{1,k})| - C\|\g Y_k\|_{L^\infty_\xi(\bar{\Cr}_k)} \int_{t_{1,k}}^{t_{2,k}} \alpha_k  \nonumber \\
			&\geq |\beta_k(t_{1,k}) - 2\alpha_k(t_{1,k})| - C\|\g Y_k\|_{L^\infty_\xi(\bar{\Cr}_k)} \left[ \alpha_k(t_{1,k}) + \alpha_k(t_{2,k}) \right]. \label{pointwise_diff_alpha_beta1}
		\end{align}
		If we integrate on $[t,t_{2,k}]$ :
		\begin{align}
			|\beta_k(t) - 2\alpha_k(t)| &\geq |\beta_k(t_{2,k}) -2\alpha_k(t_{2,k})| - C\|\g Y_k\|_{L^\infty_\xi(\bar{\Cr}_k)} \left[ \alpha_k(t_{1,k}) + \alpha_k(t_{2,k}) \right]. \label{pointwise_diff_alpha_beta2}
		\end{align}
		We sum \eqref{pointwise_diff_alpha_beta1} and \eqref{pointwise_diff_alpha_beta2} :
		\begin{align}\label{gap_alpha_beta}
			|\beta_k(t) - 2\alpha_k(t)| &\geq \frac{1}{2} \Big( |\beta_k(t_{1,k}) - 2\alpha_k(t_{1,k})| + |\beta_k(t_{2,k}) - 2\alpha_k(t_{2,k})| \Big) - C\|\g Y_k\|_{L^\infty_\xi(\bar{\Cr}_k)} \left[ \alpha_k(t_{1,k}) + \alpha_k(t_{2,k}) \right].
		\end{align}
		By the hypothesis \ref{restriction_Cr1} on $t_{1,k}$ and $t_{2,k}$, we have $\alpha_k(t_{1,k}) = o(|\beta_k(t_{1,k}) -2\alpha_k(t_{1,k})|)$ and $\alpha_k(t_{2,k}) = o(|\beta_k(t_{2,k}) -2\alpha_k(t_{2,k})|)$. Using this in \eqref{gap_alpha_beta}, we conclude that for any $t\in[t_{1,k},t_{2,k}]$,
		\begin{align*}
			\max(\alpha_k(t_{1,k}),\alpha_k(t_{2,k})) &\ust{k\to\infty}{=}o\Big( |\beta_k(t) - 2\alpha_k(t)| \Big).
		\end{align*}
		Thanks to \eqref{pointwise_alpha}, we obtain that for any $t\in[t_{1,k},t_{2,k}]$,
		\begin{align*}
			\max(\alpha_k(t_{1,k}),\alpha_k(t_{2,k})) &\ust{k\to\infty}{=} o\Big( \beta_k(t) + 2\max(\alpha_k(t_{1,k}),\alpha_k(t_{2,k})) \Big).
		\end{align*}
		So $\max(\alpha_k(t_{1,k}),\alpha_k(t_{2,k})) = o(\beta_k(t))$. Choosing $t$ such that $\beta_k(t) = \min_{[t_{1,k},t_{2,k}]} \beta_k$, we obtain \eqref{difference_t_theta}.\\
		To prove \eqref{uniform_arclength}, consider $t_{min},t_{max} \in [t_{1,k},t_{2,k}]$ such that 
		\begin{align*}
			\max_{[t_{1,k},t_{2,k}]} \beta_k = \beta_k(t_{max}), & & \min_{[t_{1,k},t_{2,k}]} \beta_k = \beta_k(t_{min}).
		\end{align*}
		If we integrate \eqref{derivative_difference_beta_alpha} on $[t_{min},t_{max}]$, we obtain thanks to \eqref{difference_t_theta} and \eqref{comparison_xit_xitheta} :
		\begin{align*}
			\left| \max_{[t_{1,k},t_{2,k}]} \beta_k - \min_{[t_{1,k},t_{2,k}]} \beta_k \right| &\ust{k\to \infty}{=} o\left( \min_{[t_{1,k},t_{2,k}]} \beta_k \right).
		\end{align*}
	\end{proof}
	
	Up to replace $\Cr_k$ by $[t_{1,k},t_{2,k}]\times \s^1$, we assume that $\alpha_k(-T_k) = o(\beta_k(-T_k))$ and $\alpha_k(T_k) = o(\beta_k(T_k))$, with $T_k \to +\infty$. We can now estimate the whole difference $\gamma_k$.
	
	\begin{lemma}\label{estimate_xistar}
		It holds
		\begin{align*}
			\max_{[-T_k,T_k]} \gamma_k \ust{k\to \infty}{=} o\left(\min_{[-T_k,T_k]} \beta_k \right).
		\end{align*}
	\end{lemma}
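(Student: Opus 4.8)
Write $W_k:=Y_k-Y_k^*$. Then $W_k(t,\cdot)$, $\dr_t W_k(t,\cdot)$ and $\dr_\theta W_k(t,\cdot)=\dr_\theta Y_k(t,\cdot)$ all have zero average on $\s^1$, and $\gamma_k(t)=\int_{\s^1}|\g W_k(t,\cdot)|^2_\xi\,d\theta$. Since $\fint_{\s^1}\dr^2_{\theta\theta}Y_k=0$ and $Y_k$ is harmonic, $(Y_k^*)''=-\overline{|\g Y_k|^2_\eta Y_k}$ (the angular average), so $W_k$ solves $\lap W_k=-\big(|\g Y_k|^2_\eta Y_k-\overline{|\g Y_k|^2_\eta Y_k}\big)$, and one has the elementary identities
\begin{align*}
	\gamma_k \;=\; \beta_k-2\pi|(Y_k^*)'|^2_\xi \;=\; \int_{\s^1}|\dr_t W_k|^2_\xi\,d\theta+\alpha_k .
\end{align*}
The plan is: (i) prove a differential inequality $\gamma_k''\geq\gamma_k-E_k$ with a controlled error $E_k\geq 0$; (ii) observe, via \cref{existence_gauge} and \cref{step1_conv_geodesic}, that every relevant quantity lives on the single scale $b_k:=\min_{[-T_k,T_k]}\beta_k$; (iii) show $\int_{-T_k}^{T_k}E_k=o(b_k)$ and $\gamma_k(\pm T_k)=o(b_k)$; (iv) conclude by the maximum principle, exactly as in \cref{estimate_xitheta}.

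\textbf{The differential inequality.} Differentiating $\gamma_k$ twice in $t$, substituting $\lap Y_k=-|\g Y_k|^2_\eta Y_k$, integrating by parts in $\theta$, and using the Poincaré inequality on $\s^1$ (legitimate since $\dr_t W_k$ and $\dr_\theta W_k$ have vanishing angular mean) gives, for $k$ large,
\begin{align*}
	\gamma_k''(t) \;\geq\; \gamma_k(t)\;-\;2\left|\int_{\s^1}\scal{\g W_k}{\g\big(|\g Y_k|^2_\eta Y_k\big)}_\xi\,d\theta\right|
\end{align*}
(the $\overline{|\g Y_k|^2_\eta Y_k}$ part of $\lap W_k$ drops out when tested against $\g W_k$ because $\int_{\s^1}\dr_t W_k=0$). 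In the error integrand every factor of $|\g Y_k|$ is spent either against the genuine gain $\|\g Y_k\|_{L^\infty(\bar{\Cr}_k)}\to 0$ of \eqref{unif_conv_to_zero}, or against one of the small quantities $\|\g W_k\|_{L^2(\s^1)}=\gamma_k^{1/2}$ and $\|\dr_\theta Y_k\|_{L^2(\s^1)}=\alpha_k^{1/2}$; here one uses pointwise conformality of $Y_k$, namely $|\dr_t Y_k|^2_\eta=|\dr_\theta Y_k|^2_\eta$, so that $|\g Y_k|^2_\eta=2|\dr_\theta Y_k|^2_\eta$ and the angular oscillation $\dr_\theta(|\g Y_k|^2_\eta)$ is a multiple of $\scal{\dr_\theta Y_k}{\dr^2_{\theta\theta}Y_k}_\eta$, controlled by $\|\g Y_k\|_{L^\infty}\,\|\dr^2_{\theta\theta}Y_k\|_{L^2(\s^1)}$ and hence (via the identity for $\alpha_k''$ from the proof of \cref{estimate_xitheta}) by $\|\g Y_k\|_{L^\infty}\,\alpha_k''^{1/2}$. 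After a Young inequality this yields $\gamma_k''(t)\geq\gamma_k(t)-E_k(t)$ with $E_k$ a finite sum of terms of the form $\|\g Y_k\|_{L^\infty(\bar{\Cr}_k)}^{a}\,\alpha_k(t)^{b}$ ($a\geq 1$), $\|\g Y_k\|_{L^\infty(\bar{\Cr}_k)}\,\gamma_k(t)^{1/2}\,\alpha_k''(t)^{1/2}$, and $\|\g Y_k\|_{L^\infty(\bar{\Cr}_k)}^{4}\beta_k(t)$.

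\textbf{One scale, and the errors.} By \cref{step1_conv_geodesic}, $\max_{[-T_k,T_k]}\beta_k\sim b_k$ and $\max_{[-T_k,T_k]}\alpha_k=o(b_k)$. Rewriting \cref{existence_gauge} on the cylinder: on every unit sub-cylinder $\|\g Y_k\|_{L^\infty}\leq C\|\g Y_k\|_{L^2}$, so $\|\g Y_k\|_{L^\infty(\bar{\Cr}_k)}^2\leq Cb_k$; interior elliptic estimates for $\lap Y_k=-|\g Y_k|^2_\eta Y_k$ together with \eqref{uniform_Linfty_bound} give $\int_{\bar{\Cr}_k}|\g^2 Y_k|^2\leq C\int_{\Cr_k}|\g Y_k|^2_\xi\leq C$, and $2T_kb_k\leq\int_{\Cr_k}|\g Y_k|^2_\xi\leq C$, hence $T_kb_k\leq C$. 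Thus $\gamma_k\leq\beta_k\leq Cb_k$, and each term of $E_k$ is bounded by $C\|\g Y_k\|_{L^\infty(\bar{\Cr}_k)}\,b_k^{1/2}\big(\alpha_k(t)^{1/2}+\alpha_k''(t)^{1/2}\big)+C\|\g Y_k\|_{L^\infty(\bar{\Cr}_k)}^{4}\beta_k(t)$. Integrating and using \eqref{integral_xitheta} with $\nu=\tfrac12$ (so $\int_{-T_k}^{T_k}\alpha_k^{1/2}=o(b_k^{1/2})$ from $\alpha_k(\pm T_k)=o(b_k)$), the Cauchy--Schwarz bound $\int_{-T_k}^{T_k}\alpha_k''^{1/2}\leq\sqrt{2T_k}\,\big(\alpha_k'(T_k)-\alpha_k'(-T_k)\big)^{1/2}$, $T_kb_k\leq C$, $\int_{\Cr_k}|\g Y_k|^2_\xi\leq C$ and $\|\g Y_k\|_{L^\infty(\bar{\Cr}_k)}\to 0$, one obtains $\int_{-T_k}^{T_k}E_k=o(b_k)$, provided the endpoints $\pm T_k$ of the (already reduced) cylinder are chosen — sharpening the selection in \cref{claim_boundary_conditions_C1} — so that in addition $\alpha_k'(\pm T_k)=o(b_k)$ and $\gamma_k(\pm T_k)=o(b_k)$. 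The latter is arranged from the pointwise bound $\gamma_k\lesssim\alpha_k''+\alpha_k+o(b_k)$ (Poincaré applied to $\dr_t W_k$ plus the $\alpha_k''$-identity) and the finiteness of $\int|\g^2 Y_k|^2$ on the outer regions, which forces slices there on which $\alpha_k''$, hence $\gamma_k$, is suitably small relative to $b_k$.

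\textbf{Conclusion and main obstacle.} With $\gamma_k''\geq\gamma_k-E_k$, $E_k\geq 0$, $\int_{-T_k}^{T_k}E_k=o(b_k)$ and $\gamma_k(\pm T_k)=o(b_k)$, comparing $\gamma_k$ to the solution of $\tau''=\tau$ with the same boundary values plus the bounded-Green's-function contribution of the forcing $E_k$ gives, as in the proof of \cref{estimate_xitheta},
\begin{align*}
	\max_{[-T_k,T_k]}\gamma_k \;\leq\; \max\big(\gamma_k(-T_k),\gamma_k(T_k)\big)+C\!\int_{-T_k}^{T_k}\!E_k \;=\; o(b_k)\;=\;o\Big(\min_{[-T_k,T_k]}\beta_k\Big),
\end{align*}
which is the claim. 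The heart of the argument is the derivation and bookkeeping of the differential inequality: the nonlinearity $|\g Y_k|^2_\eta Y_k$ is not small in any cheap sense — one only has $\|\g Y_k\|_{L^\infty}^2\sim b_k$, the very scale of $\gamma_k$ — so each factor of $|\g Y_k|$ must genuinely be traded for $\|\g Y_k\|_{L^\infty(\bar{\Cr}_k)}\to 0$ or for one of $\gamma_k^{1/2},\alpha_k^{1/2}$, and pointwise conformality of $Y_k$ is precisely what makes the angular variation of the coefficient $|\g Y_k|^2_\eta$ tractable. The secondary difficulty, propagating the estimate $o(b_k)$ up to the ends of the cylinder, is what forces the refined choice of the cutting points $\pm T_k$.
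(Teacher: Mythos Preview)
Your approach differs substantially from the paper's and carries a genuine gap. The scheme ``$\gamma_k''\geq\gamma_k-E_k$ with $\int E_k=o(b_k)$ and $\gamma_k(\pm T_k)=o(b_k)$, then maximum principle'' hinges on two inputs you do not actually have at this point. First, the scale comparison $\|\g Y_k\|_{L^\infty_\xi(\bar{\Cr}_k)}^2\leq Cb_k$ you invoke (``on every unit sub-cylinder $\|\g Y_k\|_{L^\infty}\leq C\|\g Y_k\|_{L^2}$'') is not established before this lemma: \eqref{gradientY_bound_good_scale} holds only at the scale where the gauge is chosen, while at other scales \eqref{gradientY_bound_wrong_scale} controls $\|\g Y_k\|_{L^\infty_\xi}$ by $\|\g \vec{n}_\phi\|_{L^2}$, not by $\|\g Y_k\|_{L^2_\xi}$. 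The uniform comparability $\sup|\g Y_k|_\xi\sim\inf|\g Y_k|_\xi$ is in fact a \emph{consequence} of the present lemma, recorded only afterwards in \cref{rk:unif_gradient_Y}. Without it, the refined error---which in the cleanest bookkeeping reads $E_k\lesssim\|\g Y_k\|_\infty^2\int_{\s^1}|\g\dr_\theta Y_k|^2+\|\g Y_k\|_\infty^4\alpha_k$---integrates to $O(\|\g Y_k\|_\infty^4)$ via the cutoff bound $\int_{\bar{\Cr}_k}|\g^2 Y_k|^2\leq C\|\g Y_k\|_\infty^2$, and there is no reason a priori that $\|\g Y_k\|_\infty^4=o(b_k)$ (we only know $\|\g Y_k\|_\infty^2\geq cb_k$). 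Second, your endpoint selection $\gamma_k(\pm T_k)=o(b_k)$ is circular: the route $\gamma_k\lesssim\alpha_k''+\alpha_k$ requires finding slices with $\alpha_k''=o(b_k)$, for which integral bounds on $\int|\g^2 Y_k|^2$ over outer regions are too weak.

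The paper bypasses both issues by working with the \emph{first} derivative. A direct computation gives $|(\gamma_k-2\alpha_k)'|\leq C\|\g Y_k\|_{L^\infty_\xi}\alpha_k$; integrating and using \eqref{integral_xitheta} shows $\gamma_k-2\alpha_k$ is constant up to $o(b_k)$, so by \cref{step1_conv_geodesic} the question reduces entirely to one boundary value. This sets up a dichotomy: either $\gamma_k=o(\beta_k)$ uniformly (done), or $\beta_k=O(\gamma_k)$ uniformly. Only in the second alternative does the paper use the differential inequality $\gamma_k''\geq 2\gamma_k-C\|\g Y_k\|_{L^\infty_\xi}\beta_k$, and the hypothesis $\beta_k\leq C\gamma_k$ lets the error be absorbed to give $\gamma_k''\geq\gamma_k$ cleanly; then $\int\gamma_k^{1/2}\to 0$ forces $\int\beta_k^{1/2}\to 0$, contradicting \eqref{hypothesis_oscillation}. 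The dichotomy is exactly what converts the uncontrollable $\beta_k$ in the error into the absorbable $\gamma_k$---this is the idea your direct scheme is missing.
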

	
	\begin{proof}
		We compare $\gamma_k$ and $\alpha_k$ :
		\begin{align*}
			\gamma_k'(t) &= 2\int_{\s^1} \scal{\g(Y_k - Y_k^*)}{\dr_t \g(Y_k - Y_k^*)}_\xi \\
			&= 2\int_{\s^1} \scal{\dr_\theta Y_k}{ \dr^2_{t\theta} Y_k}_\xi + 2\int_{\s^1} \scal{\dr_t(Y_k - Y_k^*)}{\dr^2_{tt}(Y_k - Y_k^*)}_\xi \\
			&= \alpha_k'(t) + 2\int_{\s^1} \scal{\dr_t(Y_k - Y_k^*)}{\lap(Y_k - Y_k^*) - \dr^2_{\theta\theta} Y_k}_\xi .
		\end{align*}
		Using an integration by parts on the last term and $\lap Y_k^* = \fint_{\s^1} \dr^2_{tt} Y_k = \fint_{\s^1} \lap Y_k$, we obtain
		\begin{align*}
			\gamma_k'(t) &= \alpha_k'(t) - 2\int_{\s^1} \scal{\dr_t(Y_k - Y_k^*)}{ |\g Y_k|^2_\eta Y_k - \fint_{\s^1} |\g Y_k|^2_\eta Y_k }_\xi + 2\int_{\s^1} \scal{\dr^2_{t\theta} Y_k}{\dr_\theta Y_k}_\xi \\
			&= 2\alpha_k'(t) - 2\int_{\s^1} \scal{\dr_t(Y_k - Y_k^*)}{ |\g Y_k|^2_\eta Y_k - \fint_{\s^1} |\g Y_k|^2_\eta Y_k }_\xi .
		\end{align*}
		Thanks to \eqref{residue_hopf_diff},
		\begin{align}
			\left| \frac{d}{dt}\left( \gamma_k - 2\alpha_k \right) \right| &\leq C\|\g (Y_k-Y_k^*)\|_{L^\infty_\xi} \int_{\s^1} |\g Y_k|^2_\eta \nonumber \\
			&\leq C\|\g Y_k\|_{L^\infty_\xi} \int_{\s^1} |\dr_\theta Y_k|^2_\eta \nonumber \\
			&\leq C\|\g Y_k\|_{L^\infty_\xi} \alpha_k(t). \label{comparison_xistar_xitheta}
		\end{align}
		For any $t\in[-T_k,T_k]$, using \eqref{integral_xitheta}, we obtain by integration on $[-T_k,t]$ and $[t,T_k]$ :
		\begin{align}\label{uniform_estimate_gamma}
			|\gamma_k(t) - 2\alpha_k(t)| &\leq \min\Big( \gamma_k(-T_k) - 2\alpha_k(-T_k), \gamma_k(T_k) - 2\alpha_k(T_k) \Big) + C\left( \alpha_k(T_k) + \alpha_k(-T_k) \right). 
		\end{align}
		Thanks to \eqref{uniform_arclength}, we get $\gamma_k=o(\beta_k)$ uniformly in $[-T_k,T_k]$ if and only if $\gamma_k(T_k) = o(\beta_k(T_k))$ or $\gamma_k(-T_k) = o(\beta_k(-T_k))$.
		Considering $t = \pm T_k$ in \eqref{uniform_estimate_gamma}, we conclude that $\gamma_k(T_k) = o(\beta_k(T_k))$ if and only if $\gamma_k(-T_k) = o(\beta_k(-T_k))$.\\
		
		In clear, we have two possibilities : either $\gamma_k = o(\beta_k)$ uniformly on $[-T_k,T_k]$, or $\beta_k = O(\gamma_k)$ uniformly on $[-T_k,T_k]$.\\
		
		We show that the second case cannot occur considering the hypothesis \eqref{hypothesis_oscillation}. By contradiction, assume that $\beta_k = O(\gamma_k)$ on $[-T_k,T_k]$, that is to say, there exists $C>0$ such that for $k$ large enough : $C\gamma_k(t) \geq \beta_k(t)$ for any $t\in[-T_k,T_k]$. We proceed as in the lemma \ref{estimate_xitheta} : first, we find an equation for $\gamma_k$.
		\begin{align}
			\gamma_k''(t) &= \frac{d}{dt} \left( 2\int_{\s^1} \scal{\g(Y_k-Y_k^*)}{\dr_t \g(Y_k-Y_k^*)}_\xi \right) \nonumber \\
			&= 2\int_{\s^1} |\dr_t \g (Y_k - Y_k^*)|^2_\xi + 2\int_{\s^1} \scal{\g(Y_k - Y_k^*)}{\dr^2_{tt} \g(Y_k -Y_k^*)}_\xi \nonumber \\
			&= 2\int_{\s^1} |\dr_t \g (Y_k - Y_k^*)|^2_\xi + 2\int_{\s^1} \scal{\g(Y_k - Y_k^*)}{\g\left( \lap(Y_k-Y_k^*) - \dr^2_{\theta\theta} Y_k \right)}_\xi . \label{equation_xistar_computation}
		\end{align}
		We can estimate the last term using
		\begin{align*}
			\int_{\s^1} \scal{\g(Y_k - Y_k^*)}{\g\left( \lap(Y_k-Y_k^*) - \dr^2_{\theta\theta} Y_k \right)}_\xi &= \int_{\s^1} \scal{\g(Y_k - Y_k^*)}{\g \left( |\g Y_k|^2_\eta Y_k - \fint_{\s^1} |\g Y_k|^2_\eta Y_k \right)}_\xi\\
			& + \int_{\s^1} |\dr_\theta \g(Y_k-Y_k^*)|^2_\xi .
		\end{align*}
		As in \eqref{third_derivative}, we can bound :
		\begin{align*}
			\int_{\s^1} \scal{\g(Y_k - Y_k^*)}{\g\left( \lap(Y_k-Y_k^*) - \dr^2_{\theta\theta} Y_k \right)}_\xi &\geq \int_{\s^1} |\dr_\theta \g(Y_k-Y_k^*)|^2_\xi - C\|\g Y_k\|_{L^\infty_\xi} \int_{\s^1} |\g Y_k|^2_\xi .
		\end{align*}
		Therefore, in \eqref{equation_xistar_computation}, using Poincaré inequality in $\s^1$ :
		\begin{align}
			\gamma_k''(t) &\geq 2\int_{\s^1} |\g^2(Y_k-Y_k^*)|^2_\xi - C\|\g Y_k\|_{L^\infty_\xi} \int_{\s^1} |\g Y_k|^2_\xi \nonumber \\
			&\geq 2\gamma_k(t)  - C\|\g Y_k\|_{L^\infty_\xi} \beta_k(t). \label{equation_xistar}
		\end{align}
		Now, our assumption $\beta_k \leq C\gamma_k $ gives $\gamma_k''(t) \geq \gamma_k(t)$ for $k$ large enough. Hence, we are back with the same equation as in the lemma \ref{estimate_xitheta}, and we also have
		\begin{align*}
			\int_{-T_k}^{T_k} \beta_k(t)^{1/2} dt \leq C\int_{-T_k}^{T_k} \gamma_k(t)^{1/2} dt &\leq C\left[ \gamma_k(-T_k)^{1/2} + \gamma_k(T_k)^{1/2} \right].
		\end{align*}
		Using \eqref{oscillations}, the oscillations must vanish. Absurd. Hence, $\gamma_k = o(\beta_k)$. We conclude thanks to \eqref{uniform_estimate_gamma}.
	\end{proof}
	
	We show the convergence of $Y_k^*$ to a straight line.
	
	\begin{proposition}\label{convergence_geodesic}
		Under the assumptions \eqref{uniform_Linfty_bound}-\eqref{unif_conv_to_zero}-\eqref{hypothesis_oscillation}, there exists a geodesic $\sigma$ of $\s^{3,1}$, and a parametrization $\tau_k : (-T_k,T_k) \to (-L_k,L_k)$ such that
		\begin{align*}
			Y_k^*\circ \tau_k^{-1} \xrightarrow[k\to \infty]{}{\sigma},
		\end{align*}
		with $|\sigma'|_\xi = 1$. The curve $\sigma$ is a timelike straight line. The convergence holds in the $C^2_{loc}$-topology.
	\end{proposition}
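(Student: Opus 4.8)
The plan is to reparametrize the angular averages $Y_k^* = \fint_{\s^1} Y_k\,d\theta$ by euclidean arclength, to show that the reparametrized curves have asymptotically vanishing acceleration and therefore converge to an affine straight line, and finally to check that this line takes values in $\s^{3,1}$ (hence is a geodesic) and to identify its causal type. The three preceding lemmas supply exactly what is needed: after the reductions leading to \Cref{estimate_xistar} one has, uniformly on $[-T_k,T_k]$, $\max\alpha_k = o(\min\beta_k)$ by \eqref{difference_t_theta}, $\max\beta_k \sim \min\beta_k$ by \eqref{uniform_arclength}, and $\max\gamma_k = o(\min\beta_k)$ by \Cref{estimate_xistar}.

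\textbf{Reparametrization.} First I would set $v_k(t) := |\dr_t Y_k^*(t)|_\xi$. Splitting $\dr_t Y_k = \dr_t Y_k^* + \dr_t(Y_k - Y_k^*)$, using $\int_{\s^1}\dr_t(Y_k - Y_k^*)\,d\theta = 0$ and $\int_{\s^1}|\dr_t Y_k|^2_\xi\,d\theta = \beta_k-\alpha_k$, one gets $2\pi v_k(t)^2 = \beta_k(t)-\alpha_k(t)-O(\gamma_k(t)) = (1+o(1))\beta_k(t)$, so $v_k>0$ for $k$ large. Then $\tau_k$, the primitive of $v_k$ normalized so that its image is centred, is a diffeomorphism of $(-T_k,T_k)$ onto some $(-L_k,L_k)$, and $\sigma_k := Y_k^*\circ\tau_k^{-1}$ satisfies $|\sigma_k'|_\xi\equiv 1$. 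The limiting interval is nondegenerate: since $|\sigma_k'|_\xi = 1$, its half-length satisfies $2L_k \ge \osc Y_k^* \ge \osc_{\Cr_k} Y_k - 2\sup_t\osc_{\s^1}Y_k(t,\cdot)$, which is bounded below by \eqref{hypothesis_oscillation} together with $\osc_{\s^1}Y_k(t,\cdot) \le \sqrt{2\pi}\,\alpha_k(t)^{1/2}\to 0$.

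\textbf{Convergence to a straight line on $\s^{3,1}$.} Averaging the harmonic-map equation $\lap Y_k = -|\g Y_k|^2_\eta Y_k$ over $\s^1$ and using $\int_{\s^1}\dr^2_{\theta\theta}Y_k\,d\theta = 0$ gives $\dr^2_{tt}Y_k^* = -\fint_{\s^1}|\g Y_k|^2_\eta Y_k\,d\theta$; by \eqref{uniform_Linfty_bound} and the Hopf-differential identity \eqref{residue_hopf_diff} (which turns $\int_{\s^1}|\g Y_k|^2_\eta$ into $2\int_{\s^1}|\dr_\theta Y_k|^2_\eta \le 2\alpha_k$) this yields $|\dr^2_{tt}Y_k^*|_\xi \le C\alpha_k(t)$, and since $|v_k'|\le|\dr^2_{tt}Y_k^*|_\xi$, the chain rule $\sigma_k'' = v_k^{-2}\dr^2_{tt}Y_k^* - v_k^{-3}v_k'\,\dr_t Y_k^*$ and $v_k^2 = (1+o(1))\beta_k/2\pi$ give $\|\sigma_k''\|_{L^\infty_\xi} \le C\,\dfrac{\max\alpha_k}{\min\beta_k}\longrightarrow 0$ by \eqref{difference_t_theta}--\eqref{uniform_arclength}. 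With $|\sigma_k|_\xi = |Y_k^*|_\xi \le \|Y_k\|_{L^\infty_\xi}\le C$ from \eqref{uniform_Linfty_bound} and $|\sigma_k'|_\xi = 1$, Arzelà--Ascoli gives, along a subsequence, $\sigma_k\to\sigma$ in $C^1_{loc}$; since $\|\sigma_k''\|_{L^\infty_\xi}\to 0$, the limit is an affine line with $|\sigma'|_\xi = 1$ and the convergence is in fact $C^2_{loc}$. To place $\sigma$ on $\s^{3,1}$ I would upgrade the estimates to genuine smallness: from $\int_{-T_k}^{T_k}\beta_k(t)\,dt = \|\g Y_k\|^2_{L^2_\xi(\Cr_k)}\le C$, $\max\beta_k\sim\min\beta_k$ and $T_k\to\infty$ one gets $\max\beta_k \le C/T_k\to 0$, hence $\alpha_k,\gamma_k\to 0$ uniformly; then $|Y_k^*(t)|^2_\eta = 1 - \tfrac12\fint_{\s^1}\fint_{\s^1}|Y_k(t,\theta)-Y_k(t,\theta')|^2_\eta\,d\theta\,d\theta' = 1+O(\alpha_k(t))\to 1$ uniformly (using $|\cdot|^2_\eta\le|\cdot|^2_\xi$ and the oscillation bound), so $|\sigma|^2_\eta\equiv 1$ in the limit and an affine line lying on $\s^{3,1}$ is a complete geodesic. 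Finally, \eqref{residue_hopf_diff} also gives $\int_{\s^1}|\dr_t Y_k|^2_\eta = \int_{\s^1}|\dr_\theta Y_k|^2_\eta = o(\beta_k)$, which by the same splitting as above controls the Minkowski length of $\dr_t Y_k^*$, and so of $\sigma'$, in the limit; this fixes the causal character and shows $\sigma$ is the time-like straight line of the statement, with $\tau_k$ the asserted reparametrization.

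\textbf{Main obstacle.} The difficulty is the Lorentzian target: there is no Cauchy--Schwarz or Poincaré inequality for $|\cdot|_\eta$, and $Y_k$ carries no a priori $L^\infty$-bound. The latter is precisely why one must work inside the gauge of \Cref{existence_gauge}, packaged into \eqref{uniform_Linfty_bound}--\eqref{unif_conv_to_zero}; the former is why every estimate is run in the euclidean norm $\xi$, the one piece of Minkowski information that is actually used --- vanishing of the $\eta$-length along $\dr_t$ --- being extracted each time from \eqref{residue_hopf_diff}. Quantitatively the crux is the acceleration bound $\|\sigma_k''\|_{L^\infty_\xi}\le C\max\alpha_k/\min\beta_k$: it is here that the uniform comparability $\max\beta_k\sim\min\beta_k$ of \Cref{step1_conv_geodesic} is indispensable, since without it a pointwise-in-$t$ estimate could not exclude a non-constant limiting speed, hence a non-affine limit.
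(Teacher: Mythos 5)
Your proposal is correct and follows essentially the same route as the paper's proof: reparametrize $Y_k^*$ by euclidean arclength, use \Cref{estimate_xistar} to get $2\pi|\dr_t Y_k^*|^2_\xi=(1+o(1))\beta_k$, average the harmonic map equation over $\s^1$ and invoke \eqref{residue_hopf_diff} to bound $\|\dr^2_{ss}Y_k^*\|_{L^\infty_\xi}\leq C\sup(\alpha_k/\beta_k)\to 0$, and conclude a straight-line limit with $C^2_{loc}$ convergence (your extra checks that $L_k$ is nondegenerate and that $|Y_k^*|^2_\eta\to 1$ are consistent additions the paper leaves implicit). One caveat: your final computation actually shows $|\sigma'|^2_\eta=0$, i.e.\ the limit line is light-like, which agrees with \Cref{theoremI} and the remark on lines in $\s^{3,1}$, so do not relabel it ``time-like''; that word in the statement is a slip of the paper, not a conclusion your argument supports.
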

	
	\begin{proof}
		We show that if we reparametrize $(Y_k^*)_{k\in\N}$ by euclidean arclength, it converges to a geodesic. Note that $\dr_t Y_k^*$ doesn't vanish. Indeed, $\beta_k$ doesn't vanish since the critical points of a conformal Gauss map coming from a Willmore surface are isolated. Hence, we can use the lemma \ref{estimate_xistar} to obtain uniformly on $\bar{\Cr}_k$ :
		\begin{align}
			\left| \frac{ |\dr_t Y_k^*|^2_\xi }{\fint_{\s^1} |\dr_t Y_k|^2_\xi } - 1 \right| &= \left| \frac{ \int_{\s^1} |\dr_t Y_k^*|^2_\xi - |\dr_t Y_k|^2_\xi }{ \fint_{\s^1} |\dr_t Y_k|^2_\xi } \right| \nonumber \\
			&= \left| \frac{ \int_{\s^1} \scal{\dr_t(Y_k^* - Y_k)}{\dr_t Y_k}_\xi + |\dr_t(Y_k^* - Y_k)|^2_\xi }{ \fint_{\s^1} |\dr_t Y_k|^2_\xi } \right| \nonumber \\
			&\leq C\left(  \frac{ \gamma_k }{ \beta_k } + \sqrt{ \frac{\gamma_k}{\beta_k} } \right) \xrightarrow[k\to \infty]{}{0}. \label{uniform_close_average}
		\end{align}
		Now, we can consider the equation followed by $Y_k^*$ :
		\begin{align*}
			\frac{d^2 Y_k^*}{ds^2} &= \frac{1}{|\dr_t Y_k^*|_\xi} \frac{d}{dt}\left( \frac{1}{|\dr_t Y_k^*|_\xi} \frac{d Y_k^*}{dt} \right) \\
			&= \frac{1}{|\dr_t Y_k^*|_\xi^2} \frac{d^2 Y_k^*}{dt^2}  - \frac{1}{|\dr_t Y_k^*|_\xi^4} \scal{\dr_t Y_k^*}{\dr^2_{tt} Y_k^*}_\xi \frac{d Y_k^*}{dt}.
		\end{align*}
		We compute the second derivative
		\begin{align*}
			\frac{d^2 Y_k^*}{dt^2} &= \fint_{\s^1} \dr^2_{tt} Y_k (t,\theta) d\theta\\
			&= \fint_{\s^1} \lap Y_k(t,\theta) d\theta \\
			&= -\fint_{\s^1} |\g Y_k(t,\theta)|^2_\eta Y_k(t,\theta) d\theta.
		\end{align*}
		Since $(Y_k)_k$ is bounded in $L^\infty$ on $\Cr_k$, it follows by \eqref{residue_hopf_diff} :
		\begin{align*}
			\left| \dr^2_{ss} Y_k^* \right|_\xi &\leq C \frac{\int_{\s^1} |\g Y_k|^2_\eta }{|\dr_t Y_k^*|^2_\xi} = C\frac{\int_{\s^1} |\dr_\theta Y_k|^2_\eta }{|\dr_t Y_k^*|^2_\xi }.
		\end{align*}
		Thanks to \eqref{difference_t_theta} :
		\begin{align}\label{laplacian_average}
			\| \dr^2_{ss} Y_k^* \|_{L^\infty_\xi(\bar{\Cr}_k)} \leq C \sup_{[-T_k,T_k]} \frac{\alpha_k }{\beta_k} \xrightarrow[k\to \infty]{}{0}.
		\end{align}
		Therefore, the limit curve $\sigma := \displaystyle{\lim_{k\to \infty}} Y_k^*$, parametrized by euclidean arclength, exists in $C^{1,a}$ for every $a\in(0,1)$.\\
		It satisfies $\sigma''=0$ in the weak sense. Hence, $\sigma$ is a straight line, and \eqref{laplacian_average} shows that the convergence is actually in the $C^2$-topology.
	\end{proof}
	
	\begin{remark}[About the lines in $\s^{3,1}$]
		The only vectors $a,b\in\R^5$ such that the line $(t\mapsto at+b)$ is contained in $\s^{3,1}$, are exactly those satisfying $|a|^2_\eta = 0$, $|b|^2_\eta = 1$ and $\scal{a}{b}_\eta = 0$. Indeed if for any $t\in\R$ we have $|at+b|^2_\eta = 1 = |a|^2_\eta t^2 + 2t\scal{a}{b}_\eta + |b|^2_\eta$, then we can identify two polynomials.
	\end{remark} 
	
	We prove the convergence in the $C^1$-topology of $Y_k$ to a straight line thanks to the lemma \ref{estimate_xistar}.
	
	\begin{proposition}\label{proposition_C1_convergence}
		Under the assumptions \eqref{uniform_Linfty_bound}-\eqref{unif_conv_to_zero}-\eqref{hypothesis_oscillation}, there exists a geodesic $\sigma$ of $\s^{3,1}$, and a parametrization $\tau_k : (-T_k,T_k)\times \s^1 \to (-L_k,L_k)\times \s^1$ such that
		\begin{align*}
			Y_k\circ \tau_k^{-1} \xrightarrow[k\to \infty]{}{\sigma},
		\end{align*}
		with $|\sigma'|_\xi = 1$. The curve $\sigma$ is a timelike straight line. The convergence holds in the $C^1_{loc}$-topology. 
	\end{proposition}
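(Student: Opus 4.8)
The plan is to derive the statement from \Cref{convergence_geodesic} — which already gives the $C^2_{loc}$-convergence of the circular averages $Y_k^*$ to a straight line — together with \Cref{estimate_xistar}, which makes $Y_k$ close to $Y_k^*$ in $W^{1,2}$ along circles; the only genuinely new work is to upgrade this closeness to a $C^1$-statement. Throughout I keep in force the reductions already made on $\Cr_k=[-T_k,T_k]\times\s^1$, so that \eqref{uniform_Linfty_bound}, \eqref{unif_conv_to_zero}, \eqref{difference_t_theta} and \eqref{uniform_arclength} hold; in particular $\beta_k\to0$ uniformly and $\max_{[-T_k,T_k]}\beta_k\sim\min_{[-T_k,T_k]}\beta_k$.

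First I would reparametrize. Let $\tau_k(t):=\int_0^t|\dr_uY_k^*|_\xi\,du$ be the euclidean arclength reparametrization of $Y_k^*$ built in the proof of \Cref{convergence_geodesic}, extended to $(-T_k,T_k)\times\s^1$ by the identity on $\s^1$, with $L_k:=\tau_k(T_k)$. Set $\widetilde Y_k:=Y_k\circ\tau_k^{-1}$, $\widetilde Y_k^*:=Y_k^*\circ\tau_k^{-1}$ and $W_k:=\widetilde Y_k-\widetilde Y_k^*$ on $(-L_k,L_k)\times\s^1$. By \Cref{convergence_geodesic}, $\widetilde Y_k^*\to\sigma$ in $C^2_{loc}$ with $|\sigma'|_\xi=1$, and $\fint_{\s^1}W_k(s,\cdot)=0$ for every $s$, so it suffices to prove that $W_k\to0$ in $C^1_{loc}$.

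The core of the argument then controls $W_k$ and its two derivatives. Since $\tau_k$ acts only on $t$, $\dr_\theta W_k=\dr_\theta Y_k$, so $\|\dr_\theta W_k\|_{L^\infty(\bar{\Cr}_k)}\to0$ by \eqref{unif_conv_to_zero}; and, $W_k(s,\cdot)$ having zero mean on $\s^1$, the Poincaré inequality gives $\|W_k(s,\cdot)\|_{L^\infty(\s^1)}\le C\sqrt{\alpha_k}\to0$ uniformly, using $\alpha_k\le2\pi\|\g Y_k\|_{L^\infty_\xi(\bar{\Cr}_k)}^2$. For the $s$-derivative, with $\rho_k:=|\dr_tY_k^*|_\xi$ one has $\dr_sW_k=\rho_k^{-1}\dr_t(Y_k-Y_k^*)$, hence along each circle $\|\dr_sW_k\|_{L^2(\s^1)}^2\le\rho_k^{-2}\gamma_k$; as $\rho_k^2\sim\fint_{\s^1}|\dr_tY_k|^2_\xi\sim\beta_k/(2\pi)$ by \eqref{uniform_close_average} and \eqref{difference_t_theta}, this is $\le C\,\max_{[-T_k,T_k]}\gamma_k/\min_{[-T_k,T_k]}\beta_k\to0$ by \Cref{estimate_xistar}. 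Moreover $\dr_\theta\dr_sW_k=\rho_k^{-1}\dr^2_{\theta t}Y_k$, and the $\varepsilon$-regularity for the conformal Gauss map controls $\int_{\s^1}|\dr^2_{\theta t}Y_k(t,\cdot)|^2\le C\int_{[t-1,t+1]\times\s^1}|\g Y_k|^2\le C\max_{[-T_k,T_k]}\beta_k\le C'\beta_k(t)\sim C''\rho_k^2$ by \eqref{uniform_arclength}, so that $\|\dr_\theta\dr_sW_k\|_{L^2(\s^1)}\le C$; interpolating on $\s^1$ for the zero-mean function $\dr_sW_k(s,\cdot)$ gives $\|\dr_sW_k(s,\cdot)\|_{L^\infty(\s^1)}\le C\|\dr_\theta\dr_sW_k\|_{L^2(\s^1)}^{1/2}\|\dr_sW_k\|_{L^2(\s^1)}^{1/2}\to0$, uniformly in $s$ on compact subcylinders. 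Combining, $W_k\to0$ in $C^1_{loc}$, whence $Y_k\circ\tau_k^{-1}=\widetilde Y_k^*+W_k\to\sigma$ in $C^1_{loc}$, the properties of $\sigma$ being inherited from \Cref{convergence_geodesic}.

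The step I expect to be the main obstacle is this $C^1$ upgrade. \Cref{estimate_xistar} is only an averaged ($W^{1,2}$-along-circles) estimate, while the conclusion is pointwise, and the reparametrization $\tau_k$ degenerates ($\rho_k\to0$). What makes it go through is a delicate balance: the $L^2$-smallness of $\dr_sW_k$ must be quantitatively stronger than the degeneration while $\dr_\theta\dr_sW_k$ stays bounded, which in turn rests on a scale-sensitive $\varepsilon$-regularity estimate for the conformal Gauss map (so that $\dr^2_{\theta t}Y_k$ is comparable to $\g Y_k$ at the shrinking scale) and on the uniform comparability \eqref{uniform_arclength} of $\beta_k$ along the cylinder.
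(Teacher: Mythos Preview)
Your proposal is correct, and it differs from the paper's argument in how the $C^1$-upgrade is carried out.

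The paper stays in the original $(t,\theta)$-coordinates: it applies interior $W^{2,4}\hookrightarrow C^1$ elliptic regularity to the equation
\[
\lap(Y_k-Y_k^*)=-|\g Y_k|^2_\eta\, Y_k+\fint_{\s^1}|\g Y_k|^2_\eta\, Y_k,
\]
estimates each piece of the right-hand side (and the lower-order $L^4$ term) separately by $o(\min\beta_k^{1/2})$, and only then divides by the arclength speed $|\dr_tY_k^*|_\xi\sim\beta_k^{1/2}$ to get the pointwise ratio estimate \eqref{speed_conv_geod}. You instead reparametrize first and run a one-dimensional Gagliardo--Nirenberg interpolation on $\s^1$: the $L^2(\s^1)$-smallness of $\dr_sW_k$ is read off directly from \Cref{estimate_xistar} together with $\rho_k^2\sim\beta_k$, and the only place you need a second-order input is the $L^2(\s^1)$-\emph{boundedness} of $\dr_\theta\dr_sW_k$. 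That input (your ``$\varepsilon$-regularity'' line) is correct but deserves one more word: it is not the first-order estimate of \Cref{existence_gauge} alone, but a further bootstrap of the harmonic-map equation under \eqref{uniform_Linfty_bound}--\eqref{unif_conv_to_zero}, yielding for instance $\|\g^2 Y_k\|_{H^1_\xi(Q(t,1))}\le C\|\g Y_k\|_{L^2_\xi(Q(t,2))}$ and then the trace on $\{t\}\times\s^1$. With that clarified, your interpolation step is clean. The trade-off is that your route is lighter---you never unpack the structure of the right-hand side above---while the paper's unrescaled $L^\infty$ bound on $\g(Y_k-Y_k^*)$ plugs straight into the subsequent $C^2$-analysis and into \Cref{rk:unif_gradient_Y}; should you continue with your approach, the same interpolation trick applied to $\dr_\theta Y_k/\rho_k$ (using $\alpha_k=o(\beta_k)$ for $L^2$-smallness and the same second-order bootstrap for $\dr^2_{\theta\theta}Y_k$) recovers those stronger ratio statements as well.
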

	
	\begin{proof}
		We first prove the convergence in the $C^1$-topology. The system satisfied by $Y_k -Y_k^*$ is the following : at a point $(t,\theta)\in \Cr_k$,
		\begin{align}\label{system_Y_Ystar}
			\lap(Y_k-Y_k^*)(t,\theta) &= -|\g Y_k(t,\theta)|^2_\eta Y_k(t,\theta) + \fint_{\s^1} |\g Y_k(t,\bar{\theta})|^2_\eta Y_k(t,\bar{\theta}) d\bar{\theta}.
		\end{align}
		By elliptic regularity, we obtain for any $Q(t_k,4)\subset \bar{\Cr}_k$ :
		\begin{align}
			\|\g(Y_k - Y_k^*)\|_{L^\infty_\xi(Q(t_k,2))} &\leq \left\|-|\g Y_k|^2_\eta Y_k + \fint_{\s^1} |\g Y_k(\cdot,\bar{\theta})|^2_\eta Y_k(\cdot,\bar{\theta}) d\bar{\theta} \right\|_{L^4_\xi(Q(t_k,3))} \nonumber\\
			&+ \| Y_k - Y_k^*\|_{L^4_\xi( Q(t_k,3))} \nonumber \\
			&\leq C\left\||\g Y_k|^2_\eta (Y_k-Y_k^*) \right\|_{L^4_\xi(Q(t_k,3))} + C\left\| |\g Y_k|^2_\eta - |\g Y_k^*|^2_\eta \right\|_{L^4(Q(t_k,3))} \label{conv_C1_lign1}\\
			&+ \| Y_k - Y_k^*\|_{L^4_\xi(Q(t_k,3))}. \label{conv_C1_lign2}
		\end{align}
		Thanks to \eqref{residue_hopf_diff}, the lemma \ref{step1_conv_geodesic} and the $\ve$-regularity, we can bound the first term of \eqref{conv_C1_lign1} by
		\begin{align*}
			\left\||\g Y_k|^2_\eta (Y_k-Y_k^*) \right\|_{L^4_\xi(Q(t_k,3))} &= \left( \int_{Q(t_k,3)} |\g Y_k|^8_\eta |Y_k-Y_k^*|^4_\xi \right)^\frac{1}{4}\\
			&\leq C\left( \int_{Q(t_k,4)} |\dr_\theta Y_k|^2_\xi \right)^3.
		\end{align*}
		So
		\begin{align}\label{conv_C1_lign11}
			\left\||\g Y_k|^2_\eta (Y_k-Y_k^*) \right\|_{L^4_\xi(Q(t_k,3))} &\ust{k\to\infty}{=} o\left( \min_{[-T_k,T_k]} \beta_k \right).
		\end{align}
		For the second term of \eqref{conv_C1_lign1}, it holds 
		\begin{align*}
			\Big| |\g Y_k|^2_\eta - |\g Y_k^*|^2_\eta \Big| &= \Big| |\g Y_k - \g Y_k^*|^2_\eta + 2\scal{\g(Y_k-Y_k^*)}{\g Y_k^*}_\eta \Big|\\
			&\leq |\g Y_k - \g Y_k^*|^2_\xi + |\g Y_k-\g Y_k^*|_\xi |\g Y_k^*|_\xi.
		\end{align*}
		So
		\begin{align}\label{conv_C1_lign12}
			\left\| |\g Y_k|^2_\eta - |\g Y_k^*|^2_\eta \right\|_{L^4(Q(t_k,3))} &\ust{k\to\infty}{=}o\left( \min_{[-T_k,T_k]} \beta_k \right).
		\end{align}
		For the term \eqref{conv_C1_lign2}, it holds : 
		\begin{align*}
			\| Y_k - Y_k^*\|_{L^4_\xi( Q(t_k,3))}^4 &\leq C \int_{Q(t_k,3)} \left( \int_{\s^1} |\dr_\theta Y_k(t,\bar{\theta})|_\xi d\bar{\theta} \right)^4 dtd\theta \\
			&\leq C\int_{t_k-3}^{t_k+3} \left( \int_{\s^1} |\dr_\theta Y_k(t,\bar{\theta})|_\xi^2 d\bar{\theta} \right)^2 dt \\
			&\leq C\int_{t_k-3}^{t_k+3} \alpha_k(t)^2 dt.
		\end{align*}
		So
		\begin{align}\label{conv_C1_lign21}
			\| Y_k - Y_k^*\|_{L^4_\xi(Q(t_k,3))}&\ust{k\to\infty}{=} o\left( \min_{[-T_k,T_k]} \beta_k^\frac{1}{2} \right).
		\end{align}
		Using the estimates \eqref{conv_C1_lign11}, \eqref{conv_C1_lign12} and \eqref{conv_C1_lign21} in the estimates \eqref{conv_C1_lign1} and \eqref{conv_C1_lign2}, we obtain 
		\begin{align*}
			\|\g(Y_k - Y_k^*)\|_{L^\infty_\xi(Q(t_k,2))} &\ust{k\to\infty}{=} o\left( \min_{[-T_k,T_k]} \beta_k^\frac{1}{2} \right).
		\end{align*}
		Thanks to \eqref{uniform_close_average}, it holds
		\begin{align}\label{speed_conv_geod}
			\sup_{(t_k,\theta_k)\in\bar{\Cr}_k} \left( \frac{|\dr_t Y_k(t_k,\theta_k) - \dr_t Y_k^*(t_k)|_\xi}{|\dr_t Y_k^*(t_k)|_\xi} + \frac{|\dr_\theta Y_k(t_k,\theta_k)|_\xi}{|\dr_t Y_k^*(t_k)|_\xi} \right) \ust{k\to\infty}{=} o(1).
		\end{align}
	\end{proof}
	
\begin{remark}\label{rk:unif_gradient_Y}
If we combine the convergence \eqref{speed_conv_geod}, \eqref{uniform_close_average} and \eqref{uniform_arclength}, we conclude that 
\begin{align*}
	\left| \frac{\inf_{\tilde{\Cr_k}} |\g Y_k|_\xi }{ \sup_{\tilde{\Cr_k}} |\g Y_k|_\xi } - 1 \right| \xrightarrow[k\to\infty]{}{0}.
\end{align*}
Furthermore, since the limit geodesic is light-like, it holds :
\begin{align}\label{eq:conv_lightlike}
	\left\| \frac{ |\g Y_k|_\eta }{ |\g Y_k|_\xi } \right\|_{L^\infty(\tilde{\Cr_k})} \xrightarrow[k\to\infty]{}{0}.
\end{align}
\end{remark}
	
	\subsection{Convergence to a geodesic in the $C^2$-topology}
	
	Since the change of parametrization degenerates by definition, the convergence in the $C^2$-topology doesn't follow from a direct bootstrap argument. As in the lemma \ref{estimate_xitheta}, we derive an equation for the quantity $\int_{\s^1} |\dr_\theta \g Y_k|^2_\xi d\theta$ :
	\begin{lemma}\label{estimate_delta}
		Given $t\in [-T_k,T_k]$ and $k\in\N$, let $\delta_k(t) := \int_{\s^1} |\dr_\theta \g Y_k|^2_\xi d\theta$. For $k$ large enough,
		\begin{align*}
			\forall t\in [-T_k,T_k], \ \ \ \delta_k''(t)& \geq \delta_k(t).
		\end{align*}
		For any $t_1<t<t_2$, we can estimate
		\begin{align}\label{pointwise_delta}
			\delta_k(t) &\leq \max(\delta_k(t_1),\delta_k(t_2)).
		\end{align}
		Moreover, for any $\nu\in(0,1]$, if $|t_1-t_2|>1$ :
		\begin{align}\label{integral_delta}
			\int_{t_1}^{t_2} \delta_k(t)^\nu dt &\leq C(\nu) \left[ \delta_k(t_1)^\nu + \delta_k(t_2)^\nu \right].
		\end{align}
	\end{lemma}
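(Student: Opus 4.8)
The plan is to reproduce, one order of differentiation higher, the argument of \ref{estimate_xitheta}. Write $f_k := |\g Y_k|^2_\eta$, so that the harmonic map equation is $\lap Y_k = -f_k Y_k$, and hence $\lap(\dr_\theta Y_k) = -(\dr_\theta f_k) Y_k - f_k\,\dr_\theta Y_k$. First I would differentiate $\delta_k$ twice in $t$, exactly as in the proof of \ref{estimate_xitheta}: replace the top-order $t$-derivatives using $\dr^2_{tt} = \lap - \dr^2_{\theta\theta}$, and then integrate by parts in $\theta$ on $\s^1$. This yields an identity of the form
\begin{align*}
\delta_k''(t) = 2\int_{\s^1}\Big( |\dr^3_{tt\theta}Y_k|^2_\xi + 2|\dr^3_{t\theta\theta}Y_k|^2_\xi + |\dr^3_{\theta\theta\theta}Y_k|^2_\xi \Big)\,d\theta + R_k(t),
\end{align*}
where the bracket is the contribution of the $f_k$-free part of $\lap(\dr_\theta Y_k)$, and $R_k(t)$ gathers every term carrying $f_k$ or one of its first two derivatives.

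For the leading term I would apply the Poincaré inequality on $\s^1$ to the zero-mean maps $\dr^2_{t\theta}Y_k$ and $\dr^2_{\theta\theta}Y_k$ (both are total $\theta$-derivatives, so $\int_{\s^1}\dr^2_{t\theta}Y_k = \int_{\s^1}\dr^2_{\theta\theta}Y_k = 0$), which gives
\begin{align*}
\int_{\s^1}\Big( |\dr^3_{t\theta\theta}Y_k|^2_\xi + |\dr^3_{\theta\theta\theta}Y_k|^2_\xi\Big)\,d\theta \;\geq\; \int_{\s^1}\Big( |\dr^2_{t\theta}Y_k|^2_\xi + |\dr^2_{\theta\theta}Y_k|^2_\xi\Big)\,d\theta \;=\; \delta_k(t);
\end{align*}
so the bracket is $\geq \delta_k(t)$, and it also dominates $\int_{\s^1}|\dr_\theta\g^2 Y_k|^2_\xi\,d\theta$ up to a fixed constant. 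For $R_k$, I would expand $\dr_\theta f_k = 2\scal{\g Y_k}{\g\dr_\theta Y_k}_\eta$ and similarly $\dr^2 f_k$: each resulting monomial carries at least one factor $\g Y_k$, which is uniformly small on $\bar{\Cr}_k$ by \eqref{unif_conv_to_zero} (and $|\g Y_k|_\eta$ is even smaller by \eqref{eq:conv_lightlike}), while the companion factors $\g^2 Y_k,\g^3 Y_k$ are bounded on $\bar{\Cr}_k$ by the $\ve$-regularity (elliptic bootstrap on $\lap Y_k = -f_k Y_k$), $|Y_k|_\xi$ is bounded by \eqref{uniform_Linfty_bound}, and $\alpha_k(t) \leq \delta_k(t)$ once more by Poincaré. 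A Cauchy--Schwarz on $\s^1$ then gives, uniformly in $t \in [-T_k,T_k]$,
\begin{align*}
|R_k(t)| \;\leq\; o(1)\Big( \delta_k(t) + \int_{\s^1}|\dr_\theta\g^2 Y_k|^2_\xi\,d\theta\Big)\qquad(k\to\infty),
\end{align*}
so that, for $k$ large, the last integral is reabsorbed into the bracket and the term $o(1)\delta_k(t)$ is negligible; this leaves $\delta_k''(t) \geq \delta_k(t)$.

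Once the differential inequality $\delta_k'' \geq \delta_k$ is in hand, the remaining two assertions are purely ODE-theoretic and identical to the end of the proof of \ref{estimate_xitheta}: given $t_1 < t_2$, let $\tau$ solve $\tau'' = \tau$ with $\tau(t_1) = \delta_k(t_1)$, $\tau(t_2) = \delta_k(t_2)$; since $\delta_k \geq 0$ and $(\delta_k - \tau)'' \geq \delta_k - \tau$, the maximum principle forces $0 \leq \delta_k(t) \leq \tau(t) \leq \max(\delta_k(t_1),\delta_k(t_2))$ on $[t_1,t_2]$, which is \eqref{pointwise_delta}; and for $\nu \in (0,1]$ with $|t_1-t_2| > 1$, integrating the explicit $\tau(t) = \lambda e^{t} + \mu e^{-t}$ over $[t_1,t_2]$ with the same coefficients $\lambda,\mu$ as in \ref{estimate_xitheta} yields \eqref{integral_delta}.

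The main obstacle is the control of $R_k$: since $\dr^2_{\theta\theta}(|\g Y_k|^2_\eta)$ a priori involves third derivatives of $Y_k$, one must be careful to integrate by parts (or to invoke the higher-order $\ve$-regularity estimates) so as to keep all error terms at or below the order of $\delta_k$; and because $\delta_k$ itself is not small, all the smallness absorbing $R_k$ has to be extracted from the single factor $\|\g Y_k\|_{L^\infty_\xi(\bar{\Cr}_k)}$ provided by \eqref{unif_conv_to_zero}.
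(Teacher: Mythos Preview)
Your overall plan coincides with the paper's: derive the identity $\delta_k'' = 2\int_{\s^1}|\dr_\theta\g^2 Y_k|^2_\xi + (\text{error})$, bound the bracket below by $\delta_k$ via Poincar\'e, absorb the error, then repeat verbatim the ODE argument of Lemma~\ref{estimate_xitheta}. The identity and the Poincar\'e step are fine, and the last two assertions are indeed identical to those of Lemma~\ref{estimate_xitheta}.

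The gap is in your control of $R_k$. After integration by parts the dangerous contribution is
\[
\int_{\s^1}\scal{\dr^2_{\theta\theta}\g Y_k}{\,2\scal{\g Y_k}{\g^2 Y_k}_\eta\,Y_k}_\xi\,d\theta,
\]
and your bound uses only $\|\g^2 Y_k\|_{L^\infty_\xi}\leq C$ from elliptic bootstrap. That gives, by Cauchy--Schwarz and Young,
\[
|R_k(t)|\;\leq\; C\,\|\g Y_k\|_{L^\infty_\xi}\Big(\int_{\s^1}|\dr_\theta\g^2 Y_k|^2_\xi\,d\theta\Big)^{1/2}\Big(\int_{\s^1}|\g^2 Y_k|^2_\xi\,d\theta\Big)^{1/2}\;\leq\; o(1)\Big(\int_{\s^1}|\dr_\theta\g^2 Y_k|^2_\xi\,d\theta\;+\;1\Big),
\]
not $o(1)\big(\delta_k+\int|\dr_\theta\g^2 Y_k|^2\big)$ as you claim. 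The additive $o(1)$ is fatal: $\delta_k(t)$ decays exponentially in the interior of the cylinder (that is precisely the content of \eqref{pointwise_delta}), so $\delta_k''\geq\delta_k - o(1)$ does \emph{not} imply $\delta_k''\geq\delta_k$. The Poincar\'e bound $\alpha_k\leq\delta_k$ that you invoke handles the term $f_k\g Y_k$ (through conformality $f_k=2|\dr_\theta Y_k|^2_\eta$), but it says nothing about $\int_{\s^1}|\g^2 Y_k|^2_\xi$.

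What the paper does instead is bound $\int_{\s^1}|\g^2 Y_k|^2_\xi$ by $\delta_k$ itself (up to lower order), not by a constant: writing $\dr^2_{tt}Y_k=\lap Y_k-\dr^2_{\theta\theta}Y_k=-|\g Y_k|^2_\eta Y_k-\dr^2_{\theta\theta}Y_k$ together with conformality gives $\int_{\s^1}|\dr^2_{tt}Y_k|^2_\xi\leq C\delta_k$, hence $\int_{\s^1}|\g^2 Y_k|^2_\xi\leq C\delta_k$. With this in hand the Cauchy--Schwarz/Young step yields the desired $|R_k|\leq o(1)\big(\delta_k+\int|\dr_\theta\g^2 Y_k|^2\big)$ and the proof closes. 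So your outline is salvageable, but the specific mechanism you propose for absorbing the error --- uniform $L^\infty$ bounds on higher derivatives --- must be replaced by this use of the equation.
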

	
	\begin{proof}
		We differentiate twice :
		\begin{align*}
			\frac{d^2 \delta_k}{dt^2} &= 2 \frac{d}{dt} \int_{\s^1} \scal{\dr_\theta \g Y_k}{\dr^2_{\theta t} \g Y_k}_\xi d\theta \\
			&= 2\int_{\s^1} |\dr^2_{\theta t} \g Y_k|^2_\xi + \scal{\dr_\theta \g Y_k}{\dr^3_{\theta t t} \g Y_k}_\xi \ d\theta \\
			&= 2\int_{\s^1} |\dr^2_{\theta t} \g Y_k|^2_\xi - \scal{\dr^2_{\theta\theta} \g Y_k}{\dr^2_{t t}\g Y_k}_\xi \ d\theta \\
			&= 2\int_{\s^1} |\dr^2_{\theta t} \g Y_k|^2_\xi - \scal{\dr^2_{\theta\theta} \g Y_k}{\g \left[\lap Y_k - \dr^2_{\theta\theta} Y_k \right]}_\xi \ d\theta .
		\end{align*}
		Therefore,
		\begin{align*}
			\frac{d^2 \delta_k}{dt^2} &=2\int_{\s^1} |\dr_\theta \g^2 Y_k|^2_\xi + \scal{\dr^2_{\theta\theta} \g Y_k}{ \g \left[|\g Y_k|^2_\eta Y_k \right] }_\xi \ d\theta.
		\end{align*}
		The last term can be bounded by Cauchy-Schwarz :
		\begin{align*}
			\frac{d^2 \delta_k}{dt^2} &\geq 2\int_{\s^1} |\dr_\theta \g^2 Y_k|^2_\xi d\theta - \int_{\s^1} |\dr^2_{\theta\theta}\g Y_k|_\xi \left( |\g Y_k|^2_\eta |\g Y_k|_\xi + C|\g Y_k|_\xi |\g^2 Y_k|_\xi \right) d\theta \\
			&\geq (2 - C\|\g Y_k\|_{L^\infty_\xi}^2 ) \int_{\s^1} |\dr_\theta \g^2 Y_k|^2_\xi d\theta - C\|\g Y_k\|_{L^\infty_\xi}^2 \int_{\s^1} |\g Y_k|^2_\xi + |\g^2 Y_k|^2_\xi \ d\theta.
		\end{align*}
		Using Poincaré inequality on the last term, we obtain
		\begin{align*}
			\frac{d^2 \delta_k}{dt^2} &\geq(2 - C\|\g Y_k\|_{L^\infty_\xi}^2 ) \int_{\s^1} |\dr_\theta \g^2 Y_k|^2_\xi d\theta - C\|\g Y_k\|_{L^\infty_\xi}^2 \int_{\s^1} |\dr_\theta \g Y_k|^2_\xi + |\dr_\theta \g^2 Y_k|^2_\xi \ d\theta \\
			&\geq (2 - C\|\g Y_k\|_{L^\infty_\xi}^2 ) \int_{\s^1} |\dr_\theta \g^2 Y_k|^2_\xi d\theta - C\|\g Y_k\|_{L^\infty_\xi}^2 \int_{\s^1} |\dr_\theta \g Y_k|^2_\xi\ d\theta.
		\end{align*}
		Using Poincaré inequality on the first term, we obtain
		\begin{align*}
			\frac{d^2 \delta_k}{dt^2} &\geq (2 - C\|\g Y_k\|_{L^\infty_\xi}^2 ) \int_{\s^1} |\dr_\theta \g Y_k|^2_\xi d\theta
		\end{align*}
		For $k$ large enough, we obtain :
		\begin{align*}
			\frac{d^2 \delta_k(t)}{dt^2} &\geq \int_{\s^1} |\dr_\theta \g Y_k|^2_\xi d\theta = \delta_k(t).
		\end{align*}
		The rest of the proof is similar to the lemma \ref{estimate_xitheta}.
	\end{proof}
	
	We can now compare $\delta_k$ to the arclength of $Y_k^*$, which is equivalent to $\beta_k$. Thanks to \eqref{integral_delta}, we obtain
	\begin{align*}
		\int_{-T_k}^{T_k} \delta_k(t)^\frac{1}{4} dt &\ust{k\to\infty}{=} o\left( \int_{-T_k}^{T_k} \beta_k(t)^\frac{1}{2} dt \right).
	\end{align*}
	Hence, as for the proof of the claim \ref{claim_boundary_conditions_C1}, we can consider $-T_k<t_{1,k}<t_{2,k}<T_k$ satisfying \ref{restriction_Cr3} and 
	\begin{align*}
		\delta_k(t_{1,k}) \ust{k\to\infty}{=} o(\beta_k(t_{1,k})^2), & & 	\delta_k(t_{2,k}) \ust{k\to\infty}{=} o(\beta_k(t_{2,k})^2).
	\end{align*}
	Thanks to \eqref{pointwise_delta} and \eqref{uniform_arclength}, we conclude that
	\begin{align}\label{uniform_delta_beta2}
		\max_{[ t_{1,k},t_{2,k} ]} \delta_k &\ust{k\to\infty}{=} o\left( \min_{[ t_{1,k},t_{2,k} ]} \beta_k^2 \right). 
	\end{align}
	As for the $C^1$-convergence, we replace $[-T_k,T_k]$ by $[t_{1,k},t_{2,k}]$ if needed. We show that the second derivatives $\g^2 (Y_k-Y_k^*)$ are small compared to the arclength $|\g Y_k^*|^2_\xi$. 
	
	\begin{lemma}\label{uniform_bound_hessian}
		It holds 
		\begin{align*}
			\sup_{[-T_k+4,T_k-4]\times\s^1} \frac{|\g^2 Y_k |_\xi}{|\g Y_k^*|^2_\xi } \xrightarrow[k\to\infty]{}{0}.
		\end{align*}
	\end{lemma}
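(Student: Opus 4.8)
First I would record the right normalization. By \eqref{uniform_close_average} one has $|\g Y_k^*(t)|_\xi^2=|\dr_t Y_k^*(t)|_\xi^2=(1+o(1))\fint_{\s^1}|\dr_t Y_k(t,\cdot)|_\xi^2$, and since $\fint_{\s^1}|\dr_t Y_k|_\xi^2=\frac1{2\pi}(\beta_k(t)-\alpha_k(t))$, Lemma~\ref{step1_conv_geodesic} and \eqref{uniform_arclength} give $|\g Y_k^*(t)|_\xi^2\sim\frac1{2\pi}\beta_k(t)$, uniformly on the cylinder and with $\beta_k$ comparable to $\min\beta_k$. Writing $\g^2 Y_k$ in the coordinates $(t,\theta)$, the Lemma therefore reduces to showing that $\|\dr^2_{tt}Y_k\|_{L^\infty}$, $\|\dr^2_{t\theta}Y_k\|_{L^\infty}$ and $\|\dr^2_{\theta\theta}Y_k\|_{L^\infty}$ are all $o(\beta_k)$ on the interior cylinder $[-T_k+4,T_k-4]\times\s^1$.

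\textbf{A higher differential inequality.} The plan for the angular components is to push the scheme of Lemmas~\ref{estimate_xitheta} and \ref{estimate_delta} one derivative further. Set $\epsilon_k(t):=\int_{\s^1}|\dr^2_{\theta\theta}\g Y_k|_\xi^2\,d\theta$. Differentiating twice in $t$, commuting derivatives, substituting $\lap Y_k=-|\g Y_k|_\eta^2 Y_k$, integrating by parts in $\theta$ and absorbing the nonlinear terms using \eqref{unif_conv_to_zero} and the Poincaré inequality on $\s^1$ — exactly as in the proof of Lemma~\ref{estimate_delta}, with one more $\dr_\theta$ — should yield $\epsilon_k''\ge\epsilon_k$ on $\bar{\Cr}_k$ for $k$ large, hence $\epsilon_k(t)\le\max(\epsilon_k(t_1),\epsilon_k(t_2))$ for $t_1<t<t_2$ and $\int_{t_1}^{t_2}\epsilon_k^\nu\le C(\nu)[\epsilon_k(t_1)^\nu+\epsilon_k(t_2)^\nu]$ for $\nu\in(0,1]$, $|t_1-t_2|>1$. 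Moreover, bootstrapping elliptic regularity from $\lap Y_k=-|\g Y_k|_\eta^2 Y_k$ (whose right-hand side is $O(\|\g Y_k\|_{L^\infty}^2)$ on $\bar{\Cr}_k$) together with \eqref{unif_conv_to_zero} gives $\|\g^2 Y_k\|_{L^\infty}+\|\g^3 Y_k\|_{L^\infty}\to0$ on $[-T_k+3,T_k-3]\times\s^1$, so $\epsilon_k$ is well defined and tends to $0$ at interior slices. I would then run the dichotomy of the proof of Claim~\ref{claim_boundary_conditions_C1} with the ratio $\alpha_k/\beta_k$ replaced by $\epsilon_k/\beta_k^3$: if this ratio stayed bounded below on an outer subcylinder of oscillation $\ge\delta/10$ then $\beta_k^{1/2}\le C\epsilon_k^{1/6}$ there, and the integral estimate above together with \eqref{oscillations} would force that oscillation to vanish, contradicting \eqref{hypothesis_oscillation}. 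Hence there are $-T_k<t_{1,k}<t_{2,k}<T_k$ with $\osc_{[t_{1,k},t_{2,k}]\times\s^1}Y_k\ge\delta/2$ and $\epsilon_k(t_{i,k})=o(\beta_k(t_{i,k})^3)$, and the maximum principle above with \eqref{uniform_arclength} gives $\epsilon_k=o(\beta_k^3)$ uniformly on $[t_{1,k},t_{2,k}]$; as before, replace $[-T_k,T_k]$ by $[t_{1,k},t_{2,k}]$.

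\textbf{Conclusion.} On each slice $\{t\}\times\s^1$, by \eqref{uniform_delta_beta2} one has $\|\dr^2_{\theta\theta}Y_k\|_{L^2(\s^1)}+\|\dr^2_{t\theta}Y_k\|_{L^2(\s^1)}\le 2\delta_k^{1/2}=o(\beta_k)$, and by the previous step $\|\dr^3_{\theta\theta\theta}Y_k\|_{L^2(\s^1)}+\|\dr^3_{t\theta\theta}Y_k\|_{L^2(\s^1)}\le 2\epsilon_k^{1/2}=o(\beta_k^{3/2})$; the Sobolev embedding $H^1(\s^1)\hookrightarrow L^\infty(\s^1)$ then gives $\|\dr^2_{\theta\theta}Y_k\|_{L^\infty}+\|\dr^2_{t\theta}Y_k\|_{L^\infty}=o(\beta_k)+o(\beta_k^{3/2})=o(\beta_k)$. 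For the remaining component I would use the harmonic map equation in the form $\dr^2_{tt}Y_k=\lap Y_k-\dr^2_{\theta\theta}Y_k=-|\g Y_k|_\eta^2 Y_k-\dr^2_{\theta\theta}Y_k$, where $|\g Y_k|_\eta^2\le\big\||\g Y_k|_\eta/|\g Y_k|_\xi\big\|_{L^\infty}^2\,|\g Y_k|_\xi^2=o(\beta_k)$ by \eqref{eq:conv_lightlike} and Remark~\ref{rk:unif_gradient_Y}, while $\|Y_k\|_{L^\infty_\xi}\le C$, so $\|\dr^2_{tt}Y_k\|_{L^\infty}=o(\beta_k)$. Adding the three contributions, $|\g^2 Y_k|_\xi=o(\beta_k)=o(|\g Y_k^*|_\xi^2)$ uniformly, which is the statement.

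\textbf{Main obstacle.} The delicate point is the second step: propagating enough a priori regularity to make sense of the third-order boundary quantities $\epsilon_k(t_{i,k})$ on a degenerating cylinder — the smallness of $\|\g Y_k\|_{L^\infty}$ is only available away from the ends, so one must iterate elliptic estimates transverse to the degenerate $t$-direction — and verifying that the endpoint selection is genuinely compatible with the oscillation hypothesis \eqref{hypothesis_oscillation}, as in Claim~\ref{claim_boundary_conditions_C1}. The differential-inequality computation for $\epsilon_k$ itself is mechanical once Lemmas~\ref{estimate_xitheta} and \ref{estimate_delta} are available.
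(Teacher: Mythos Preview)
Your argument is correct and reaches the same conclusion, but the route is genuinely different from the paper's.

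The paper never introduces a third-order quantity. Instead it differentiates the system \eqref{system_Y_Ystar} once, obtaining an elliptic equation for $\g(Y_k-Y_k^*)$, and applies two-dimensional interior $W^{2,4}$ estimates on each $Q(t,3)$. The three resulting terms are handled using, respectively, the $C^1$ convergence of Proposition~\ref{proposition_C1_convergence}, the $\ve$-regularity bound $\|\scal{\g Y_k}{\g^2 Y_k}_\eta\|_{L^4}\le C\|\g Y_k\|_{L^2_\eta}^2$ (which is gauge-invariant), and the already-established control $\delta_k=o(\beta_k^2)$ from \eqref{uniform_delta_beta2}. This yields $\|\g^2(Y_k-Y_k^*)\|_{L^\infty}=o(\min\beta_k)$, and one finishes by adding back $|\g^2 Y_k^*|=o(|\g Y_k^*|^2)$ from Proposition~\ref{convergence_geodesic}. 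No further restriction of the cylinder and no third derivatives are needed.

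Your approach trades the two-dimensional elliptic step for one more iteration of the $\alpha_k$/$\delta_k$ machinery: you push the differential inequality to $\epsilon_k=\int_{\s^1}|\dr^2_{\theta\theta}\g Y_k|^2_\xi$, run the dichotomy of Claim~\ref{claim_boundary_conditions_C1} a third time, and then close with the one-dimensional embedding $H^1(\s^1)\hookrightarrow L^\infty(\s^1)$ together with the pointwise identity $\dr^2_{tt}Y_k=-|\g Y_k|^2_\eta Y_k-\dr^2_{\theta\theta}Y_k$. This is legitimate: the bootstrap you invoke (apply interior $W^{2,p}$ estimates first to $Y_k$, then to $\g Y_k$, using $\|\g Y_k\|_{L^\infty}\to 0$ to kill the nonlinear terms) does give $\|\g^2 Y_k\|_{L^\infty}+\|\g^3 Y_k\|_{L^\infty}\to 0$ on interior slices, so the endpoint values of $\epsilon_k$ vanish and the dichotomy goes through. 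The exponent $3$ in $\epsilon_k=o(\beta_k^3)$ is stronger than necessary (any exponent $>0$ would suffice for the Sobolev step, since $\beta_k\to 0$), but it is harmless. The cost of your route is one additional restriction of the cylinder and a slightly heavier regularity bootstrap; the gain is that the final step is purely algebraic, relying only on the equation and on \eqref{eq:conv_lightlike} rather than on the comparison with $Y_k^*$.
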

	
	\begin{proof}
		By differentiating \eqref{system_Y_Ystar}, we obtain 
		\begin{align*}
			\lap \g\left[ Y_k - Y_k^*\right](t,\theta) &= \g\left[ -|\g Y_k(t,\theta)|^2_\eta (Y_k(t,\theta)-Y_k^*(t)) + \fint_{\s^1} \left( |\g Y_k(t,\theta)|^2_\eta - |\g Y_k(t,\bar{\theta})|^2_\eta \right) Y_k(t,\bar{\theta}) d\bar{\theta} \right].
		\end{align*}
		Therefore, we have the following estimate :
		\begin{align*}
			\left| \lap \g\left[ Y_k - Y_k^*\right] \right|_\xi &\leq |\g Y_k|^2_\eta |\g(Y_k - Y_k^*)|_\xi + C|\scal{\g Y_k}{\g^2 Y_k}_\eta|\\
			&+ C\fint_{\s^1} \left( |\scal{\g Y_k}{\g^2 Y_k}_\eta| + |\g Y_k|^2_\eta |\g Y_k|_\xi \right) .
		\end{align*}
		By elliptic regularity, it holds for any $Q(t,4)\subset [-T_k,T_k]\times \s^1$ :
		\begin{align}
			\left\| \g^2 (Y_k - Y_k^*) \right\|_{L^\infty_\xi(Q(t,2))} &\leq C \left\| |\g Y_k|^2_\eta |\g(Y_k - Y_k^*)|_\xi \right\|_{L^4(Q(t,3))} \label{elliptic_reg_11}\\
			&+ C\left\| \scal{\g Y_k}{\g^2 Y_k}_\eta \right\|_{L^4(Q(t,3))} \label{elliptic_reg_12}\\
			&+ C\|\g(Y_k - Y_k^*) \|_{L^4_\xi(Q(t,3))}. \label{elliptic_reg_13}
		\end{align}
		The boundary term \eqref{elliptic_reg_13} can be estimated in the same manner as \eqref{conv_C1_lign2} :
		\begin{align}\label{elliptic_reg_23}
			\|\g(Y_k - Y_k^*) \|_{L^4_\xi(Q(t,3))} &\leq C\left( \int_{t-3}^{t+3} \delta_k^2 \right)^\frac{1}{4} \ust{k\to\infty}{=} o\left(\min \beta_k \right).
		\end{align}
		The term \eqref{elliptic_reg_11} can be estimated thanks to the $C^1$-convergence, see the proposition \ref{proposition_C1_convergence} :
		\begin{align}\label{elliptic_reg_21}
			\left\| |\g Y_k|^2_\xi |\g(Y_k - Y_k^*)|_\xi \right\|_{L^4(Q(t,2))} &\leq \left\| \g Y_k \right\|_{L^\infty_\xi(Q(t,2))}^2 \left\| \g Y_k - \g Y_k^* \right\|_{L^\infty_\xi(Q(t,2))} \ust{k\to\infty}{=} o(\min\beta_k ).
		\end{align}
		The term \eqref{elliptic_reg_12} is invariant by isometries of $\s^{3,1}$. Therefore, it can be estimated thanks to the $\ve$-regularity, see the corollary \ref{control_normal_by_arond} :
		\begin{align*}
			\left\| \scal{\g Y_k}{\g^2 Y_k}_\eta \right\|_{L^4(Q(t,3))} &\leq C\|\g Y_k\|_{L^2_\eta(Q(t,4))}^2.
		\end{align*}
		Since $Y_k$ is conformal,
		\begin{align}\label{elliptic_reg_22}
			\left\| \scal{\g Y_k}{\g^2 Y_k}_\eta \right\|_{L^4_\xi(Q(t,3))} &\leq C\|\dr_\theta Y_k\|_{L^2_\xi(Q(t,4))}^2 \ust{k\to\infty}{=} o(\min\beta_k).
		\end{align}
		Using \eqref{elliptic_reg_21}-\eqref{elliptic_reg_22}-\eqref{elliptic_reg_23} in \eqref{elliptic_reg_11}-\eqref{elliptic_reg_12}-\eqref{elliptic_reg_13}, we obtain
		\begin{align*}
			\left\| \g^2 (Y_k - Y_k^*) \right\|_{L^\infty_\xi(Q(t,2))} &\ust{k\to\infty}{=} o(\min(\beta_k)).
		\end{align*}
		Thanks to \eqref{uniform_close_average}, we conclude that 
		\begin{align*}
			\sup_{p_k\in[-T_k+4,T_k-4]\times\s^1} \frac{|\g^2 (Y_k-Y_k^*)(p_k)|_\xi}{|\g Y_k^*(p_k)|^2_\xi} \xrightarrow[k\to\infty]{}{0}.
		\end{align*}
	By \cref{convergence_geodesic}, it holds 
	\begin{align*}
		\sup_{p_k\in[-T_k+4,T_k-4]\times\s^1} \frac{|\g^2 Y_k^*(p_k)|_\xi}{|\g Y_k^*(p_k)|^2_\xi} \xrightarrow[k\to\infty]{}{0}.
	\end{align*}
So we conclude.
	\end{proof}
	
	We can now conclude the proof of the convergence in the $C^2$-topology.
	\begin{proposition}
		Under the assumptions \eqref{uniform_Linfty_bound}-\eqref{unif_conv_to_zero}-\eqref{hypothesis_oscillation}, there exists a geodesic $\sigma$ of $\s^{3,1}$, and a parametrization $\tau_k : (-T_k,T_k)\times \s^1 \to (-L_k,L_k)\times \s^1$ such that
		\begin{align*}
			Y_k\circ \tau_k^{-1} \xrightarrow[k\to \infty]{}{\sigma},
		\end{align*}
		with $|\sigma'|_\xi = 1$. The curve $\sigma$ is a timelike straight line. The convergence holds in the $C^2_{loc}$-topology. 
	\end{proposition}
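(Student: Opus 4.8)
The plan is to observe that this proposition is just the $C^2$-refinement of the $C^1_{loc}$-convergence already established in \cref{proposition_C1_convergence}, so that nothing new needs to be set up: I would keep the reparametrization $\tau_k$ introduced in the proof of \cref{convergence_geodesic} — Euclidean arclength of $Y_k^*$ in the $t$-variable, extended by the identity on the $\s^1$-factor — and write $\tilde{Y}_k := Y_k\circ\tau_k^{-1}$, defined on $(-L_k,L_k)\times\s^1$. From \cref{proposition_C1_convergence} I already have $\tilde{Y}_k\to\sigma$ in $C^1_{loc}$, with $|\sigma'|_\xi=1$, and $\sigma$ is a straight line which, being an affine line contained in $\s^{3,1}$, must have a light-like direction; this is also visible directly, since conformality of $Y_k$ forces $|\dr_s\tilde{Y}_k|^2_\eta=|\dr_\theta Y_k|^2_\eta/|\dr_t Y_k^*|^2_\xi\to 0$, cf. \eqref{eq:conv_lightlike}. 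It therefore only remains to show that the second derivatives of $\tilde{Y}_k$ tend to $0$ uniformly on compact subsets; a compactness argument then upgrades the convergence to $C^2_{loc}$ and re-proves $\sigma''=0$.

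The second step is the computation of those second derivatives. Writing $s$ for the new variable, so that $\dr_s=|\dr_t Y_k^*|_\xi^{-1}\dr_t$ while $\theta$ is untouched, the chain rule gives $\dr_\theta^2\tilde{Y}_k=\dr_\theta^2 Y_k$, $\dr_s\dr_\theta\tilde{Y}_k=|\dr_t Y_k^*|_\xi^{-1}\dr_t\dr_\theta Y_k$, and $\dr_s^2\tilde{Y}_k=|\dr_t Y_k^*|_\xi^{-2}\dr_t^2 Y_k-|\dr_t Y_k^*|_\xi^{-4}\scal{\dr_t Y_k^*}{\dr_t^2 Y_k^*}_\xi\,\dr_t Y_k$. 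I would substitute $\dr_t^2 Y_k=\lap Y_k-\dr_\theta^2 Y_k=-|\g Y_k|^2_\eta Y_k-\dr_\theta^2(Y_k-Y_k^*)$ and the identity $\dr_t^2 Y_k^*=-\fint_{\s^1}|\g Y_k|^2_\eta Y_k$ from the proof of \cref{convergence_geodesic}, and bound term by term. The contributions carrying a factor $|\g Y_k|^2_\eta$ are $o(|\dr_t Y_k^*|^2_\xi)$: by \eqref{eq:conv_lightlike} one has $|\g Y_k|^2_\eta=o(|\g Y_k|^2_\xi)$ uniformly, and $|\g Y_k|^2_\xi\sim|\dr_t Y_k^*|^2_\xi$ pointwise by \eqref{uniform_close_average}--\eqref{speed_conv_geod}. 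The contributions $\dr_\theta^2(Y_k-Y_k^*)$ and $\dr_t\dr_\theta Y_k$ are $o(|\dr_t Y_k^*|^2_\xi)$ by \cref{uniform_bound_hessian}, which gives $|\g^2 Y_k|_\xi=o(|\g Y_k^*|^2_\xi)=o(|\dr_t Y_k^*|^2_\xi)$ uniformly; since $|\dr_t Y_k^*|_\xi\to 0$ by \eqref{unif_conv_to_zero}, this even forces $\dr_\theta^2\tilde{Y}_k=o(1)$ and $\dr_s\dr_\theta\tilde{Y}_k=o(1)$ outright. Finally $|\dr_t^2 Y_k^*|_\xi/|\dr_t Y_k^*|^2_\xi\to 0$ by \eqref{laplacian_average}, so the last term of $\dr_s^2\tilde{Y}_k$ is $o(1)$ as well. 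Hence all three second derivatives of $\tilde{Y}_k$ tend to $0$ in $L^\infty_\xi$ on every fixed-size subcylinder of $\bar{\Cr}_k$.

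The last step is to conclude: combining this with the $C^1$-bounds $|\dr_s\tilde{Y}_k|_\xi\to 1$ and $\|\dr_\theta\tilde{Y}_k\|_{L^\infty_\xi}=\|\dr_\theta Y_k\|_{L^\infty_\xi}\to 0$, and with $\|\tilde{Y}_k-Y_k^*\circ\tau_k^{-1}\|_{L^\infty_\xi}\to 0$ (obtained by integrating $\dr_\theta Y_k$), the family $(\tilde{Y}_k)_{k\in\N}$ is bounded in $C^2_{loc}$ with every subsequential limit having vanishing second derivatives; as its $C^1$-limit is the straight line $\sigma$, the whole sequence converges to $\sigma$ in $C^2_{loc}$ and $\sigma''=0$.

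The hard part is structural rather than computational: because the reparametrization $\tau_k$ degenerates, $|\dr_t Y_k^*|_\xi\to 0$, one cannot bootstrap the harmonic map equation $\lap Y_k=-|\g Y_k|^2_\eta Y_k$ in the rescaled coordinates, and this is exactly why the elementary second-derivative computation above does not close by itself. The real work is hidden in \cref{uniform_bound_hessian}, which supplies precisely the gain $|\g^2 Y_k|_\xi=o(|\g Y_k^*|^2_\xi)$ compensating the degeneration of $\tau_k$; proving that lemma is the delicate point, and it relies on the $C^1$-convergence of \cref{proposition_C1_convergence}, on elliptic regularity applied on balls of fixed size in the \emph{original} coordinates, and on the control \eqref{uniform_delta_beta2} of $\delta_k$ coming from the ODE comparison in \cref{estimate_delta}.
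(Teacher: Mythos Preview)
Your proposal is correct and follows essentially the same approach as the paper: both use the reparametrization by Euclidean arclength of $Y_k^*$, reduce the $C^2$-statement to showing that all three second derivatives of $\tilde Y_k$ vanish in $L^\infty$, and extract the needed bounds from \cref{uniform_bound_hessian}, \cref{convergence_geodesic} (in particular \eqref{laplacian_average}), and \eqref{unif_conv_to_zero}/\eqref{eq:conv_lightlike}. The only cosmetic difference is that for $\dr_s^2$ the paper splits $Y_k=(Y_k-Y_k^*)+Y_k^*$ and bounds each piece, whereas you substitute the harmonic map equation $\dr_t^2 Y_k=-|\g Y_k|^2_\eta Y_k-\dr_\theta^2 Y_k$ directly; the two computations are equivalent and invoke the same lemmas.
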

	
	\begin{proof}
		Thanks to the proposition \ref{proposition_C1_convergence}, we have the convergence in the $C^1$-topology. It remains to show that $ \g^2 (Y_k\circ \tau_k^{-1}) $ converges to $0$ uniformly. The new parametrization doesn't act on the angular parametrization. Hence, $	\dr^2_{\theta\theta} (Y_k\circ \tau_k^{-1}) = (\dr^2_{\theta\theta} Y_k)\circ \tau_k^{-1} $ uniformly converges to $0$ thanks to \eqref{unif_conv_to_zero}. For the mixed derivatives, it holds
		\begin{align*}
			\dr^2_{\theta s} Y_k &= \frac{1}{|\g Y_k^*|_\xi} \dr^2_{\theta t} Y_k = |\g Y_k^*|_\xi \frac{\dr^2_{t \theta} (Y_k - Y_k^*)}{|\g Y_k^*|^2_\xi}.
		\end{align*}
		Thanks to the lemma \ref{uniform_bound_hessian} and \eqref{unif_conv_to_zero}, we obtain that $\| \dr^2_{\theta s} Y_k \|_{L^\infty_\xi }$ converges to $0$. For the purely radial part of the hessian, it holds 
		\begin{align*}
			\dr^2_{ss} Y_k &= \dr^2_{ss} (Y_k - Y_k^*) + \dr^2_{ss} Y_k^*\\
			&= \frac{1}{|\dr_t Y_k^*|_\xi} \dr_t \left( \frac{1}{|\dr_t Y_k^*|_\xi} \dr_t (Y_k-Y_k^*) \right) + \dr^2_{ss} Y_k^*\\
			&= \frac{\dr^2_{tt} (Y_k - Y_k^*)}{|\dr_t Y_k^*|^2_\xi} -\frac{1}{|\dr_t Y_k^*|^3_\xi} \scal{\dr_t Y_k^*}{\dr^2_{tt} Y_k^*}_\xi \dr_t (Y_k-Y_k^*) + \dr^2_{ss} Y_k^*.
		\end{align*}
		Thanks to the lemma \ref{uniform_bound_hessian} and the proposition \ref{convergence_geodesic}, we obtain that $\dr^2_{ss} Y_k$ converges uniformly to $0$.
	\end{proof}

	\section{Proof of the proposition \ref{equality_index}}\label{proof_equality_index}

	Let $\Psi : \Sigma\to\s^3$ be a smooth immersion, not totally umbilic. Consider $Y$ its conformal Gauss map. Given a vector field $Z\in T_Y\s^{3,1}$, the goal of this section, see the proposition \ref{caracterization_CGM_variation}, is to show that $Z$ satisfies a system of two equations if and only if there exists a variation $(Y_t)_{t\in(-1,1)}$ satisfying two conditions : each $Y_t$ is a conformal Gauss map of some immersion $\Psi_t$ with $(\dr_t Y_t)_{|t=0} = Z$ and $(\dr_t \Psi_t)_{|t=0} \perp \g \Psi$.\\
	
	We denote $\Ur := \{x\in\Sigma : |\g Y|_\eta = 0\}$ the set of umbilic points of $\Psi$. Given a function $f_t(x)$, we denote $\dot{f} := (\dr_t f_t)_{|t=0}$.\\
	
	According to \cite[Proposition 3.3]{palmer1991}, a vector field $Z\in T_Y \s^{3,1}$ has to satisfy the following condition in order to come from a variation through conformal Gauss maps. Let $\xi$ be the normal part of $Z$ along $Y(\Sigma\setminus \Ur)$. Then we can choose $(Y_t)_{t\in(-1,1)}$ a variation through conformal Gauss maps such that $\dot{Y}=Z$ if and only if 
	\begin{align}\label{variation_through_CGM_equation}
		\scal{\lap^{NY} \xi +2\xi}{\nu}_\eta + \scal{\g^{TY} \xi}{\g^{TY} \nu}_\eta = 0,
	\end{align}
	where $\lap^{NY}$ is the rough laplacian on $NY := (T Y(\Sigma\setminus \Ur))^\perp$ computed with the metric $g_Y := Y^*\eta$, $\g^{TY}$ is the tangent part of $\g^{g_Y}$ to $Y(\Sigma\setminus \Ur)$, and $\nu := \begin{pmatrix}
		\Psi\\ 1
	\end{pmatrix}$. However, \eqref{variation_through_CGM_equation} is heavily degenerated around umbilic points of $\Psi$ since every part of the equation depends on the geometry of $Y$. Instead of considering the minimal surface's point of view, we consider the harmonic map's point of view. The equation \eqref{variation_through_CGM_equation} is the linearisation of the condition that the mean curvature $h_\gamma \in \R^{4,1}$ of the conformal Gauss map $\gamma$ of a smooth immersion $\psi :\Sigma\to \s^3$ satisfies, away from the umbilic points of $\psi$ :
	\begin{align*}
		h_{\gamma} &= (\lap_{g_\gamma} H_\psi + 2H_\psi) \begin{pmatrix}
			\psi\\ 1
		\end{pmatrix}, 
	\end{align*}
	where $g_\gamma := \gamma^*\eta$. Let $g_\psi := \psi^*\xi$. Using that $\gamma : (\Sigma,g_\psi) \to \s^{3,1} $ is conformal, the above equation can be written as
	\begin{align}\label{mean_curvature_CGM}
		\lap_{g_\psi} \gamma + |\g \gamma|^2_\eta \gamma &= (\lap_{g_\psi} H_\psi + |\g \gamma|^2_\eta H_\psi) \begin{pmatrix}
			\psi\\ 1
		\end{pmatrix}.
	\end{align}
	Since $\psi$ is smooth, this equation makes sense at umbilic points of $\psi$. This is the constraint that we linearize. 
	
	\begin{lemma}
		Let $\Psi : \Sigma \to \s^3$ be a smooth immersion and $Y$ be its conformal Gauss map. Let $(\Psi_t)_{t\in(-1,1)}$ be a smooth deformation of $\Psi$ such that $\dot{\Psi} \perp \g \Psi$. Consider $(Y_t)_{t\in(-1,1)}$ their conformal Gauss map. It holds
		\begin{align}\label{LCGM_perp_variation}
			\scal{\nu}{\lap_{g_\Psi} \dot{Y} - |\g Y|^2_\eta \dot{Y} }_\eta &= 0.
		\end{align}
	\end{lemma}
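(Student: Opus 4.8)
The plan is to differentiate the structural constraint \eqref{mean_curvature_CGM} along the family $(\Psi_t)_{t\in(-1,1)}$ and then pair the result with the fixed null vector $\nu = \begin{pmatrix}\Psi\\1\end{pmatrix}$. Applied to $\Psi_t$, the constraint \eqref{mean_curvature_CGM} reads $\lap_{g_{\Psi_t}} Y_t + |\g Y_t|^2_\eta Y_t = f_t\,\nu_t$, with $\nu_t := \begin{pmatrix}\Psi_t\\1\end{pmatrix}$ and $f_t := \lap_{g_{\Psi_t}} H_{\Psi_t} + |\g Y_t|^2_\eta H_{\Psi_t}$. Differentiating at $t=0$ and taking the $\eta$-product with $\nu$, the right-hand side contributes $\dot f\,\scal{\nu}{\nu}_\eta + f\,\scal{\dot\nu}{\nu}_\eta$; both terms vanish, since $|\nu|^2_\eta = |\Psi|^2-1 = 0$ and, differentiating $|\Psi_t|^2 \equiv 1$, $\scal{\dot\nu}{\nu}_\eta = \scal{\dot\Psi}{\Psi} = 0$. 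So the whole content of the lemma will come out of the left-hand side after pairing with $\nu$.

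Expanding the left-hand side, $\dr_t\big(\lap_{g_{\Psi_t}} Y_t + |\g Y_t|^2_\eta Y_t\big)\big|_0 = \lap_{g_\Psi}\dot Y + |\g Y|^2_\eta \dot Y + \big(\dr_t\lap_{g_{\Psi_t}}\big)\big|_0[Y] + \big(\dr_t|\g Y_t|^2_\eta\big)\big|_0\,Y$. After pairing with $\nu$ the last term disappears because $\scal{Y}{\nu}_\eta = 0$ (read off \eqref{definition_Y}: $\scal{H\Psi+N}{\Psi} - H = H - H = 0$, using $|\Psi|^2=1$ and $N\perp\Psi$). For the variation of the Laplace--Beltrami operator I would write $\big(\dr_t\lap_{g_{\Psi_t}}\big)\big|_0[Y] = \dot{g^{ij}}\big(\dr^2_{ij}Y - \Gamma^k_{ij}\dr_k Y\big) - g^{ij}\dot\Gamma^k_{ij}\dr_k Y$; since $\scal{\dr_k Y}{\nu}_\eta = 0$ — immediate from \eqref{derivatives_Y} together with $\scal{\nu}{\nu}_\eta = 0$ and $\scal{\dr_k\nu}{\nu}_\eta = \scal{\dr_k\Psi}{\Psi} = 0$ — every $\Gamma$- and $\dot\Gamma$-term drops, leaving only $\dot{g^{ij}}\scal{\dr^2_{ij}Y}{\nu}_\eta$. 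Differentiating $\scal{\dr_i Y}{\nu}_\eta = 0$ in $x_j$ and using \eqref{derivatives_Y} with $\scal{\dr_k\nu}{\dr_j\nu}_\eta = g_{kj}$ gives $\scal{\dr^2_{ij}Y}{\nu}_\eta = -\scal{\dr_i Y}{\dr_j\nu}_\eta = \Arond_{ij}$. Combining with the vanishing of the right-hand side from the first step, this produces the relation $\scal{\lap_{g_\Psi}\dot Y}{\nu}_\eta + |\g Y|^2_\eta\,\scal{\dot Y}{\nu}_\eta + \dot{g^{ij}}\Arond_{ij} = 0$.

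It then remains to evaluate two scalars. First, from \eqref{definition_Y} one has $\scal{\dot Y}{\nu}_\eta = \scal{\dot N}{\Psi}$, and differentiating $\scal{N_t}{\Psi_t} \equiv 0$ gives $\scal{\dot N}{\Psi} = -\scal{N}{\dot\Psi}$. Second, the hypothesis $\dot\Psi\perp\g\Psi$ combined with $\dot\Psi\perp\Psi$ forces $\dot\Psi = \scal{\dot\Psi}{N}\,N$, whence $\dot g_{ij} = -2\scal{\dot\Psi}{N}\,A_{ij}$, so $\dot{g^{ij}} = 2\scal{\dot\Psi}{N}\,A^{ij}$, and using $A^{ij}\Arond_{ij} = |\Arond|^2_g + H\tr_g\Arond = |\Arond|^2_g = |\g Y|^2_\eta$ one obtains $\dot{g^{ij}}\Arond_{ij} = 2\scal{\dot\Psi}{N}\,|\g Y|^2_\eta$. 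Substituting into the relation above yields $\scal{\lap_{g_\Psi}\dot Y}{\nu}_\eta = -\scal{\dot\Psi}{N}\,|\g Y|^2_\eta = |\g Y|^2_\eta\,\scal{\dot Y}{\nu}_\eta$, which is exactly \eqref{LCGM_perp_variation}.

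The bracketed expansions ($\dr_t\lap_{g_{\Psi_t}}$, $\dr_t|\g Y_t|^2_\eta$) and the various orthogonality relations for $\nu,\dr_i\nu,Y,\dr_i Y$ are routine bookkeeping. The step I expect to be the crux is the identity $\scal{\dr^2_{ij}Y}{\nu}_\eta = \Arond_{ij}$: it is what makes the metric-variation term genuinely nonzero, and it is precisely this term that absorbs the sign discrepancy between the $+|\g Y|^2_\eta\dot Y$ produced by the differentiation and the $-|\g Y|^2_\eta\dot Y$ appearing in \eqref{LCGM_perp_variation}, so that the two scalar contributions cancel. A minor point to keep in mind is that \eqref{mean_curvature_CGM} is the smooth extension of Palmer's condition and is valid on all of $\Sigma$, including umbilic points, which is what legitimizes differentiating it pointwise everywhere.
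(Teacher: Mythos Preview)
Your proof is correct and follows essentially the same strategy as the paper: differentiate the structural constraint \eqref{mean_curvature_CGM} for conformal Gauss maps, pair with $\nu$, and reduce using the key identities $\scal{\dr^2_{ij}Y}{\nu}_\eta = \Arond_{ij}$ and $\dot g_{ij} = -2\scal{\dot\Psi}{N}A_{ij}$. The only organizational difference is that the paper differentiates $\scal{\nu_t}{\lap_{g_{\Psi_t}}Y_t}_\eta = 0$ (with moving $\nu_t$), which produces an extra term $\scal{\dot\nu}{\lap_{g_\Psi}Y}_\eta$ to be computed separately, whereas you differentiate the full equation and pair with the fixed $\nu$, so that term never appears and is instead replaced by the contribution $|\g Y|^2_\eta\scal{\dot Y}{\nu}_\eta$ from the $|\g Y_t|^2_\eta Y_t$ piece; the two bookkeepings are equivalent and your route is marginally cleaner.
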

	
	\begin{proof}
		Let $\nu_t := \begin{pmatrix}
			\Psi_t \\ 1
		\end{pmatrix}$. Thanks to \eqref{mean_curvature_CGM}, it holds :
		\begin{align*}
			0 &= \frac{d}{dt} \left( \scal{ \nu_t }{ \lap_{g_{\Psi_t}} Y_t }_\eta \right)_{|t=0} \\
			&= \scal{\dot{\nu}}{\lap_{g_\Psi} Y}_\eta + \scal{\nu}{ \frac{\dr}{\dr t}\left( \lap_{g_{\Psi_t}} Y_t \right)_{|t=0} }_\eta\\
			&= \scal{\dot{\nu}}{\lap_{g_\Psi} Y}_\eta + \scal{ \nu }{  \lap_{g_{\Psi}} \dot{Y} - \frac{1}{2\sqrt{\det g_{\Psi} }} \tr_{g_\Psi}(\dot{g}_\Psi) \dr_i \left[ g_\Psi^{ij} \sqrt{\det g_\Psi} \dr_j Y \right] }_\eta \\
			&+ \scal{ \nu }{ -\frac{1}{\sqrt{\det g_\Psi}} \dr_i\left[ g_\Psi^{i\alpha} (\dot{g}_\Psi)_{\alpha\beta} g_\Psi^{\beta j} \sqrt{\det g_\Psi} \dr_j Y \right] + \frac{1}{2\sqrt{\det g_{\Psi} }} \dr_i \left[ g_\Psi^{ij} \tr_{g_\Psi}(\dot{g}_\Psi) \sqrt{\det g_\Psi} \dr_j Y \right] }_\eta .
		\end{align*}
		Since $\lap_{g_\Psi} Y \perp \nu$ and $\g Y\perp \nu$, the above equation reduces to :
		\begin{align}\label{LCGM_step1}
			0 &= \scal{\dot{\nu}}{\lap_{g_\Psi} Y}_\eta + \scal{ \nu }{ \lap_{g_{\Psi}} \dot{Y} - \scal{\dot{g}_\Psi}{\g^2 Y}_{g_\Psi} }_\eta.
		\end{align}
		Using \eqref{derivatives_Y}, we compute the hessian of $Y$ :
		\begin{align*}
			\g^2 Y &= (\g^2 H) \nu + (\g H - \g \Arond)(\g \nu) - \Arond \g^2 \nu.
		\end{align*}
		Since $\nu,\g \nu\perp \nu$, we obtain 
		\begin{align*}
			\scal{\nu}{\g^2 Y}_\eta &= -\scal{\nu}{\Arond \g^2 \nu}_\eta \\
			&= \scal{\g \nu}{\Arond \g \nu}_\eta \\
			&= \Arond.
		\end{align*}
		Therefore, \eqref{LCGM_step1} reduces to
		\begin{align}\label{LCGM_step2}
			0 &= \scal{\dot{\nu}}{\lap_{g_\Psi} Y}_\eta + \scal{ \nu }{ \lap_{g_{\Psi}} \dot{Y}}_\eta - \scal{\dot{g}_\Psi}{\Arond}_{g_\Psi} .
		\end{align}
		Using $\dot{\Psi}\perp \g \Psi$, we obtain
		\begin{align*}
			(\dot{g}_\Psi)_{ij} &= \scal{\dr_i \dot{\Psi} }{\dr_j \Psi}_\xi + \scal{\dr_i \Psi}{\dr_j \dot{\Psi}}_\xi \\
			&= -2 \scal{\dot{\Psi} }{\dr^2_{ij} \Psi}_\xi \\
			&= - 2 \scal{\dot{\Psi}}{N}_\xi A_{ij}.
		\end{align*}
		We now write $\scal{\dot{\Psi}}{N}_\xi$  in terms of $\dot{Y}$. By definition of a conformal Gauss map and using $\nu\perp \dot{\nu}$, it holds 
		\begin{align*}
			\scal{\dot{\nu}}{Y}_\eta &= \scal{ \begin{pmatrix}
					\dot{\Psi} \\ 0
			\end{pmatrix} }{ \begin{pmatrix}
					N\\ 0
			\end{pmatrix} }\\
			&= \scal{\dot{\Psi}}{N}_\xi.
		\end{align*}
		Hence, \eqref{LCGM_step2} can be written as 
		\begin{align}\label{LCGM_step3}
			0 &= \scal{\dot{\nu}}{\lap_{g_\Psi} Y}_\eta + \scal{\nu}{\lap_{g_\Psi} \dot{Y}}_\eta +2\scal{\dot{\nu}}{Y}_\eta |\Arond|^2_{g_\Psi}.
		\end{align}
		Furthermore, we can write the first term as
		\begin{align*}
			\scal{\dot{\nu}}{\lap_{g_\Psi} Y}_\eta &= \scal{\dot{\nu} }{(\lap_{g_\Psi} H) \nu +(\g H - \di \Arond)\g \nu - \Arond \cdot (\g^2 \nu) }_\eta.
		\end{align*}
		Using $\dot{\Psi}\perp \g \Psi$, we conclude $\dot{\nu}\perp \g \nu$ and we obtain
		\begin{align*}
			\scal{\dot{\nu}}{\lap_{g_\Psi} Y}_\eta &= - \scal{\dot{\nu}}{\Arond \cdot \g^2 \nu}_\eta \\
			&= -\Arond^{\alpha\beta} \scal{\dot{\Psi}}{\g_{\alpha\beta} \Psi}_\xi\\
			&= -\scal{\dot{\Psi}}{N}_\xi |\Arond|^2_{g_\Psi}.
		\end{align*}
		Therefore, \eqref{LCGM_step3} can be written as
		\begin{align*}
			0 &= \scal{\nu}{\lap_{g_{\Psi}}\dot{Y}}_\eta + \scal{\dot{\nu}}{Y}_\eta |\Arond|^2_{g_\Psi}.
		\end{align*}
		Using $|\Arond|^2_{g_\Psi} = |\g Y|^2_{g_\Psi,\eta}$ and $\scal{\dot{\nu}}{Y}_\eta = -\scal{\dot{Y}}{\nu}_\eta$, we conclude.
	\end{proof}
	
	Since we consider variations of $\Psi$ satisfying $\dot{\Psi}\perp \g \Psi$, the relation \eqref{LCGM_perp_variation} is not enough to caracterize the variations coming from conformal Gauss maps. A second condition to fully complete the description is the following.
	
	\begin{lemma}\label{orthogonality_lemma}
		Let $\Psi : \Sigma \to \s^3$ be a smooth immersion. Let $(\Psi_t)_{t\in(-1,1)}$ be a smooth deformation of $\Psi$ such that $\dot{\Psi} \perp \g \Psi$. Consider $(Y_t)_{t\in(-1,1)}$ their conformal Gauss map. It holds $\scal{\g \dot{Y} }{\nu}_\eta = 0$.
	\end{lemma}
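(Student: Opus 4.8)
The plan is to differentiate in $t$ the pointwise orthogonality between the gradient of the conformal Gauss map and the null direction $\nu_t := \begin{pmatrix}\Psi_t\\ 1\end{pmatrix}$. The first step is to record that $\scal{\g Y_t}{\nu_t}_\eta = 0$ holds for \emph{every} $t$, not just at $t=0$: by \eqref{derivatives_Y} one has $\g Y_t = (\g H_t)\nu_t - \Arond_t(\g\nu_t)$, and both $\nu_t$ and $\g\nu_t$ are $\eta$-orthogonal to $\nu_t$, since $\scal{\nu_t}{\nu_t}_\eta = |\Psi_t|^2 - 1 = 0$ and $\scal{\dr_j\nu_t}{\nu_t}_\eta = \scal{\dr_j\Psi_t}{\Psi_t}_\xi = \tfrac12\dr_j|\Psi_t|^2 = 0$, using that $\Psi_t$ takes values in $\s^3$. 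This is exactly the identity whose $t$-derivative carries the information we want.

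Differentiating $\scal{\dr_i Y_t}{\nu_t}_\eta = 0$ at $t=0$ yields $\scal{\dr_i\dot Y}{\nu}_\eta = -\scal{\dr_i Y}{\dot\nu}_\eta$, where $\dot\nu = \begin{pmatrix}\dot\Psi\\ 0\end{pmatrix}$. It then suffices to show the right-hand side vanishes. Expanding $\dr_i Y = (\dr_i H)\nu - \Arond_i^{\ j}\dr_j\nu$ from \eqref{derivatives_Y} and pairing with $\dot\nu$ gives
\[
\scal{\dr_i Y}{\dot\nu}_\eta = (\dr_i H)\,\scal{\nu}{\dot\nu}_\eta - \Arond_i^{\ j}\,\scal{\dr_j\nu}{\dot\nu}_\eta .
\]
Here $\scal{\nu}{\dot\nu}_\eta = \scal{\Psi}{\dot\Psi}_\xi = 0$ because $|\Psi| \equiv 1$ on $\s^3$, and $\scal{\dr_j\nu}{\dot\nu}_\eta = \scal{\dr_j\Psi}{\dot\Psi}_\xi = 0$ by the hypothesis $\dot\Psi\perp\g\Psi$. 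Hence $\scal{\dr_i Y}{\dot\nu}_\eta = 0$ for each $i$, so $\scal{\g Y}{\dot\nu}_\eta = 0$ and therefore $\scal{\g\dot Y}{\nu}_\eta = 0$, which is the claim.

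The argument is pure bookkeeping of vanishing $\eta$-pairings, so I do not anticipate a real obstacle; the only point deserving care is to justify that $\scal{\g Y_t}{\nu_t}_\eta = 0$ is an identity in $t$ (so that it may be differentiated), which is precisely what the first step settles. As an alternative route that bypasses the differentiation step, one can compute $\dot Y$ directly by differentiating the explicit formula \eqref{definition_Y} for the conformal Gauss map, express $\scal{\dr_i\dot Y}{\nu}_\eta$ using $\scal{\nu}{\nu}_\eta = 0$, $\scal{\g\nu}{\nu}_\eta = 0$ and the relation $\scal{\dot\nu}{Y}_\eta = \scal{\dot\Psi}{N}_\xi$ already used above, and check that all remaining terms vanish under $\dot\Psi\perp\g\Psi$; this produces the same cancellations.
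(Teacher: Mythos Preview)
Your proof is correct and follows essentially the same route as the paper: both differentiate the identity $\scal{\g Y_t}{\nu_t}_\eta = 0$ at $t=0$ to reduce the claim to $\scal{\g Y}{\dot\nu}_\eta = 0$, and then verify this vanishing by expanding $\g Y$ via \eqref{derivatives_Y} and using $\dot\Psi\perp\g\Psi$. The only difference is expository: you spell out explicitly why the identity holds for all $t$ before differentiating, whereas the paper invokes the relation $\scal{\g Y}{\dot\nu}_\eta = -\scal{\g\dot Y}{\nu}_\eta$ at the end without comment.
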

	
	\begin{proof}
		By direct computation, using $\nu\perp \dot{\nu}$ and \eqref{derivatives_Y}, it holds
		\begin{align*}
			\scal{\g Y}{\dot{\nu}}_\eta &= - \Arond \scal{\g \nu}{\dot{\nu}}_\eta.
		\end{align*}
		Using $\dot{\Psi}\perp \g \Psi$, we obtain $\scal{\g \nu}{\dot{\nu}}_\eta = 0$. We conclude thanks to $\scal{\g Y}{\dot{\nu}}_\eta = - \scal{\g \dot{Y} }{\nu}_\eta$.
	\end{proof}
	
	To show that the converse is true, we will need to test the equation \eqref{LCGM_perp_variation} against linear combinations of $\nu$ and $\g \nu$.
	
	\begin{lemma}\label{LCGM_invariants}
		Let $\Psi : \Sigma \to \s^3$ be a smooth immersion. Let $\Omega\subset\Sigma$ be an open set included in the domain of a chart where $g_\Psi$ is conformally flat. For any $\alpha^1,\alpha^2,\beta\in C^\infty(\Omega)$, it holds
		\begin{align*}
			\scal{\nu}{\lap_{g_\Psi} (\alpha^k \dr_k \nu) - |\g Y|^2_\eta \alpha^k\dr_k \nu }_\eta &= -2\di_{g_\Psi}(\alpha),\\
			\scal{\nu}{\lap_{g_\Psi} (\beta \nu) - |\g Y|^2_\eta \beta \nu }_\eta &= -2\beta.
		\end{align*}
	\end{lemma}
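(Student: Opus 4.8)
The plan is to reduce both identities to an elementary computation in a single conformal chart. Both sides are coordinate independent: the left-hand sides manifestly, and the right-hand sides because $\alpha := \alpha^k\dr_k$ is a genuine vector field on $\Omega$ and $\alpha^k\dr_k\nu$ is just the derivative of $\nu$ along $\alpha$. So I would fix conformal coordinates with $g_\Psi = e^{2\lambda}(dx_1^2+dx_2^2)$ on $\Omega$, use that on $\R^5$-valued maps $\lap_{g_\Psi} = e^{-2\lambda}\lap_0$ with $\lap_0 := \dr_1^2+\dr_2^2$, and that $\di_{g_\Psi}(\alpha) = e^{-2\lambda}\dr_k(e^{2\lambda}\alpha^k) = \dr_k\alpha^k + 2\alpha^k\dr_k\lambda$.

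The first step is to record the algebraic relations that make the pairing with $\nu$ collapse. Since $\Psi$ maps into $\s^3$, $|\nu|^2_\eta = |\Psi|^2_\xi - 1 = 0$, hence $\scal{\nu}{\dr_k\nu}_\eta = 0$, and differentiating once more $\scal{\nu}{\dr^2_{jk}\nu}_\eta = -\scal{\dr_j\nu}{\dr_k\nu}_\eta = -\scal{\dr_j\Psi}{\dr_k\Psi}_\xi = -e^{2\lambda}\delta_{jk}$; tracing gives $\scal{\nu}{\lap_0\nu}_\eta = -\sum_i|\dr_i\nu|^2_\eta = -2e^{2\lambda}$. Finally, since the last component of $\nu$ is constant, $\scal{\dr_k\nu}{\lap_0\nu}_\eta = \scal{\dr_k\Psi}{\lap_0\Psi}_\xi = e^{2\lambda}\scal{\dr_k\Psi}{\lap_{g_\Psi}\Psi}_\xi = 0$, because $\lap_{g_\Psi}\Psi$ is the mean curvature vector of $\Psi(\Sigma)\subset\R^4$, which is normal to $\Psi(\Sigma)$.

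With these in hand, for the first identity the $|\g Y|^2_\eta$ term vanishes at once since $\scal{\nu}{\alpha^k\dr_k\nu}_\eta = 0$, so it remains to expand by the Leibniz rule
\[
\lap_0(\alpha^k\dr_k\nu) = (\lap_0\alpha^k)\,\dr_k\nu + 2\,\dr_j\alpha^k\,\dr^2_{jk}\nu + \alpha^k\,\dr_k(\lap_0\nu)
\]
and pair with $\nu$: the first term gives $0$, the second gives $2\,\dr_j\alpha^k\scal{\nu}{\dr^2_{jk}\nu}_\eta = -2e^{2\lambda}\dr_k\alpha^k$, and the third gives $\alpha^k\big(\dr_k\scal{\nu}{\lap_0\nu}_\eta - \scal{\dr_k\nu}{\lap_0\nu}_\eta\big) = -4e^{2\lambda}\alpha^k\dr_k\lambda$; summing and multiplying by $e^{-2\lambda}$ produces exactly $-2\di_{g_\Psi}(\alpha)$. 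For the second identity I would run the same argument, shorter: $\scal{\nu}{\beta\nu}_\eta = 0$ kills the $|\g Y|^2_\eta$ term, and in $\lap_0(\beta\nu) = (\lap_0\beta)\nu + 2\,\dr_k\beta\,\dr_k\nu + \beta\lap_0\nu$ the first two terms pair to $0$ with $\nu$, leaving $\beta\scal{\nu}{\lap_0\nu}_\eta = -2e^{2\lambda}\beta$, i.e.\ $-2\beta$ after multiplying by $e^{-2\lambda}$. I do not expect a genuine obstacle: everything follows from $|\nu|^2_\eta\equiv 0$ and from $\lap_{g_\Psi}\Psi\perp T\Psi(\Sigma)$ together with two Leibniz expansions; the only points requiring care are the conformal-coordinate expressions for $\lap_{g_\Psi}$ and $\di_{g_\Psi}$ and the bookkeeping of which terms drop.
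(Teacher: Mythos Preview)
Your proof is correct and follows essentially the same route as the paper: work in conformal coordinates, use $\lap_{g_\Psi}=e^{-2\lambda}\lap_0$, note that the $|\g Y|^2_\eta$ contributions vanish because $\nu,\dr_k\nu\perp\nu$, and expand by Leibniz. The only cosmetic difference is that the paper computes $\lap_0\nu$ explicitly via $\lap_{g_\Psi}\Psi = 2HN-2\Psi$ and then pairs with $\nu$, whereas you bypass this by recording directly that $\scal{\nu}{\lap_0\nu}_\eta=-2e^{2\lambda}$ and $\scal{\dr_k\nu}{\lap_0\nu}_\eta=0$; the resulting terms $-2e^{2\lambda}\dr_k\alpha^k-4e^{2\lambda}\alpha^k\dr_k\lambda$ and $-2e^{2\lambda}\beta$ are identical in both arguments.
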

	
	\begin{proof}
		Let $\lambda$ be the conformal factor of $\Psi$ on $\Omega$ : $g_\Psi = \Psi^*\xi = e^{2\lambda} h$, where $h$ is the flat metric on $\R^2$.	We show the first relation. By direct computation using $\nu \perp \g \nu$,
		\begin{align*}
			\scal{\nu}{\lap_{g_\Psi} (\alpha^k \dr_k \nu) - |\g Y|^2_\eta \alpha^k\dr_k \nu }_\eta &= \scal{\nu}{\lap_{g_\Psi} (\alpha^k \dr_k \nu) }_\eta \\
			&= 2\scal{\nu }{\g \alpha^k \g \dr_k \nu}_\eta + \scal{\nu}{\alpha^k \lap_{g_\Psi}(\dr_k \nu)}_\eta \\
			&= -2\scal{\g \nu }{\g \alpha^k \dr_k \nu}_\eta + \scal{\nu}{\alpha^k e^{-2\lambda}\lap_h(\dr_k \nu)}_\eta \\
			&= -2(\g \alpha^k) \scal{\g \Psi}{\dr_k \Psi}_\xi + \scal{ \begin{pmatrix}
					\Psi\\ 1
			\end{pmatrix} }{\alpha^k e^{-2\lambda}\dr_k\left[ e^{2\lambda} \begin{pmatrix}
					H N -2\Psi \\ 0
				\end{pmatrix} \right] }_\eta \\
			&= -2 \dr_k \alpha^k - 4 \alpha^k (\dr_k \lambda) \\
			&= -2 e^{-2\lambda} \dr_k(e^{2\lambda} \alpha^k ) = -2 \di_{g_\Psi}(\alpha).
		\end{align*}
		We show the second relation : using $\nu,\g \nu \perp \nu$,
		\begin{align*}
			\scal{\nu}{\lap_{g_\Psi} (\beta \nu) - |\g Y|^2_\eta \beta \nu }_\eta &= \scal{\nu}{\lap_{g_\Psi} (\beta \nu)}_\eta \\
			&= \beta \scal{\nu}{\lap_{g_\Psi}\nu}_\eta \\
			&= \beta \scal{ \begin{pmatrix}
					\Psi\\ 1
			\end{pmatrix} }{ \begin{pmatrix}
					HN - 2\Psi\\ 0
			\end{pmatrix} }_\eta \\
			&= - 2\beta.
		\end{align*}
	\end{proof}
	
	We can now prove the caracterization of the variations of $Y$ coming from conformal Gauss maps, which concludes the proof of the proposition \ref{equality_index}. 
	\begin{proposition}\label{caracterization_CGM_variation}
		Let $\Psi : \Sigma \to \s^3$ be a smooth immersion and $Y$ be its conformal Gauss map. Consider a smooth map $Z : (\Sigma,g_\Psi)\to \R^{4,1}$ such that $Z\in T_Y \s^{3,1}$. Assume that the set $\Ur\subset \Sigma$ of umbilic points of $\Psi$ are nowhere dense, i.e. $\overline{\Sigma\setminus \Ur} = \Sigma$, and that $Z$ satisfy the two following equations :
		\begin{align}
			\scal{\nu}{\lap_{g_\Psi} Z - |\g Y|^2_\eta Z }_\eta &= 0, \label{LCGM_perp}\\
			\scal{\g Z }{\nu}_\eta = 0. \label{orthogonality}
		\end{align}
		Then, there exists a smooth variations $(Y_t)_{t\in(-1,1)}$ of $Y$ which are conformal Gauss maps of a variation $(\Psi_t)_{t\in(-1,1)}$ of $\Psi$ satisfying $\dot{\Psi} \perp \g \Psi$ and $\dot{Y} = Z$.
	\end{proposition}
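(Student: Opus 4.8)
The plan is to reconstruct the variation $(\Psi_t)$ from $Z$ by prescribing its normal component and then verifying that the induced variation of the conformal Gauss map is exactly $Z$. The natural candidate for the normal speed is dictated by the computations already performed: in the proof of \eqref{LCGM_perp_variation} we saw that $\scal{\dot\Psi}{N}_\xi = \scal{\dot\nu}{Y}_\eta = -\scal{Z}{\nu}_\eta$. So first I would \emph{define} the variation by setting $\dot\Psi := -\scal{Z}{\nu}_\eta\, N$, i.e. take any smooth family $(\Psi_t)_{t\in(-1,1)}$ with $\Psi_0=\Psi$ and $(\dr_t\Psi_t)_{|t=0} = -\scal{Z}{\nu}_\eta N$; this automatically satisfies $\dot\Psi\perp\g\Psi$. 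Let $(Y_t)$ be the conformal Gauss maps of $(\Psi_t)$ and set $W := \dot Y - Z$. The goal becomes: $W \equiv 0$.

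The key observation is that $W$ is a \emph{tangential} vector field along $Y(\Sigma\setminus\Ur)$ that, moreover, satisfies a homogeneous version of the two constraint equations. Indeed, by construction $\dot Y$ satisfies \eqref{LCGM_perp_variation} and \eqref{orthogonality} — i.e. \eqref{LCGM_perp} and \eqref{orthogonality} with $Z$ replaced by $\dot Y$ — and $Z$ satisfies them by hypothesis, so subtracting gives
\begin{align*}
	\scal{\nu}{\lap_{g_\Psi} W - |\g Y|^2_\eta W}_\eta = 0, \qquad \scal{\g W}{\nu}_\eta = 0.
\end{align*}
Next I would check that $W$ has no component along $Y$ and no component along the isotropic normal direction $\nu$: both $\dot Y$ and $Z$ lie in $T_Y\s^{3,1}$, so $\scal{W}{Y}_\eta = 0$; and the normal speed was chosen precisely so that the $\nu^*$-components of $\dot Y$ and $Z$ agree (this is the content of $\scal{\dot\nu}{Y}_\eta = -\scal{Z}{\nu}_\eta$ matched against the analogous identity for $\dot Y$ itself), while the $\nu$-component of any $\dot Y$ coming from a variation with $\dot\Psi\perp\g\Psi$ is governed by \eqref{orthogonality}. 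Hence on $\Sigma\setminus\Ur$ we may write $W = a^k \dr_k Y$ for smooth functions $a^k$, since $(Y,\dr_1 Y,\dr_2 Y,\nu,\nu^*)$ is a frame there and $W$ is orthogonal to $Y,\nu,\nu^*$.

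Now I would plug $W = a^k\dr_k Y$ into the two homogeneous constraints and use \Cref{LCGM_invariants}. From \eqref{derivatives_Y}, $\dr_k Y = (\dr_k H)\nu - \Arond_k^{\ j}\dr_j\nu$, so $W$ is a $C^\infty(\Sigma\setminus\Ur)$-combination of $\nu$ and $\dr_j\nu$; writing $W = \beta\nu + \alpha^j\dr_j\nu$ with $\alpha,\beta$ determined linearly (and algebraically, via $\Arond$) by $a^k$, \Cref{LCGM_invariants} turns $\scal{\nu}{\lap_{g_\Psi}W - |\g Y|^2_\eta W}_\eta = 0$ into $\di_{g_\Psi}(\alpha) + \beta = 0$ and $\scal{\g W}{\nu}_\eta = 0$ into a pointwise relation between $\alpha$ and $\beta$ (computing $\scal{\g(\beta\nu+\alpha^j\dr_j\nu)}{\nu}_\eta$ using $\nu\perp\nu$, $\g\nu\perp\nu$, $\scal{\dr_j\nu}{\dr_i\nu}_\eta = g_{ij}$ gives essentially $\beta = -\tfrac12\,\text{(something in }\alpha)$, or more likely forces $\beta$ in terms of $\tr$ of the relevant quantity). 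The main obstacle — and where I expect the real work to be — is exactly this linear-algebra/PDE step: showing that the two homogeneous equations, together with the constraint $\scal{W}{Y}_\eta=0$ and the requirement that $W$ come from an \emph{admissible} (i.e. $\dot\Psi\perp\g\Psi$) variation, force $\alpha\equiv 0$ and hence $W\equiv 0$ on $\Sigma\setminus\Ur$. I would try to close this by deriving a first-order homogeneous ODE/elliptic system for $\alpha$ with zero boundary behaviour, or by a unique-continuation argument; and finally, since $\Ur$ is nowhere dense and $W$ is smooth on all of $\Sigma$, $W\equiv 0$ on $\Sigma\setminus\Ur$ extends by continuity to $W\equiv 0$ on $\Sigma$, giving $\dot Y = Z$ as required.
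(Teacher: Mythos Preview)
Your strategy---define the variation $(\Psi_t)$ directly by the normal speed $-\scal{Z}{\nu}_\eta N$, set $W:=\dot Y - Z$, and show $W\equiv 0$---is sound and is essentially the approach the paper takes (the paper phrases it dually, first building $\Psi_t$ from the normal bundle of an arbitrary variation $(Y_t)$, but the key computation is the same). However your execution has one genuine gap and one unnecessary complication.

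\textbf{The gap.} You assert that $W$ has no $\nu$-component, i.e.\ $\scal{W}{\nu^*}_\eta=0$, saying this ``is governed by \eqref{orthogonality}''. It is not: \eqref{orthogonality} concerns $\scal{\g W}{\nu}_\eta$, not $\scal{W}{\nu^*}_\eta$. What you \emph{have} established is $\scal{W}{Y}_\eta=0$ and $\scal{W}{\nu}_\eta=0$ (the latter from your choice of normal speed). Since $\nu$ is null and orthogonal to itself, this only gives $W\in\Span(\dr_1 Y,\dr_2 Y,\nu)$, so the correct decomposition on $\Sigma\setminus\Ur$ is
\[
W = b^i\,\dr_i Y + c\,\nu,
\]
not $W=a^k\dr_k Y$.

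\textbf{The overcomplication.} You anticipate needing ``a first-order homogeneous ODE/elliptic system'' or ``unique continuation''. In fact the conclusion is purely algebraic. From the decomposition above, using $\scal{\nu}{\dr_i\dr_j Y}_\eta=\Arond_{ij}$ (computed in the paper just before \eqref{LCGM_step2}) and $\scal{\dr_j\nu}{\nu}_\eta=0$, the constraint $\scal{\dr_j W}{\nu}_\eta=0$ reads
\[
b^i\,\Arond_{ij}=0 \quad\text{for each }j.
\]
On $\Sigma\setminus\Ur$ the matrix $\Arond$ is invertible, so $b^i=0$ pointwise. Then $W=c\,\nu$, and \Cref{LCGM_invariants} gives $\scal{\nu}{\lap_{g_\Psi}(c\nu)-|\g Y|^2_\eta c\nu}_\eta=-2c$, so the remaining constraint forces $c=0$. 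Hence $W=0$ on $\Sigma\setminus\Ur$, and by density on $\Sigma$.

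This is exactly the mechanism the paper uses: it writes $Z-\dot\Gamma$ as a combination of $\nu$ and $\Arond\cdot\g\nu$, kills the tangential part via \eqref{orthogonality} and the invertibility of $\Arond$, then kills the $\nu$-coefficient via \Cref{LCGM_invariants}. No PDE is needed.
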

	
	\begin{proof}
		Since $Z\in T_Y \s^{3,1}$, there exists a variation $(Y_t)_{t\in(-1,1)}$ of $Y$ in $\s^{3,1}$ such that $\dot{Y} = Z$. Consider a open set $\Omega\subset \Sigma\setminus \Ur$ relatively compact. Since $Y$ is strictly space-like on $\Omega$, there exists $t_0\in(0,1)$ depending on $\Omega$ such that for $|t|<t_0$, $Y_t$ is strictly space-like. So if $|t|<t_0$, the normal space of $Y_t$ contains two isotropic directions $\nu_t$ and $\nu_t^*$. Up to exchange $\nu_t$ and $\nu_t^*$, the family $(Y_t,\g Y_t,\nu_t,\nu_t^*)$ is a direct basis of $\R^{4,1}$. If we normalize $\nu_t$ in order to have $(\nu_t)_5 = 1$, then $\nu_t = \begin{pmatrix}
			\Psi_t \\ 1
		\end{pmatrix}$ and $(\Psi_t)_{t\in(-t_0,t_0)}$ is a smooth variation of $\Psi$ on $\Omega$. Let $\gamma_t$ be the conformal Gauss map of $\Psi_t$. Since $\Psi_t$ is also an immersion for $|t|$ small enough, the vector space $\Span(\nu_t,\g \nu_t)^\perp$ has dimension 2 and contains three vectors : $\gamma_t$, $Y_t$ and $\nu_t$. Hence, there exists $\alpha_t,\beta_t\in C^\infty(\Omega)$ such that $Y_t= \alpha_t \gamma_t + \beta_t \nu_t$. We remark that $\alpha_t = 1$ and $\beta_0 = 0$ since $Y_0 = Y = \gamma_0$ and $1 = |Y_t|^2_\eta = \alpha_t^2$. Therefore $Z =\dot{Y} = \dot{\gamma} + \dot{\beta} \nu$. Consider a family of diffeomorphisms $\vp_t \in \Diff(\Omega)$ such that $\Psi_t\circ\vp_t^{-1}$ satisfy $\dr_t[\Psi_t\circ\vp_t^{-1}]_{|t=0} \perp \g \Psi$. Let $\Gamma_t := \gamma_t\circ\vp_t^{-1}$ be the conformal Gauss map of $\Psi_t\circ\vp_t^{-1}$. Then
		\begin{align}\label{decomposition_Z_step1}
			Z &= \dot{\Gamma} + (\dot{\vp}\cdot \g )Y + \dot{\beta}\nu.
		\end{align}
		Using \eqref{derivatives_Y}, we obtain
		\begin{align*}
			Z &= \dot{\Gamma} + (\dot{\beta} + (\dot{\vp}\cdot \g )H )\nu - \dot{\vp} \Arond \g \nu.
		\end{align*}
		Since $Z$ and $\dot{\Gamma}$ satisfy \eqref{LCGM_perp}, by the lemma \ref{LCGM_invariants} we obtain
		\begin{align*}
			\dot{\beta} + (\dot{\vp}\cdot \g )H &= \di_{g_\Psi}(\dot{\vp} \Arond).
		\end{align*}
		Therefore, \eqref{decomposition_Z_step1} reduces to
		\begin{align*}
			Z &= \dot{\Gamma} +\di_{g_\Psi}(\dot{\vp} \Arond) \nu - \dot{\vp} \Arond \g \nu. 
		\end{align*}
		Using the relation \eqref{orthogonality}, the lemma \ref{orthogonality_lemma} and the orthogonality relations $\nu,\g\nu\perp \nu$, we obtain 
		\begin{align*}
			0 = \scal{\g Z}{\nu}_\eta &= \scal{\g \dot{\Gamma}}{\nu}_\eta  - \scal{\dot{\vp} \Arond \g^2 \nu}{\nu}_\eta = \dot{\vp}\Arond.
		\end{align*}	
		Since $\Arond$ is invertible on $\Omega$, we obtain $\dot{\vp} = 0$. So $Z = \dot{\Gamma}$ on $\Omega$. We remark that 
		\begin{align*}
			\scal{\dr_t[\Psi_t\circ \vp_t^{-1}]_{|t=0}}{N}_\xi = - \scal{\dot{\Gamma}}{\nu}_\eta = -\scal{Z}{\nu}_\eta.
		\end{align*}
		By differentiating the definition \eqref{definition_Y}, we obtain that $\dot{\Gamma}$ depends only on $\scal{\dr_t[\Psi_t\circ \vp_t^{-1}]_{|t=0}}{N}_\xi$. So $\dot{\Gamma}$ is actually the derivative at $t=0$ of the conformal Gauss map of the variation 
		\begin{align*}
			\tilde{\Psi}_t &:= \frac{\Psi - t \scal{Z}{\nu}_\eta N }{|\Psi - t \scal{Z}{\nu}_\eta N|_\xi}.
		\end{align*}
		The variation $(\tilde{\Psi}_t)_{t\in(-1,1)}$ is a smooth variation of $\Psi$ defined on the whole surface $\Sigma$. Hence, the equality $Z= \dot{\Gamma}$ holds on $\Sigma \setminus \Ur$. Since $\Sigma\setminus \Ur$ is dense in $\Sigma$, we obtain $Z = \dot{\Gamma}$ on the whole surface $\Sigma$ by continuity.
	\end{proof}

	\section{Proof of the theorem \ref{quantization_bounded_index}}\label{proof_quantization}
	
	\subsection{Setting}
	Consider a sequence $\Psi_k : \Sigma \to \s^3$ of Willmore immersions satisfying
	\begin{align*}
		\sup_{k\in\N} \Er(\Psi_k) <\infty.
	\end{align*}
	We assume that the metrics $g_k := \Psi_k^* (\xi_{|\s^3})$ degenerate in the moduli space of $\Sigma$. Consider its conformal Gauss map $(Y_k)_{k\in\N}$, and assume that there exists $\lambda>0$ and a collar region $\Cr_k:= [-T_k,T_k]\times \s^1$ satisfying
	\begin{align}\label{existence_energy}
		\liminf_{k\to \infty} \Ar(Y_k;\Cr_k) \geq \lambda.
	\end{align}
	
	\textbf{Goal :} We show that $\Ind_\Ar(Y_k) \xrightarrow[k\to \infty]{}{+\infty}$. Given the proposition \ref{equality_index}, we conclude that $\Ind_\Er(\Psi_k) \xrightarrow[k\to \infty]{}{+\infty}$.\\
	In the section \ref{section_construction_jacobi_field}, we show how to construct a Jacobi field on a region of given area. In the section \ref{section_explosion_index}, we conclude.\\
	
	\begin{remark}\label{umbilic_circle}
		According to \cite[Theorem 3.1]{schatzle2017}, there might exists some $t\in[-T_k,T_k]$ such that $\Arond_k = 0$ on the whole circle $\{t\}\times \s^1$. However, if we consider a diffeomorphism $f\in\Diff(\Cr_k)$, then $\{x\in\Cr_k : \Arond_{\Psi_k\circ f}(x) = 0 \} = f^{-1} (\{y\in \Cr_k : \Arond_{\Psi_k}(y) = 0\})$ since the traceless part of the second fundamental form is a geometric quantity. Hence, if we change the parametrization of $\Psi_k$, we can erase circles of umbilic points of the form $\{t\}\times \s^1$ on the domain. This is done in the \Cref{no_umbilic_circle}. Therefore, up to a reparametrization, it holds
		\begin{align}\label{hypothesis_no_umbilic_circle}
			\forall t\in[-T_k,T_k],\ \ \ \int_{\{ t\}\times \s^1} |\g Y_k|^2_\eta >0 .
		\end{align}
	\end{remark}
	
	\textbf{We will assume (\ref{hypothesis_no_umbilic_circle}) until the end of the section.}\\
	
	\begin{remark}\label{existence_oscillation_along_cylinder}
		We observe that $Y_k$ must converge to an infinite light-like straight line. Indeed, assume that there exists $t_k\in[-T_k,T_k]$ such that, letting $M_k \in SO(4,1)$ be the matrix given by the \cref{bubbling_CGM}, $\| M_k Y_k \|_{L^\infty_\xi([t_k,T_k]\times \s^1)} $ is bounded. Then the \cref{estimate_xitheta} holds on the whole domain $[t_k,T_k]\times \s^1$. In particular by \eqref{integral_xitheta}, $\| M_k\dr_\theta Y_k \|_{L^2_\xi([t_k,T_k]\times \s^1)} $ converges to $0$. Using \eqref{residue_hopf_diff}, we obtain that $\| \g Y_k \|_{L^2_\eta ([t_k,T_k]\times \s^1)}$ converges to $0$. So there is no area in this region. Therefore, on a region with given area, the euclidean oscillations must go to $+\infty$. So the \cref{bubbling_CGM} shows that $(Y_k)_k$ converges to an infinite light-like straight line, up to isometries and reparametrization.
	\end{remark}

	\subsection{Second variation of the Dirichlet energy of the conformal Gauss map of a Willmore immersion without umbilic points}\label{section_construction_jacobi_field}
	
	In this section, we show the following :
	\begin{lemma}\label{existence_jacobi_field}
		Let $\mu>0$. Assume that there exists a subset $[a_k,b_k]\subset [-T_k,T_k]$ satisfying 
		\begin{align*}
			2\mu \geq \Ar(Y_k;[a_k,b_k]\times \s^1) \geq \mu.
		\end{align*}
		Then there exists $k_0\in\N$ such that for $k\geq k_0$, there exists a vector field $V_k$ along $Y_k$ supported on the cylinder $[a_k,b_k]\times \s^1$ such that $\delta^2 \Dr_{Y_k}(V_k) <0$. Moreover, there exists a smooth variation $(\Psi_k^u)_{u\in(-1,1)}$ of $\Psi$ on $[a_k,b_k]\times \s^1$ such that $(\dr_u \Psi_k^u)_{|u=0} \perp \g \Psi_k$ and their conformal Gauss maps $(Y_k^u)_{u\in(-1,1)}$ satisfy $(\dr_u Y_k^u)_{|u=0} = V_k$.
	\end{lemma}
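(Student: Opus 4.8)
\emph{Strategy and reductions.} The plan is to produce, for $k$ large, an explicit test field $V_k$ along $Y_k$, supported in $[a_k,b_k]\times\s^1$, which is a controlled perturbation of $\chi_k\,e$, where $e$ is a fixed constant vector in a space-like direction normal to the limiting light-like line and $\chi_k$ is a cut-off of large but fixed width $L$. The negativity of $\delta^2\Dr_{Y_k}(V_k)$ then comes from the fact that the Jacobi operator of the Dirichlet energy is the Schrödinger operator $\lap_{g_k}+|\g Y_k|^2_\eta$, whose potential is nonnegative and of total mass $2\,\Ar(Y_k;[a_k,b_k]\times\s^1)\ge2\mu$, tested on a cylinder we are free to make as long as we like. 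For the reductions: by \cref{existence_oscillation_along_cylinder} together with \cref{bubbling_CGM}, after composing $Y_k$ with a path in $SO(4,1)$ and with an (asymptotically conformal) reparametrisation of the cylinder --- operations under which $\Dr$ and the conditions \eqref{LCGM_perp}--\eqref{orthogonality} are equivariant --- one may assume $Y_k\to\sigma$ in $C^2_{loc}$, where $\sigma(s)=as+b$ is a nonconstant light-like line of $\s^{3,1}$ with $|a|^2_\eta=0$, $|a|_\xi=1$, $|b|^2_\eta=1$, $\scal{a}{b}_\eta=0$. Under the standing hypothesis of the section $\Psi_k$ has no umbilic point on the region considered (see \cref{umbilic_circle}), so $\Arond_k$ is everywhere invertible, $(Y_k,\dr_1Y_k,\dr_2Y_k,\nu_k,\nu_k^*)$ is a moving frame of $\R^{4,1}$ with $\nu_k$ the null conformal-frame vector normalised by $(\nu_k)_5=1$, and \cref{rk:unif_gradient_Y} gives $\bigl\||\g Y_k|_\eta/|\g Y_k|_\xi\bigr\|_{L^\infty}\to0$. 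Finally, $\Ar(Y_k;\Omega)=\tfrac12\int_\Omega|\Arond_k|^2_{g_k}\,d\vol_{g_k}\le\tfrac12\int_\Omega|\g N_k|^2_{g_k}\,d\vol_{g_k}$, so \eqref{apriori_necks_cylinder} shows that the area of $Y_k$ over any unit-length sub-cylinder tends to $0$; hence $\Ar(Y_k;[a_k,b_k]\times\s^1)\ge\mu$ forces $b_k-a_k\to+\infty$.

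\emph{The test field.} Choose a constant $e\in\R^{4,1}$ with $|e|^2_\eta=1$ which is $\eta$-orthogonal to $a$, to $b$ and to $\lim_k\nu_k$; this is possible since $\eta$ is nondegenerate, and it gives $\scal{e}{\sigma}_\eta\equiv\scal{e}{\sigma'}_\eta\equiv0$ and $\scal{e}{\nu_k}_\eta\to0$ uniformly. Fix $L>0$ and a cut-off $\chi_k\in C_c^\infty\bigl((a_k,b_k)\bigr)$ depending only on $t$, with $0\le\chi_k\le1$, $\chi_k\equiv1$ on $[a_k+L,b_k-L]$, $|\chi_k'|\le2/L$ and $|\chi_k''|\le C/L^2$. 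Set $V_k:=\chi_k\,e+w_k$, where $w_k$, supported in $[a_k,b_k]\times\s^1$, is chosen so that $V_k$ satisfies the tangency relation $\scal{V_k}{Y_k}_\eta=0$ together with \eqref{LCGM_perp}--\eqref{orthogonality}: since $\Arond_k$ is invertible, \cref{LCGM_invariants} turns this requirement into a linear system, triangular in the conformal frame, for the frame components of $w_k$, which one solves. By \cref{caracterization_CGM_variation}, any such $V_k$ is the infinitesimal variation of $Y_k$ along a variation $(\Psi_k^u)_{u\in(-1,1)}$ of $\Psi_k$ through conformal Gauss maps with $\dot\Psi_k^u\perp\g\Psi_k$; this already yields the last assertion of the lemma.

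\emph{Smallness of $w_k$ --- the main obstacle.} The heart of the argument is to show
\[
\bigl\||w_k|_\xi\bigr\|_{L^\infty([a_k,b_k]\times\s^1)}\xrightarrow[k\to\infty]{}0
\qquad\text{and}\qquad
\int_{[a_k,b_k]\times\s^1}|\g V_k|^2_\eta\ \le\ \frac{C}{L}+o(1).
\]
The source term of the system defining $w_k$ measures the failure of $\chi_k\,e$ to be admissible; it splits into a part bounded by $|\chi_k'|$ (supported on the two width-$L$ ramps, hence with $L^2$-norm $\le C/L$), a part of size $O(\chi_k|\g Y_k|^2_\eta)$ (with $L^2$-norm $O(\mu\,o(1))$, using $\Ar\le2\mu$ and the decay of $|\g Y_k|_\eta$ relative to $|\g Y_k|_\xi$), and a remainder that is $o(1)$ uniformly thanks to the choice of $e$ (orthogonal, for $\eta$, to $\sigma$, $\sigma'$ and $\lim_k\nu_k$) and the $C^2_{loc}$-convergence of $Y_k$ and of its conformal frame to $\sigma$. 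Propagating this into the two displayed estimates over a cylinder whose length diverges is exactly where the difficulty lies: it cannot be done by quoting elliptic estimates on $[a_k,b_k]\times\s^1$ (the Laplacian has arbitrarily small eigenvalues there), and must instead exploit that the relevant frame derivatives $\g\dr_jY_k$, $\g\nu_k$ are $\eta$-controlled --- modulo null directions, which contribute nothing to $|\cdot|^2_\eta$ --- by $|\g Y_k|_\eta$, so that the bulk of $\int|\g w_k|^2_\eta$ is absorbed by $\int_{[a_k,b_k]\times\s^1}|\g Y_k|^2_\eta\le4\mu$ times quantities tending to $0$.

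\emph{Negativity of the second variation.} Since $Y_k$ is harmonic and $|Y_k|_\eta\equiv1$, the standard second variation formula for the Dirichlet energy into $\s^{3,1}$ gives, for the compactly supported $V_k$,
\[
\delta^2\Dr_{Y_k}(V_k)=\int_{[a_k,b_k]\times\s^1}\Bigl(|\g V_k|^2_\eta-|\g Y_k|^2_\eta\,|V_k|^2_\eta\Bigr).
\]
By the previous step the first integral is $\le C/L+o(1)$; moreover $|V_k|^2_\eta=\chi_k^2|e|^2_\eta+2\chi_k\scal{e}{w_k}_\eta+|w_k|^2_\eta=\chi_k^2+o(1)$ uniformly, so, using once more that the $\eta$-area of the two width-$L$ ramps is $o(1)$,
\[
\int|\g Y_k|^2_\eta\,|V_k|^2_\eta\ \ge\ \int\chi_k^2\,|\g Y_k|^2_\eta-o(1)\ \ge\ 2\,\Ar(Y_k;[a_k,b_k]\times\s^1)-o(1)\ \ge\ 2\mu-o(1).
\]
Therefore $\delta^2\Dr_{Y_k}(V_k)\le C/L-2\mu+o(1)$; fixing $L$ with $C/L<\mu$ and then $k_0$ large enough, we obtain $\delta^2\Dr_{Y_k}(V_k)\le-\mu+o(1)<0$ for all $k\ge k_0$, which together with the construction of $V_k$ completes the proof.
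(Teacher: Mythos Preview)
Your overall architecture --- take a constant spacelike vector $e$ orthogonal in $\eta$ to the limiting light-like line, multiply by a scalar cutoff, and add the unique correction in $\Span(Y_k,\nu_k,\g\nu_k)$ so that the constraints \eqref{LCGM_perp}--\eqref{orthogonality} hold --- is exactly the paper's. The substantive difference is the scalar multiplier: the paper uses $f_k(s)=\sin(\pi s/\ell_k)$ where $s$ is the \emph{Lorentz} arclength $ds=\alpha_k(t)\,dt$, $\alpha_k(t)=\bigl(\int_{\s^1}|\g Y_k|^2_\eta\bigr)^{1/2}$, whereas you use a plateau cutoff $\chi_k(t)$ of fixed ramp width $L$ in the original variable.

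This is not a cosmetic difference; it is what makes the error analysis close. Because $\dr_t=\alpha_k\,\dr_s$, every $t$-derivative of the paper's $f_k$ carries an extra factor $\alpha_k$, so that the correction coefficients $d_k,e_k$ and their gradients are pointwise dominated by powers of $\alpha_k$ and of $\|\g Y_k\|_{L^2_\eta(Q(t,2))}$ (see \eqref{derivative1_f}--\eqref{derivative3_rho}). These factors are precisely what absorbs the uncontrolled geometric quantities in $|\g E_k|^2_\eta$: for instance the term $d_k^2|\g\nu_k|^2_\eta$ involves $|\g\nu_k|^2_\eta=2e^{2\lambda_k}$, which has no a priori bound, and only becomes harmless because $d_k$ itself carries the right powers of $\alpha_k$ and because one may pass to the local isometry of \cref{control_normal_by_arond} in which $e^{\lambda_k}\sim1$. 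With your $\chi_k$, the coefficients are of fixed size $\sim L^{-j}$ and carry no such factor. Your one-line justification --- that ``$\g\nu_k$ is $\eta$-controlled, modulo null directions, by $|\g Y_k|_\eta$'' --- is not correct: $\g\nu_k$ is spacelike, and in fact $|\g\nu_k|^2_\eta/|\g Y_k|^2_\eta=2/|\Arond_k|^2_{g_{\Psi_k}}$, which blows up near umbilic points and is in any case not uniformly bounded on the cylinder. So the step you flag as ``the main obstacle'' is a genuine gap: as written, neither $\|w_k\|_{L^\infty_\xi}\to0$ (Euclidean norms are not isometry-invariant, and you have not fixed a single frame) nor $\int|\g V_k|^2_\eta\le C/L+o(1)$ is established.

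There is also a setup issue. You propose to ``compose $Y_k$ with a path in $SO(4,1)$ and an asymptotically conformal reparametrisation'' so that $Y_k\to\sigma$ in $C^2_{loc}$. But a path of isometries (one per scale) is not a single isometry, so a globally constant $e$ is not well-defined after such an operation; and $\Dr$ is not invariant under non-conformal reparametrisations of the domain, so your later integrals would be computed in the wrong metric. The paper avoids both problems by defining $E_k$ in one fixed frame, never reparametrising the domain metric, and invoking local isometries only for pointwise estimates (the relevant scalars $\scal{E}{Y_k}_\eta$, $\scal{E}{\nu_k}_\eta$ being frame-invariant). If you want to salvage your simpler cutoff, you would need to reproduce that mechanism and redo the pointwise estimates of \cref{asympotic_second_derivative_dirichlet_Ek} without the $\alpha_k$-factors --- which is possible in principle, but is real work and is not contained in your sketch.
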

	
	In the rest of the section, we fix $\mu>0$ and a cylinder $\Cr_k :=[a_k,b_k]\times \s^1$ satisfying
	\begin{align}\label{assumption_small_energy}
		2\mu\geq \Ar(Y_k;\Cr_k) \geq \mu.
	\end{align}
	We denote the average lorentz length on $\Cr_k$ by 
	\begin{align*}
		\ell_k := \int_{a_k}^{b_k} \left( \int_{\s^1} |\g Y_k|^2_\eta d\theta \right)^\frac{1}{2} dt.
	\end{align*}
	We observe that $\ell_k$ cannot be bounded. Thanks to \eqref{apriori_necks_cylinder}, we have $\|\g Y_k\|_{L^\infty_\eta (\Cr_k)} \xrightarrow[k\to \infty]{}{0}$. Therefore, by the assumption \eqref{assumption_small_energy} we obtain
	\begin{align*}
		\mu &\leq \int_{\Cr_k} |\g Y_k|^2_\eta \leq \| \g Y_k\|_{L^\infty_\eta(\Cr_k)} \int_{\Cr_k} |\g Y_k|_\eta \leq \| \g Y_k\|_{L^\infty_\eta(\Cr_k)} \sqrt{2\pi} \int_{a_k}^{b_k} \left( \int_{\s^1} |\g Y_k|_\eta^2 d\theta \right)^\frac{1}{2} dt.
	\end{align*}
	So that :
	\begin{align}\label{infinite_length}
		\ell_k \xrightarrow[k\to \infty]{}{+\infty}.
	\end{align}
	A second observation is that the vector fields $(\nu_k)_{k\in\N}$ converge toward the limit geodesic $\sigma$. Indeed, since we have the uniform bound $|\nu_k|_\xi^2=2$, the normal vector fields $(\nu_k)_{k\in\N}$ also converge to a limit normal vector field $\nu_\infty$ along $\sigma$. Thanks to the theorem \ref{bubbling_CGM}, we know that the convergence of $(Y_k)_{k\in\N}$ to $\sigma$ holds in the $C^2$-topology, up to reparametrization and an isometry of $\s^{3,1}$. By \eqref{eq:conv_lightlike}, the fact that $|\g Y_k|_\eta \ust{k\to\infty}{=} o(|\g Y_k|_\xi)$ up to an isometry means that we have the following asymptotic expansion : thanks to \eqref{derivatives_Y},
	\begin{align*}
		\dr_t Y_k \ust{k\to \infty}{=} (\dr_t H_k) \nu_k + o(\dr_t H_k).
	\end{align*}
	Therefore, the arclength of $Y_k^*$ is given by the behaviour of $\dr_t H_k$, and we obtain :
	\begin{align*}
		\dr_s Y_k \ust{k\to \infty}{=} \frac{\nu_k}{\sqrt{2}} + o(1).
	\end{align*}
	The convergence of the second derivatives can be written as 
	\begin{align}\label{as:gnu}
		\dr_s \nu_k \ust{k\to \infty}{=} o(1).
	\end{align}
	Therefore, $\nu_\infty$ is actually constant proportional to $\dr_s \sigma$ and the scalar product with $\nu$ in \eqref{system_condition_LCGM} can be exchanged with $\dr_s \sigma$ up to a small error. We show that we can choose a variation through conformal Gauss maps that converges to a parallel space-like normal vector field along $\sigma$. \\
	
	Consider $a,b\in\R^{4,1}$ such that the limit geodesic $\sigma$ is given by $\forall s\in\R,\ \sigma(s) = as+b$. The vector space $\Span(a,b)^\perp$ has dimension $3$, so there exists a vector $E\in \R^{4,1}$ satisfying $|E|^2_\eta = 1$ and $\scal{E}{a}_\eta = \scal{E}{b}_\eta = 0$. It precisely means that $E\in T_\sigma \s^{3,1}$, $\g_{\dot{\sigma}} E = 0$ and $\scal{E}{\dot{\sigma}}_\eta = 0$, so $E$ is a parallel vector field along $\sigma$. Thanks to \eqref{hypothesis_no_umbilic_circle}, we can consider the following change of variable :
	\begin{align}\label{change_variable}
		\alpha_k(t) := \left( \int_{\s^1} |\g Y_k(t,\theta)|^2_\eta d\theta \right)^\frac{1}{2}, &  & ds = \alpha_k(t) dt.
	\end{align}
	The domain $[a_k,b_k]$ of the variable $t$ becomes $[0,\ell_k]$ for the variable $s$. Let $f_k(s) := \sin\left( \frac{\pi s}{\ell_k } \right)$ and $\rho_k(s)$ be a cut-off function : $\rho_k\geq 0$, $\rho_k(s) = 1$ if $s\in[1,\ell_k-1]$, $\rho_k(s) = 0$ if $s\in[0,\frac{1}{2}]\cup [\ell_k- \frac{1}{2}, \ell_k]$ and $|\dr_s^i \rho_k| \leq C$ for $i\in\inter{1}{4}$. Consider
	\begin{align}
		E_k &:= \rho_k f_k E - \rho_k f_k\scal{E}{Y_k}_\eta Y_k + d_k \nu_k + e^i_k \dr_i \nu_k, \label{definition_jacobi_field}\\
		e_k &:= \g^{g_{\Psi_k}} (\rho_k f_k) \scal{E}{\nu_k}_\eta, \label{component_gnu}\\
		d_k &:= -\frac{1}{2} \scal{\nu_k}{  \lap_{g_{\Psi_k}}(\rho_k f_k E) + |\g Y_k|^2_\eta \rho_k f_k E }_\eta - \scal{\g^{g_{\Psi_k}} (\rho_k f_k E) }{\g^{g_{\Psi_k}} \nu_k }_\eta. \label{component_nu}
	\end{align}
	Therefore $\scal{E_k}{Y_k}_\eta = 0$, so $E_k$ is a vector field along $Y_k$ and $E_k(s,\theta) = 0$ if $s\in[0,\frac{1}{2}]\cup[\ell_k-\frac{1}{2},\ell_k]$, so $E_k$ has compact support. From the \cref{equality_index}, we obtain :
	\begin{lemma}
		The vector field $E_k$ comes from a variation through conformal Gauss maps of immersions $(\Psi_k^u)_{u\in(-1,1)}$ such that $(\dr_u \Psi_k^u)_{|u=0} \perp \g \Psi_k$. 
	\end{lemma}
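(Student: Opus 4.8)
The plan is to apply \cref{equality_index}, in the precise form of \cref{caracterization_CGM_variation}: once we know that $E_k$, extended by zero outside $[a_k,b_k]\times\s^1$, is a smooth map into $T_{Y_k}\s^{3,1}$ solving the system \eqref{system_condition_LCGM}, and that the umbilic set of $\Psi_k$ is nowhere dense, we immediately obtain a variation $(\Psi_k^u)_{u\in(-1,1)}$ of $\Psi_k$ with $(\dr_u\Psi_k^u)_{|u=0}\perp\g\Psi_k$ whose conformal Gauss maps $(Y_k^u)_{u\in(-1,1)}$ satisfy $(\dr_u Y_k^u)_{|u=0}=E_k$. Nowhere density is \cite[Theorem 3.1]{schatzle2017}: $\Psi_k$ is Willmore and, having positive area on $[a_k,b_k]\times\s^1$ by \eqref{assumption_small_energy}, is not totally umbilic, so its umbilic set is a union of isolated points and closed real-analytic curves. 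That $E_k$ takes values in $T_{Y_k}\s^{3,1}$ and vanishes outside $[a_k,b_k]\times\s^1$ has already been observed, so the whole matter reduces to verifying the two scalar equations in \eqref{system_condition_LCGM}. The crux is that $e_k$ and $d_k$ in \eqref{definition_jacobi_field} are designed precisely so that the first, respectively second, of these equations holds.

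I would first check the orthogonality equation $\scal{\g E_k}{\nu_k}_\eta=0$, since it is the one fixing $e_k$. Differentiating $E_k=\rho_k f_k E-\rho_k f_k\scal{E}{Y_k}_\eta Y_k+d_k\nu_k+e_k^i\dr_i\nu_k$: the $d_k\nu_k$ term contributes nothing because $\scal{\nu_k}{\nu_k}_\eta=0$ and $\scal{\dr_i\nu_k}{\nu_k}_\eta=0$; the first two terms contribute only $\dr_i(\rho_k f_k)\scal{E}{\nu_k}_\eta$, because $E$ is a constant vector of $\R^{4,1}$ and $\scal{Y_k}{\nu_k}_\eta=\scal{\dr_j Y_k}{\nu_k}_\eta=0$; and the $e_k^i\dr_i\nu_k$ term contributes $-(e_k)_j$, using $\scal{\dr_j\dr_i\nu_k}{\nu_k}_\eta=-\scal{\dr_i\nu_k}{\dr_j\nu_k}_\eta=-(g_{\Psi_k})_{ij}$ (obtained by differentiating $\scal{\dr_i\nu_k}{\nu_k}_\eta=0$, together with $\scal{\dr_i\nu_k}{\dr_j\nu_k}_\eta=(g_{\Psi_k})_{ij}$). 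Comparing with the definition \eqref{component_gnu} of $e_k$, these cancel, so \eqref{orthogonality} holds.

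Then I would check $\scal{\nu_k}{\lap_{g_{\Psi_k}}E_k-|\g Y_k|^2_\eta E_k}_\eta=0$ by applying the operator $\scal{\nu_k}{\lap_{g_{\Psi_k}}(\cdot)-|\g Y_k|^2_\eta(\cdot)}_\eta$ to the four terms of $E_k$. Since $Y_k$ is harmonic, $\lap_{g_{\Psi_k}}Y_k=-|\g Y_k|^2_\eta Y_k$, so the term $-\rho_k f_k\scal{E}{Y_k}_\eta Y_k$ yields only multiples of $Y_k$ and $\g Y_k$, both $\eta$-orthogonal to $\nu_k$, hence $0$. As $E$ is constant, the term $\rho_k f_k E$ yields $\scal{\nu_k}{\lap_{g_{\Psi_k}}(\rho_k f_k E)-|\g Y_k|^2_\eta\rho_k f_k E}_\eta$. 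By \cref{LCGM_invariants}, applied pointwise in a conformally flat chart (the flat collar furnishes one around every point), the term $d_k\nu_k$ yields $-2d_k$ and the term $e_k^i\dr_i\nu_k$ yields $-2\di_{g_{\Psi_k}}(e_k)$. From \eqref{component_gnu} one computes $\di_{g_{\Psi_k}}(e_k)=\scal{\nu_k}{\lap_{g_{\Psi_k}}(\rho_k f_k E)}_\eta+\scal{\g(\rho_k f_k E)}{\g\nu_k}_\eta$, so the sum of the four contributions equals $-2d_k-2\scal{\g(\rho_k f_k E)}{\g\nu_k}_\eta-\scal{\nu_k}{\lap_{g_{\Psi_k}}(\rho_k f_k E)+|\g Y_k|^2_\eta\rho_k f_k E}_\eta$, which vanishes by the very definition \eqref{component_nu} of $d_k$. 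Hence \eqref{LCGM_perp} holds, and \cref{caracterization_CGM_variation} gives the claim.

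I do not expect a genuine obstacle: the only non-elementary ingredient is the already-established \cref{LCGM_invariants}, everything else being the harmonicity and conformality of $Y_k$ together with the orthogonality relations $\scal{\nu_k}{\nu_k}_\eta=\scal{\g\nu_k}{\nu_k}_\eta=\scal{\g Y_k}{\nu_k}_\eta=0$. The only care required is bookkeeping: confirming that $d_k$ and $e_k$, hence $E_k$, vanish near the endpoints $s\in\{0,\ell_k\}$ — which they do since $\rho_k$ and all its relevant derivatives vanish there, and $f_k(0)=f_k(\ell_k)=0$ — so that the extension of $E_k$ by zero to $\Sigma$ is smooth and \cref{caracterization_CGM_variation} applies, and checking that \cref{LCGM_invariants} may legitimately be invoked on the flat cylinder.
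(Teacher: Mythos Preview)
Your proposal is correct and follows essentially the same route as the paper: verify the two equations of \eqref{system_condition_LCGM} by direct computation using the orthogonality relations and \cref{LCGM_invariants}, then invoke \cref{equality_index}/\cref{caracterization_CGM_variation}. Your additional remarks on the nowhere density of umbilic points and the smooth extension of $E_k$ by zero are useful bookkeeping that the paper leaves implicit.
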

	
	\begin{proof}
		We check \eqref{system_condition_LCGM} by direct computations. Using that $Y_k,\nu_k,\g \nu_k,\g Y_k \perp \nu_k$ we obtain
		\begin{align*}
			\scal{\g E_k}{\nu_k}_\eta &= \scal{\g(\rho_k f_k E) }{\nu_k}_\eta + e^i_k \scal{\g \dr_i \nu_k}{\nu_k}_\eta \\
			&= \scal{\g(\rho_k f_k E) }{\nu_k}_\eta - e_k = 0.
		\end{align*}
		Thanks to the \cref{LCGM_invariants} and the orthogonality relations $Y_k,\g Y_k, \lap_{g_{\Psi_k}} Y_k \perp \nu_k$ :
		\begin{align*}
			\scal{\nu_k}{ \lap_{g_{\Psi_k}}E_k - |\g Y_k|^2_\eta E_k }_\eta &=\scal{\nu_k}{  \lap_{g_{\Psi_k}}(\rho_k f_k E) - |\g Y_k|^2_\eta \rho_k f_k E }_\eta - 2 d_k - 2 \di_{g_{\Psi_k}} (e_k) \\
			&= -\scal{\nu_k}{  \lap_{g_{\Psi_k}}(\rho_k f_k E) + |\g Y_k|^2_\eta \rho_k f_k E }_\eta - 2\scal{\g^{g_{\Psi_k}} (\rho_k f_k E) }{\g^{g_{\Psi_k}} \nu_k }_\eta - 2d_k\\
			&= 0.
		\end{align*}
		Thanks to the \cref{equality_index}, there is a variation $(\Psi_k^u)_{u\in(-1,1)}$ of $\Psi_k$ such that its conformal Gauss map satisfies $\dot{Y}_k = E_k$ and $\dot{\Psi}_k \perp \g \Psi_k$.
	\end{proof}
	
	We now prove that the main contribution in the computation of $\delta^2 \Dr_{Y_k}(E_k)$ comes from $f_k E$. By a classical computation, we have for any smooth variation $(Y^t_k)_{t\in(-1,1)}$ of $Y_k$ with compact support on $\Cr_k$ : 
	\begin{align}\label{general_formula_second_variation_dirichlet}
		\delta^2 \Dr_{Y_k}(\dot{Y}_k) &= \int_{\Cr_k} |\g \dot{Y}_k |^2_\eta - \Riem^{\s^{3,1}}(\dot{Y}_k , \g Y_k, \dot{Y}_k,\g Y_k) \ dt d\theta.
	\end{align}
	where $\dot{Y}_k := (\dr_t Y^t_k)_{|t=0}$ and $\Riem^{\s^{3,1}}(x,y,z,t) = \scal{x}{z}_\eta \scal{y}{t}_\eta - \scal{x}{t}_\eta \scal{y}{z}_\eta$. To obtain an asymptotic expansion, we use the fact that $\delta^2 \Dr_{Y_k}(E_k)$ is invariant by isometries of $\s^{3,1}$ and that the vector field $E_k$ is also a conformal invariant in the following sense. There exists a smooth variation $(\Psi_k^u)_{u\in(-1,1)}$ of $\Psi_k$ such that their conformal Gauss maps $(Y_k^u)_{u\in(-1,1)}$ satisfy $(\dr_u Y_k^u)_{|u=0}= E_k$. Then for any conformal transformation $\Theta \in \Conf(\s^3)$, we have $M_{\Theta} E_k = (\dr_u M_{\Theta} Y_k^u)_{|u=0}$, and $M_\Theta Y_k^u$ is the conformal Gauss map of the variation $\Theta \circ \Psi_k^u$ of $\Theta\circ\Psi_k$. Therefore in all the pointwise estimates of the proof of the following lemma, the isometry will depend on the point we are working on. We will precise which one at each step, but for simplicity, we will always assume that this matrix is the identity. 
	
	\begin{lemma}\label{asympotic_second_derivative_dirichlet_Ek}
		It holds
		\begin{align*}
			\delta^2 \Dr_{Y_k}(E_k) \ust{k\to\infty}{=} \int_{\Cr_k} |\g (\rho_k f_k)|^2 -\rho_k^2 f_k^2 |\g Y_k|^2_\eta\ dtd\theta + o\left( \frac{1}{\ell_k^2}+  \int_{\Cr_k} |\g f_k|^2 + f_k^2 |\g Y_k|^2_\eta\ dtd\theta \right).
		\end{align*}
	\end{lemma}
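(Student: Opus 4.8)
The plan is to expand the right-hand side of the second variation formula \eqref{general_formula_second_variation_dirichlet} applied to $\dot{Y}_k = E_k$ and to show that all terms except those coming from the principal part $\rho_k f_k E$ are lower order. First I would write $E_k = \rho_k f_k E - \rho_k f_k \scal{E}{Y_k}_\eta Y_k + d_k \nu_k + e_k^i \dr_i \nu_k$ and expand $|\g E_k|^2_\eta$ and $\Riem^{\s^{3,1}}(E_k,\g Y_k,E_k,\g Y_k)$. Since the identity $\scal{E_k}{Y_k}_\eta = 0$ holds, the second variation is genuinely intrinsic to $\s^{3,1}$, and by the conformal invariance noted just above the lemma statement, every pointwise estimate may be carried out after applying the isometry supplied by \Cref{bubbling_CGM}, so that locally $Y_k$ is close to the light-like straight line $\sigma$ in the $C^2$-topology, $\nu_k$ is close to the constant parallel vector $\frac{1}{\sqrt 2}\dr_s\sigma$ (this is \eqref{as:gnu}), $E$ is parallel, and $|\g Y_k|_\eta = o(|\g Y_k|_\xi)$ by \eqref{eq:conv_lightlike}.

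\textbf{Controlling the correction terms.}

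The main work is to bound the terms $d_k\nu_k$ and $e_k^i\dr_i\nu_k$ and their derivatives. From the definitions \eqref{component_gnu}--\eqref{component_nu}, $e_k$ is $\g(\rho_k f_k)\scal{E}{\nu_k}_\eta$ and $d_k$ is a combination of $\scal{\nu_k}{\lap_{g_{\Psi_k}}(\rho_k f_k E)+|\g Y_k|^2_\eta\rho_k f_k E}_\eta$ and $\scal{\g(\rho_k f_k E)}{\g\nu_k}_\eta$. I would show that $\scal{E}{\nu_k}_\eta$ is $o(1)$: indeed $\nu_k\to\nu_\infty$ which is proportional to $\dr_s\sigma$, hence light-like, while $E$ is a fixed space-like unit vector orthogonal to $\sigma$ and $\dr_s\sigma$, so $\scal{E}{\nu_\infty}_\eta = 0$. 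This makes $e_k = o(|\g(\rho_k f_k)|)$, so $e_k^i\dr_i\nu_k$ contributes at order $o(1)$ times $|\g Y_k|_\xi$ in the $C^1$-norm and, using \eqref{as:gnu}, a negligible amount in the energy. Similarly $d_k$ involves $\scal{\nu_k}{\lap(\rho_k f_k E)}_\eta$: writing $\lap(\rho_k f_k E) = (\lap(\rho_k f_k))E + 2\g(\rho_k f_k)\cdot\g E + \rho_k f_k\lap E$, each factor $\scal{\nu_k}{E}_\eta$, $\scal{\nu_k}{\g E}_\eta$ is $o(1)$ by the same orthogonality-in-the-limit argument, and $\g\nu_k = o(1)$ by \eqref{as:gnu}, so $d_k = o(|\g(\rho_k f_k)|^2 + \rho_k f_k|\g Y_k|^2_\eta)$ — here the $\ve$-regularity and \Cref{control_normal_by_arond} give the uniform bounds on $\g^2\nu_k$ and $\g^2 Y_k$ needed to make the derivative of $d_k$ controllable in $L^2$. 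Combining, $\int_{\Cr_k}|\g(d_k\nu_k + e_k^i\dr_i\nu_k)|^2_\eta$ and the corresponding Riemann-curvature cross-terms are $o\big(\tfrac{1}{\ell_k^2} + \int_{\Cr_k}|\g f_k|^2 + f_k^2|\g Y_k|^2_\eta\big)$.

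\textbf{The principal part and the error from the $Y_k$-component.}

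For the remaining piece $\rho_k f_k E - \rho_k f_k\scal{E}{Y_k}_\eta Y_k$, the subtracted term $\rho_k f_k\scal{E}{Y_k}_\eta Y_k$ is again $o(1)$-small because $\scal{E}{Y_k}_\eta\to\scal{E}{\sigma}_\eta = 0$ pointwise with uniform convergence of one derivative; its contribution to both $|\g\cdot|^2_\eta$ and the curvature term is absorbed into the stated error. What survives is $\int_{\Cr_k}|\g(\rho_k f_k E)|^2_\eta - \Riem^{\s^{3,1}}(\rho_k f_k E,\g Y_k,\rho_k f_k E,\g Y_k)\,dtd\theta$. Since $E$ is parallel with $|E|_\eta = 1$ and orthogonal to the limit, $|\g(\rho_k f_k E)|^2_\eta = |\g(\rho_k f_k)|^2 |E|^2_\eta + o(\cdots) = |\g(\rho_k f_k)|^2 + o(\cdots)$, while $\Riem^{\s^{3,1}}(x,y,z,t) = \scal{x}{z}_\eta\scal{y}{t}_\eta - \scal{x}{t}_\eta\scal{y}{z}_\eta$ evaluated on $x=z=\rho_k f_k E$, $y=t=\g Y_k$ gives $\rho_k^2 f_k^2(|E|^2_\eta|\g Y_k|^2_\eta - \scal{E}{\g Y_k}_\eta^2)$; the last term is $\scal{E}{\dr_s Y_k}_\eta^2 = o(|\g Y_k|^2_\eta)$ since $\dr_s Y_k\to\tfrac{1}{\sqrt2}\dr_s\sigma$ which is $\eta$-orthogonal to $E$. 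Hence the curvature term equals $\rho_k^2 f_k^2|\g Y_k|^2_\eta + o(\cdots)$, yielding exactly the claimed asymptotic. The main obstacle is the bookkeeping in the step for $d_k$: one must verify that differentiating $d_k$ does not produce terms of size comparable to $\tfrac{1}{\ell_k}$ or to $|\g Y_k|_\xi$ that fail to be swallowed by the error, and this requires carefully combining \eqref{as:gnu}, the $C^2$-closeness from \Cref{bubbling_CGM}, the uniform bound $|\g^2 Y_k|_\xi = o(|\g Y_k|^2_\xi)$ from \Cref{uniform_bound_hessian}, and the harmonic map equation $\lap_{g_{\Psi_k}} Y_k = -|\g Y_k|^2_\eta Y_k$ to rewrite second derivatives of $Y_k$ in terms of quantities that are already known to be small.
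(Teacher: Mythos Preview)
Your overall strategy is the same as the paper's: expand the second variation formula, isolate the principal part $\rho_k f_k E$, and show that the correction terms $d_k\nu_k$ and $e_k^i\dr_i\nu_k$ are negligible using $\scal{E}{\nu_k}_\eta\to 0$, $\scal{E}{Y_k}_\eta\to 0$, and \eqref{as:gnu}. However, there are two genuine gaps.

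\textbf{The light-like cancellation for $\nabla d_k$.} You correctly flag $\nabla d_k$ as the dangerous term, but the tools you list (\eqref{as:gnu}, \Cref{uniform_bound_hessian}, the harmonic map equation) are not what resolves it. The term $\nabla d_k$ contains $\nabla^3(\rho_k f_k)$; since $f_k$ is a function of the \emph{Lorentzian} arclength $s$ with $\partial_t = \alpha_k\partial_s$, computing $\partial_t^3 f_k$ produces a factor $\alpha_k^{-1}$ (see the paper's \eqref{derivative3_f_alone}). This is not small, and no amount of $C^2$-closeness or $\ve$-regularity tames $\alpha_k^{-1}$ by itself. The actual mechanism, which the paper singles out as the ``key observation'', is purely algebraic: since $|\nu_k|^2_\eta=0$ and $\scal{\nu_k}{\nabla\nu_k}_\eta=0$, the quadratic term $|(\nabla d_k)\nu_k|^2_\eta$ vanishes identically, and $\nabla d_k$ survives in $|\nabla E_k|^2_\eta$ only through cross-terms such as $2(\nabla d_k)\nabla(\rho_k f_k)\scal{E}{\nu_k}_\eta$ and $2(\nabla d_k)e_k^i\scal{\nu_k}{\nabla\partial_i\nu_k}_\eta$. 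In every such cross-term the $\alpha_k^{-1}$ is cancelled by an $\alpha_k$ coming from the other factor (e.g.\ from $|\nabla(\rho_k f_k)|\leq C\alpha_k(\ell_k^{-1}+f_k\mathbf{1}_{\partial})$), and a residual factor $\scal{E}{\nu_k}_\eta=o(1)$ remains. Without invoking $|\nu_k|^2_\eta=0$ you cannot close the estimate.

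\textbf{The term $\int f_k^2\scal{E}{\nabla Y_k}_\eta^2$.} Your claim that $\scal{E}{\nabla Y_k}_\eta^2=o(|\nabla Y_k|^2_\eta)$ pointwise is not justified: from $\nabla Y_k=(\nabla H_k)\nu_k-\mathring A_k\nabla\nu_k$ and the smallness of $\scal{E}{\nu_k}_\eta$, $\scal{E}{\nabla\nu_k}_\eta$ one only gets $\scal{E}{\nabla Y_k}_\eta=o(|\nabla Y_k|_\xi)$, and since $|\nabla Y_k|_\eta=o(|\nabla Y_k|_\xi)$ as well (\eqref{eq:conv_lightlike}), the two quantities may be of the same order. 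The paper instead integrates by parts: $\int f_k^2\scal{E}{\nabla Y_k}_\eta^2=-\int f_k^2\scal{E}{\Delta Y_k}_\eta\scal{E}{Y_k}_\eta-2\int f_k\nabla f_k\scal{E}{\nabla Y_k}_\eta\scal{E}{Y_k}_\eta$, then uses $\Delta Y_k=-|\nabla Y_k|^2_\eta Y_k$ and Young's inequality together with $\scal{E}{Y_k}_\eta=o(1)$ to absorb everything into $o(\int f_k^2|\nabla Y_k|^2_\eta+|\nabla f_k|^2)$; this is \eqref{integral_scalar_E_gYk}.
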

	
	\begin{proof}
		We proceed by brute force using the definition \eqref{definition_jacobi_field}. Thanks to the formula \eqref{general_formula_second_variation_dirichlet}  :
		\begin{align}\label{second_variation_dirichlet_Ek}
			\delta^2 \Dr_{Y_k}(E_k) &= \int_{\Cr_k} |\g E_k|^2_\eta - |E_k|^2_\eta |\g Y_k|^2_\eta + |\scal{E_k}{\g Y_k}_\eta|^2\ dtd\theta.
		\end{align}	
		To obtain an asympototic expansion, we will need to estimate the derivatives of $f_k$ and $\rho_k$. We first proceed to this preliminary work.
		
		\underline{Estimate of the first derivative of $f_k$ and $\rho_k$ :}\\
		We consider the change of variable \eqref{change_variable}. Using $\frac{ds}{dt} = \left( \int_{\s^1} |\g Y_k|^2_\eta d\theta \right)^\frac{1}{2} =: \alpha_k(t)$, we obtain $\dr_t = \alpha_k \dr_s$. We compute the first derivative of $f_k$. 
		\begin{align*}
			\dr_t f_k &= \alpha_k \dr_s f_k = \frac{\pi \alpha_k}{\ell_k} \cos\left( \frac{\pi s}{\ell_k} \right).
		\end{align*} 
		So 
		\begin{align}\label{derivative1_f}
			|\dr_t f_k| \leq C\ell_k^{-1} \alpha_k.
		\end{align}
		We also have $\dr_t \rho_k = \alpha_k \dr_s \rho_k$, so
		\begin{align}\label{derivative1_rho}
			|\dr_t \rho_k| &\leq C\alpha_k \mathbf{1}_{[0,1]\cup[\ell_k-1,\ell_k]}(s).
		\end{align}

		\underline{Estimate of the second derivative of $f_k$ and $\rho_k$ :}\\
		We will need the derivative of $\alpha_k$. It is given by
		\begin{align}\label{derivative_alpha}
			\dr_s \alpha_k &= \alpha_k^{-1} \int_{\s^1} \scal{\g Y_k}{\dr_s \g Y_k}_\eta d\theta. 
		\end{align}
		We differentiate twice $f_k$ :
		\begin{align*}
			\dr^2_{tt} f_k &= \alpha_k \dr_s\big( \alpha_k \dr_s f_k) \\
			&= \frac{\pi}{\ell_k} \left( \int_{\s^1} \scal{\g Y_k}{\dr_s \g Y_k}_\eta d\theta \right) \cos\left( \frac{\pi s}{\ell_k} \right) - \left( \frac{\pi \alpha_k}{\ell_k} \right)^2 \sin\left( \frac{\pi s}{\ell_k} \right).
		\end{align*}
		To bound the first term, we use \eqref{derivatives_Y} in the isometry of $\s^{3,1}$ given by the corollary \ref{control_normal_by_arond} :
		\begin{align*}
			\left| \int_{\s^1} \scal{\g Y_k}{\dr_t \g Y_k}_\eta d\theta \right| &= 	\left| \int_{\s^1} \scal{ (\g H_k) \begin{pmatrix}
					\Psi_k\\ 1
				\end{pmatrix} - \begin{pmatrix}
					\Arond_k \g \Psi_k \\ 0
			\end{pmatrix} }{\dr_t \left[(\g H_k) \begin{pmatrix}
					\Psi_k\\ 1
				\end{pmatrix} - \begin{pmatrix}
					\Arond_k \g \Psi_k \\ 0
				\end{pmatrix} \right]  }_\eta d\theta \right| \\
			&= \left| \int_{\s^1} \scal{ (\g H_k) \begin{pmatrix}
					\Psi_k\\ 1
				\end{pmatrix} - \begin{pmatrix}
					\Arond_k \g \Psi_k \\ 0
			\end{pmatrix} }{\dr_t\g H_k \begin{pmatrix}
					\Psi_k\\ 1
				\end{pmatrix} +(\g H_k) \begin{pmatrix}
					\dr_t \Psi_k\\ 0
			\end{pmatrix} }_\eta d\theta \right.\\
			&\left. - \int_{\s^1} \scal{ (\g H_k) \begin{pmatrix}
					\Psi_k\\ 1
				\end{pmatrix} - \begin{pmatrix}
					\Arond_k \g \Psi_k \\ 0
			\end{pmatrix} }{  \begin{pmatrix}
					\dr_t(\Arond_k) \g \Psi_k + \Arond \dr_t\g \Psi_k \\ 0
			\end{pmatrix} }_\eta d\theta \right| \\
			&= \left| \int_{\s^1}-\scal{\Arond_k \g\Psi_k}{(\g H_k)\dr_t \Psi_k}_\xi -\scal{(\g H_k)\Psi_k }{\Arond \dr_t \g \Psi_k}_\xi  + \scal{\Arond_k \g \Psi_k}{\dr_t[\Arond_k \g \Psi_k]}_\xi \ d\theta \right|\\
			&= \left|  \int_{\s^1} \scal{\Arond_k \g \Psi_k}{\dr_t[\Arond_k \g \Psi_k]}_\xi \ d\theta \right|.
		\end{align*}
		By the corollary \ref{control_normal_by_arond} and the $\ve$-regularity, it holds 
		\begin{align}\label{derivative2_alpha}
			\left| \int_{\s^1} \scal{\g Y_k}{\dr_t \g Y_k}_\eta d\theta \right| \leq \left( \int_{\s^1} |\Arond|^2_{g_\Psi} d\theta \right)^\frac{1}{2} \left(\int_{\s^1} |\g \Arond|^2_{g_\Psi} d\theta \right)^\frac{1}{2} &\leq C\alpha_k \|\g Y_k\|_{L^2_\eta(Q(t,2))}.
		\end{align}
		Hence, $|\int_{\s^1} \scal{\g Y_k}{\dr_s \g Y_k}_\eta d\theta| \leq C\|\g Y_k\|_{L^2_\eta(Q(t,2))}$. So 
		\begin{align}\label{derivative2_f}
			|\dr_{tt} f_k| \leq C\ell_k^{-1} \|\g Y_k\|_{L^2_\eta(Q(t,2))}.
		\end{align}
		We proceed as well for $\rho_k$ :
		\begin{align*}
			\dr^2_{tt} \rho_k &= \alpha_k(\dr_s \alpha_k) (\dr_s \rho_k) + \alpha_k^2 (\dr^2_{ss} \rho_k).
		\end{align*}
		Thanks to \eqref{derivative_alpha}-\eqref{derivative2_alpha}, we obtain
		\begin{align}\label{derivative2_rho}
			|\dr^2_{tt} \rho_k| &\leq C\|\g Y_k\|_{L^2_\eta(Q(t,2))} \mathbf{1}_{[0,1]\cup[\ell_k-1,\ell_k]}(s).
		\end{align}

		\underline{Estimate of the third derivative of $f_k$ and $\rho_k$ :}\\
		We first compute the third derivative of $f_k$ :
		\begin{align*}
			\dr^3_{ttt} f_k &= \alpha_k \dr_s\left[\frac{\pi}{\ell_k} \left( \int_{\s^1} \scal{\g Y_k}{\dr_s\g  Y_k}_\eta d\theta \right) \cos\left( \frac{\pi s}{\ell_k} \right) - \left( \frac{\pi \alpha_k}{\ell_k} \right)^2 \sin\left( \frac{\pi s}{\ell_k} \right) \right] \\
			&= \frac{\pi \alpha_k}{\ell_k} \left( \int_{\s^1} |\dr_s \g Y_k|^2_\eta + \scal{\g Y_k}{\dr^2_{ss}\g Y_k}_\eta \ d\theta \right) \cos\left( \frac{\pi s}{\ell_k} \right) \\
			&-\frac{\pi^2 \alpha_k}{\ell_k^2}\left( \int_{\s^1}\scal{\g Y_k}{\dr_s \g Y_k}_\eta d\theta \right) \sin\left( \frac{\pi s}{\ell_k} \right) \\
			&-\frac{2\pi^2 \alpha_k}{\ell_k^2} \left( \int_{\s^1}\scal{\g Y_k}{\dr_s\g Y_k}_\eta d\theta \right)\sin\left( \frac{\pi s}{\ell_k} \right) \\
			&- \left( \frac{\pi \alpha_k}{\ell_k} \right)^3 \sin\left( \frac{\pi s}{\ell_k} \right).
		\end{align*}
		The three last terms can be bounded by $\alpha_k/\ell_k^2$. Using $\dr_s = \alpha_k^{-1} \dr_t$ on the first term, we obtain
		\begin{align*}
			\dr^3_{ttt} f_k	&\ust{k\to\infty}{=} \frac{\pi\alpha_k}{\ell_k} \left( \int_{\s^1} |\dr_s \g Y_k|^2_\eta + \scal{\g Y_k}{\dr^2_{ss} \g Y_k}_\eta\ d\theta\right) \cos\left( \frac{\pi s}{\ell_k} \right) + O \left( \frac{ \alpha_k}{\ell_k^2 } \right) \\
			&\ust{k\to\infty}{=} \frac{\pi}{\ell_k \alpha_k} \left( \int_{\s^1} |\dr_t \g Y_k|^2_\eta + \scal{\g Y_k}{\dr^2_{tt} \g Y_k}_\eta\ d\theta\right) \cos\left( \frac{\pi s}{\ell_k} \right) \\
			&- \frac{\pi}{\ell_k}\frac{\dr_t \alpha_k}{\alpha_k^2} \left( \int_{\s^1} \scal{\g Y_k}{\dr_t \g Y_k}_\eta d\theta \right)\cos\left( \frac{\pi s}{\ell_k} \right) + O \left( \frac{ \alpha_k}{\ell_k^2 } \right)
		\end{align*}
		We estimate the second term thanks to \eqref{derivative_alpha} and \eqref{derivative2_alpha}. We estimate the first term thanks to the $\ve$-regularity :
		\begin{align}
			|\dr^3_{ttt} f_k|&\leq \frac{C}{\ell_k \alpha_k}\|\g Y_k\|_{L^2_\eta(Q(t,2))}^2 + \frac{C}{\ell_k \alpha_k^3} \left( \int_{\s^1} \scal{\g Y_k}{\dr_t\g Y_k}_\eta d\theta \right)^2 + O \left( \frac{ \alpha_k}{\ell_k^2 } \right) \nonumber \\
			&\leq \frac{C}{\ell_k \alpha_k}\|\g Y_k\|_{L^2_\eta(Q(t,2))}^2 + O \left( \frac{ \alpha_k}{\ell_k^2 } \right) \label{derivative3_f_alone}
		\end{align}
		Using \eqref{derivative1_f}, we obtain
		\begin{align}\label{derivative3_f}
			|\g f_k| |\g^3 f_k| &\leq C\ell_k^{-2}\|\g Y_k\|_{L^2_\eta (Q(t,2))}^2.
		\end{align}
		In the same manner, we estimate the third derivative of $\rho_k$. Thanks to \eqref{derivative_alpha}, it holds
		\begin{align*}
			\dr^3_{ttt}\rho_k &= \alpha_k \dr_s \left[ \left( \int_{\s^1} \scal{\g Y_k}{\dr_s \g Y_k}_\eta \right) (\dr_s \rho_k) + \alpha_k^2 (\dr^2_{ss} \rho_k) \right]\\
			&= \alpha_k \dr_s \left( \int_{\s^1} \scal{\g Y_k}{\dr_s \g Y_k}_\eta \right)(\dr_s \rho_k) + \alpha_k\left( \int_{\s^1} \scal{\g Y_k}{\dr_s \g Y_k}_\eta \right)(\dr^2_{ss} \rho_k) \\
			&+ 2\alpha_k\left( \int_{\s^1} \scal{\g Y_k}{\dr_s \g Y_k}_\eta \right) (\dr^2_{ss}\rho_k) + \alpha_k^3 (\dr^3_{sss}\rho_k).
		\end{align*}
		As for $\dr^3_{ttt} f_k$, we obtain 
		\begin{align}\label{derivative3_rho_alone}
			|\dr^3_{ttt}\rho_k| &\leq \frac{C}{\alpha_k} \|\g Y_k\|_{L^2_\eta(Q(t,2))}^2 \mathbf{1}_{[0,1]\cup[\ell_k-1,\ell_k]}(s).
		\end{align}
		Using \eqref{derivative1_rho}, we conclude that
		\begin{align}\label{derivative3_rho}
			|\dr_t \rho_k| |\dr^3_{ttt}\rho_k| &\leq C\|\g Y_k\|_{L^2_\eta(Q(t,2))}^2 \mathbf{1}_{[0,1]\cup[\ell_k-1,\ell_k]}(s).
		\end{align}
		
		\underline{Estimate of the term $\int_{\Cr_k} |\g E_k|^2_\eta$ in \eqref{second_variation_dirichlet_Ek} :}\\
		
		Given $(t_k,\theta_k)\in\Cr_k$, we estimate the term $|\g E_k(t_k,\theta_k)|^2_\eta $ using the isometry of $\s^{3,1}$ given by the \cref{control_normal_by_arond}. We compute $|\g E_k|^2_\eta$ :	
		\begin{align*}
			|\g E_k|^2_\eta =& \Big|\g(\rho_k f_k) E - \g(\rho_k f_k)\scal{E}{Y_k}_\eta Y_k - \rho_k f_k \scal{E}{\g Y_k}_\eta Y_k - \rho_k f_k \scal{E}{Y_k}_\eta \g Y_k \\
			& + (\g d_k)\nu_k + d_k(\g \nu_k) + (\g e^i_k) \dr_i \nu_k + e^i_k \g (\dr_i \nu_k) \Big|^2_\eta.
		\end{align*}		
		The key observation is that the leading terms inside the norm are $\g(\rho_k f_k) E$ since $E$ is constant, and possibly $(\g d_k)\nu_k$, since we don't have nice estimates on the third derivatives of $\rho_k$ and $f_k$. However $(\g d_k)\nu_k$ is light-like, so in the computation of $|\g E_k|^2_\eta$, the bad part vanishes. We have nice estimates for all the other terms. A second observation, is that when we will estimate derivatives of the product $\rho_k f_k$, we will keep track of $f_k$ when all the derivatives hit $\rho_k$, so we will often obtains terms of the form $f_k \mathbf{1}_{[0,1]\cup[\ell_k-1,\ell_k]}(s)$. This will allow us to use the assymptotic expansion $f_k \ust{k\to\infty}{=}C\ell_k^{-1} + o(\ell_k^{-1})$ near the boundary.\\
		
		We now developp $|\g E_k|^2_\eta$, using the orthogonality relations $Y_k,\nu_k\perp \g Y_k$ and $Y_k,\g Y_k,\nu_k,\g \nu_k \perp \nu_k$ :
		\begin{align}
			|\g E_k|^2_\eta =& |\g (\rho_k f_k)|^2  - 2 |\g (\rho_k f_k)|^2 \scal{E}{Y_k}_\eta^2 -4\rho_k f_k \g (\rho_k f_k) \scal{E}{\g Y_k}_\eta \scal{E}{Y_k}_\eta \nonumber\\
			&+ 2\g (\rho_k f_k) (\g d_k) \scal{E}{\nu_k}_\eta + 2\g (\rho_k f_k) d_k \scal{E}{\g \nu_k}_\eta + 2\g (\rho_k f_k) (\g e^i_k) \scal{E}{\dr_i \nu_k}_\eta + 2\g (\rho_k f_k) e^i_k \scal{E}{\g \dr_i \nu_k}_\eta \nonumber\\
			&+ |\g (\rho_k f_k)|^2 \scal{E}{Y_k}_\eta^2 +2\rho_k f_k \g (\rho_k f_k) \scal{E}{Y_k}_\eta \scal{E}{\g Y_k}_\eta - 2 e^i_k\g (\rho_k f_k) \scal{E}{Y_k}_\eta \scal{Y_k}{\g \dr_i \nu_k}_\eta \nonumber\\
			&+ (\rho_k f_k)^2 \scal{E}{\g Y_k}_\eta^2 - 2\rho_k f_k e^i_k \scal{E}{\g Y_k}_\eta \scal{Y_k}{\g \dr_i \nu_k}_\eta \nonumber\\
			&+ (\rho_k f_k)^2 \scal{E}{Y_k}_\eta^2 |\g Y_k|^2_\eta -2\rho_k f_k d_k \scal{E}{Y_k}_\eta \scal{\g Y_k}{\g \nu_k}_\eta -2\rho_k f_k (\g e^i_k) \scal{E}{Y_k}_\eta \scal{\g Y_k}{\dr_i \nu_k}_\eta \nonumber\\
			&- 2\rho_k f_k e^i_k \scal{E}{Y_k}_\eta \scal{\g Y_k}{\g \dr_i \nu_k}_\eta \nonumber\\
			&+ 2(\g d_k) e^i_k \scal{\nu_k}{\g \dr_i \nu_k}_\eta \nonumber\\
			&+ d_k^2 |\g \nu_k|^2_\eta + 2d_k (\g e^i_k) \scal{\g \nu_k}{\dr_i \nu_k}_\eta + 2d_k e^i_k \scal{\g \nu_k}{\g\dr_i \nu_k}_\eta \nonumber\\
			&+ |(\g e^i_k)\dr_i \nu_k|^2_\eta + 2 \scal{(\g e^i_k) \dr_i \nu_k }{ e^j_k \g \dr_j \nu_k}_\eta \nonumber\\
			&+ |e^i_k\g \dr_i \nu_k|^2_\eta. \label{eq:norm_gE_k}
		\end{align}
		We start by focusing on the scalar products. Thanks to \eqref{derivatives_Y}, it holds $\scal{\dr_i Y_k}{\dr_j \nu_k}_\eta = -(\Arond_k)_{ij}$. In particular, $\scal{\g Y_k}{\g \nu_k}_\eta = 0$. The terms involving second derivatives of $\nu_k$ can be controlled by $\|\g N_{\Theta_k\circ\Psi_k}\|_{L^2(Q(t,2))} = o(1)$, thanks to the $\ve$-regularity as in the proof of the \cref{existence_gauge}, where $\Theta_k\in\Conf(\s^3)$ corresponds to the isometry in $\s^{3,1}$ we are working with. As well for the term $\scal{\g Y_k}{\g \dr_i \nu_k}_\eta$ which can be estimated by $\ve$-regularity and \eqref{gradientY_bound_wrong_scale} 
		\begin{align*}
			|\scal{\g Y_k}{\g \dr_i \nu_k}_\eta| &\leq |\g Y_k|_\xi |\g^2 (\Theta_k\circ \Psi_k)|_\xi \\
			& \leq C|\g (\Theta_k\circ \Psi_k)|_\xi \|\g  N_{\Theta_k\circ \Psi_k}\|_{L^2(Q(t,2))}^2\\
			&\ust{k\to\infty}{=}o(1).
		\end{align*}
		Now we focus on the coefficient of the scalar products in \eqref{eq:norm_gE_k}. We estimate $e_k$ by its definition \eqref{component_gnu} and \eqref{derivative1_f}-\eqref{derivative1_rho} :
		\begin{align}\label{eq:est_ek}
			|e_k| &\leq C|\scal{E}{\nu_k}_\eta|\left( f_k\alpha_k\mathbf{1}_{[0,1]\cup[\ell_k-1,\ell_k]}(s) + \frac{\alpha_k}{\ell_k}\right).
		\end{align}
		We estimate the derivatives of $e_k$ in the following way : by its definition \eqref{component_gnu}, it holds
		\begin{align*}
			|\g e_k| &\leq C|\g^2(\rho_k f_k)||\scal{E}{\nu_k}_\eta| + C|\g (\rho_k f_k)| |\scal{E}{\g \nu_k}_\eta|.
		\end{align*}
		Thanks to \eqref{derivative1_f}, \eqref{derivative1_rho}, \eqref{derivative2_f} and \eqref{derivative2_rho}, we obtain
		\begin{align}\label{eq:est_gek}
			|\g e_k| &\leq C|\scal{E}{\nu_k}_\eta|\left( f_k\mathbf{1}_{[0,1]\cup[\ell_k-1,\ell_k]}(s) +\frac{1}{\ell_k}\right) \|\g Y_k\|_{L^2(Q(t,2))}  +C|\scal{E}{\g \nu_k}_\eta|\left(  f_k\alpha_k\mathbf{1}_{[0,1]\cup[\ell_k-1,\ell_k]}(s) + \frac{\alpha_k}{\ell_k}\right). 
		\end{align}
		We estimate $d_k$ by its definition \eqref{component_nu} :
		\begin{align*}
			|d_k|&\leq C|\scal{\nu_k}{E}_\eta| \left( |\g^2(\rho_kf_k)| + |\g Y|^2_\eta \rho_k f_k\right) + |\scal{E}{\g \nu_k}_\eta| |\g(\rho_k f_k)|.
		\end{align*}
		Thanks to \eqref{derivative1_f}, \eqref{derivative1_rho}, \eqref{derivative2_f} and \eqref{derivative2_rho}, we obtain
		\begin{align}
			|d_k| &\leq C|\scal{E}{\nu_k}_\eta|\left( \|\g Y_k\|_{L^2(Q(t,2))} f_k\mathbf{1}_{[0,1]\cup[\ell_k-1,\ell_k]}(s) +\frac{1}{\ell_k}\|\g Y_k\|_{L^2(Q(t,2))} + |\g Y_k|^2_\eta \rho_k f_k \right) \label{eq:est_dk1}\\
			& + C|\scal{E}{\g \nu_k}_\eta| \left(f_k \mathbf{1}_{[0,1]\cup[\ell_k-1,\ell_k]}(s) + \frac{1}{\ell_k} \right) \alpha_k. \label{eq:est_dk2}
		\end{align}
		We estimate the derivatives of $d_k$ in the following way : by its definition \eqref{component_nu}, it holds
		\begin{align*}
			|\g d_k| \leq &C|\scal{\g \nu_k}{E}_\eta| |\g^2(\rho_k f_k)| \\
			&+C|\scal{\nu_k}{E}_\eta|\left( |\g^3(\rho_k f_k)| + \rho_k f_k |\scal{\g Y_k}{\g^2 Y_k}_\eta| + |\g Y_k|^2_\eta |\g (\rho_k f_k)| \right)\\
			&+ C|\g^2(\rho_k f_k)| |\scal{E}{\nu_k}_\eta| + |\g (\rho_k f_k)| |\scal{E}{\g^2\nu_k}_\eta|.
		\end{align*}
		Thanks to \eqref{derivative1_f}, \eqref{derivative1_rho}, \eqref{derivative2_f}, \eqref{derivative2_rho}, \eqref{derivative3_f_alone}, \eqref{derivative3_rho_alone},  and the $\ve$-regularity, we obtain
		\begin{align*}
			|\g d_k| \leq &C|\scal{\g \nu_k}{E}_\eta|\left( f_k\mathbf{1}_{[0,1]\cup[\ell_k-1,\ell_k]}(s) +\frac{1}{\ell_k}\right) \|\g Y_k\|_{L^2(Q(t,2))}\\
			&+C|\scal{\nu_k}{E}_\eta|\left( \frac{f_k\mathbf{1}_{[0,1]\cup[\ell_k-1,\ell_k]}(s)}{\alpha_k} +\frac{1}{\ell_k \alpha_k}+ f_k\rho_k \right) \|\g Y_k\|_{L^2(Q(t,2))}^2\\
			&+C|\scal{\nu_k}{E}_\eta||\g Y|^2_\eta \left( f_k\mathbf{1}_{[0,1]\cup[\ell_k-1,\ell_k]}(s)+\frac{1}{\ell_k}\right)\alpha_k\\
			&+C|\scal{\nu_k}{E}_\eta|\|\g Y_k\|_{L^2(Q(t,2))}\left( f_k\mathbf{1}_{[0,1]\cup[\ell_k-1,\ell_k]}(s)+\frac{1}{\ell_k}\right)\\
			&+ C\left( f_k\mathbf{1}_{[0,1]\cup[\ell_k-1,\ell_k]}(s)+\frac{1}{\ell_k}\right)\alpha_k |\scal{E}{\g^2\nu_k}_\eta|.
		\end{align*}
		The term $\g d_k$ appears twice in \eqref{eq:norm_gE_k}. We estimate the term $(\g d_k)e_k$ term by the above estimate and \eqref{eq:est_ek}:
		\begin{align*}
			|\g d_k| |e_k| &\leq C|\scal{\nu_k}{E}_\eta| |\scal{\g \nu_k}{E}_\eta|\left( f_k\mathbf{1}_{[0,1]\cup[\ell_k-1,\ell_k]}(s) +\frac{1}{\ell_k}\right)^2\alpha_k \|\g Y_k\|_{L^2(Q(t,2))}\\
			&+C|\scal{\nu_k}{E}_\eta|^2 \left( f_k\mathbf{1}_{[0,1]\cup[\ell_k-1,\ell_k]}(s) +\frac{1}{\ell_k }+ \alpha_k f_k\rho_k \right)\left( f_k\mathbf{1}_{[0,1]\cup[\ell_k-1,\ell_k]}(s)+\frac{1}{\ell_k}\right) \|\g Y_k\|_{L^2(Q(t,2))}^2\\
			&+C|\scal{\nu_k}{E}_\eta|^2|\g Y|^2_\eta \left( f_k\mathbf{1}_{[0,1]\cup[\ell_k-1,\ell_k]}(s)+\frac{1}{\ell_k}\right)^2\alpha_k^2\\
			&+C|\scal{\nu_k}{E}_\eta|^2 \|\g Y_k\|_{L^2(Q(t,2))}\left( f_k\mathbf{1}_{[0,1]\cup[\ell_k-1,\ell_k]}(s)+\frac{1}{\ell_k}\right)^2 \alpha_k \\
			&+ C|\scal{\nu_k}{E}_\eta| \left( f_k\mathbf{1}_{[0,1]\cup[\ell_k-1,\ell_k]}(s)+\frac{1}{\ell_k}\right)^2\alpha_k^2 |\scal{E}{\g^2\nu_k}_\eta|.
		\end{align*}		
		We estimate the term $(\g d_k)\g(\rho_k f_k)$ by \eqref{derivative1_f}-\eqref{derivative1_rho} :
		\begin{align*}
			|\g d_k||\g(\rho_kf_k)|&\leq C|\scal{\g \nu_k}{E}_\eta|\left( f_k\mathbf{1}_{[0,1]\cup[\ell_k-1,\ell_k]}(s) +\frac{1}{\ell_k}\right)^2 \alpha_k \|\g Y_k\|_{L^2(Q(t,2))}\\
			&+C|\scal{\nu_k}{E}_\eta|\left( f_k\mathbf{1}_{[0,1]\cup[\ell_k-1,\ell_k]}(s) +\frac{1}{\ell_k }+ f_k\rho_k\alpha_k \right)\left( f_k\mathbf{1}_{[0,1]\cup[\ell_k-1,\ell_k]}(s)+\frac{1}{\ell_k}\right)  \|\g Y_k\|_{L^2(Q(t,2))}^2\\
			&+C|\scal{\nu_k}{E}_\eta||\g Y|^2_\eta \left( f_k\mathbf{1}_{[0,1]\cup[\ell_k-1,\ell_k]}(s)+\frac{1}{\ell_k}\right)^2\alpha_k^2\\
			&+C|\scal{\nu_k}{E}_\eta|\|\g Y_k\|_{L^2(Q(t,2))}\left( f_k\mathbf{1}_{[0,1]\cup[\ell_k-1,\ell_k]}(s)+\frac{1}{\ell_k}\right)^2 \alpha_k\\
			&+ C\left( f_k\mathbf{1}_{[0,1]\cup[\ell_k-1,\ell_k]}(s)+\frac{1}{\ell_k}\right)^2\alpha_k^2 |\scal{E}{\g^2\nu_k}_\eta|.
		\end{align*}
		Using the estimates $\scal{E_k}{\nu_k}_\eta \ust{k\to\infty}{=}o(1)$ and $\scal{E_k}{Y_k}_\eta \ust{k\to\infty}{=}o(1)$, from the above estimates and \eqref{eq:norm_gE_k}, we conclude :
		\begin{align*}
			\left| |\g E_k|^2_\eta - |\g(\rho_k f_k)|^2 \right| \leq &C(\rho_k f_k)^2 |\scal{E}{\g Y_k}_\eta|^2 \\
			&+ C|\scal{\g \nu_k}{E}_\eta|^2\left( f_k\mathbf{1}_{[0,1]\cup[\ell_k-1,\ell_k]}(s) +\frac{1}{\ell_k^2}\right)\left( \alpha_k^2 + \|\g Y_k\|_{L^2(Q(t,2))}^2 \right)\\
			&+o\left(|\g(\rho_k f_k)|^2 + \left( f_k\mathbf{1}_{[0,1]\cup[\ell_k-1,\ell_k]}(s) +\frac{1}{\ell_k^2}\right)\left( \alpha_k^2 + \|\g Y_k\|_{L^2(Q(t,2))}^2 \right) + f_k\rho_k |\g Y_k|^2_\eta \right)
		\end{align*}
		Therefore,
		\begin{align*}
			\left| \int_{\Cr_k} |\g E_k|^2_\eta -|\g(\rho_k f_k)|^2 \ dtd\theta \right| \leq &C\int_{\Cr_k} f_k^2 |\scal{E}{\g Y_k}_\eta|^2 dtd\theta \\
			&+ C\int_{\Cr_k} |\scal{\g \nu_k}{E}_\eta|^2\left( f_k\mathbf{1}_{[0,1]\cup[\ell_k-1,\ell_k]}(s) +\frac{1}{\ell_k^2}\right)\left( \alpha_k^2 + \|\g Y_k\|_{L^2(Q(t,2))}^2 \right) dtd\theta \\
			&+ o\left( \int_{\Cr_k}|\g(\rho_k f_k)|^2 + f_k\rho_k |\g Y_k|^2_\eta \ dtd\theta + \frac{\Ar(Y_k;\Cr_k)}{\ell_k^2} \right).
		\end{align*}
		Each integral is invariant by conformal transformation on $\Psi_k$. Therefore, we can use \eqref{as:gnu} to obtain
		\begin{align}\label{gradient_part_second_variation_Ek}
			\left| \int_{\Cr_k} |\g E_k|^2_\eta -|\g(\rho_k f_k)|^2 \ dtd\theta \right| \leq &  C\int_{\Cr_k} f_k^2 |\scal{E}{\g Y_k}_\eta|^2 dtd\theta \nonumber \\
			&+ o\left( \int_{\Cr_k}|\g(\rho_k f_k)|^2 + f_k\rho_k |\g Y_k|^2_\eta \ dtd\theta + \frac{\Ar(Y_k;\Cr_k)}{\ell_k^2} \right).
		\end{align}
		We compute the term $\int f_k^2 |\scal{E}{\g Y_k}_\eta|^2$. By integration by parts, it holds
		\begin{align*}
			\int_{\Cr_k} f_k^2 |\scal{E}{\g Y_k}_\eta|^2 dtd\theta &= -\int_{\Cr_k} f_k^2 \scal{E}{\lap Y_k}_\eta \scal{E}{Y_k}_\eta + 2f_k(\g f_k)\scal{E}{\g Y_k}\scal{E}{Y_k}\ dtd\theta.
		\end{align*}
		By Young's inequality
		\begin{align*}
			\int_{\Cr_k} f_k^2 |\scal{E}{\g Y_k}_\eta|^2 dtd\theta &\leq \int_{\Cr_k} f_k^2 |\g Y_k|^2_\eta \scal{E}{Y_k}_\eta^2 +C|\g f_k|^2 \scal{E}{Y_k}_\eta^2 + \frac{1}{4} f_k^2 |\scal{E}{\g Y_k}_\eta|^2 \ dtd\theta.
		\end{align*}
		Using $\scal{E}{Y_k}_\eta \ust{k\to\infty}{=}o(1)$, we obtain
		\begin{align}\label{integral_scalar_E_gYk}
			\int_{\Cr_k} f_k^2 |\scal{E}{\g Y_k}_\eta|^2 dtd\theta &\ust{k\to\infty}{=} o\left( \int_{\Cr_k} f_k^2 |\g Y_k|^2_\eta + |\g f_k|^2 \ dtd\theta\right).
		\end{align}
		Therefore, \eqref{gradient_part_second_variation_Ek} reduces to
		\begin{align}\label{integral_gEk}
			\int_{\Cr_k} |\g E_k|^2_\eta dtd\theta &\ust{k\to\infty}{=} \int_{\Cr_k} |\g (\rho_k f_k)|^2 dtd\theta + o\left( \int_{\Cr_k} |\g f_k|^2 +f_k^2 |\g Y_k|^2_\eta dtd\theta + \frac{\Ar(Y_k;\Cr_k)}{\ell_k^2} \right).
		\end{align}
		
		\underline{Estimate of the term $\int_{\Cr_k}|\scal{E_k}{\g Y_k}_\eta|^2 $ in \eqref{second_variation_dirichlet_Ek} :}\\
		We compute the scalar product in the isometry of $\s^{3,1}$ given by the \cref{bubbling_CGM} with $\delta=1$ : thanks to \eqref{definition_jacobi_field} and \eqref{derivatives_Y},
		\begin{align*}
			\scal{E_k}{\g Y_k}_\eta &= \scal{\rho_k f_k E}{ \g Y_k }_\eta - e_k \Arond_k.
		\end{align*}
		Thanks to \eqref{component_gnu}, \eqref{integral_scalar_E_gYk} and \eqref{derivative1_f} together with $\scal{E}{\nu_k}_\eta \ust{k\to\infty}{=}o(1)$, we obtain
		\begin{align}\label{scal_part_second_variation_Ek}
			\int_{\Cr_k} |\scal{E_k}{\g Y_k}_\eta |^2 dtd\theta \ust{k\to\infty}{=} o\left( \int_{\Cr_k}|\g f_k|^2 + f_k^2 |\g Y_k|^2_\eta \ dtd\theta \right).
		\end{align}
		
		\underline{Estimate of the term $\int_{\Cr_k} |E_k|^2_\eta |\g Y_k|^2_\eta$ in \eqref{second_variation_dirichlet_Ek} :}\\
		By a direct computation in the isometry of $\s^{3,1}$ given by the \cref{bubbling_CGM} with $\delta=1$, it holds
		\begin{align*}
			|E_k|^2_\eta &= (\rho_k f_k)^2 -2\rho_k^2 f_k^2\scal{E}{Y_k}_\eta^2 +2\rho_k f_k d_k \scal{E}{\nu_k}_\eta + 2 \rho_k f_k e^i_k \scal{E}{\dr_i \nu_k}_\eta \\
			&+ (\rho_k f_k)^2 \scal{E}{Y_k}_\eta^2 +|e^i_k \dr_i \nu_k|^2_\eta.
		\end{align*}
		By Young inequality,
		\begin{align*}
			\left| 2\rho_k f_k d_k \scal{E}{\nu_k}_\eta + 2\rho_k f_k e^i_k \scal{E}{\dr_i \nu_k}_\eta\right| &\leq \left( |\scal{E}{\nu_k}_\eta| + |\scal{E}{\g \nu_k}_\eta| \right) \left( \rho_k^2 f_k^2 + d_k^2 + |e_k|^2 \right).
		\end{align*}
		By the definitions \eqref{component_gnu}-\eqref{component_nu} together with the estimates \eqref{derivative1_f} and \eqref{derivative2_f}, we obtain
		\begin{align*}
			\left| 2\rho_k f_k d_k \scal{E}{\nu_k}_\eta + 2\rho_k f_k e^i_k \scal{E}{\dr_i \nu_k}_\eta\right| \ust{k\to\infty}{=} o\left( \rho_k^2 f_k^2 + |\g (\rho_k f_k)|^2 + \frac{\|\g Y_k\|_{L^2_\eta(Q(t,2))}^2 }{\ell_k^2} \right).
		\end{align*}
		Using \eqref{derivative1_rho}, we obtain the asymptotic expansion
		\begin{align}\label{norm_part_second_variation_Ek}
			\int_{\Cr_k} |E_k|^2_\eta |\g Y_k|^2_\eta dtd\theta &\ust{k\to\infty}{=} \int_{\Cr_k} \rho_k^2 f_k^2 |\g Y_k|^2_\eta dtd\theta + o \left( \int_{\Cr_k} |\g f_k|^2+ f_k^2 |\g Y_k|^2_\eta dtd\theta + \frac{\Ar(Y_k;\Cr_k)}{\ell_k^2 } \right).
		\end{align}
		Thanks to \eqref{assumption_small_energy}, we have $o\left(\frac{\Ar(Y_k;\Cr_k)}{\ell_k^2} \right) = o(\ell_k^{-2})$. We conclude by \eqref{integral_gEk}-\eqref{scal_part_second_variation_Ek}-\eqref{norm_part_second_variation_Ek}.
	\end{proof}
	
	Now, we show that we can get rid of $\rho_k$ in the expression $\left( \int_{\Cr_k} |\g (\rho_k f_k)|^2 -\rho_k^2 f_k^2 |\g Y_k|^2_\eta\ dtd\theta \right)$. To do so, we compute it with the change of variable \eqref{change_variable}. 
	\begin{lemma}
		It holds 
		\begin{align*}
			\int_{\Cr_k} |\g (\rho_k f_k)|^2 -\rho_k^2 f_k^2 |\g Y_k|^2_\eta\ dtd\theta &\leq \int_1^{\ell_k-1} \alpha_k \left( 4\pi |\dr_s f_k|^2_\eta -  f_k^2 \right) ds + o\left( \frac{1}{\ell_k^2} \right).
		\end{align*}
	\end{lemma}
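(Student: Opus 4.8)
The plan is to perform the change of variable \eqref{change_variable} in the left-hand side, which turns it into a one-dimensional integral carrying the weight $\alpha_k$, and then to isolate the interval $[1,\ell_k-1]$ where the cut-off $\rho_k$ is trivial, the two boundary intervals being negligible.

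First I would note that $\rho_k f_k$ is a function of $t$ alone and that $\Cr_k$ carries the flat metric, so that $|\g(\rho_k f_k)|^2 = (\dr_t(\rho_k f_k))^2$; integrating over $\theta\in\s^1$ and using $\int_{\s^1}|\g Y_k(t,\cdot)|^2_\eta = \alpha_k(t)^2$ from the definition \eqref{change_variable}, one gets
\begin{align*}
	\int_{\Cr_k} |\g(\rho_k f_k)|^2 - \rho_k^2 f_k^2 |\g Y_k|^2_\eta\ dtd\theta = 2\pi\int_{a_k}^{b_k} (\dr_t(\rho_k f_k))^2\ dt - \int_{a_k}^{b_k} \rho_k^2 f_k^2 \alpha_k^2\ dt.
\end{align*}
The substitution $ds = \alpha_k(t)dt$ is a diffeomorphism $[a_k,b_k]\to[0,\ell_k]$ since $\alpha_k>0$ by \eqref{hypothesis_no_umbilic_circle}, and as $\dr_t = \alpha_k\dr_s$ it converts both terms into integrals with the common weight $\alpha_k$, yielding
\begin{align*}
	\int_{\Cr_k} |\g(\rho_k f_k)|^2 - \rho_k^2 f_k^2 |\g Y_k|^2_\eta\ dtd\theta = \int_0^{\ell_k} \alpha_k \left( 2\pi(\dr_s(\rho_k f_k))^2 - \rho_k^2 f_k^2 \right) ds.
\end{align*}

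Then I would split $[0,\ell_k]$ into $[0,1]$, $[1,\ell_k-1]$ and $[\ell_k-1,\ell_k]$. On the middle interval $\rho_k\equiv 1$, hence $\dr_s\rho_k=0$ and the integrand equals $\alpha_k(2\pi(\dr_s f_k)^2 - f_k^2)\leq\alpha_k(4\pi(\dr_s f_k)^2 - f_k^2)$ because $\alpha_k\geq0$; this gives the main term of the statement. On each boundary interval one has $f_k=\sin(\pi s/\ell_k)$ with $0\leq f_k\leq\pi/\ell_k$ and $|\dr_s f_k|\leq\pi/\ell_k$, together with $0\leq\rho_k\leq1$ and $|\dr_s\rho_k|\leq C$, so that $(\dr_s(\rho_k f_k))^2 + \rho_k^2 f_k^2 \leq C\ell_k^{-2}$ pointwise, and the boundary contribution is at most $C\ell_k^{-2}\int_{[0,1]\cup[\ell_k-1,\ell_k]}\alpha_k\ ds$. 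The one point needing care — the main obstacle — is that estimating this last integral by the total length $\int_0^{\ell_k}\alpha_k = 2\Ar(Y_k;\Cr_k)$ would only give $O(\ell_k^{-2})$; instead I would use the uniform smallness $\|\g Y_k\|_{L^\infty_\eta(\Cr_k)}\to0$ coming from \eqref{apriori_necks_cylinder}, which yields $\alpha_k(s)^2 = \int_{\s^1}|\g Y_k|^2_\eta \leq 2\pi\|\g Y_k\|_{L^\infty_\eta(\Cr_k)}^2\to0$ uniformly in $s$, hence $\int_{[0,1]\cup[\ell_k-1,\ell_k]}\alpha_k\ ds \leq 2\|\alpha_k\|_{L^\infty}\to0$ and the boundary contribution is $o(\ell_k^{-2})$. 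Collecting the three pieces gives the claimed inequality.
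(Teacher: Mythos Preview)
Your proof is correct and follows essentially the same approach as the paper: pass to the arclength variable $s$, isolate the region $[1,\ell_k-1]$ where $\rho_k\equiv 1$, and control the boundary pieces using the uniform smallness $\alpha_k\leq\sqrt{2\pi}\,\|\g Y_k\|_{L^\infty_\eta(\Cr_k)}\to 0$. The only cosmetic difference is that the paper first expands via $|\g(\rho_k f_k)|^2\leq 2|\g\rho_k|^2 f_k^2 + 2\rho_k^2|\g f_k|^2$ (which produces the factor $4\pi$ directly), whereas you keep the product intact and then invoke the trivial inequality $2\pi(\dr_s f_k)^2\leq 4\pi(\dr_s f_k)^2$ on the middle interval; both routes are equivalent.
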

	
	\begin{proof}
		We first show the following :
		\begin{align}\label{vanishing_integral_g_rho}
			\int_{\Cr_k} |\g \rho_k|^2 f_k^2 dtd\theta &\ust{k\to\infty}{=} o\left( \frac{1}{\ell_k^2} \right).
		\end{align}
		We consider the change of variable \eqref{change_variable}. It holds
		\begin{align*}
			\int_{\Cr_k} |\g \rho_k|^2 f_k^2 dtd\theta &= 2\pi \int_0^{\ell_k} \alpha_k (\dr_s \rho_k)^2 f_k(s)^2 ds \\
			&= 2\pi \int_{[0,1]\cup[\ell_k-1,\ell_k]} \alpha_k \sin\left( \frac{\pi s}{\ell_k} \right)^2 ds.
		\end{align*}
		Using $\sin(u)\ust{u\to 0}{=} u+o(u)$ and $\sin(\pi-t)=\sin(t)$, we obtain
		\begin{align*}
			\int_{\Cr_k} |\g \rho_k|^2 f_k^2 dtd\theta &\ust{k\to\infty}{=} \frac{2\pi^3}{\ell_k^2} \int_{[0,1]\cup[\ell_k-1,\ell_k]} \alpha_k ds.
		\end{align*}
		Since $\int_{[0,1]\cup[\ell_k-1,\ell_k]} \alpha_k ds\leq \|\g Y_k\|_{L^\infty_\eta(\Cr_k)}$, we obtain \eqref{vanishing_integral_g_rho}.\\
		Using that $\rho_k(s) = 1$ if $s\in[1,\ell_k-1]$, we obtain 
		\begin{align*}
			\int_{\Cr_k} |\g (\rho_k f_k)|^2 -\rho_k^2 f_k^2 |\g Y_k|^2_\eta\ dtd\theta \leq &\int_{\Cr_k} 2|(\g \rho_k) f_k|^2 + 2|\rho_k \g f_k|^2  -\rho_k^2 f_k^2 |\g Y_k|^2_\eta\ dtd\theta  \\
			\leq &\int_{a_k}^{b_k} \rho_k^2\Big( 4\pi (\dr_t f_k)^2 - f_k^2 \alpha_k^2 \Big) dt + o\left( \frac{1}{\ell_k^2} \right) \\
			\leq &\int_{0}^{\ell_k} \rho_k^2 \alpha_k \Big( 4\pi (\dr_s f_k)^2 - f_k^2  \Big) ds + o\left( \frac{1}{\ell_k^2} \right) \\
			\leq &\int_1^{\ell_k-1} \alpha_k \Big( 4\pi (\dr_s f_k)^2 - f_k^2  \Big) ds + o\left( \frac{1}{\ell_k^2} \right) \\
			&+ \int_{[0,1]\cup[\ell_k-1,\ell_k]}  \rho_k^2 \alpha_k \Big( 4\pi (\dr_s f_k)^2 - f_k^2  \Big) ds.
		\end{align*}
		Using $\dr_s f_k = \frac{\pi}{\ell_k} \cos\left( \frac{\pi s}{\ell_k} \right)$, we obtain an estimate for the last term :
		\begin{align*}
			\int_{[0,1]\cup[\ell_k-1,\ell_k]}  \rho_k^2 \alpha_k \Big( 4\pi (\dr_s f_k)^2 - f_k^2  \Big) ds &\ust{k\to\infty}{=} \int_{[0,1]}  \rho_k^2 \alpha_k \Big( \frac{4\pi^3}{\ell_k^2} - \frac{\pi^2 s^2}{\ell_k^2}  \Big) ds + \int_{[\ell_k-1,\ell_k]} \rho_k^2 \alpha_k \Big( \frac{4\pi^3}{\ell_k^2} - \frac{\pi^2 (\ell_k-s)^2}{\ell_k^2}  \Big) ds \\
			&\ust{k\to\infty}{=} o\left( \frac{1}{\ell_k^2} \right).
		\end{align*}
	\end{proof}
	
	To conclude the proof of the proposition \ref{existence_jacobi_field}, we prove that the quantity $\Big( \int 2|\g f_k|^2 - |\g Y_k|^2_\eta f_k^2 dtd\theta \Big)$ is exactly of size $\frac{1}{\ell_k^2}$.
	
	\begin{lemma}\label{bound_energy_fk}
		There exists $k_0 \in\N$ such that for $k\geq k_0$,
		\begin{align*}
			\int_1^{\ell_k-1} \alpha_k \left( 4\pi |\dr_s f_k|^2_\eta -  f_k^2 \right) ds <- \frac{ \pi^3 \mu}{2 \ell_k^2 }.
		\end{align*}
	\end{lemma}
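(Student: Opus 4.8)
The statement involves only the explicit functions $f_k(s)=\sin(\pi s/\ell_k)$ and the scalar density $\alpha_k$, so the plan is to reduce it to two quantitative one–variable estimates. First I would use $\dr_s f_k=\frac{\pi}{\ell_k}\cos(\pi s/\ell_k)$ to write the left-hand side as $\frac{4\pi^3}{\ell_k^2}\int_1^{\ell_k-1}\alpha_k\cos^2(\pi s/\ell_k)\,ds-\int_1^{\ell_k-1}\alpha_k\sin^2(\pi s/\ell_k)\,ds$. The first term is controlled by the total mass of $\alpha_k$: via the change of variables $ds=\alpha_k\,dt$ (legitimate by \eqref{hypothesis_no_umbilic_circle}) and the identity $\Ar(Y_k;\Cr_k)=\Dr(Y_k;\Cr_k)=\tfrac12\int_{\Cr_k}|\g Y_k|^2_\eta$ one gets $\int_0^{\ell_k}\alpha_k\,ds=\int_{a_k}^{b_k}\alpha_k^2\,dt=2\Ar(Y_k;\Cr_k)\in[2\mu,4\mu]$ by \eqref{assumption_small_energy}, so the $\cos^2$ term is at most $16\pi^3\mu/\ell_k^2$, which is of the small order we need.

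The heart of the matter is a lower bound of order $\mu/\ell_k^2$ for $\int_1^{\ell_k-1}\alpha_k\sin^2(\pi s/\ell_k)\,ds$, and this is the step I expect to be the main obstacle: a priori $\alpha_k$ could concentrate its mass near $s=0$ and $s=\ell_k$, precisely where $\sin^2$ vanishes. The extra ingredient that forbids this is the uniform decay $\|\g Y_k\|_{L^\infty_\eta(\Cr_k)}\to0$ recorded before \eqref{infinite_length}, which gives $\alpha_k(s)^2=\int_{\s^1}|\g Y_k|^2_\eta\,d\theta\le\delta_k^2$ with $\delta_k:=\sqrt{2\pi}\,\|\g Y_k\|_{L^\infty_\eta(\Cr_k)}\to0$. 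I would then combine the elementary bound $\sin^2(\pi s/\ell_k)\ge\frac{4}{\ell_k^2}\min\{s,\ell_k-s\}^2$ (from $\sin x\ge\frac2\pi x$ on $[0,\tfrac\pi2]$ and the symmetry of $f_k$) with a distribution-function argument: the set $\{\,\min\{s,\ell_k-s\}^2<\lambda\,\}$ has Lebesgue measure $2\sqrt\lambda$, hence carries mass at most $2\delta_k\sqrt\lambda$; choosing $\lambda=m^2/(16\delta_k^2)$, where $m:=\int_0^{\ell_k}\alpha_k\,ds\ge2\mu$ and $m\le\delta_k\ell_k$ (so $\sqrt\lambda\le\ell_k/2$), at least half of the mass $m$ sits where $\min\{s,\ell_k-s\}^2\ge\lambda$, whence $\int_0^{\ell_k}\alpha_k\sin^2(\pi s/\ell_k)\,ds\ge\frac{m^3}{8\delta_k^2\ell_k^2}\ge\frac{\mu^3}{\delta_k^2\ell_k^2}$. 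Cutting off to $[1,\ell_k-1]$ only removes an amount $O(\mu/\ell_k^2)$ since there $\sin^2(\pi s/\ell_k)\le\pi^2/\ell_k^2$.

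Putting the two estimates together, the left-hand side is at most $\frac{\mu}{\ell_k^2}\big(C-\mu^2/\delta_k^2\big)$ for some absolute constant $C$ absorbing the $16\pi^3$ and the boundary corrections. Since $\mu$ is fixed while $\delta_k\to0$, the term $\mu^2/\delta_k^2$ eventually dominates, and picking $k_0$ so large that $\mu^2/\delta_k^2>C+\tfrac{\pi^3}{2}$ for $k\ge k_0$ yields the claimed strict inequality $<-\pi^3\mu/(2\ell_k^2)$. Everything beyond the anti-concentration step is bookkeeping; note that the explicit bound in the statement is deliberately weak, as the left-hand side is in fact at least of size $\tfrac12\,\mu^3/(\delta_k^2\ell_k^2)$ in absolute value for $k$ large, which is what makes the lemma robust against the error terms produced in \Cref{asympotic_second_derivative_dirichlet_Ek}.
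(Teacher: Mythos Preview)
Your argument is correct, and the distribution-function step showing that $\alpha_k$ cannot concentrate near the endpoints is valid as written. Your route is genuinely different from the paper's, though both rest on the same key input $\alpha_k\le\delta_k\to 0$.

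The paper proceeds more directly: it rewrites the integrand as $\alpha_k\big[\tfrac{4\pi^3}{\ell_k^2}-(1+\tfrac{4\pi^3}{\ell_k^2})\sin^2(\pi s/\ell_k)\big]$ and splits the domain at the \emph{fixed} points $s=2\pi$ and $s=\ell_k-2\pi$. On the boundary pieces the $s$-length is bounded, so $\int\alpha_k\,ds\le C\delta_k\to 0$ and their contribution is $o(\ell_k^{-2})$; on the interior one has $\sin^2(\pi s/\ell_k)\ge\sin^2(2\pi^2/\ell_k)\sim 4\pi^4/\ell_k^2$, and the numerical inequality $\pi>1$ makes the bracket $\le(4\pi^3-4\pi^4)/\ell_k^2<0$, which multiplied by the remaining mass $\ge\mu/2$ gives the claim. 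This is shorter and avoids any layer-cake estimate, at the price of relying on the specific comparison $4\pi^4>4\pi^3$.

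Your approach, by contrast, is insensitive to the precise constant in front of $|\dr_s f_k|^2$: the anti-concentration argument yields $\int\alpha_k\sin^2(\pi s/\ell_k)\,ds\gtrsim\mu^3/(\delta_k^2\ell_k^2)$, which eventually dominates \emph{any} fixed multiple of $\mu/\ell_k^2$. So you obtain a strictly stronger quantitative conclusion (as you note at the end), at the cost of a slightly longer argument. For the purposes of the paper either bound suffices, since the error terms in \Cref{asympotic_second_derivative_dirichlet_Ek} are already $o(\ell_k^{-2})$.
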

	
	\begin{proof}
		We now compute the integral : 
		\begin{align*}
			\int_1^{\ell_k-1} \alpha_k \left( 4\pi |\dr_s f_k|^2_\eta -  f_k^2 \right) ds &= \int_1^{\ell_k-1} \alpha_k \left[ \frac{4\pi^3}{\ell_k^2} \cos\left( \frac{\pi s}{\ell_k} \right)^2 - \sin\left( \frac{\pi s}{\ell_k} \right)^2 \right]\ ds \\
			&= \int_1^{\ell_k-1} \alpha_k \left[ \frac{4\pi^3}{\ell_k^2} - \left( 1+  \frac{4\pi^3}{\ell_k^2}  \right)\sin\left( \frac{\pi s}{\ell_k} \right)^2 \right]\ ds. 
		\end{align*}
		We separate the integral on the domains $[2\pi ,\ell_k-2\pi]$ and $[1,2\pi]\cup [\ell_k-2\pi,\ell_k-1]$ :
		\begin{align*}
			\int_1^{\ell_k-1} \alpha_k \left( 4\pi |\dr_s f_k|^2_\eta -  f_k^2 \right) ds =& \int_{2\pi}^{\ell_k-2\pi} \alpha_k \left[ \frac{4\pi^3}{\ell_k^2} - \left( 1+  \frac{4\pi^3}{\ell_k^2}  \right)\sin\left( \frac{\pi s}{\ell_k} \right)^2 \right]\ ds \\
			&+ \int_1^{2\pi} \alpha_k \left[ \frac{4\pi^3}{\ell_k^2} - \left( 1+  \frac{2\pi^3}{\ell_k^2}  \right)\sin\left( \frac{\pi s}{\ell_k} \right)^2 \right]\ ds \\
			&+ \int_{\ell_k-2\pi}^{\ell_k-1} \alpha_k \left[ \frac{4\pi^3}{\ell_k^2} - \left( 1+  \frac{4\pi^3}{\ell_k^2}  \right)\sin\left( \frac{\pi s}{\ell_k} \right)^2 \right]\ ds.
		\end{align*}
		Since $\int_a^b \alpha_k(s) ds = \Ar(Y_k; [a,b]\times \s^1)$, we obtain
		\begin{align*}
			\int_1^{\ell_k-1} \alpha_k \left( 4\pi |\dr_s f_k|^2_\eta -  f_k^2 \right) ds \leq & \left[ \frac{4\pi^3}{\ell_k^2} - \left( 1+  \frac{4\pi^3}{\ell_k^2}  \right)\sin\left(\frac{2\pi^2}{\ell_k}\right)^2 \right] \Ar\Big(Y_k;[2\pi,\ell_k-2\pi]\times \s^1\Big) \\
			&+ \frac{4\pi^3}{\ell_k^2} \Ar\Big(Y_k;([0,2\pi]\cup[\ell_k-2\pi,\ell_k])\times \s^1\Big).
		\end{align*}
		Since $Y_k$ has average lorentz length $4\pi$ on $([0,2\pi]\cup[\ell_k-2\pi,\ell_k])\times \s^1$, we obtain
		\begin{align*}
			\Ar\Big(Y_k;([0,2\pi]\cup[\ell_k-2\pi,\ell_k])\times \s^1\Big)\leq C\|\g Y_k\|_{L^\infty_\eta (\Cr_k) } \xrightarrow[k\to\infty]{}{0}.
		\end{align*}
		Thanks to \eqref{assumption_small_energy}, for $k$ large enough, it holds
		\begin{align*}
			& \Ar\Big(Y_k;([0,2\pi]\cup[\ell_k-2\pi,\ell_k])\times \s^1\Big) \leq \frac{\mu}{10^{1000}},\\
			& \Ar\Big(Y_k;[2\pi,\ell_k - 2\pi]\times \s^1\Big) \geq \frac{\mu}{2}.
		\end{align*}
		Using the asymptotic expansion $\sin(u) \ust{u\to 0}{=} u+o(u)$, we obtain
		\begin{align*}
			\int_1^{\ell_k-1} \alpha_k \left( 4\pi |\dr_s f_k|^2_\eta -  f_k^2 \right) ds &\leq \left[ \frac{4\pi^3}{\ell_k^2} - \left( 1+  \frac{4\pi^3}{\ell_k^2}  \right) \left( \frac{2\pi^2}{\ell_k}  \right)^2  + o\left( \frac{1}{\ell_k^2} \right) \right] \frac{\mu}{2} + \frac{4\pi^3}{\ell_k^2} \frac{\mu}{10^{1000} } \\
			&\leq \left( 2\pi^3 -2\pi^4 + \frac{2\pi^3}{10^{1000}} \right) \frac{\mu}{\ell_k^2 } + o\left( \frac{1}{\ell_k^2} \right) \\
			&\leq -\frac{\pi^3 \mu}{\ell_k^2 } + o\left( \frac{1}{\ell_k^2} \right) .
		\end{align*}
	\end{proof}

	We conclude the prooof of the \cref{existence_jacobi_field} by choosing $V_k = E_k$. Thanks to the \cref{asympotic_second_derivative_dirichlet_Ek} and the  \cref{bound_energy_fk}, we obtain $\delta^2\Dr_{Y_k}(V_k) <0$ for $k$ large enough.

	\subsection{Lower bound on the Willmore index}\label{section_explosion_index}
	
	In this section, we prove that $\Ind_\Er(\Psi_k) \xrightarrow[k\to\infty]{}{+\infty}$.\\
	
	Thanks to \eqref{existence_energy}, for any $J\in\N$ and $k\in\N$, there exists a subdivision of $\Cr_k$ in $J$ subcylinders $\Cr_k^1 ,..., \Cr_k^J$ such that for any $j\in\inter{1}{J}$, it holds $\Ar(Y_k;\Cr_k^j) \in [ \frac{\lambda}{2J} , \frac{\lambda}{J} ]$. Thanks to the lemma \ref{existence_jacobi_field}, for each $j\in\inter{1}{J}$, there exists $k_{\lambda,j}\in\N$ such that for $k\geq k_{\lambda,j}$, there exists a vector field $V_k^j$ supported on $\Cr_k^j$ such that $\delta^2 \Dr_{Y_k}(V_k^j)<0$. Since $Y_k$ is a critical point of both $\Ar$ and $\Dr$, with $\Ar \leq \Dr$, it holds $\delta^2 \Ar_{Y_k}(V_k^j) \leq \delta^2 \Dr_{Y_k}(V_k^j) <0$. Since each $V_k^j$ comes from a variation through conformal Gauss maps, to each $V_k^j$ corresponds a Jacobi field for $\Psi_k$. Hence for $k\geq \max\{k_{\lambda,1},...,k_{\lambda,J}\}$, it holds $\Ind_\Er(\Psi_k) \geq J$. So for any $J\in\N$, we obtain $\liminf_{k\to\infty} \Ind_\Er(\Psi_k) \geq J$.

	\appendix
	
	\section{Quatization for $\Er$ is equivalent to quantization for $W$}\label{quantization_E_W}
	
	Recall that for any immersion $\Phi : \Sigma \to \R^3$, with $g_\Phi = \Phi^*\xi$, we have $|\Arond_\Phi|^2_{g_\Phi} = 2H^2_\Phi - 2 K_\Phi$. Therefore, to prove quantization for the lagragian $W$ knowing the quantization for the lagragian $\Er$, we only have to prove the quantization for $\Phi \mapsto \int_\Sigma K_\Phi d\vol_{g_\Phi}$.\\
	
	We recall \cite[Lemma V.1]{bernard2014} :
	\begin{lemma}
		There exists $\eta_0>0$ and $C>0$ such that for any dyadic annuli $ B_R(0)\setminus B_r(0) \subset \R^2$ with $R>4r$, $\eta \in (0,\eta_0)$, and any conformal immersion $\Phi : B_R(0)\setminus B_r(0) \to \R^3$ with $L^2$-bounded second fundamental form satisfying :
		\begin{align*}
			\|\g \vec{n}_\Phi \|_{L^{2,\infty}(B_R\setminus B_r)} + \int_{\dr B_r(0)} |\g \vec{n}_\Phi| + \|\g \vec{n}_\Phi \|_{L^2\big( [B_R\setminus B_{R/2}]\cup [B_{2r}\setminus B_r] \big)} &\leq \eta.
		\end{align*}
		Then, its Gauss curvature $K_\Phi$ satisfies
		\begin{align*}
			\left| \int_{B_R(0) \setminus B_r(0)} K_\Phi d\vol_{g_\Phi} \right| &\leq C\eta.
		\end{align*}
	\end{lemma}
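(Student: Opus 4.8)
The final statement to be proved is the lemma from \cite{bernard2014} (Lemma V.1) asserting smallness of the integral of the Gauss curvature on a dyadic annulus under a smallness assumption on the second fundamental form, measured in the weak-$L^2$ norm on the whole annulus together with $L^1$ on a boundary circle and $L^2$ on the two extreme dyadic sub-annuli.

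\medskip

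The plan is to use the Liouville-type equation satisfied by the conformal factor together with a Hodge-type decomposition of the Gauss map. First I would write $\Phi : B_R\setminus B_r \to \R^3$ in conformal coordinates, so that $g_\Phi = e^{2\lambda} \xi$ on the annulus, and recall that the Gauss curvature satisfies the Liouville equation $-\lap \lambda = K_\Phi e^{2\lambda} = e^{2\lambda} K_\Phi$, hence $\int_{B_R\setminus B_r} K_\Phi \, d\vol_{g_\Phi} = -\int_{B_R\setminus B_r} \lap\lambda \, dx$. By the divergence theorem this equals $\int_{\dr B_r} \dr_\nu \lambda - \int_{\dr B_R} \dr_\nu \lambda$, so the whole question reduces to controlling the flux of $\g\lambda$ through the two boundary circles by $\eta$. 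The key analytic input is the conservative/Wente-type structure: there is an elliptic system, coming from the fact that $\g\lambda$ is related to $\g\vec{n}_\Phi \cdot$ (rotation of) $\g\Phi$, allowing one to bound $\g\lambda$ in $L^{2,\infty}$ by $\|\g\vec{n}_\Phi\|_{L^2}^2$ locally (this is the type of estimate already quoted in the paper as \cite[Theorem 0.2]{laurain2018}, \cite[Lemma II.1]{bernard2019}). The point is that on the \emph{extreme} dyadic annuli $B_R\setminus B_{R/2}$ and $B_{2r}\setminus B_r$ the full $L^2$ norm of $\g\vec{n}_\Phi$ is $\leq\eta$, so there $\|\g\lambda\|_{L^2} \leq C\eta$ by genuine local elliptic estimates; then the flux through a well-chosen circle in each of these annuli is $\leq C\eta$ by the mean value / slicing argument (choose the radius so that the circular integral is controlled by the annular $L^2$ norm, using Fubini).

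\medskip

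More concretely, the key steps in order are: (i) fix conformal coordinates and reduce to the flux identity $\int K_\Phi d\vol_{g_\Phi} = \int_{\dr B_r}\dr_\nu\lambda - \int_{\dr B_R}\dr_\nu\lambda$; (ii) establish the local estimate $\|\g\lambda\|_{L^2(\tilde A)} \leq C\|\g\vec{n}_\Phi\|_{L^2(A)}^2 + C\|\g\vec{n}_\Phi\|_{L^2(A)}$ or its $L^{2,\infty}$ analogue, where $\tilde A \Subset A$ are concentric sub-annuli, via the divergence-form equation for $\lambda$ and the Coulomb-gauge / Wente machinery; (iii) in $B_R\setminus B_{R/2}$ and $B_{2r}\setminus B_r$ use the $L^2$-smallness hypothesis on $\g\vec{n}_\Phi$ to get $\|\g\lambda\|$ small there; (iv) by a Fubini/pigeonhole argument pick radii $\rho_1 \in (r,2r)$ and $\rho_2\in(R/2,R)$ such that $\int_{\dr B_{\rho_i}}|\g\lambda| \leq C\eta$; (v) observe that $\int_{\dr B_r}\dr_\nu\lambda = \int_{\dr B_{\rho_1}}\dr_\nu\lambda$ up to $\int_{B_{\rho_1}\setminus B_r} K_\Phi d\vol_{g_\Phi}$, and iterate — actually cleaner: apply the flux identity on the sub-annulus $B_{\rho_2}\setminus B_{\rho_1}$ directly, reducing to the two good circles, and bound the remaining pieces $\int_{B_r^{c}\cap B_{\rho_1}} K$ and $\int_{B_{\rho_2}\setminus B_{R/2}} K$ by Gauss--Bonnet-type arguments on a single dyadic annulus where $\g\vec{n}_\Phi$ is $L^2$-small, using $|K_\Phi| \leq \tfrac12 |\g\vec{n}_\Phi|^2_{g_\Phi}$ pointwise. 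Combining gives $|\int_{B_R\setminus B_r} K_\Phi d\vol_{g_\Phi}| \leq C\eta$.

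\medskip

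The main obstacle I expect is step (ii): getting the correct \emph{global} control of $\g\lambda$ across the full dyadic range from only weak-$L^2$ smallness of $\g\vec{n}_\Phi$ together with the two endpoint $L^2$-smallness and one $L^1$ boundary term. A naive local estimate loses a logarithm in the number of dyadic scales; the resolution (as in \cite{bernard2014}) is precisely that one does \emph{not} need the middle region's conformal factor pointwise, only the boundary fluxes, so the endpoint $L^2$-smallness on $B_R\setminus B_{R/2}$ and $B_{2r}\setminus B_r$ is exactly what is needed, and the $L^{2,\infty}$ smallness on the whole annulus plus the $L^1$ bound on $\dr B_r$ are used to run the Coulomb gauge construction and to make the Wente constant effective. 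I would isolate this as the technical heart and otherwise follow the line of \cite{bernard2014} verbatim; since the statement is cited there, the cleanest route is simply to invoke that reference, but the above is how one reconstructs the proof.
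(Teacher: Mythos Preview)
The paper does not prove this lemma at all: it simply recalls it as \cite[Lemma V.1]{bernard2014} and uses it as a black box in Appendix~A. Your sketch is a faithful reconstruction of the Bernard--Rivi\`ere argument (Liouville equation for $\lambda$, reduction to boundary fluxes via the divergence theorem, Fubini selection of good radii in the extreme dyadic annuli where $\|\nabla\vec n\|_{L^2}\le\eta$, and the pointwise bound $|K|\le\tfrac12|\nabla\vec n|^2_g$ on those pieces), and you yourself note at the end that invoking the reference is the cleanest route---which is exactly what the paper does.
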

	
	Consider a neck or a collar region $\Cr_k := B_{R_k} \setminus B_{r_k}$ with $\frac{r_k}{R_k} \xrightarrow[k\to \infty]{}{0}$, developped by a sequence $(\Phi_k)_{k\in\N}$ of Willmore immersions from $\Sigma\to \R^3$. Using the estimate \eqref{apriori_necks} and (VI.5)-(VI.6) in \cite{bernard2014}, we have that
	\begin{align*}
		\|\g \vec{n}_{\Phi_k} \|_{L^{2,\infty}(B_{4R_k/5}\setminus B_{5r_k/5})} &\leq C \sup_{\rho\in[r_k,R_k/2]} \|\g \vec{n}_{\Phi_k} \|_{L^2(B_{2\rho}\setminus B_\rho)}.
	\end{align*}
	So that
	\begin{align*}
		\left| \int_{\Cr_k} K_{\Phi_k} d\vol_{g_{\Phi_k}} \right| \ust{k\to \infty}{=} o(1).
	\end{align*}
	Assume that $(\Sigma,[g_{\Phi_k}])$ converges to a nodal surfaces $(\tilde{\Sigma},\tilde{h})$. Let $(\tilde{\Sigma}^l)_{l\in\inter{1}{q}}$ be the connected components of $\tilde{\Sigma}$. Assume that we have quantization : there exists $q$ branched immersions $\Phi^l_\infty : \tilde{\Sigma}^l \to \R^3$ and a finite number of immersions $\omega_j :\s^2\to \R^3$ which are Willmore away from possibly finitely many points, with a possible finite number of ends and branch points, such that, up to a subsequence,
	\begin{align*}
		\lim_{k\to \infty} \Er(\Phi_k) &= \sum_{l=1}^q \Er(\tilde{\Phi}^l_\infty) + \sum_{j=1}^p \Er(\omega_j).
	\end{align*}
	Then,
	\begin{align*}
		\lim_{k\to \infty} W(\Phi_k) &= \lim_{k\to \infty} \int_\Sigma \frac{1}{2}\left|\Arond_{\Phi_k} \right|^2_{g_{\Phi_k}} + K_{\Phi_k}\ d\vol_{g_{\Phi_k}} \\
		&= \sum_{l=1}^q \int_{\tilde{\Sigma}^l} \frac{1}{2}\left|\Arond_{\Phi_\infty^l} \right|^2_{g_{\Phi_\infty^l}} + K_{\Phi_\infty^l}\ d\vol_{g_{\Phi_\infty^l}} + \sum_{j=1}^p \int_{\s^2} \frac{1}{2}\left|\Arond_{\omega_j} \right|^2_{g_{\omega_j}} + K_{\omega_j}\ d\vol_{g_{\omega_j}} \\
		&= \sum_{l=1}^q W(\Phi^l_\infty) + \sum_{j=1}^p W(\omega_j).
	\end{align*}
	
	\section{Technical results for the \cref{gauge_neck_region} }\label{technical_lemmas_gauge}

	The two following proposition can be proved just as the corollaries 2.1 and 2.2 from \cite{bernard2020}, by changing the domains $B_1$ and $B_{1/2}$ by $B_2\setminus B_1$ and $B_{11/6}\setminus B_{7/6}$.
	\begin{proposition}\label{control_h_a}
		Let $\Phi : B_2(0)\setminus B_1(0) \to \R^3$ be a conformal Willmore immersion. There exists $\ve_1>0$ such that, if
		\begin{align*}
			\int_{B_2\setminus B_1} |\g \vec{n}|^2 < \ve_1,\ \ \ \ \ \| \g \lambda \|_{L^{2,\infty}(B_2\setminus B_1)} \leq C_0.
		\end{align*}
		for some $C_0>0$, then there exists $C=C(C_0)>0$ such that
		\begin{align*}
			\left\| \left( H - \fint_{B_{11/6}\setminus B_{7/6}} H \right) e^\lambda \right\|_{L^2(B_{11/6}\setminus B_{7/6})} &\leq C\left\| \Arond e^{-\lambda} \right\|_{L^2(B_2 \setminus B_1)}.
		\end{align*}
	\end{proposition}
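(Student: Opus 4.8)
The plan is to follow the argument of \cite[Corollary~2.1]{bernard2020} essentially verbatim, with the disc $B_1$ and its interior $B_{1/2}$ replaced throughout by the annulus $B_2\setminus B_1$ and the smaller annulus $B_{11/6}\setminus B_{7/6}$; the geometric input, namely the twice‑contracted Codazzi identity, does not see the topology of the domain, so the substitution is harmless.

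Concretely, I would first derive the elliptic equation solved by $H$. Writing $g_\Phi=e^{2\lambda}|dz|^2$ on $B_2\setminus B_1$ and denoting by $\Arond_{ij}$ the components of the traceless second fundamental form in this conformal chart, the identity $\di_{g_\Phi}\Arond=dH$ (valid for any immersion into $\R^3$) becomes $e^{2\lambda}\,\dr_iH=\dr_j\Arond_{ij}$, all Christoffel contributions cancelling because the metric is conformal in dimension two and $\Arond$ is trace‑free. Applying $\dr_i$, substituting once more and multiplying by $e^{-2\lambda}$ leads to
\begin{align*}
\lap H \;=\; \dr_i\dr_j\!\big(e^{-2\lambda}\Arond_{ij}\big)\;+\;\dr_i\!\big(2e^{-2\lambda}(\dr_j\lambda)\Arond_{ij}\big).
\end{align*}
Thus $\lap H$ is, modulo a single divergence of a lower order term, the double divergence of the $L^2$ quantity $e^{-2\lambda}\Arond$. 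Moreover, once $\ve_1$ is small the $\ve$‑regularity of \cite{riviere2008} applies on $B_2\setminus B_1$: it makes $\Arond$ and $\g H$ bounded on interior annuli and, together with the hypothesis $\|\g\lambda\|_{L^{2,\infty}}\le C_0$ and the Harnack estimate \cite[Lemma~II.1]{bernard2019}, controls $\osc_{B_2\setminus B_1}\lambda$, so that $e^{\pm\lambda}$ is comparable to a positive constant on $B_2\setminus B_1$; finally $\lap\lambda=-e^{2\lambda}K_\Phi\in L^1$ by Gauss--Bonnet, so $\lambda\in W^{2,1}_{loc}$.

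Then I would split $H-\bar H$, with $\bar H:=\fint_{B_{11/6}\setminus B_{7/6}}H$. Pick $\chi\in C_c^\infty(B_2\setminus B_1)$ equal to $1$ near $B_{11/6}\setminus B_{7/6}$, let $u:=\lap^{-1}\big[\dr_i\dr_j(\chi e^{-2\lambda}\Arond_{ij})\big]+\lap^{-1}\big[\dr_i(2\chi e^{-2\lambda}(\dr_j\lambda)\Arond_{ij})\big]$ on $\R^2$, and set $w:=H-u$, which is harmonic on a neighbourhood of $B_{11/6}\setminus B_{7/6}$. The leading term of $u$ is a Calderón--Zygmund transform of $\chi e^{-2\lambda}\Arond$, so $\|u\|_{L^2(\R^2)}\le C\|e^{-2\lambda}\Arond\|_{L^2(B_2\setminus B_1)}$; the divergence term, a product of $\g\lambda\in L^{2,\infty}$ with $\Arond\in L^2$, is handled by Hölder in Lorentz spaces and the embedding $W^{2,1}(\R^2)\hookrightarrow C^0\cap W^{1,2}$ (a Wente‑type estimate), giving a bound by $C(C_0)\|\Arond\|_{L^2(B_2\setminus B_1)}$, while the commutator terms produced by $\chi$ carry a derivative of $\Arond$ but are supported away from $B_{11/6}\setminus B_{7/6}$ and are absorbed via the interior $\ve$‑regularity. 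For the harmonic remainder, the Poincaré inequality on the annulus gives $\|w-\fint_{B_{11/6}\setminus B_{7/6}}w\|_{L^2(B_{11/6}\setminus B_{7/6})}\le C\|\g w\|_{L^2(B_{11/6}\setminus B_{7/6})}$, and $\g w=\g H-\g u$ is controlled in $L^2$ of that annulus by the same quantities ($\g H=e^{-2\lambda}\di\Arond$ through the $\ve$‑regularity, $\g u$ by interior elliptic regularity). Since $\fint w=\bar H-\fint u$, one gets $H-\bar H=(w-\fint w)+(u-\fint u)$, hence $\|H-\bar H\|_{L^2(B_{11/6}\setminus B_{7/6})}\le C(C_0)\|\Arond\|_{L^2(B_2\setminus B_1)}$; multiplying by $e^\lambda$ and using that the weight is comparable to a constant converts both sides into the stated inequality. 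The second proposition, the analogue of \cite[Corollary~2.2]{bernard2020} for $\g\vec n$, follows identically, using $|\g\vec n|\le C(|H|e^\lambda+|\Arond|e^{-\lambda})$.

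The main obstacle is entirely bookkeeping: tracking the conformal weights through the representation formula and checking that each lower order term — the divergence term above and the cut‑off commutators — lands, after $\lap^{-1}$ or $\lap^{-1}\di$, in $L^2$ of the inner annulus with a bound $C(C_0)\|\Arond e^{-\lambda}\|_{L^2(B_2\setminus B_1)}$, using only Hölder in Lorentz spaces, Wente‑type estimates, and the interior $\ve$‑regularity. This is precisely the computation carried out on the disc in \cite{bernard2020}, and no new idea is needed to transplant it to the annulus.
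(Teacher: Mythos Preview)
Your proposal is correct in spirit and lands on the same conclusion by the same high-level mechanism as the paper: both arguments simply transplant \cite[Corollary~2.1]{bernard2020} from the disc to the annulus, observing that the Codazzi identity $e^{2\lambda}\dr_i H=\dr_j\Arond_{ij}$ is domain-independent and that none of the analytic tools used there require simple connectedness.

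There is, however, a genuine difference in the technical route. The paper's proof singles out as the one nontrivial ingredient the Bourgain--Brezis result \cite[Theorem~3']{bourgain2003}: given $f\in L^2$ on a bounded Lipschitz connected open set, one can solve $\di Y=f$ with $Y\in L^\infty\cap W^{1,2}$ and the corresponding bounds. This is what \cite{bernard2020} actually uses on the disc, and the paper's point is that the Bourgain--Brezis theorem is stated for general Lipschitz domains, so it transfers to the annulus without modification. By contrast, you take an extra derivative to obtain a Poisson equation for $H$, then run a parametrix-with-cutoff argument and invoke Calder\'on--Zygmund and Wente-type estimates. Your route is more hands-on and requires you to chase the lower-order term $\dr_i\big(e^{-2\lambda}(\dr_j\lambda)\Arond_{ij}\big)$ through $\lap^{-1}$ at the critical regularity (H\"older in Lorentz spaces gives only $L^1$ for $\dr_j\lambda\cdot\Arond_{ij}$, and $\lap^{-1}\di:L^1\to L^{2,\infty}$, not $L^2$), which is exactly the borderline issue that Bourgain--Brezis is designed to bypass. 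Your appeal to a ``Wente-type'' structure would close this, but it is not automatic and is where your sketch is thinnest; the paper's Bourgain--Brezis route avoids this bookkeeping entirely.
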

	
	\begin{lemma}\label{average_spacelike}
		Let $\Phi : B_2(0)  \setminus B_1(0) \to \R^3$ be a conformal Willmore immersion such that its conformal factor $\lambda_\Phi$ satisfies $C^{-1} < e^{\lambda_\Phi} < C$, and $\|\Phi\|_{W^{1,\infty}(B_2(0)  \setminus B_1(0))} < C$.\
		There exists $\ve_2>0$ such that if
		\begin{align*}
			\int_{B_2(0)  \setminus B_1(0)} \left| \Arond_\Phi e^{-\lambda_\Phi} \right|^2 <\ve_2,
		\end{align*}
		Then, its conformal Gauss map $Y_\Phi$ satisfy
		\begin{align*}
			\left| \fint_{B_2(0)  \setminus B_1(0)} Y_\Phi \right|^2_\eta &\geq \frac{1}{2}.
		\end{align*}
	\end{lemma}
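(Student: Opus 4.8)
The proof will follow the scheme of Corollary~2.2 in \cite{bernard2020}, the only change being that the pair of balls $(B_1,B_{1/2})$ used there is replaced by the pair of annuli $\big(B_2\setminus B_1,\ B_{11/6}\setminus B_{7/6}\big)$. I describe the three steps and then indicate the main difficulty.

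\textbf{Step 1: an algebraic reduction.} Since $Y:=Y_\Phi$ takes values in $\s^{3,1}$, one has $|Y(x)|^2_\eta=1$ for every $x$, hence $\scal{Y(x)}{Y(y)}_\eta=1-\tfrac12|Y(x)-Y(y)|^2_\eta$ for all $x,y$. Writing $v=(v',v_5)$ with $v'\in\R^4$, one has $|v|^2_\eta=|v'|^2-v_5^2\le |v'|^2+v_5^2=|v|^2_\xi$. Averaging over $\Omega:=B_2\setminus B_1$ (or, as in \cite{bernard2020}, over a slightly smaller annulus, which only costs adjusting constants) and setting $\bar Y:=\fint_\Omega Y$, one gets, using also $\tfrac12\fint_\Omega\fint_\Omega|Y(x)-Y(y)|^2_\xi\,dx\,dy=\fint_\Omega|Y-\bar Y|^2_\xi$,
\begin{align*}
	\Big|\fint_\Omega Y\Big|^2_\eta
	&=\fint_\Omega\fint_\Omega \scal{Y(x)}{Y(y)}_\eta\,dx\,dy
	=1-\frac12\fint_\Omega\fint_\Omega |Y(x)-Y(y)|^2_\eta\,dx\,dy\\
	&\ge 1-\fint_\Omega|Y-\bar Y|^2_\xi\,.
\end{align*}
Thus it suffices to prove that the euclidean variance $\fint_\Omega|Y-\bar Y|^2_\xi$ is $\le\tfrac12$; in fact we will show it is $o(1)$ as $\ve_2\to 0$.

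\textbf{Step 2: smallness of the euclidean Dirichlet energy of $Y$.} By the Poincaré inequality on the annulus (with universal constant) the variance is controlled by $\|\g Y\|_{L^2_\xi(\Omega)}^2$. The \emph{Minkowski} Dirichlet energy is automatically small: from $\scal{\dr_iY}{\dr_jY}_\eta=\tfrac12|\Arond_\Phi|^2_{g_\Phi}(g_\Phi)_{ij}$ and $C^{-1}<e^{\lambda_\Phi}<C$ one gets $\int_\Omega\big(\sum_i\scal{\dr_iY}{\dr_iY}_\eta\big)\,dx=\int_\Omega|\Arond_\Phi|^2_{g_\Phi}\,d\vol_{g_\Phi}\le C\ve_2$. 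The point is to upgrade this to a bound on the euclidean energy, which additionally sees the $\g H_\Phi$ contribution in $\g Y=(\g H_\Phi)\,\nu-\Arond_\Phi\,\g\nu$, with $\nu=\big(\Phi,\tfrac{|\Phi|^2-1}{2},\tfrac{|\Phi|^2+1}{2}\big)$. As in \cite{bernard2020}, one exploits the Willmore equation, which in the flat coordinates reads $\lap H_\Phi=-|\Arond_\Phi e^{-\lambda_\Phi}|^2 H_\Phi$: treating $H_\Phi$ as a solution of a linear elliptic equation with potential small in $L^1$, and using the $W^{1,\infty}$ bound on $\Phi$ (which bounds $|\nu|_\xi,|\g\nu|_\xi$, hence $|Y|_\xi$ once $H_\Phi$ is controlled), one obtains on $B_{11/6}\setminus B_{7/6}$ a uniform bound $\|H_\Phi\|_{L^\infty}+\|Y\|_{L^\infty_\xi}\le C$ together with $\|\g H_\Phi\|_{L^2}+\|\Arond_\Phi\|_{L^2}=o(1)$ as $\ve_2\to 0$ — this is exactly the type of estimate produced by the $\ve$-regularity machinery for the conformal Gauss map, in the spirit of \cref{control_normal_by_arond}. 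Consequently $\|\g Y\|_{L^2_\xi(B_{11/6}\setminus B_{7/6})}=o(1)$, and by Step~1, $\big|\fint_\Omega Y\big|^2_\eta\ge 1-C\|\g Y\|^2_{L^2_\xi}\ge \tfrac12$ once $\ve_2$ is small enough (absorbing the harmless change of annulus from Step~1 if one insists on averaging over all of $B_2\setminus B_1$).

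\textbf{Main obstacle.} The heart of the argument is Step~2: the hypothesis only provides smallness of the conformally invariant quantity $\int|\Arond_\Phi e^{-\lambda_\Phi}|^2=\int\sum_i\scal{\dr_iY}{\dr_iY}_\eta$, whereas what is needed is smallness of the \emph{euclidean} gradient $\|\g Y\|_{L^2_\xi}$, which also involves $\g H_\Phi$ and is not controlled by the former alone. Closing this gap is precisely the near-umbilic rigidity behind Corollary~2.2 of \cite{bernard2020}: one uses the Willmore equation for $H_\Phi$ (a critical elliptic equation with $L^1$ potential) together with the bounded-geometry hypotheses to promote the smallness of $\int|\Arond_\Phi|^2$ to smallness of $\|\g Y\|_{L^2_\xi}$ — equivalently, to the fact that $\Phi$ is $W^{2,2}$-close to a totally umbilic surface, for which $Y$ is a constant point of $\s^{3,1}$ (cf.\ \cref{remark_creation_oscillation}).
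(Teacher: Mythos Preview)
Your Step~1 is correct and is the natural reduction. The gap is in Step~2: you invoke the Willmore equation $\Delta H_\Phi = -|\Arond_\Phi e^{-\lambda_\Phi}|^2\,H_\Phi$ and claim that, as a linear equation with potential small in $L^1$, it forces $\|\g H_\Phi\|_{L^2}=o(1)$. This cannot work: when $\Arond_\Phi\equiv 0$ the equation becomes $\Delta H_\Phi=0$, whose solutions on the annulus are arbitrary harmonic functions (e.g.\ $\log|x|$), with no smallness of $\g H_\Phi$ whatsoever. A second-order equation with small potential simply does not control the gradient of its solutions without further input, so the ``$\ve$-regularity machinery'' you allude to has nothing to bite on here.

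The ingredient the paper names is the Gauss--Codazzi system. In conformal gauge Codazzi is a \emph{first-order} identity of the schematic form $e^{2\lambda_\Phi}\dr_j H_\Phi = \dr_i(\Arond_\Phi)_{ij} + (\dr\lambda_\Phi)\cdot\Arond_\Phi$, writing $\g H_\Phi$ directly as a divergence of $\Arond_\Phi$. Combined with the Bourgain--Brezis lemma (solving $\di\, U=f$ on a bounded Lipschitz domain --- here the annulus, which is why simple connectedness is irrelevant --- with $\|U\|_{L^\infty\cap W^{1,2}}\le C\|f\|_{L^2}$), one integrates by parts to trade the derivative on $\Arond_\Phi$ for one on the test function and obtains $\|(H_\Phi-\fint H_\Phi)e^{\lambda_\Phi}\|_{L^2}\le C\|\Arond_\Phi e^{-\lambda_\Phi}\|_{L^2}$ (this is exactly how \cref{control_h_a} is proved) and, by the same mechanism, the euclidean variance bound on $Y_\Phi$ that your Step~1 requires. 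So your architecture is right, but the engine in the second step is Codazzi plus Bourgain--Brezis, not the scalar Willmore equation for $H_\Phi$.
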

	
	The proofs rely on two main ingredients. First, the Gauss-Codazzi equations, which are independant of the domain. Second, on \cite[Theorem 3']{bourgain2003} stating that we can solve $\di Y = f$ on any Lipschitz, connected, bounded open set $\Omega\subset \R^2$, with the bounds $\|Y\|_{L^\infty(\Omega)} + \|Y\|_{W^{1,2}(\Omega)} \leq C(\Omega)\|f\|_{L^2(\Omega)}$. So all the arguments still hold if the domain is not simply connected.

	\section{Umbilic circles}\label{few_facts_CGM}
	
	\begin{lemma}\label{no_umbilic_circle}
		Let $\ve\in(0,1)$ and $k\in\N$. Consider a Willmore immersion $\Psi : [0,T]\times \s^1 \to \s^3$ not totally umbilic. There exists a diffeomorphism $f : [0,T]\times \s^1\to [0,T]\times \s^1$, such that $\Psi\circ f$ has no circle of umbilic points of the form $\{t\}\times \s^1$. We can choose $f$ such that $\|f-\mathrm{id}\|_{W^{k,\infty}([0,T]\times \s^1)}\leq \ve$.
	\end{lemma}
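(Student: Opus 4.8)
The plan is to exploit the structure theorem of Sch\"atzle \cite{schatzle2017}: for a Willmore immersion $\Psi$ that is not totally umbilic, the umbilic set $\Ur = \{x : \Arond_\Psi(x)=0\}$ is a closed set consisting of finitely many isolated points together with finitely many disjoint closed real-analytic curves. On the cylinder $[0,T]\times\s^1$, the potentially bad objects are the umbilic curves that happen to be of the form $\{t\}\times\s^1$, i.e.\ whole slices. Since $\Arond_\Psi$ is a geometric tensor, $\Arond_{\Psi\circ f} = f^*\Arond_\Psi$ vanishes exactly on $f^{-1}(\Ur)$; hence it suffices to build a diffeomorphism $f$ close to the identity in $W^{k,\infty}$ which moves the (finitely many) slices $\{t_1\}\times\s^1,\dots,\{t_m\}\times\s^1$ in $\Ur$ off the position of being constant slices. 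First I would observe that only finitely many such slices occur: each is one of the real-analytic umbilic curves, and there are finitely many of them; alternatively, if infinitely many slices $\{t\}\times\s^1$ were umbilic, then by analyticity of the umbilic curves an open band $[a,b]\times\s^1$ would be umbilic, and then unique continuation for the (analytic) Willmore surface would force $\Psi$ to be totally umbilic, contradicting the hypothesis.

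Next I would construct $f$ explicitly as a perturbation acting only in the $t$-variable near each offending slice, of the form $f(t,\theta) = (t + \ve\, \chi(t)\, g(\theta),\ \theta)$ where, for each bad slice $\{t_i\}\times\s^1$, $\chi$ is a fixed smooth cutoff equal to $1$ near $t_i$ and supported in a small neighborhood disjoint from the other bad slices and from $\{0,T\}$, and $g:\s^1\to\R$ is a fixed smooth non-constant function (say $g(\theta)=\sin\theta$). For $\ve$ small this is a diffeomorphism of $[0,T]\times\s^1$ (the Jacobian in the $t$-direction is $1+\ve\chi'(t)g(\theta)>0$), it equals the identity near the ends $\{0,T\}\times\s^1$, and $\|f-\mathrm{id}\|_{W^{k,\infty}} \le \ve\,\|\chi g\|_{W^{k,\infty}}$, which is $\le\ve$ after rescaling $g$. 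The image of a constant slice $\{t_i\}\times\s^1$ under $f^{-1}$ is the graph $\{(t_i - \ve g(\theta) + o(\ve),\theta)\}$, which is non-constant in $t$ for $\ve$ small since $g$ is non-constant; more precisely $f^{-1}(\{t_i\}\times\s^1)$ is a curve on which the $t$-coordinate takes at least two distinct values, so it is not of the form $\{t'\}\times\s^1$.

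It remains to check that no new bad slices are created, i.e.\ that after applying $f$ the umbilic set $f^{-1}(\Ur)$ contains no whole slice. Away from the neighborhoods of the original bad slices, $f$ is the identity, so $f^{-1}(\Ur)$ agrees with $\Ur$ there and contains no slice (those parts of $\Ur$ are isolated points or curves that are not constant slices, by construction we only moved the constant ones). Inside each small neighborhood of $t_i$, the component of $\Ur$ passing through is either $\{t_i\}\times\s^1$ itself (other umbilic curves can be assumed excluded from the neighborhood by shrinking it, using that there are finitely many and they are closed and disjoint) plus possibly isolated points; $f^{-1}$ sends $\{t_i\}\times\s^1$ to the non-constant graph above and sends isolated points to isolated points, so no constant slice survives. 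Choosing the single smallest $\ve$ over the finitely many bad slices, and shrinking the supports of the $\chi$'s to be pairwise disjoint, completes the construction.

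The main obstacle, and the only genuinely delicate point, is the finiteness and separation statement about umbilic slices — i.e.\ promoting Sch\"atzle's local structure result to the global conclusion that only finitely many constant slices are umbilic and that they can be isolated from the rest of $\Ur$ by disjoint bands. This uses that $[0,T]\times\s^1$ is compact, that the umbilic curves are closed, disjoint, and real-analytic, and the unique-continuation/analyticity argument ruling out umbilic bands; everything after that is an elementary explicit perturbation. If one prefers to avoid unique continuation, one can instead argue that a constant umbilic slice is a particular real-analytic umbilic curve, Sch\"atzle's theorem gives finitely many such curves on the compact cylinder, hence finitely many constant slices, which is all that is needed.
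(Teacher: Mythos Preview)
Your proof is correct and follows essentially the same route as the paper: both use a perturbation of the form $f(t,\theta)=(t+a\,\chi(t)\sin\theta,\ \theta)$ supported near each offending slice, so that $f^{-1}(\{t_i\}\times\s^1)$ becomes a non-constant graph and hence is no longer a constant slice. The paper only spells out the local move around a single umbilic circle, whereas you are more careful about the global bookkeeping --- invoking Sch\"atzle's structure theorem to get finitely many bad slices, choosing disjoint supports, and checking that no new constant umbilic slices are created; this extra care is welcome but does not constitute a different argument.
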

	\begin{proof}
		Indeed, assume that $\Arond_{\Psi} = 0$ on $\{t\}\times \s^1$ and consider a small neighbourhood $[t-s,t+s]\times \s^1$, for some $s\in(0,\frac{1}{2})$ such that $\Arond_\Psi$ doesn't vanish on $([t-s,t+s]\times \s^1) \setminus ( \{t\}\times \s^1)$. Then, choose $f : [t-s,t+s]\times \s^1 \to [t-s,t+s]\times \s^1$ defined by
		\begin{align*}
			\forall (\tau,\theta) \in [t-s,t+s]\times \s^1,\ \ \ f(\tau,\theta) = (\tau + a \eta(\tau)\sin(\theta), \theta),
		\end{align*}
		where $\eta : [t-s,t+s]\to[0,1]$ is a cutoff function such that $\eta = 0$ on $[t-s,t-2s/3]\cup[t+2s/3,t+s]$, $\eta = 1$ on $[t-s/3,t+s/3]$, and for any $i\in\inter{1}{k}$, $s^i|\g^i \eta| \leq C$, and $a>0$ is small enough in order to have $f : [t-s,t+s]\times \s^1 \to [t-s,t+s]\times \s^1$. Then there is no more $\tau \in[t-s,t+s]$ such that $\Arond_{\Psi\circ f} = \Arond_{\Psi}\circ f$ satisfy $(\Arond_{\Psi\circ f})_{|\{\tau\}\times \s^1} = 0$. We can estimate :
		\begin{align*}
			\|f-\mathrm{id}\|_{W^{k,\infty}} &\leq a \|\eta(\tau) \sin(\theta)\|_{W^{k,\infty}} \leq C a \left(  \sum_{i=0}^k \frac{1}{s^i} \right).
		\end{align*}
		Up to reduce again $a$, it holds $\|f-\mathrm{id}\|_{W^{k,\infty}}\leq \ve$.
	\end{proof}

	\bibliographystyle{plain}
	\bibliography{biblio1}

\end{document}